\numberwithin{equation}{subsection}
\newtheorem{theorem}[equation]{Theorem}
\newtheorem{proposition}[equation]{Proposition}
\newtheorem{lemma}[equation]{Lemma}
\newtheorem{corollary}[equation]{Corollary}
\newtheorem{question}[equation]{Question}
\theoremstyle{definition}
\newtheorem{rmk}[equation]{Remark}
\newenvironment{remark}[1][]{\begin{rmk}[#1] \pushQED{\qed}}{\popQED \end{rmk}}
\newtheorem{eg}[equation]{Example}
\newenvironment{example}[1][]{\begin{eg}[#1] \pushQED{\qed}}{\popQED \end{eg}}
\newtheorem{defn}[equation]{Definition}
\newenvironment{definition}[1][]{\begin{defn}[#1]\pushQED{\qed}}{\popQED \end{defn}}
\newcommand{\cA}{\mathcal{A}}
\newcommand{\cB}{\mathcal{B}}
\newcommand{\bC}{\mathbf{C}}
\newcommand{\cC}{\mathcal{C}}
\newcommand{\bD}{\mathbf{D}}
\newcommand{\cD}{\mathcal{D}}
\newcommand{\rD}{\mathrm{D}}
\newcommand{\bF}{\mathbf{F}}
\newcommand{\cF}{\mathcal{F}}
\newcommand{\bG}{\mathbf{G}}
\newcommand{\rH}{\mathrm{H}}
\newcommand{\bI}{\mathbf{I}}
\newcommand{\cI}{\mathcal{I}}
\newcommand{\bK}{\mathbf{K}}
\newcommand{\cK}{\mathcal{K}}
\newcommand{\rK}{\mathrm{K}}
\newcommand{\bM}{\mathbf{M}}
\newcommand{\cO}{\mathcal{O}}
\newcommand{\bP}{\mathbf{P}}
\newcommand{\cP}{\mathcal{P}}
\newcommand{\bQ}{\mathbf{Q}}
\newcommand{\cQ}{\mathcal{Q}}
\newcommand{\rR}{\mathrm{R}}
\newcommand{\bS}{\mathbf{S}}
\newcommand{\cT}{\mathcal{T}}
\newcommand{\cV}{\mathcal{V}}
\newcommand{\bZ}{\mathbf{Z}}
\newcommand{\rf}{\mathrm{f}}
\newcommand{\fg}{\mathfrak{g}}
\newcommand{\bk}{\mathbf{k}}
\newcommand{\fm}{\mathfrak{m}}
\renewcommand{\phi}{\varphi}
\renewcommand{\emptyset}{\varnothing}
\renewcommand{\tilde}[1]{\widetilde{#1}}
\newcommand{\ol}[1]{\overline{#1}}
\newcommand{\ul}[1]{\underline{#1}}
\newcommand{\arxiv}[1]{\href{http://arxiv.org/abs/#1}{{\tt arXiv:#1}}}
\def\Ddots{\mathinner{\mkern1mu\raise\p@
\vbox{\kern7\p@\hbox{.}}\mkern2mu
\raise4\p@\hbox{.}\mkern2mu\raise7\p@\hbox{.}\mkern1mu}}
\DeclareMathOperator{\im}{image} 
\DeclareMathOperator{\coker}{coker}
\renewcommand{\hom}{\operatorname{Hom}}
\DeclareMathOperator{\trace}{Tr}
\DeclareMathOperator{\ext}{Ext}
\DeclareMathOperator{\End}{End}
\DeclareMathOperator{\Sym}{Sym}
\DeclareMathOperator{\Aut}{Aut}
\DeclareMathOperator{\depth}{depth}
\DeclareMathOperator{\Tor}{Tor}
\DeclareMathOperator{\pdim}{pdim}
\DeclareMathOperator{\sgn}{sgn}
\DeclareMathOperator{\Mod}{Mod}
\DeclareMathOperator{\Ch}{\mathbf{Ch}}
\newcommand{\GL}{\mathbf{GL}}
\let\lbb\llbracket
\let\rbb\rrbracket
\let\wt\widetilde
\newcommand{\BS}{\mathrm{BS}}
\newcommand{\HS}{\mathrm{HS}}
\newcommand{\VS}{\mathrm{VS}}
\newcommand{\Part}{\mathrm{Part}}
\newcommand{\tors}{\mathrm{tors}}
\newcommand{\gfin}{\mathrm{gf}}
\newcommand{\id}{\mathrm{id}}
\newcommand{\op}{\mathrm{op}}
\newcommand{\lw}[1]{{\textstyle \bigwedge^{#1}}}
\renewcommand{\Vec}{\mathrm{Vec}}
\newcommand{\Perf}{\mathrm{Perf}}
\newcommand{\Tors}{\mathrm{Tors}}
\newcommand{\gf}{\mathrm{gf}}
\DeclareMathOperator{\soc}{soc}
\DeclareMathOperator{\Proj}{Proj}
\DeclareMathOperator{\Ind}{Ind}
\DeclareMathOperator{\Rep}{Rep}
\DeclareMathOperator{\Hom}{Hom}
\DeclareMathOperator{\Ext}{Ext}
\DeclareMathOperator{\Equiv}{Equiv}
\newcommand{\ttors}{\tilde{\Mod}\vphantom{\Mod}_A^{\tors}}
\title[GL-equivariant modules over infinite polynomial rings]{GL-equivariant modules over polynomial rings\\in infinitely many variables}
\date{June 13, 2015}
\author{Steven V Sam}
\address{Department of Mathematics, MIT, Cambridge, MA}
\curraddr{Department of Mathematics, University of California, Berkeley, CA}
\email{svs@math.berkeley.edu}
\author{Andrew Snowden}
\address{Department of Mathematics, MIT, Cambridge, MA}
\curraddr{Department of Mathematics, University of Michigan, Ann Arbor, MI}
\email{asnowden@umich.edu}
\subjclass[2010]{%
13A50, 
13C05, 
13D02, 
05E05, 
05E10, 
16G20
}
\begin{document}

\begin{abstract}
Consider the polynomial ring in countably infinitely many variables over a field of characteristic zero, together with its natural action of the infinite general linear group $G$. We study the algebraic and homological properties of finitely generated modules over this ring that are equipped with a compatible  $G$-action.  We define and prove finiteness properties for analogues of Hilbert series, systems of parameters, depth, local cohomology, Koszul duality, and regularity. We also show that this category is built out of a simpler, more combinatorial, quiver category which we describe explicitly.

Our work is motivated by recent papers in the literature which study finiteness properties of infinite polynomial rings equipped with group actions. (For example, the paper by Church, Ellenberg and Farb on the category of FI-modules, which is equivalent to our category.) Along the way, we see several connections with the character polynomials from the representation theory of the symmetric groups. Several examples are given to illustrate that the invariants we introduce are explicit and computable.
\end{abstract}

\maketitle

\setcounter{tocdepth}{1}
\tableofcontents

\section*{Introduction}

This paper concerns the algebraic and homological properties of modules over the twisted commutative algebra (tca) $A = \Sym(\bC\langle 1 \rangle)$.  There are several ways to view the algebra $A$, but the main point of view we take in this paper is that $A$ is the symmetric algebra on the infinite dimensional vector space $\bC^\infty = \bigcup_{n \ge 0} \bC^n$ equipped with the action of the general linear group $\GL_\infty = \bigcup_{n \ge 0} \GL_n$. Modules over this algebra are required to have a compatible polynomial action of $\GL_\infty$, and concepts such as ``finite generation'' are defined relative to this structure.

Our study of $A$-modules is motivated by recent results in the literature.  In \cite{efw} and \cite{sw}, free resolutions over $A$ are studied.  Although the terminology ``twisted commutative algebra'' is not mentioned there, the idea of using Schur functors without evaluating on a vector space is implicit.  In \cite{snowden}, $A$-modules and modules over more general twisted commutative algebras are used to help establish properties of $\Delta$-modules, which are in turn used to study syzygies of Segre embeddings.  In \cite{fimodules}, $A$-modules are studied under the name ``FI-modules'' (see Proposition~\ref{prop:fi-mod} for the equivalence) and many examples are given.  Some other papers of a similar flavor are \cite{draisma}, \cite{draismakuttler}, \cite{hillarmartin}, \cite{hillarsullivant}.  With this paper, we hope to initiate a systematic study of twisted commutative algebras from the point of view of commutative algebra. 

\subsection{Statement of results}

The first difficulty one encounters when studying the category $\Mod_A$ (the category of {\it finitely generated} $A$-modules) is that it has infinite global dimension:  indeed, the Koszul resolution of the residue field $\bC$ is unbounded, since no wedge power of $\bC^{\infty}$ vanishes. (In fact, every non-projective object of $\Mod_A$ has infinite projective dimension.) As a consequence, the Grothendieck group of $\Mod_A$ is not spanned by projective objects.  In the first part of this paper, we study the structure of $A$-modules and establish results that allow one to deal with these difficulties.  We mention a few specific results here:
\begin{enumerate}[1.]
\item \label{item:proj=inj} Projective $A$-modules are also injective (Corollary~\ref{cor:projinj}).
\item \label{item:finiteinjdim} Every finitely generated $A$-module has finite injective dimension, and, in fact, admits a finite length resolution by finitely generated injective $A$-modules (Theorem~\ref{thm:injA}).
\item \label{item:triangledecomp} Every object $M$ of the bounded derived category of finitely generated $A$-modules fits into an exact triangle of the form
\begin{displaymath}
M_t \to M \to M_f \to
\end{displaymath}
where $M_t$ is a finite length complex of finitely generated torsion modules and $M_f$ is a finite length complex of finitely generated projective modules (Theorem~\ref{thm:pt-decomp}).
\item The Grothendieck group of $\Mod_A$ is spanned by the classes of projective and simple modules (Proposition~\ref{prop:KA}).
\end{enumerate}

Our approach to studying the category $\Mod_A$ is to break it up into two pieces:  the category $\Mod_A^{\tors}$ of torsion $A$-modules and the Serre quotient $\Mod_K = \Mod_A / \Mod_A^{\tors}$.  The category $\Mod_K$ can be thought of as modules over the ``generic point'' of $\Proj(A)$.  In \S \ref{sec:modK}, we give basic structure results on $\Mod_K$:  we compute the simple and injective objects and explicitly describe the injective resolutions of simple objects (Theorem~\ref{thm:BGGresolution}).  We also show, somewhat surprisingly, that $\Mod_K$ is equivalent to $\Mod_A^{\tors}$.  In \S \ref{sec:quiver}, we refine these results and describe $\Mod_K$ and $\Mod_A^{\tors}$ as the category of representations of a certain quiver.  This quiver has wild representation type (Remark~\ref{rmk:wildtype}), which means one cannot expect very fine results for the structure of $A$-modules.

In \S\ref{sec:sectionfunctor}, we apply the results from \S \ref{sec:modK} and \S \ref{sec:quiver} to study $\Mod_A$.  We prove the results mentioned above, as well as a few others: for instance, we show that the auto-equivalence group of $\Mod_A$ is trivial and give a complete description of $\rD^b(A)$ involving only the simpler category $\rD^b(K)$.  We also introduce local cohomology and the section functor.  These are adjoints to the natural functors $\Mod_A^{\tors} \to \Mod_A$ and $\Mod_A \to \Mod_K$, and are important tools in establishing the results of this section.

The second part of the paper studies invariants of $A$-modules.  Using the results from the first part, we obtain easy proofs of the following results:
\begin{enumerate}[1.]
\setcounter{enumi}{4}
\item \label{item:hilbertratl} An analogue of the Hilbert series is ``rational'' (Theorem~\ref{thm:enhanced}).
\item The existence of systems of parameters is substituted by the statement that modules are annihilated by ``differential operators'' (Theorem~\ref{thm:diffeq}).
\item \label{item:fglinearstrand} A generalization of the Hilbert syzygy theorem (as rephrased in Theorem~\ref{thm:hilb-syz-orig}): the minimal projective resolution of a finitely generated $A$-module is finitely generated as a comodule over the exterior algebra $\bigwedge{\bC^{\infty}}$ (Theorem~\ref{thm:hilbsyz}). In particular, regularity is finite, i.e., only finitely many linear strands are non-zero (Corollary~\ref{cor:regularity}). This gives a precise sense in which projective resolutions are determined by a finite amount of data.
\item \label{item:depth} There is a well-defined notion of depth which specializes to the usual notion (Theorem~\ref{thm:depth}).
\item We establish analogues of well-known relationships between local cohomology and depth and Hilbert series.
\end{enumerate}

We note that some of the results in this paper, such as results~\ref{item:proj=inj}, \ref{item:finiteinjdim}, \ref{item:triangledecomp}, \ref{item:hilbertratl} mentioned above, remain interesting if we replace $\bC^\infty$ by a finite-dimensional $\bC^n$, while others only contain content in the infinite setting (generally due to the fact that the polynomial ring in finitely many variables has finite global dimension).

\subsection{Duality}

Koszul duality gives an equivalence between the derived category of $A$-modules and the derived category of $\bigwedge{\bC^{\infty}}$-comodules (equipped with a compatible group action).  Result~\ref{item:fglinearstrand} above shows that Koszul duality induces an equivalence of the bounded derived category of finitely generated $A$-modules with the bounded derived category of finitely cogenerated $\bigwedge{\bC^{\infty}}$-comodules.  However, the abelian category of $\bigwedge{\bC^{\infty}}$-comodules is equivalent to that of $A$-modules, via duality and the transpose operation on partitions --- this is a consequence of the $\GL_\infty$ actions, and is not seen in the analogous finite dimensional situation.  Combining the two equivalences, we obtain an autoduality
\begin{displaymath}
\cF \colon \rD^b(A)^{\op} \to \rD^b(A),
\end{displaymath}
which we call the ``Fourier transform.'' The Fourier transform interchanges the perfect and torsion pieces of result~\ref{item:triangledecomp}, i.e., we have $(\cF M)_t=\cF(M_f)$ (Theorem~\ref{thm:perf-tors}).

\subsection{Applications}

The explicit resolutions we construct in $\Mod_K$ allow us to give a conceptual derivation of a formula for character polynomials (see \S\ref{ss:charpoly}), while our theory of local cohomology provides invariants which detect the discrepancy between the character polynomial and the actual character in low degrees. In particular, see Remark~\ref{rmk:stabdeg} which applies explicit local cohomology calculations to improve some bounds given in \cite{fimodules}.  

Our result on enhanced Hilbert series, and its generalization to multivariate tca's, suggests how to define and prove rationality of an enhanced Hilbert series for $\Delta$-modules.  Similarly, our result on Poincar\'e series, and its generalization to multivariate tca's, suggests how to prove a rationality result for Poincar\'e series of $\Delta$-modules.  This affirmatively answers Questions~4 and~7 from \cite{snowden}.

\subsection{Analogy with $\bC[t]$}

As the notation $\Sym(\bC\langle 1 \rangle)$ is meant to suggest, we think of $A$ as being analogous to the graded polynomial ring $\bC[t]$.  We think of $A$-modules as analogous to \emph{nonnegatively} graded $\bC[t]$-modules.  This analogy is not perfect, but is surprisingly good, and serves as something of a guiding principle: many of the results and definitions in this paper have simpler analogues in the setting of $\bC[t]$-modules.  For example, result~\ref{item:proj=inj} above may seem unexpected at first, but is analogous to the fact that in the category of nonnegatively graded $\bC[t]$-modules, $\bC[t]$ is injective.  We point out many other instances of this analogy along the way, and encourage the reader to find more still.

Just as $\bC[t]$ can be generalized to multivariate polynomial rings, so too can $\Sym(\bC\langle 1 \rangle)$ be generalized to multivariate tca's:  these are rings of the form $\Sym(U \otimes \bC^{\infty})$, where $U$ is a finite dimensional vector space, equipped with the obvious $\GL_{\infty}$-action.  (Actually, these are just the polynomial tca's generated in degree 1.)  We have not yet succeeded in generalizing the results from the first part of this paper to the multivariate setting.  Nonetheless, we have proved analogues of results \ref{item:hilbertratl}--\ref{item:depth} listed above in this setting.  The proofs of these results in the general case are significantly different (and longer), and will be treated in \cite{koszul} and \cite{hilbert}.

\subsection{Roadmap}

We hope the results in this paper will appeal to those interested in abelian categories, commutative algebra, and/or combinatorial representation theory. Here we provide a brief roadmap to try to indicate what might be interesting to whom.

We begin each section with a brief overview of the results that it contains. We advise that any reader of this paper look through these overviews to get a first approximation of the results contained in this paper.

The first part of the paper is largely abstract and categorical in nature.  For those interested in these aspects, we highlight Theorem~\ref{thm:facile}, which gives an elegant description of a natural class of abelian categories, its application to $\Mod_K$ in \S \ref{ss:quivermodK} and the description of the category $\rD^b(A)$ given in Theorem~\ref{thm:Dequiv}.  For the reader mainly interested in the second part of the paper, the most important results from the first part are contained in \S \ref{ss:simp-inj} and \S \ref{ss:injres}; see also Proposition~\ref{prop:ind}.  

The second part of the paper contains the content which is more likely to be of interest to the commutative algebraist or combinatorial representation theorist. In particular, the connection with character polynomials is contained in \S\ref{ss:charpoly} and \S\ref{ss:localchar}. We also wish to highlight \S\ref{ss:quivermodK} which shows that a certain simplicial complex related to Pieri's formula is contractible. As for the extension of the basic invariants of commutative algebra, we refer the reader to \S\ref{ss:enhancedhilbert} for Hilbert series and \S\ref{sec:fourier} for basic properties of Koszul duality and finiteness properties for Tor. The results in \S\ref{sec:depth} give analogues of the notions of depth and local cohomology, and should be of interest to both commutative algebraists and combinatorialists.

For explicit calculations, see Remark~\ref{rmk:computer} for some information, as well as \S\ref{ss:exampleEFW} and \S\ref{ss:localchar}.

\subsection{Notation}

Throughout, $\bC$ denotes the complex numbers, though all results work equally well over an arbitrary field of characteristic 0.  We use the symbol $A$ for the twisted commutative algebra (tca) $\Sym(\bC\langle 1 \rangle)$. An introductory treatment of tca's (along with other background material) can be found in \cite{expos}. We will not use the full theory of tca's, and it will often be enough for the reader to treat $A$ as $\Sym(\bC^\infty)$ with a $\GL_\infty$-action. Other notation is defined in the body of the paper.

Unless otherwise stated (notably in \S\ref{ss:sectiondefn}), ``$A$-module'' will always mean ``finitely generated $A$-module.'' The category of finitely generated $A$-modules is denoted $\Mod_A$.

\subsection*{Acknowledgements}

We thank Thomas Church, Jordan Ellenberg, Ian Shipman, David Treumann, and Yan Zhang for helpful correspondence. We also thank an anonymous referee for a very thorough reading of a previous draft and for numerous suggestions which significantly improved the quality of the paper.

Steven Sam was supported by an NDSEG fellowship while this work was done. 

\section{Background} \label{sec:background}

We refer to \cite{expos} for a more thorough treatment of the material in this section.

\subsection{Basic notions}

Given a partition $\lambda$ of size $n$ (denoted $|\lambda| = n$), let $\bS_\lambda$ be the Schur functor indexed by $\lambda$ and let $\bM_\lambda$ be the corresponding irreducible representation of the symmetric group $S_n$. We index them so that if $\lambda = (n)$ has one part, then $\bS_{(n)}$ is the $n$th symmetric power functor, and $\bM_{(n)}$ is the trivial representation of $S_n$. The notation $(a^b)$ means the sequence $(a, \dots, a)$ with $a$ repeated $b$ times. In particular, if $\lambda = (1^n)$, then $\bS_{(1^n)}$ is the $n$th exterior power functor, and $\bM_{(1^n)}$ is the sign representation of $S_n$.

Given an inclusion of partitions $\lambda \subseteq \mu$ (i.e., $\lambda_i \le \mu_i$ for all $i$), we say that $\mu / \lambda$ is a {\bf horizontal strip} of size $|\mu| - |\lambda|$ if $\mu_i \ge \lambda_i \ge \mu_{i+1}$ for all $i$. For notation, we write $\mu / \lambda \in \HS_d$ (implicit in this notation is that $\lambda \subseteq \mu$), where $d=|\mu|-|\lambda|$, and we write $\mu / \lambda \in \HS$ if there is some $d$ for which $\mu / \lambda \in \HS_d$. We recall Pieri's formula, which states that 
\[
\bS_\lambda \otimes \Sym^d = \bS_\lambda \otimes \bS_{(d)} = \bigoplus_{\mu / \lambda \in \HS_d} \bS_\mu.
\]
We define the {\bf transpose partition} $\lambda^\dagger$ by $\lambda^\dagger_i = \#\{j \ge i \mid \lambda_j \ge i\}$. If $\lambda \subseteq \mu$, we say that $\mu / \lambda \in \VS_d$ if and only if $\mu^\dagger / \lambda^\dagger \in \HS_d$, and say that $\mu / \lambda$ is a {\bf vertical strip}. The notation $\VS$ is defined similarly. The dual version of Pieri's formula states that
\[
\bS_\lambda \otimes \bigwedge^d = \bS_\lambda \otimes \bS_{(1^d)} = \bigoplus_{\mu / \lambda \in \VS_d} \bS_\mu.
\]

\subsection{The category $\cV$} \label{sec:catV}

Consider the following three abelian categories:
\begin{itemize}
\item Let $\cV_1$ be the category of polynomial representations of $\GL_\infty$, where a representation of $\GL_\infty$ is polynomial if it appears as a subquotient of an arbitrary direct sum of tensor powers of the standard representation $\bC^{\infty}$. Morphisms are maps of representations. The simple objects in this category are the representations $\bS_{\lambda}(\bC^{\infty})$.
\item Let $\cV_2$ be the category of polynomial endofunctors of $\Vec$, where a functor is polynomial if it appears as a subquotient (in the category of functors $\Vec \to \Vec$) of an arbitrary direct sum of functors of the form $V \mapsto V^{\otimes n}$. Morphisms are natural transformations of functors. The simple objects in this category are the Schur functors $\bS_{\lambda}$.
\item Let $\cV_3$ be the category of sequences $(V_n)_{n \ge 0}$, where $V_n$ is a representation of the symmetric group $S_n$. A morphism $f \colon (V_n) \to (W_n)$ is a sequence $f=(f_n)$ where $f_n$ is a map of representations $V_n \to W_n$. The simple objects in this category are the representations $\bM_{\lambda}$ (placed in degree $\vert \lambda \vert$, with 0 in all other degrees).
\end{itemize}
The three categories are equivalent. One can see this directly from the structure of the three categories: in each, every object is a direct sum of simple objects, and the simple objects are indexed by partitions. Better, one can give natural equivalences between them, as follows:
\begin{itemize}
\item The equivalence $\cV_2 \to \cV_1$ takes a functor $F$ to the representation $F(\bC^{\infty})$.
\item The equivalence $\cV_3 \to \cV_2$ takes a sequence $(V_n)$ to the functor $F$ given by
\begin{displaymath}
F(U)=\bigoplus_{n \ge 0} (U^{\otimes n} \otimes V_n)_{S_n}
\end{displaymath}
where the subscript denotes coinvariants.
\item The equivalence $\cV_1 \to \cV_2$ takes a polynomial representation $U$ to the sequence $(V_n)$ where $V_n$ is the $1^n$ weight space of $U$ (i.e., the subspace of $U$ where a diagonal matrix $[z_1, z_2, \ldots]$ acts by multiplication by $z_1 \cdots z_n$).
\end{itemize}
See \cite[\S 5]{expos} for more details on these equivalences.

Each category $\cV_i$ has a symmetric tensor product. For $\cV_1$, the tensor product is the usual tensor product of representations. For $\cV_2$, it is the pointwise tensor product: $(F \otimes G)(V)=F(V) \otimes G(V)$. In $\cV_3$, it is given as follows. Let $(V_n)$ and $(W_n)$ be two objects of $\cV_3$. Then
\begin{displaymath}
(V \otimes W)_n=\bigoplus_{i+j=n} \Ind_{S_i \times S_j}^{S_n} (V_i \otimes W_j).
\end{displaymath}
The equivalences between the $\cV_i$ respect the tensor product. The structure coefficients of the tensor product of simple objects are computed using the Littlewood--Richardson rule.

Let $\cV_i^{\gf}$ be the subcategory of $\cV_i$ on graded-finite objects, i.e., objects in which each simple appears with finite multiplicity. Each category $\cV_i^{\gf}$ has a duality denoted $(-)^{\vee}$. For $\cV_3^{\gf}$, duality is given by $(V_n)^{\vee}=(V_n^*)$, where $(-)^*$ denotes the usual dual vector space. For $\cV_2^{\gf}$, duality is given by $F^{\vee}(V)=F(V^*)^*$ if $V$ is finite dimensional; in general, $F^{\vee}(V)$ is the direct limit of $F(W^*)^*$ over the finite dimensional subspaces $W$ of $V$. And for $\cV_1^{\gf}$, duality is given by
\begin{displaymath}
V^{\vee}=\Hom_{\GL_\infty}(V, \Sym(\bC^{\infty} \otimes \bC^{\infty})),
\end{displaymath}
where we let $\GL_\infty$ act on the first factor of $\bC^{\infty} \otimes \bC^{\infty}$ to form the $\Hom$ space, and the action of $\GL_\infty$ on the $\Hom$ space comes from its action on the second factor. Note that the linear dual of a polynomial representation is not a polynomial representation, so $V^{\vee}$ is very different from $V^*$. The dualities on $\cV_i^{\gf}$ are compatible with the equivalences between them. The dualities are compatible with tensor products, i.e., the duality functor $\cV_i^{\gf} \to (\cV_i^{\gf})^{\op}$ is a tensor functor. Note that every object of $\cV_i^{\gf}$ is non-canonically isomorphic to its dual, and canonically isomorphic to its double dual. The functor $(-)^\vee$ can be extended to $\cV$, but is no longer a duality.

The category $\cV_3$ admits an operation, which we call {\bf transpose}, and denote by $(-)^{\dag}$. It is given by $(V_n)^{\dag}=(V_n \otimes \sgn)$, where $\sgn$ is the sign representation. Transpose takes the simple object $\bM_{\lambda}$ to $\bM_{\lambda^{\dag}}$. The transpose functor is a tensor functor, but not a symmetric tensor functor; see \cite[\S 7.4]{expos} for details. We transfer this operation to $\cV_1$ and $\cV_2$ via the equivalences; there does not seem to be a nice formula for this operation on these categories, however.

Since the categories $\cV_i$ are canonically identified and come with the same structure, it will at times be convenient to treat them all at once. We therefore let $\cV$ be an abstract category equivalent to any of the $\cV_i$. We treat $\cV$ as an abelian category equipped with a tensor product, transpose functor, and duality (on the graded-finite objects).

For a partition $\lambda$, we write $\ell(\lambda)$ for the number of rows in $\lambda$.  For an object $M$ of $\cV$, we write $\ell(M)$ for the supremum of the $\ell(\lambda)$ over those $\lambda$ for which $\bS_{\lambda}$ is a constituent of $M$.

\subsection{The algebra $A$}

The ring $A=\bigoplus_{d \ge 0} \Sym^d(\bC^{\infty})$ is a commutative algebra object in the category $\cV=\cV_1$. An $A$-module is an object $M$ of $\cV$ with an appropriate multiplication map $A \otimes M \to M$. An $A$-module $M$ is finitely generated if there exists a surjection $A \otimes V \to M \to 0$ where $V$ is a finite length object of $\cV$. Furthermore, $M$ has finite length as an $A$-module if and only if it has finite length as an object of $\cV$.

\begin{definition}
We denote by $\Mod_A$ the category of finitely generated $A$-modules.  
\end{definition}

We will repeatedly use the following fundamental fact \cite[Theorem 2.3]{snowden}:

\begin{theorem}
Every submodule of a finitely generated $A$-module is also finitely generated; in other words, $A$ is noetherian as an algebra object in $\cV$. In particular, $\Mod_A$ is an abelian category.
\end{theorem}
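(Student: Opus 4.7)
The plan is to reduce, by standard manipulations, to showing noetherianity of the free modules $A \otimes (\bC^\infty)^{\otimes d}$ for each $d \ge 0$, and then to transfer the problem to a statement about FI-modules via the equivalence $\cV_1 \simeq \cV_3$ recalled in \S\ref{sec:catV}.

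First I would observe that any finitely generated $A$-module $M$ admits a surjection $A \otimes V \to M$ with $V$ of finite length in $\cV$; since quotients and finite direct sums of noetherian objects are noetherian, and since each $\bS_\lambda(\bC^\infty)$ is a direct summand of some tensor power $(\bC^\infty)^{\otimes d}$, it suffices to show that $A \otimes (\bC^\infty)^{\otimes d}$ is noetherian in $\Mod_A$ for every $d$.

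Next I would pass through the equivalence $\cV_1 \simeq \cV_3$. Under this equivalence the algebra $A$ becomes the sequence $(\bM_{(n)})_{n \ge 0}$ of trivial $S_n$-representations, with multiplication coming from the inclusions $S_a \times S_b \hookrightarrow S_{a+b}$, and an $A$-module corresponds to a sequence $(M_n)$ of $S_n$-representations together with structure maps $\Ind_{S_a \times S_b}^{S_{a+b}}(\bC \boxtimes M_b) \to M_{a+b}$; by Frobenius reciprocity this datum is precisely that of an FI-module. A finitely generated $A$-module corresponds to a finitely generated FI-module, and, by a direct weight-space computation, the free module $A \otimes (\bC^\infty)^{\otimes d}$ corresponds to the principal projective FI-module $[n] \mapsto \bC[\mathrm{Inj}([d],[n])]$. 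The task thus becomes: every FI-submodule of this principal projective is finitely generated.

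The combinatorial heart of the argument is a well-partial-order argument on the underlying indexing set. I would fix a monomial order on injections $[d] \hookrightarrow [n]$ --- for example, identify such an injection $f$ with its word $(f(1),\ldots,f(d))$ and use a suitable graded lexicographic order --- and, for an FI-submodule $N$, consider the associated set of leading monomials. I would then invoke Higman's lemma, which asserts that the set of finite words over a well-partial-ordered alphabet is itself well-partial-ordered under the subword ordering, to conclude that the set of leading monomials of $N$ has only finitely many elements minimal with respect to the ordering induced by FI-morphisms. Lifting these minimal monomials back to elements of $N$ produces a finite generating set. The main obstacle will be engineering an order with the right properties: it must be simultaneously compatible with the FI-action, with the $S_n$-symmetry, and with sums, so that leading terms of images of generators are controlled and linear combinations do not cause cancellations that destroy the combinatorial argument. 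Once these compatibilities are in place, the standard Gr\"obner-style reasoning closes the proof.
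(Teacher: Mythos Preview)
The paper does not give its own proof of this theorem: it is simply quoted from \cite[Theorem~2.3]{snowden}, so there is no in-paper argument to compare against directly. Your overall strategy --- reduce to free modules on tensor powers, translate through the equivalence $\cV_1 \simeq \cV_3$ to FI-modules (this is the content of Proposition~\ref{prop:fi-mod}, proved just after the theorem is stated), and then run a Gr\"obner/Higman argument on the principal projectives $[n] \mapsto \bC[\mathrm{Inj}([d],[n])]$ --- is sound and is essentially the approach carried out in \cite{fimodules}. It is different in flavor from the argument in \cite{snowden}, which works in the polynomial-functor model $\cV_2$ and does not pass through FI or Higman's lemma.

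There is, however, a real gap at exactly the point you flag. A monomial order on $\mathrm{Inj}([d],[n])$ cannot be $S_n$-equivariant, so the notion of ``leading monomial'' of an element of an $S_n$-stable subspace is not well-behaved: $S_n$-averaging produces cancellations, and a na\"ive Gr\"obner reduction collapses. Saying ``the main obstacle will be engineering an order'' is accurate, but no such order exists on FI itself. The standard repair is to interpose the auxiliary category $\mathrm{OI}$ of totally ordered finite sets and order-preserving injections. Restriction along the inclusion $\mathrm{OI} \hookrightarrow \mathrm{FI}$ takes a finitely generated FI-module to a finitely generated $\mathrm{OI}$-module; over $\mathrm{OI}$ there is no symmetric-group action to contend with, monomials are exactly ordered injections, and Higman's lemma applies cleanly to show that every $\mathrm{OI}$-submodule of a principal projective is finitely generated. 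Finally, an FI-submodule that is finitely generated as an $\mathrm{OI}$-module is a fortiori finitely generated as an FI-module. Inserting this passage through $\mathrm{OI}$ is the missing step; with it, your outline becomes a complete proof.
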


If $V$ is a finite length object of $\cV$ then $A \otimes V$ is a projective object of $\Mod_A$.  An easy argument with Nakayama's lemma shows that all projective objects are of this form.  In particular, the indecomposable projectives of $\Mod_A$ are exactly the modules of the form $A \otimes \bS_{\lambda}$. We emphasize that Pieri's formula implies that we have a multiplicity-free decomposition
\[
A \otimes \bS_\lambda = \bigoplus_{\mu / \lambda \in \HS} \bS_\mu.
\]
This is a fundamental formula, and will be used throughout the rest of the paper without explicit mention. We have the following important result on these modules:

\begin{proposition} \label{prop:pierisubmod}
If $\bS_\mu \subset A \otimes \bS_\lambda$, then the $A$-submodule generated by $\bS_\mu$ contains all $\bS_\nu \subset A \otimes \bS_\lambda$ such that $\mu \subseteq \nu$.
\end{proposition}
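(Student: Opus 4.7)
The plan is to realize the submodule generated by $\bS_\mu$ as the image of the $A$-linear map $\phi \colon A \otimes \bS_\mu \to A \otimes \bS_\lambda$ extending the chosen Pieri inclusion of $\bS_\mu$. By Pieri's formula both source and target decompose multiplicity-freely into Schur summands, so by Schur's lemma $\phi$ restricts on each summand $\bS_\xi$ of the source (present iff $\xi/\mu \in \HS$) to either $0$ or an isomorphism onto the Pieri copy of $\bS_\xi$ in the target (present iff $\xi/\lambda \in \HS$). The proposition therefore reduces to showing $\phi|_{\bS_\nu} \neq 0$ whenever $\mu \subseteq \nu$ and $\nu/\lambda \in \HS$.

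I would first record the combinatorial compatibility $\nu/\mu \in \HS$, needed so that $\bS_\nu$ actually occurs in $A \otimes \bS_\mu$: $\nu_i \geq \mu_i$ by containment, and $\mu_i \geq \lambda_i \geq \nu_{i+1}$ by combining $\mu/\lambda \in \HS$ with $\nu/\lambda \in \HS$. I would then induct on $|\nu|-|\mu|$ to reduce to the single-box case. Taking $i^\ast := \max\{i : \nu_i > \mu_i\}$ and $\nu' := \nu - e_{i^\ast}$, the chain $\nu_{i^\ast+1} \leq \mu_{i^\ast+1} \leq \mu_{i^\ast} < \nu_{i^\ast}$ (so the box is removable) together with $\nu_{i^\ast} > \mu_{i^\ast} \geq \lambda_{i^\ast}$ shows $\mu \subseteq \nu' \subsetneq \nu$ with $\nu'/\lambda \in \HS$; the inductive hypothesis gives $\bS_{\nu'} \subset A \cdot \bS_\mu$, reducing the task to proving $\bS_\nu \subset A \cdot \bS_{\nu'}$ when $|\nu|-|\nu'|=1$.

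The single-box case amounts to showing that, for $\nu = \nu' + e_k$ with $\nu'/\lambda, \nu/\lambda \in \HS$, the composite $\bS_\nu \hookrightarrow V \otimes \bS_{\nu'} \to \Sym^{k'+1} \otimes \bS_\lambda$ is nonzero (with $k' = |\nu'|-|\lambda|$; the first arrow the Pieri inclusion; the second obtained by tensoring the Pieri inclusion $\bS_{\nu'} \hookrightarrow \Sym^{k'} \otimes \bS_\lambda$ with $V$ and composing with the multiplication $V \otimes \Sym^{k'} \to \Sym^{k'+1}$). I would verify this by explicit highest-weight computation after evaluating on a sufficiently large finite-dimensional space: the kernel of $V \otimes \Sym^{k'} \to \Sym^{k'+1}$ is the Pieri complement $\bS_{(k',1)}$, and multiplication of the highest weight vector of the Pieri copy $\bS_{\nu'} \subset \Sym^{k'} \otimes \bS_\lambda$ by the basis vector $e_k$ produces a weight-$\nu$ vector with nonzero projection onto the highest-weight-$\nu$ line of the Pieri copy $\bS_\nu \subset \Sym^{k'+1} \otimes \bS_\lambda$ (computable from the canonical splitting $V \otimes \Sym^{k'} \cong \Sym^{k'+1} \oplus \bS_{(k',1)}$). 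This projection calculation is the principal obstacle; once it is in hand, the induction closes.
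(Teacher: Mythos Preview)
The paper does not give a self-contained argument here; it simply cites \cite[\S 8]{olver} and \cite[Lemma~2.1]{sw}. Your outline is essentially the approach taken in those references: reduce to the single-box case by induction, then verify that case by an explicit highest-weight computation using Olver's formulas for Pieri inclusions. Your combinatorial reductions are all correct --- in particular the verification that $\nu/\mu\in\HS$ and that the intermediate $\nu'$ satisfies $\mu\subseteq\nu'$ and $\nu'/\lambda\in\HS$ are clean and complete.

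You are right to flag the single-box projection as the principal obstacle, and your proposal is honest in not claiming to have carried it out. This computation is exactly the content the paper is outsourcing. The idea of multiplying the highest weight vector of the Pieri copy $\bS_{\nu'}\subset\Sym^{k'}\otimes\bS_\lambda$ by $e_k$ and checking nonzero projection onto $\bS_\nu$ is correct in principle, but to execute it you need an explicit description of that highest weight vector. Olver's formulas express the Pieri inclusion $\bS_{\nu'}\hookrightarrow\Sym^{k'}\otimes\bS_\lambda$ as a composite of polarization operators, and from this the nonvanishing of the relevant coefficient can be read off directly; \cite{sw} repackages the same computation. So your proposal is a correct roadmap that stops precisely where the cited references begin.
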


\begin{proof}
This can be found in \cite[\S 8]{olver} or \cite[Lemma 2.1]{sw}.
\end{proof}

\begin{remark} \label{rmk:computer}
We wish to emphasize the fact that the maps $\bS_\mu \to A \otimes \bS_\lambda$ given by Pieri's formula can be made concrete. A computer implementation of these maps, based on \cite{olver}, has been written by the first author as a {\tt Macaulay 2} package \cite{pierimaps}. Since all relevant calculations in this paper can be done by replacing $\bC^\infty$ with $\bC^n$ for $n \gg 0$, this means that they can be done on a computer.
\end{remark}

Finally, we prove that the category of $A$-modules is equivalent to the category of FI-modules over $\bC$ \cite[Definition 1.1]{fimodules}. (This remains true over any field of characteristic 0.) Let FI denote the category whose objects are finite sets and whose morphisms are injective maps. Then an FI-module is a functor from FI to the category of $\bC$-vector spaces. This result will not be used in the rest of the paper except in Remark~\ref{rmk:stabdeg}.

\begin{proposition} \label{prop:fi-mod}
The category of ${\rm FI}$-modules is equivalent to the category of $A$-modules.
\end{proposition}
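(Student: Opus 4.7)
The plan is to work through the equivalence $\cV \simeq \cV_3$, which lets us view an $A$-module as a sequence $(M_n)_{n\ge 0}$ of $S_n$-representations equipped with a multiplication map $A \otimes M \to M$, and then to translate this data into the data of a functor from $\mathrm{FI}$ to $\Vec$. In the $\cV_3$ picture, the algebra $A$ corresponds to the sequence $(\bC)_{n\ge 0}$ in which each $\bC$ is the trivial representation of $S_n$ (since $A_n = \Sym^n(\bC^\infty)$ corresponds to $\bM_{(n)}$). Using the tensor product formula in $\cV_3$,
\[
(A \otimes M)_n \;=\; \bigoplus_{i+j=n} \Ind_{S_i \times S_j}^{S_n}(M_i),
\]
Frobenius reciprocity shows that giving a multiplication $A \otimes M \to M$ is the same as giving, for every pair $(i,j)$, an $(S_i \times S_j)$-equivariant map $\mu_{i,j}\colon M_i \to M_{i+j}$ where $S_j$ acts trivially on $M_i$ and via the block inclusion $S_j \hookrightarrow S_{i+j}$ on $M_{i+j}$. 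Associativity and unitality of the $A$-action translate into $\mu_{i+j,k}\circ \mu_{i,j} = \mu_{i,j+k}$ and $\mu_{0,n} = \id$.

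Next I would identify exactly this data with the data of an $\mathrm{FI}$-module. Restricting attention to the skeleton with objects $[n] = \{1,\ldots,n\}$, an $\mathrm{FI}$-module $W$ provides vector spaces $M_n := W([n])$ each carrying an action of $S_n = \Aut_{\mathrm{FI}}([n])$, together with a map $W(f)\colon M_i \to M_n$ for every injection $f\colon [i] \hookrightarrow [n]$, compatible with composition. Every such $f$ factors as $f = \sigma \circ \iota_{i,n-i}$ where $\iota_{i,n-i}\colon [i] \hookrightarrow [n]$ is the standard inclusion and $\sigma \in S_n$; the standard inclusion is fixed under post-composition by the subgroup $S_{n-i}$ permuting $\{i+1,\ldots,n\}$ and commutes with $S_i$, so $W(\iota_{i,n-i})$ is automatically $(S_i \times S_{n-i})$-equivariant in the sense above. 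Conversely, any factorization yields at most one consistent choice for $W(f) = \sigma \cdot W(\iota_{i,n-i})$, and well-definedness plus functoriality under composition of injections is equivalent to the associativity condition $\mu_{i+j,k}\circ \mu_{i,j} = \mu_{i,j+k}$.

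With this dictionary in hand, I would define the two functors. Given an $\mathrm{FI}$-module $W$, set $\Phi(W)$ to be the object of $\cV_3$ given by $M_n = W([n])$ with multiplication $\mu_{i,j} := W(\iota_{i,j})$, and transport to $\cV_1$ via the equivalence to obtain the $A$-module. Given an $A$-module, define $\Psi(M)([n]) = M_n$ and $\Psi(M)(f) = \sigma \cdot \mu_{i,n-i}$ using a factorization as above; independence of the factorization follows from the $(S_i \times S_{n-i})$-equivariance of $\mu_{i,n-i}$, and functoriality follows from associativity of $\mu$. It is then routine to verify that $\Phi$ and $\Psi$ are inverse equivalences, and that they are $\bC$-linear. (If the equivalence is to be upgraded to finitely generated objects on both sides, one checks that a surjection $A\otimes V \twoheadrightarrow M$ with $V$ of finite length in $\cV$ corresponds precisely to an $\mathrm{FI}$-module generated, in the sense of \cite{fimodules}, by the finitely many isotypic components of $V$.)

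The main obstacle is purely bookkeeping: keeping straight the several symmetric group actions (the intrinsic $S_n$-action on $M_n$, and the $S_i \times S_j$-action coming from the tensor product, block inclusion, and standard inclusion of sets) and checking that the $(S_i \times S_j)$-equivariance condition coming from Frobenius reciprocity matches exactly the condition that the standard-inclusion maps extend functorially to all injections. Once these identifications are made carefully, nothing deeper is required.
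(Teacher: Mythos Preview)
Your proposal is correct and follows essentially the same route as the paper: both transfer to the model $\cV_3$, identify $A$ with the sequence of trivial $S_n$-representations, and then translate the multiplication map into the data of an $\mathrm{FI}$-module. The only cosmetic difference is that the paper reduces to the degree-$1$ component (using that $A$ is generated in degree $1$, so the structure is determined by maps $M_n \to M_{n+1}$ whose composites land in the $S_d$-invariants) and then cites \cite[Lemma~2.1]{fimodules} for the final identification, whereas you keep all the $\mu_{i,j}$ at once and spell out the dictionary with $\mathrm{FI}$ directly.
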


\begin{proof}
Let $B=(B_n)_{n \ge 0}$ be the object of $\cV_3$ given by $B_n=\bC$, with the trivial action of $S_n$, for all $n \ge 0$. We give $B$ the structure of an algebra object of $\cV_3$ in the obvious manner. A $B$-module is an object $M=(M_n)_{n \ge 0}$ of $\cV_3$ equipped with a unital and associative multiplication map $B \otimes M \to M$. Since $B$ is generated in degree 1, such a map is determined by the maps $B_1 \otimes M_n=M_n \to M_{n+1}$ it induces. The associativity condition exactly means that the composite map $M_n \to M_{n+d}$ lands in the $S_d$-invariant space of the target. Thus a $B$-module is the same thing as an FI-module, by \cite[Lemma~2.1]{fimodules}.

Under the equivalence $\cV_1 \to \cV_3$, the algebra $A$ is sent to the algebra $B$. Since this is an equivalence of tensor categories, it induces an equivalence between the categories of $A$-modules and $B$-modules.
\end{proof}

\part{Structure of $A$-modules}


\section{\texorpdfstring{The structure of $\Mod_K$ and $\Mod_A^{\tors}$}{The structure of ModK and ModKtors}} \label{sec:modK}

The main purpose of this section is to define and study a localization functor 
\[
T \colon \Mod_A \to \Mod_K.
\]
The definition of $\Mod_K$ is given in \S\ref{ss:modK}. The category $\Mod_K$ is analyzed and described explicitly. In particular, we classify the simple objects and the injective objects in \S\ref{ss:simp-inj} and construct the minimal injective resolutions of every simple object in \S\ref{ss:injres}. This gives the transition matrices in K-theory between simple and injective objects, and it is shown that the multiplicative structure constants in K-theory are the same in both bases. Finally, in \S\ref{ss:modKmodAequiv}, we show that $\Mod_K$ is equivalent to the category $\Mod_A^{\tors}$ of torsion objects in $\Mod_A$.

\subsection{\texorpdfstring{The category $\Mod_K$}{The category ModK}} \label{ss:modK}

Let $\Mod_K$ be the Serre quotient of $\Mod_A$ by the Serre subcategory $\Mod_A^{\tors}$ of finite length objects. Recall that the objects of $\Mod_K$ are the objects of $\Mod_A$, and that 
\[
\hom_{\Mod_K}(M,N) = \varinjlim \hom_{\Mod_A}(M', N/N')
\]
where the colimit is over all submodules $M' \subseteq M$ and $N' \subseteq N$ such that $M/M'$ and $N'$ have finite length. Hence, if $M$ and $N$ are two $A$-modules then a map $M \to N$ in $\Mod_K$ comes from a map $M' \to N/N'$ in $\Mod_A$, where $M'$ and $N'$ are submodules of $M$ and $N$ such that $M/M'$ and $N'$ have finite length.  Two objects of $\Mod_A$ become isomorphic in $\Mod_K$ if and only if there are maps $M \stackrel{f}{\leftarrow} L \stackrel{g}{\to} N$ in $\Mod_A$ such that the kernel and cokernel of both $f$ and $g$ are finite length. There is a natural functor
\begin{displaymath}
T \colon \Mod_A \to \Mod_K
\end{displaymath}
called the {\bf localization functor}. It is the identity on objects, and takes a morphism in $\Mod_A$ to the morphism it represents in $\Mod_K$. We sometimes use the phrase ``the image of $M$ under $T$'' when we want to regard the object $M$ of $\Mod_A$ as an object of $\Mod_K$.

Note that if $M$ and $N$ are given, then a map $T(M) \to T(N)$ only lifts to a map $M' \to N/N'$, as above. However, if we are given $N$ together with a map $f \colon \ol{M} \to T(N)$, then we can find a lift $M$ of $\ol{M}$ (e.g., a module $M$ with an isomorphism $T(M) \to \ol{M}$) and a map $M \to N$ lifting $f$.

\begin{proposition} \label{prop:extsurj}
Let $M$ and $N$ be objects of $\Mod_A$. Then the natural map 
\[
\varinjlim \Ext^i_A(M', N) \to \Ext^i_K(T(M), T(N))
\]
is surjective for $i=0,1$, where the limit is over the finite colength $A$-submodules $M'$ of $M$.
\end{proposition}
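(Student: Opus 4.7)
The plan is to represent the given class in $\Mod_K$ by data in $\Mod_A$ and then restrict to a finite-colength submodule $M''\subseteq M$ where the obstruction to lifting that data vanishes by a degree count. Both cases reduce to vanishing of $\Ext^{\bullet}_A(M'',N')$ for a suitable finite-length submodule $N'\subseteq N$ arising from the representative.

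For $i=0$: a morphism $\phi\colon T(M)\to T(N)$ is represented, by the very definition of $\Hom_K$, by an $A$-linear map $f\colon M'\to N/N'$ with $M/M'$ and $N'$ of finite length. Form the pullback $P := M'\times_{N/N'}N$, giving $0\to N'\to P\to M'\to 0$ and a natural projection $P\to N$. A section $s\colon M''\to P$ of $P\to M'$ over a subobject $M''\subseteq M'$ yields the lift $g := (P\to N)\circ s\in \Hom_A(M'',N)$, and such a section exists precisely when the restricted class $[P]|_{M''}\in\Ext^1_A(M'',N')$ vanishes. For $i=1$: by the standard description of $\Ext^1$ in a Serre quotient, a class $\xi\in\Ext^1_K(T(M),T(N))$ is the image of some $A$-extension $\eta\colon 0\to N/N'\to Q\to M'\to 0$. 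Applying $\Ext^{\bullet}_A(M',-)$ to $0\to N'\to N\to N/N'\to 0$ produces a connecting map $\delta\colon\Ext^1_A(M',N/N')\to\Ext^2_A(M',N')$, and lifting $[\eta]$ along $N\surjects N/N'$ to a class in $\Ext^1_A(M',N)$ is exactly the problem of killing $\delta[\eta]$.

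The shared technical input is the following vanishing, essentially by a degree count: \emph{if $L$ has finite length and $n$ strictly exceeds the largest polynomial degree appearing in $L$, then $\Ext^i_A(A_+^n M,L)=0$ for every $i\geq 0$.} Indeed, $A_+^n M$ is generated as an $A$-module in polynomial degrees $\geq n$, and in a minimal projective resolution $A\otimes V_\bullet\to A_+^n M$ each $V_i\in\cV$ is concentrated in degrees $\geq n$ (in a minimal resolution, generating degrees never drop). Hence $\Hom_A(A\otimes V_i,L)=\Hom_\cV(V_i,L)=0$ purely by degree, and $\Ext^i$ vanishes.

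Given a representative, I apply this with $L=N'$ and take $M''$ to be a finite-colength submodule of $M$ contained in $A_+^n M'$ for $n$ large. Both obstructions $[P]|_{M''}$ and $\delta[\eta]|_{M''}$ then vanish: in the $i=0$ case, the pullback extension over $M''$ splits and yields the desired section; in the $i=1$ case, $[\eta]|_{M''}$ lifts to a class $\tilde\eta\in\Ext^1_A(M'',N)$. A formal check using $T(N')=0$ and $T(M/M'')=0$ confirms that the constructed lifts are sent to $\phi$ and $\xi$ respectively by the natural map. I expect the main technical obstacle to be verifying the Key Lemma cleanly in the tca setting, in particular establishing the degree control on a minimal $A$-projective resolution of $A_+^n M$; for $i=1$ one must also invoke the Yoneda-type representation of $\Ext^1$ classes in the Serre quotient.
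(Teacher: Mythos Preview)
Your Key Lemma is correct and the $i=0$ argument works. The paper instead uses the truncation functor $M \mapsto M_{\ge n}$ (the sum of the $\bS_\lambda$-isotypic pieces with $|\lambda|\ge n$): for $n$ large it converts the representative $f\colon M'\to N/N'$ directly into a map $M_{\ge n}\to N_{\ge n}\subseteq N$. Your pullback/splitting argument is an equivalent reformulation.

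The gap is your step~1 for $i=1$. The ``standard description of $\Ext^1$ in a Serre quotient'' you invoke is not a black-box result, and the specific form you assert --- an $A$-extension $0\to N/N'\to Q\to M'\to 0$ with $M'\subseteq M$ and $N'\subseteq N$ --- is not what one obtains by lifting. After lifting the surjection $E\to T(M)$ to a map $\wt L\to M$ and letting $K$ be its kernel, one only has $T(K)\cong T(N)$; there is no reason $K$ is a quotient of $N$ by a finite-length submodule. For instance, if $N=L_\lambda^0$, which has no nonzero finite-length submodules, then $N/N'=N$ for every admissible $N'$, so you would need $K=N$ on the nose. Restricting to a smaller $M''\subseteq M'$ does not change $K$, so your $\Ext^2$-obstruction argument, which is predicated on the short exact sequence $0\to N'\to N\to K\to 0$, never gets started.

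The paper's fix is exactly to resolve this identification of $K$ with $N$: for $n$ large one has simultaneously $K_{\ge n}\cong N_{\ge n}$, surjectivity of $\wt L_{\ge n}\to M_{\ge n}$, and hence an honest extension $0\to N_{\ge n}\to\wt L_{\ge n}\to M_{\ge n}\to 0$; pushing out along $N_{\ge n}\hookrightarrow N$ finishes, with no obstruction theory needed. Your framework can be repaired (use the already-proved $i=0$ case to produce a map $K_1\to N$ with $K/K_1$ finite length, then lift the class from $\Ext^1_A(M'',K)$ to $\Ext^1_A(M'',K_1)$ using $\Ext^1_A(M'',K/K_1)=0$, then push forward to $N$), but this replaces your $\Ext^2$ step with a different $\Ext^1$ step and is essentially the paper's truncation argument in disguise.
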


\begin{proof}
The key to this lemma is the existence of truncation functors $(-)_{\ge n}$, as follows. Let $P$ be an object of $\cV$. Define $P_{\ge n}$ to be the sum of the $\bS_{\lambda}$-isotypic pieces of $P$ over partitions $\lambda$ with $\vert \lambda \vert \ge n$. Then $(-)_{\ge n}$ is clearly a functor from $\cV$ to itself. Furthermore, if $P$ is an $A$-module then $P_{\ge n}$ is a submodule of $P$. Note that if $P \in \Mod_A^{\tors}$ then $P_{\ge n}=0$ for $n \gg 0$.

We now treat the $i=0$ case of the lemma. A map $T(M) \to T(N)$ lifts, by definition, to a map $f \colon M' \to N/N'$, where $M' \subset M$ has finite colength and $N' \subset N$ has finite length. For $n \gg 0$, the maps $M'_{\ge n} \to M_{\ge n}$ and $N_{\ge n} \to (N/N')_{\ge n}$ are isomorphisms, so $f_{\ge n}$ can be regarded as a map $M_{\ge n} \to N_{\ge n}$. Of course, we can then compose with the inclusion $N_{\ge n} \to N$, and thus regard $f$ as an element of $\varinjlim \Hom_A(M_{\ge n}, N)$. This achieves the required lifting.

We now treat the $i=1$ case. Let
\begin{displaymath}
0 \to T(N) \to L \stackrel{g}{\to} T(M) \to 0
\end{displaymath}
be an extension in $\Mod_K$. Lift $g$ to a map $\wt{g} \colon \wt{L} \to M$ in $\Mod_A$, and let $K=\ker(\wt{g})$, so that we have an exact sequence
\begin{equation}
\label{eq:extsurj}
0 \to K \to \wt{L} \to M
\end{equation}
Since $T$ is exact, we have an isomorphism $T(K) \to T(N)$. Lift this isomorphism to a map $K_0 \to N/N_0$, where $K_0 \subset K$ has finite colength and $N_0 \subset N$ has finite length. Now, for $n$ sufficiently large, each of the maps
\begin{displaymath}
N_{\ge n} \to (N/N_0)_{\ge n} \leftarrow (K_0)_{\ge n} \to K_{\ge n}
\end{displaymath}
is an isomorphism, and the map $\wt{L}_{\ge n} \to M_{\ge n}$ is surjective. Applying $(-)_{\ge n}$ to \eqref{eq:extsurj} and using these facts, we obtain an extension
\begin{displaymath}
0 \to N_{\ge n} \to \wt{L}_{\ge n} \to M_{\ge n} \to 0
\end{displaymath}
lifting the given extension. Pushing out along the map $N_{\ge n} \to N$, we obtain a class in $\Ext^1_A(M_{\ge n}, N)$ lifting the given class.
\end{proof}

\subsection{Simple and injective objects} \label{ss:simp-inj}

Let $\lambda$ be a partition and let $D \ge \lambda_1$ be an integer.  The module $A \otimes \bS_\lambda$ then has a unique subspace 
\[
L^{\ge D}_{\lambda} = \bigoplus_{d \ge D} \bS_{(d,\lambda)},
\]
which is easily seen to be an $A$-submodule. Proposition~\ref{prop:pierisubmod} implies that $L^{\ge D}_\lambda$ is generated as an $A$-module by $\bS_{(D, \lambda)}$. We will also use $L_\lambda^0$ to denote $L_\lambda^{\ge \lambda_1}$. We define $L_{\lambda}=T(L_{\lambda}^0)$ and $Q_{\lambda}=T(A \otimes \bS_{\lambda})$.

\begin{proposition}
\label{T-simple}
Let $M$ be an $A$-module. Then $T(M)$ is a simple object of $\Mod_K$ if and only if for every $A$-submodule $N$ of $M$, either $N$ or $M/N$ is finite length.
\end{proposition}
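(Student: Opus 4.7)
The plan is to exploit two facts about the Serre quotient functor $T : \Mod_A \to \Mod_K$: exactness of $T$, and the identification of subobjects of $T(M)$ in $\Mod_K$ with $A$-submodules of $M$ modulo finite-length error. Combined with the tautology that $T(N) = 0$ if and only if $N$ has finite length, the argument is essentially bookkeeping.

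For the forward direction, I would assume $T(M)$ is simple, so $T(M) \neq 0$ and in particular $M$ is not finite length. Given any $A$-submodule $N \subseteq M$, exactness of $T$ applied to $0 \to N \to M \to M/N \to 0$ yields $0 \to T(N) \to T(M) \to T(M/N) \to 0$ in $\Mod_K$. Simplicity of $T(M)$ forces either $T(N) = 0$ (so $N$ has finite length) or $T(N) = T(M)$, in which case $T(M/N) = 0$ and $M/N$ has finite length.

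For the converse, assuming the condition and excluding the trivial case where $M$ itself has finite length (in which case $T(M) = 0$ is not simple and the hypothesis is vacuous), I would show that every subobject $X \subseteq T(M)$ in $\Mod_K$ is either $0$ or $T(M)$. The essential step is realizing $X$ as $T(N)$ for some submodule $N \subseteq M$: write $X = T(Y)$, and following the description of morphisms in $\Mod_K$ from \S\ref{ss:modK}, represent the inclusion $X \hookrightarrow T(M)$ by an honest $A$-module map $f : Y' \to M/M'$ with $Y' \subseteq Y$ of finite colength and $M' \subseteq M$ of finite length. That $f$ is monic in $\Mod_K$ forces $\ker f$ to have finite length, so $\im(f) \subseteq M/M'$ pulls back to a submodule $N \subseteq M$ containing $M'$ with $T(N) \cong X$ as subobjects of $T(M)$. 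By hypothesis $N$ or $M/N$ has finite length, so $X$ is $0$ or $T(M)$, proving simplicity.

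The main step requiring care is this lifting of subobjects of $T(M)$ to honest submodules of $M$: it amounts to unpacking the colimit defining $\Hom_{\Mod_K}$ to replace a morphism-up-to-finite-length with a genuine submodule inclusion. Once that is in hand, everything else follows mechanically from the exactness of $T$ and the characterization of the zero object of $\Mod_K$.
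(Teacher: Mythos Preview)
Your proof is correct and follows essentially the same approach as the paper's: the forward direction is identical, and for the converse both you and the paper represent a nonzero subobject of $T(M)$ by a map $N' \to M/M'$ and apply the hypothesis to (the preimage in $M$ of) its image. You make the pullback to an honest submodule of $M$ explicit where the paper leaves it implicit, and you correctly flag the degenerate case $T(M)=0$ (where the hypothesis is trivially satisfied rather than vacuous), which the paper's statement and proof gloss over.
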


\begin{proof}
Suppose that $T(M)$ is simple, and let $N$ be an $A$-submodule of $M$. Then $T(N)$ is either 0 or $M$. In the first case, $N$ has finite length. In the second, $T(M/N)=0$, and so $M/N$ has finite length. We now prove the converse. Let $T(N)$ be a non-zero subobject of $T(M)$. The inclusion $T(N) \to T(M)$ is represented by a map $N' \to M/M'$ in $\Mod_A$ where $N/N'$ and $M'$ have finite length. If the image of $N'$ in $M/M'$ had finite length, then the image of $T(N)$ in $T(M)$ would be zero. Thus this is not the case, and so, by assumption, the cokernel of the map $N' \to M/M'$ has finite length. But this means that the inclusion $T(N) \to T(M)$ is surjective, which proves that $T(M)$ is simple.
\end{proof}

\begin{proposition}
\label{Llambda-sub}
The non-zero $A$-submodules of $L_{\lambda}^{\ge D}$ are the modules $L_{\lambda}^{\ge d}$ with $D \le d$.
\end{proposition}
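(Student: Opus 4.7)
The plan is to exploit the fact that $L_\lambda^{\ge D}$ is multiplicity-free as an object of $\cV$, combined with Proposition~\ref{prop:pierisubmod}, to pin down its submodule structure completely.

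First I would show the easy direction: each $L_\lambda^{\ge d}$ with $d \ge D$ is indeed a non-zero $A$-submodule of $L_\lambda^{\ge D}$. This follows because $L_\lambda^{\ge d}$ is generated as an $A$-module by $\bS_{(d,\lambda)}$ (it is just $L_\lambda^0$ with $\lambda$ replaced by $(d,\lambda)$, viewed inside $A \otimes \bS_\lambda$), and this is contained in $L_\lambda^{\ge D}$ when $d \ge D$.

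For the converse, let $N$ be a non-zero $A$-submodule of $L_\lambda^{\ge D}$. The key observation is that, as an object of $\cV$,
\[
L_\lambda^{\ge D} = \bigoplus_{d \ge D} \bS_{(d,\lambda)},
\]
and the partitions $(d,\lambda)$ are pairwise distinct as $d$ varies; hence $L_\lambda^{\ge D}$ is multiplicity-free. Since $\cV$ is semisimple, the subobject $N$ must be a direct sum of some subset of these simples, i.e.,
\[
N = \bigoplus_{d \in S} \bS_{(d,\lambda)}
\]
for some non-empty $S \subseteq \{D, D+1, \ldots\}$. Let $d_0 = \min(S)$. Then $\bS_{(d_0,\lambda)} \subseteq N$, and Proposition~\ref{prop:pierisubmod}, applied inside $A \otimes \bS_\lambda$, implies that the $A$-submodule generated by $\bS_{(d_0,\lambda)}$ contains every $\bS_\nu \subseteq A \otimes \bS_\lambda$ with $(d_0,\lambda) \subseteq \nu$. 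In particular it contains $\bS_{(e,\lambda)}$ for every $e \ge d_0$, so $L_\lambda^{\ge d_0} \subseteq N$. Conversely, $N \subseteq L_\lambda^{\ge d_0}$ by definition of $d_0$, and thus $N = L_\lambda^{\ge d_0}$.

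There is no substantive obstacle here; the only point that needs a moment's attention is that the $\GL_\infty$-subrepresentations of a multiplicity-free object are exactly the sums of its isotypic pieces, so any $A$-submodule has the simple combinatorial form used above.
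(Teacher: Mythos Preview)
Your proof is correct and follows essentially the same approach as the paper: both pick the minimal $d$ with $\bS_{(d,\lambda)} \subseteq N$, observe that $N \subseteq L_\lambda^{\ge d}$ by minimality, and then invoke Proposition~\ref{prop:pierisubmod} to get the reverse inclusion. Your version is slightly more explicit about why $N$ decomposes as a sum of the $\bS_{(d,\lambda)}$'s (semisimplicity of $\cV$ plus multiplicity-freeness), which the paper leaves implicit.
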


\begin{proof}
Let $M$ be a non-zero $A$-submodule of $L_{\lambda}^{\ge D}$. Then $M$ contains one of the spaces $\bS_{(d,\lambda)}$ with $d \ge D$; choose $d$ to be minimal. Then obviously $M \subset L_{\lambda}^{\ge d}$. Proposition~\ref{prop:pierisubmod} shows that this containment is an equality.
\end{proof}

\begin{corollary}
The object $L_{\lambda}$ is a simple object of $\Mod_K$. The natural map $T(L_{\lambda}^{\ge D}) \to T(L_{\lambda})$ is an isomorphism, for any $D \ge \lambda_1$.
\end{corollary}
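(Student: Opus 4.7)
The plan is to deduce both assertions directly from Propositions~\ref{T-simple} and~\ref{Llambda-sub}, since $L_\lambda$ is defined as $T(L_\lambda^0)$ with $L_\lambda^0=L_\lambda^{\ge\lambda_1}$.

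For the first assertion, I would apply Proposition~\ref{T-simple} with $M=L_\lambda^0$. Let $N$ be an arbitrary $A$-submodule of $L_\lambda^0$. If $N=0$, then $N$ has finite length. Otherwise, Proposition~\ref{Llambda-sub} gives $N=L_\lambda^{\ge d}$ for some $d\ge\lambda_1$, in which case the quotient $L_\lambda^0/N$ is the direct sum of the finitely many isotypic pieces $\bS_{(e,\lambda)}$ with $\lambda_1\le e<d$, hence has finite length as an object of $\cV$, and therefore also as an $A$-module. Either way, one of $N$ or $L_\lambda^0/N$ has finite length, so $L_\lambda=T(L_\lambda^0)$ is simple in $\Mod_K$.

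For the second assertion, the hypothesis $D\ge\lambda_1$ ensures that $(d,\lambda)$ is a partition for every $d\ge D$, so $L_\lambda^{\ge D}$ is a well-defined $A$-submodule of $L_\lambda^0$. The inclusion $\iota\colon L_\lambda^{\ge D}\hookrightarrow L_\lambda^0$ has zero kernel and cokernel equal to $\bigoplus_{\lambda_1\le d<D}\bS_{(d,\lambda)}$, which is a finite direct sum of simple objects of $\cV$ and hence has finite length in $\Mod_A$. Since $T$ kills finite length modules and is exact, $T(\iota)$ is an isomorphism $T(L_\lambda^{\ge D})\xrightarrow{\sim} T(L_\lambda^0)=L_\lambda$.

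There is no real obstacle here: the entire content is repackaging what Propositions~\ref{T-simple} and~\ref{Llambda-sub} already supply. The only point that requires a moment's care is verifying that $(d,\lambda)$ is indeed a partition for $d\ge D$ whenever $D\ge\lambda_1$, which is why the bound on $D$ appears.
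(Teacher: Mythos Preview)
Your proof is correct and follows essentially the same approach as the paper's: both use Proposition~\ref{Llambda-sub} to identify the submodules of $L_\lambda^0$, observe that the quotients $L_\lambda^{\ge D}/L_\lambda^{\ge D'}$ are finite length, and then invoke Proposition~\ref{T-simple}. Your write-up is simply more explicit about how Proposition~\ref{T-simple} is being applied.
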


\begin{proof}
For $D'\ge D$, the quotient $L_{\lambda}^{\ge D}/L_{\lambda}^{\ge D'}$ is $\bigoplus_{D \le d <D'} \bS_{(d,\lambda)}$, and therefore of finite length. It follows that the inclusion $L_{\lambda}^{\ge D'} \subset L_{\lambda}^{\ge D}$ is an isomorphism in $\Mod_K$. Simplicity of this object follows from Proposition~\ref{T-simple}.
\end{proof}

\begin{proposition} \label{prop:filtration}
There is a filtration
\[
0 = F_{-1} \subset F_0 \subset \cdots \subset F_{\lambda_1} = A \otimes \bS_\lambda
\] 
where each $F_i$ is an $A$-submodule of $A \otimes \bS_\lambda$, such that for all $i \ge 0$ we have an isomorphism of $A$-modules
\begin{align*}
F_i / F_{i-1} \cong \bigoplus_{\mu,\ \lambda / \mu \in \HS_i} L^{\ge \lambda_1}_\mu.
\end{align*}
\end{proposition}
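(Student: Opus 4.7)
My plan is to exhibit the filtration explicitly via the Pieri decomposition $A \otimes \bS_\lambda = \bigoplus_{\nu/\lambda \in \HS} \bS_\nu$, stratified by a natural combinatorial invariant. The key observation is that for any $\nu$ with $\nu/\lambda \in \HS$, the truncated partition $\mu(\nu) := (\nu_2, \nu_3, \ldots)$ satisfies $\lambda/\mu(\nu) \in \HS$: the horizontal strip condition $\nu_i \ge \lambda_i \ge \nu_{i+1}$ gives $\mu(\nu)_i \le \lambda_i$, while $\nu \supseteq \lambda$ gives $\mu(\nu)_i \ge \lambda_{i+1}$. Setting $i(\nu) := |\lambda/\mu(\nu)|$, this invariant takes values in $\{0, 1, \ldots, \lambda_1\}$, and the fiber of $\nu \mapsto \mu(\nu)$ over each $\mu$ with $\lambda/\mu \in \HS_i$ is exactly $\{(d,\mu) : d \ge \lambda_1\}$.

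I would then define
\[
F_i = \bigoplus_{\substack{\nu/\lambda \in \HS \\ i(\nu) \le i}} \bS_\nu \subseteq A \otimes \bS_\lambda
\]
and verify that $F_i$ is an $A$-submodule using Proposition~\ref{prop:pierisubmod}: the $A$-module generated by $\bS_\nu$ picks up exactly the summands $\bS_{\nu'}$ with $\nu \subseteq \nu'$ and $\nu'/\lambda \in \HS$, and the containment $\nu \subseteq \nu'$ forces $\mu(\nu) \subseteq \mu(\nu') \subseteq \lambda$, hence $i(\nu') \le i(\nu)$. The equality $F_{\lambda_1} = A \otimes \bS_\lambda$ follows from the universal upper bound $|\lambda/\mu''| \le \lambda_1$ for any $\mu''$ with $\lambda/\mu'' \in \HS$.

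At the level of $\cV$-objects, the fiber description of Step~1 immediately identifies $F_i/F_{i-1}$ with $\bigoplus_{\lambda/\mu \in \HS_i} L^{\ge \lambda_1}_\mu$. To promote this to an isomorphism of $A$-modules, for each $\mu$ with $\lambda/\mu \in \HS_i$ I would fix the unique copy of $\bS_{(\lambda_1,\mu)}$ inside $A \otimes \bS_\lambda$ (which lies in $F_i \setminus F_{i-1}$) and the unique copy inside $A \otimes \bS_\mu$. Freeness of $A \otimes \bS_{(\lambda_1,\mu)}$ extends each embedding to a surjective $A$-module map onto, respectively, the $\mu$-component of $F_i/F_{i-1}$ and $L^{\ge \lambda_1}_\mu$. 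A second application of Proposition~\ref{prop:pierisubmod} in both ambient modules, combined with the combinatorial analysis above, shows that each of these two surjections kills exactly the constituents $\bS_{\nu''}$ of $A \otimes \bS_{(\lambda_1,\mu)}$ for which $\nu''$ is not of the form $(d,\mu)$. Since $A \otimes \bS_{(\lambda_1,\mu)}$ is multiplicity-free as a $\cV$-object, any $A$-submodule is determined by the list of simple constituents it contains, so the two kernels coincide; this identifies the two quotients and, summing over $\mu$, produces the claimed isomorphism.

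The main obstacle is this last step: we need an isomorphism of $A$-modules rather than merely of $\cV$-objects. The argument hinges on the multiplicity-freeness of the free module $A \otimes \bS_{(\lambda_1,\mu)}$ (itself a manifestation of Pieri's rule), which forces any two $A$-module surjections out of this module whose kernels share the same list of simple constituents to have identical kernels, and hence canonically isomorphic targets.
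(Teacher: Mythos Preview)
Your proof is correct and follows essentially the same route as the paper's. Your invariant $i(\nu)=|\lambda/\mu(\nu)|$ is literally equal to the paper's $|\lambda|-|\nu|+\nu_1$, so the filtration is identical; the only difference is that the paper dispatches the final $A$-module identification $F_\mu \cong L_\mu^{\ge \lambda_1}$ in one line by appealing to Proposition~\ref{prop:pierisubmod}, whereas you spell out the kernel comparison through the common cover $A\otimes\bS_{(\lambda_1,\mu)}$, which is a valid (and somewhat more explicit) way to unpack that same citation.
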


\begin{proof}
First, the space $A \otimes \bS_\lambda$ is multiplicity-free as a representation of $\GL_\infty$ by Pieri's formula. Let $F_i$ be the subobject (in $\cV$) of $A \otimes \bS_\lambda$ which is the sum of all $\bS_\nu$ such that $|\lambda| - |\nu| + \nu_1 \le i$. From Pieri's formula, $F_i$ is an $A$-submodule for all $i$, and $F_{\lambda_1} = A \otimes \bS_\lambda$, and $F_0 = L_\lambda^0$.

We have an isomorphism $F_i / F_{i-1} \cong \bigoplus_\mu F_\mu$ in $\cV$, where the sum is over all $\mu$ with $\lambda / \mu \in \HS_i$, and where $F_\mu$ is the sum of all $\bS_\nu$ with $|\lambda| - |\nu| + \nu_1 = i$ and $(\nu_2, \nu_3, \dots) = \mu$. It follows from Pieri's formula that each $F_\mu$ is an $A$-submodule of $F_i / F_{i-1}$, and that $F_\mu \cong L_\mu^{\ge \lambda_1}$ as objects of $\cV$. Finally, Proposition~\ref{prop:pierisubmod} implies that $F_\mu \cong L_\mu^{\ge \lambda_1}$ as $A$-modules.
\end{proof}

\begin{corollary}
\label{Qlambda-gr}
The object $Q_\lambda$ has finite length, and its simple constituents are those $L_{\mu}$ such that $\lambda/\mu \in \HS$.
\end{corollary}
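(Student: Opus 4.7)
The plan is to apply the localization functor $T$ to the filtration constructed in Proposition~\ref{prop:filtration} and read off the consequences.

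First, I would recall that $T$ is exact (it is the quotient functor onto a Serre quotient). Applying $T$ termwise to the filtration
\[
0 = F_{-1} \subset F_0 \subset \cdots \subset F_{\lambda_1} = A \otimes \bS_\lambda
\]
of Proposition~\ref{prop:filtration} yields a filtration
\[
0 = T(F_{-1}) \subset T(F_0) \subset \cdots \subset T(F_{\lambda_1}) = Q_\lambda
\]
in $\Mod_K$, with subquotients
\[
T(F_i)/T(F_{i-1}) \cong T(F_i/F_{i-1}) \cong \bigoplus_{\lambda/\mu \in \HS_i} T(L_\mu^{\ge \lambda_1}).
\]

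Next I would observe that if $\lambda/\mu \in \HS_i$, then $\mu \subseteq \lambda$, so in particular $\mu_1 \le \lambda_1$. Thus the hypothesis of the preceding corollary (that $T(L_\mu^{\ge D}) \to T(L_\mu) = L_\mu$ is an isomorphism whenever $D \ge \mu_1$) applies with $D = \lambda_1$, giving $T(L_\mu^{\ge \lambda_1}) \cong L_\mu$. Substituting this into the subquotient formula, the $i$-th subquotient of the filtration on $Q_\lambda$ is $\bigoplus_{\lambda/\mu \in \HS_i} L_\mu$, which is a finite direct sum of simple objects in $\Mod_K$.

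Since the filtration has finite length (there are only $\lambda_1+1$ steps) and each subquotient is semisimple of finite length, $Q_\lambda$ has finite length. Its composition factors are exactly the $L_\mu$ appearing in some $\bigoplus_{\lambda/\mu \in \HS_i} L_\mu$, i.e., those $L_\mu$ with $\lambda/\mu \in \HS$. There is no real obstacle here: the entire argument is a direct application of Proposition~\ref{prop:filtration} combined with the exactness of $T$ and the identification $T(L_\mu^{\ge D}) \cong L_\mu$ from the preceding corollary; the only minor point to verify is that the horizontal strip condition $\lambda/\mu \in \HS$ automatically supplies the inequality $\mu_1 \le \lambda_1$ needed to invoke that corollary.
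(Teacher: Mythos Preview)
Your proof is correct and is exactly the argument the paper intends: the corollary is stated immediately after Proposition~\ref{prop:filtration} with no separate proof, and deducing it amounts precisely to applying the exact functor $T$ to that filtration and invoking the preceding corollary that $T(L_\mu^{\ge D}) \cong L_\mu$ for $D \ge \mu_1$.
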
 

\begin{corollary} \label{cor:modKfinitelength}
Every object of $\Mod_K$ has finite length.
\end{corollary}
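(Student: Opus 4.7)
The plan is straightforward and follows almost directly from the immediately preceding results. Every object of $\Mod_K$ is, by the definition of a Serre quotient, of the form $T(M)$ for some finitely generated $A$-module $M$. So I need only argue that $T(M)$ has finite length for every such $M$.

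First I would choose a presentation of $M$. Since $M$ is finitely generated, there is a finite length object $V$ of $\cV$ and a surjection $A \otimes V \twoheadrightarrow M$ in $\Mod_A$. Decomposing $V$ into Schur functors gives a surjection
\[
\bigoplus_{i=1}^{n} A \otimes \bS_{\lambda_i} \twoheadrightarrow M
\]
for some partitions $\lambda_1, \dots, \lambda_n$. Applying the exact functor $T$ yields a surjection $\bigoplus_{i=1}^n Q_{\lambda_i} \twoheadrightarrow T(M)$ in $\Mod_K$.

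By Corollary~\ref{Qlambda-gr}, each $Q_{\lambda_i}$ has finite length, so the direct sum on the left is a finite length object of $\Mod_K$. Since quotients of finite length objects in an abelian category have finite length, $T(M)$ has finite length as well. There is no serious obstacle: all the work has been done in Proposition~\ref{prop:filtration} and its corollary, which exhibit the explicit finite filtration of $A \otimes \bS_\lambda$ whose associated graded pieces become (in $\Mod_K$) direct sums of the simple objects $L_\mu$.
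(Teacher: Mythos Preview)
Your proof is correct and follows essentially the same approach as the paper: both argue that every object of $\Mod_K$ is a quotient of some $T(A \otimes V)$ with $V$ finite length in $\cV$, and then invoke Corollary~\ref{Qlambda-gr} to conclude. You simply spell out the decomposition of $V$ into Schur functors a bit more explicitly than the paper does.
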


\begin{proof}
Every object of $\Mod_A$ is a quotient of an object of the form $A \otimes V$ where $V$ is a finite length object of $\cV$. It follows that every object of $\Mod_K$ is a quotient of $T(A \otimes V)$ for some such $V$, and these have finite length by the previous corollary.
\end{proof}

\begin{corollary} \label{cor:allsimples}
Every simple object of $\Mod_K$ is isomorphic to $L_{\lambda}$ for some $\lambda$.
\end{corollary}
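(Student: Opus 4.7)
The plan is to reduce the simplicity question to the structure theorem for $Q_\lambda$ already established in Corollary~\ref{Qlambda-gr}. Let $S$ be a simple object of $\Mod_K$. Since $S$ is in particular an object of $\Mod_A$ modulo the identification of $\Mod_K$'s objects with $\Mod_A$'s, and every object of $\Mod_A$ is by definition finitely generated, $S$ is a quotient (in $\Mod_A$, hence also in $\Mod_K$) of some module of the form $A \otimes V$ with $V$ a finite length object of $\cV$. Applying the localization functor $T$, we get a surjection $T(A \otimes V) \to S$ in $\Mod_K$.

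Next I would decompose $V$ into simples of $\cV$: write $V = \bigoplus_{i} \bS_{\lambda^{(i)}}$ (a finite direct sum, since $V$ has finite length). Because $T$ and tensor with $A$ both commute with finite direct sums, this gives $T(A \otimes V) = \bigoplus_i Q_{\lambda^{(i)}}$. Since $S$ is simple and is a quotient of this finite direct sum, the composition $Q_{\lambda^{(i)}} \hookrightarrow \bigoplus_i Q_{\lambda^{(i)}} \to S$ must be nonzero for at least one index $i$; the image is then a nonzero subobject of the simple $S$, hence equals $S$. Thus $S$ is a quotient of a single $Q_{\lambda}$ with $\lambda = \lambda^{(i)}$.

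Finally, I would invoke Corollary~\ref{Qlambda-gr}: $Q_\lambda$ has finite length, with every Jordan--H\"older constituent of the form $L_\mu$ for some $\mu$ with $\lambda/\mu \in \HS$. Any simple quotient of a finite length module appears as a Jordan--H\"older constituent, so $S \cong L_\mu$ for some such $\mu$. There is no real obstacle here; the only subtle point is verifying that a simple quotient of a finite direct sum factors through one summand, which follows immediately from simplicity. The argument relies crucially on Corollary~\ref{cor:modKfinitelength} (ensuring the Jordan--H\"older decomposition of $Q_\lambda$ makes sense and exhausts its simple subquotients) and on Proposition~\ref{prop:filtration}, which is what gave the explicit composition series in the first place.
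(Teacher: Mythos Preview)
Your proof is correct and follows essentially the same approach as the paper: exhibit the simple object as a quotient of some $T(A \otimes V)$ and then invoke Corollary~\ref{Qlambda-gr} to identify its constituents. The only difference is cosmetic: you take the extra step of reducing to a single summand $Q_\lambda$, whereas the paper simply observes that a quotient is a constituent of $T(A \otimes V)=\bigoplus_i Q_{\lambda^{(i)}}$ and that all constituents of this sum are $L_\mu$'s.
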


\begin{proof}
As in the previous proof, any simple object of $\Mod_K$ is a quotient (and thus constituent) of an object of the form $T(A \otimes V)$ with $V \in \cV$ finite length. By Corollary~\ref{Qlambda-gr}, all constituents of $T(A \otimes V)$ are of the form $L_{\lambda}$.
\end{proof}

The {\bf socle} of an object $M \in \Mod_K$, denoted by $\soc(M)$, is its largest semisimple submodule. In particular, every non-zero submodule of $M$ has non-zero intersection with $\soc(M)$.

\begin{proposition} \label{prop:Qsoc}
The socle of $Q_{\lambda}$ is $L_{\lambda}$.
\end{proposition}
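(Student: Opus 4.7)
The plan is to prove $\soc(Q_\lambda)=L_\lambda$ in two stages: (a) exhibit $L_\lambda$ as a submodule of $Q_\lambda$, and (b) show every simple submodule of $Q_\lambda$ is isomorphic to $L_\lambda$. Together with Corollary~\ref{Qlambda-gr}, which shows $L_\lambda$ appears with multiplicity one as a composition factor of $Q_\lambda$, these two facts force $\soc(Q_\lambda)$ to equal $L_\lambda$ exactly.

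Stage (a) is immediate: the filtration of Proposition~\ref{prop:filtration} provides an $A$-submodule inclusion $F_0 = L_\lambda^0 \hookrightarrow A\otimes \bS_\lambda$, and applying the exact functor $T$ yields $L_\lambda \hookrightarrow Q_\lambda$.

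For stage (b), suppose $L_\mu \hookrightarrow Q_\lambda$ is a simple submodule. The plan is to lift this embedding using the $i=0$ case of Proposition~\ref{prop:extsurj} to an $A$-module map $\tilde\phi\colon L_\mu^{\ge n}\to A\otimes \bS_\lambda$ for some sufficiently large $n$. Since $T(\tilde\phi)$ recovers the given mono after the identification $T(L_\mu^{\ge n})=L_\mu$, the kernel of $\tilde\phi$ is torsion; however, Proposition~\ref{Llambda-sub} lists the submodules of $L_\mu^{\ge n}$ as the $L_\mu^{\ge m}$ for $m\ge n$, none of which has finite length, so $\tilde\phi$ is injective. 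The image is then an $A$-submodule of $A\otimes \bS_\lambda$ abstractly isomorphic to $L_\mu^{\ge n}$, hence multiplicity-free as a $\GL_\infty$-representation with constituents exactly the $\bS_{(d,\mu)}$ for $d\ge n$; it is also generated (as an $A$-module) by $\tilde\phi(\bS_{(n,\mu)})$, which by multiplicity-freeness of $A\otimes \bS_\lambda$ coincides with the unique $\bS_{(n,\mu)}$-isotypic piece there. The existence of such a piece forces $(n,\mu)/\lambda\in \HS$, and in particular $n\ge\lambda_1$ and $\mu\subseteq\lambda$.

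Now I would invoke Proposition~\ref{prop:pierisubmod}: the submodule generated by $\bS_{(n,\mu)}$ contains every $\bS_\nu\subset A\otimes \bS_\lambda$ with $(n,\mu)\subseteq \nu$. Taking $\nu=(n,\lambda)$, which indeed satisfies $(n,\lambda)/\lambda\in \HS$ (since $n\ge\lambda_1$) and $(n,\mu)\subseteq (n,\lambda)$ (since $\mu\subseteq \lambda$), the image of $\tilde\phi$ must contain $\bS_{(n,\lambda)}$. Matching this against the known list of constituents $\{\bS_{(d,\mu)}\}_{d\ge n}$ gives $\mu=\lambda$, completing stage (b). The main obstacle is to set up the lift $\tilde\phi$ correctly and to argue it is injective; the remainder is combinatorial bookkeeping with Pieri's rule, with Proposition~\ref{prop:pierisubmod} providing the essential $A$-module input (as opposed to merely $\GL_\infty$-theoretic input) that detects the forbidden $\bS_{(n,\lambda)}$ constituent inside a submodule generated in a smaller shape.
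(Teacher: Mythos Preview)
Your proof is correct and takes essentially the same approach as the paper: lift to $\Mod_A$ and use Proposition~\ref{prop:pierisubmod} to control submodules of $A\otimes\bS_\lambda$. The paper streamlines your stages (a)--(b) into a single step by observing that every subobject of $Q_\lambda$ is $T$ of an $A$-submodule of $A\otimes\bS_\lambda$ (a general property of Serre quotients), and that any nonzero such submodule already contains some $L_\lambda^{\ge D}$ by Proposition~\ref{prop:pierisubmod}; this bypasses both the explicit lifting via Proposition~\ref{prop:extsurj} and the multiplicity-one appeal to Corollary~\ref{Qlambda-gr}.
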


\begin{proof}
It suffices to show that every non-zero submodule of $Q_{\lambda}$ contains $L_{\lambda}$. Every submodule of $Q_{\lambda}$ is the image under $T$ of a submodule of $A \otimes \bS_{\lambda}$. It follows immediately from Proposition~\ref{prop:pierisubmod} that any non-zero submodule of $A \otimes \bS_{\lambda}$ contains $L^{\ge D}_{\lambda}$, for some $D$, which completes the proof.
\end{proof}

\begin{proposition} \label{prop:ext1Q}
$\Ext^1_K(L_\mu, Q_\lambda) = 0$ for all partitions $\lambda, \mu$.
\end{proposition}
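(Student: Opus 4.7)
By Proposition~\ref{prop:extsurj}, any class $\alpha \in \Ext^1_K(L_\mu, Q_\lambda)$ is represented by an $A$-module extension
\[
0 \to A \otimes \bS_\lambda \to \wt E \to L_\mu^{\ge D} \to 0
\]
for some $D \ge \mu_1$. My plan is to exhibit, for some $D' \ge D$, an $A$-linear map $\phi\colon L_\mu^{\ge D'} \to \wt E$ whose composite with $\wt E \twoheadrightarrow L_\mu^{\ge D}$ is the natural inclusion $L_\mu^{\ge D'} \hookrightarrow L_\mu^{\ge D}$; such a $\phi$ induces a section of $T(\wt E) \twoheadrightarrow L_\mu$ in $\Mod_K$, forcing $\alpha = 0$.

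Since $L_\mu^{\ge D'}$ is generated as an $A$-module by $\bS_{(D',\mu)}$ (Proposition~\ref{prop:pierisubmod}), constructing $\phi$ amounts to choosing a $\GL_\infty$-equivariant lift $v \in \wt E$ of the generator of $\bS_{(D',\mu)}$ for which the induced $A$-linear map $f\colon A \otimes \bS_{(D',\mu)} \to \wt E$ factors through the quotient $L_\mu^{\ge D'} = A \otimes \bS_{(D',\mu)} / K$, where $K$ is the kernel of the surjection. Fixing a $\GL_\infty$-equivariant splitting $\wt E \cong (A \otimes \bS_\lambda) \oplus L_\mu^{\ge D}$ as objects of $\cV$ (possible by semisimplicity of $\cV$), the restriction $f|_K$ automatically lands in $A \otimes \bS_\lambda \subset \wt E$, and varying $v$ within its (at most one-dimensional) $\bS_{(D',\mu)}$-isotypic piece of $A \otimes \bS_\lambda$ changes $f|_K$ by the coboundary coming from $\Hom_\cV(\bS_{(D',\mu)}, A \otimes \bS_\lambda) \to \Hom_A(K, A \otimes \bS_\lambda)$. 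Consequently, the obstruction to finding a suitable $v$ is precisely the image of $\alpha$ under the restriction map $\Ext^1_A(L_\mu^{\ge D}, A \otimes \bS_\lambda) \to \Ext^1_A(L_\mu^{\ge D'}, A \otimes \bS_\lambda)$.

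The proposition then reduces to verifying $\varinjlim_{D'} \Ext^1_A(L_\mu^{\ge D'}, A \otimes \bS_\lambda) = 0$. This is to be proved by combining the long exact sequence associated with the short exact sequence
\[
0 \to L_\mu^{\ge D'} \to L_\mu^{\ge D} \to \bigoplus_{D \le d < D'} \bS_{(d,\mu)} \to 0,
\]
whose third term is finite length, with a Koszul-resolution calculation of $\Ext^\bullet_A(\bS_{(d,\mu)}, A \otimes \bS_\lambda)$. The main obstacle is this Koszul calculation: using the dual Pieri decomposition $\bS_{(d,\mu)} \otimes \bigwedge^i \bC^\infty = \bigoplus_{\nu/(d,\mu) \in \VS_i} \bS_\nu$ together with the multiplicity-free decomposition $A \otimes \bS_\lambda = \bigoplus_{\nu/\lambda \in \HS} \bS_\nu$, one must check that the contributions from $\Ext^2_A(\bS_{(d,\mu)}, A \otimes \bS_\lambda)$ with $d \ge D'$ are enough, via the connecting map in the long exact sequence, to annihilate $\alpha$ once $D'$ is taken large enough.
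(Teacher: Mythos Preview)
Your reduction is correct: via Proposition~\ref{prop:extsurj} the problem does come down to showing that a given class in $\Ext^1_A(L_\mu^{\ge D}, A\otimes\bS_\lambda)$ dies after pullback along $L_\mu^{\ge D'}\hookrightarrow L_\mu^{\ge D}$ for some $D'\ge D$. However, the final paragraph does not actually establish this. In the long exact sequence attached to $0\to L_\mu^{\ge D'}\to L_\mu^{\ge D}\to T\to 0$ (with $T=\bigoplus_{D\le d<D'}\bS_{(d,\mu)}$), the connecting map runs $\Ext^1_A(L_\mu^{\ge D'},A\otimes\bS_\lambda)\to\Ext^2_A(T,A\otimes\bS_\lambda)$, so $\Ext^2$ controls the \emph{cokernel} of restriction, not whether a fixed class restricts to zero; for the latter you need the class to lie in the image of $\Ext^1_A(T,A\otimes\bS_\lambda)$. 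Your sentence also has the indices backwards (the simples appearing in $T$ have $D\le d<D'$, not $d\ge D'$). The ``Koszul calculation'' you allude to is never performed, and carrying it out amounts to proving $\Ext^1_A(\bS_\nu,A\otimes\bS_\lambda)=0$ for all $\nu$, i.e.\ that $A\otimes\bS_\lambda$ is saturated---which is exactly Proposition~\ref{prop:freesat}, proved later in the paper by a separate direct argument. So as written the argument has a genuine gap at the crucial step.

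The paper's proof avoids all of this homological bookkeeping. After lifting to $0\to A\otimes\bS_\lambda\to\wt M\to L_\mu^{\ge D}\to 0$ with $D\ge\max(\lambda_1,\mu_1)$, it argues directly with multiplicities: either some copy of $\bS_{(D,\mu)}\subset\wt M$ generates an $A$-submodule meeting $L_\lambda^0$, which forces $\lambda/\mu\in\HS$ by Pieri, or the sequence already splits. In the former case $\bS_{(D,\mu)}$ has multiplicity two in $\wt M$ while $\bS_{(D,\lambda)}\subset L_\lambda^0$ has multiplicity one, so one of the two copies of $\bS_{(D,\mu)}$ fails to generate $\bS_{(D,\lambda)}$; using that $L_\lambda=\soc(Q_\lambda)$ (Proposition~\ref{prop:Qsoc}), the $A$-submodule it generates is then disjoint from $A\otimes\bS_\lambda$ modulo torsion, and its image in $L_\mu^{\ge D}$ furnishes the splitting in $\Mod_K$. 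This is a concrete one-step argument rather than an Ext computation, and it does not rely on any saturation statement.
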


\begin{proof}
Suppose that we have a short exact sequence $0 \to Q_\lambda \to M \to L_\mu \to 0$ in $\Mod_K$. By Proposition~\ref{prop:extsurj}, this lifts to an extension $0 \to A \otimes \bS_{\lambda} \to \wt{M} \to L_{\mu}^{\ge D} \to 0$ in $\Mod_A$, for some $D$; we can assume $D \ge \max(\lambda_1, \mu_1)$. Let $N \subset \tilde{M}$ be a subobject in the category $\cV$ such that $N$ maps isomorphically to $\bS_{(D,\mu)}$ under the map $\tilde{M} \to L_\mu^{\ge D}$. By Proposition~\ref{prop:Qsoc}, either the smallest $A$-submodule of $\tilde{M}$ containing $N$ has non-zero intersection with $L_{\lambda}^0$, or our original sequence splits. So assume the first case happens. In particular, $A \otimes \bS_{(D,\mu)}$ contains $\bS_{(D',\lambda)}$ for some $D'$, so by Pieri's formula, we conclude that $\lambda / \mu$ is a horizontal strip.

Consider the multiplication map $A_{|\lambda|-|\mu|} \otimes \tilde{M}_{(D,\mu)} \to \tilde{M}$ where $\tilde{M}_\nu$ is the $\bS_\nu$-isotypic component of $\tilde{M}$ in the category $\cV$. Since $\tilde{M}_{(D,\mu)}$ contains $\bS_{(D,\mu)}$ with multiplicity $2$, and since $L_{\lambda}^0$ contains $\bS_{(D,\lambda)}$ with multiplicity $1$, there is a copy of $\bS_{(D,\mu)} \subset \tilde{M}$ which does not generate $\bS_{(D,\lambda)}$ under $A$. In particular, it does not generate $\bS_{(D',\lambda)}$ under $A$ for any $D' \ge D$. By Proposition~\ref{prop:Qsoc}, this implies that the $A$-submodule generated by this copy of $\bS_{(D,\mu)}$ has zero intersection with $A \otimes \bS_\lambda$ (to be precise, the image of these two submodules under $T$ have zero intersection, which implies that they intersect in a torsion module; then use that $\tilde{M}$ is torsion-free), so it maps injectively to $L_\mu^{\ge D}$ under the surjection $\tilde{M} \to L_\mu^{\ge D}$ and implies that the original sequence $0 \to Q_\lambda \to M \to L_\mu \to 0$ is split.
\end{proof}

\begin{proposition} \label{prop:modKinj}
$Q_\lambda$ is an injective object in $\Mod_K$.
\end{proposition}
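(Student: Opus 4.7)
The plan is to reduce injectivity of $Q_\lambda$ to the vanishing statement already established in Proposition~\ref{prop:ext1Q}. Recall that $Q_\lambda$ is injective in $\Mod_K$ if and only if $\Ext^1_K(N, Q_\lambda) = 0$ for every object $N$ of $\Mod_K$. Since by Corollary~\ref{cor:modKfinitelength} every object of $\Mod_K$ has finite length, and by Corollary~\ref{cor:allsimples} every simple object is of the form $L_\mu$ for some partition $\mu$, it is enough to prove vanishing against every $L_\mu$.

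I would argue this reduction by induction on the length of $N$. The base case $N = L_\mu$ is exactly Proposition~\ref{prop:ext1Q}. For the inductive step, pick a simple subobject $S \subseteq N$ (which exists because $N$ has finite length) and consider the short exact sequence $0 \to S \to N \to N/S \to 0$, where $N/S$ has strictly smaller length. Applying $\Hom_K(-, Q_\lambda)$ gives a long exact sequence with fragment
\[
\Ext^1_K(N/S, Q_\lambda) \to \Ext^1_K(N, Q_\lambda) \to \Ext^1_K(S, Q_\lambda).
\]
The outer terms vanish by the inductive hypothesis and Proposition~\ref{prop:ext1Q} respectively, so the middle term vanishes as well.

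Thus $\Ext^1_K(N, Q_\lambda) = 0$ for all $N \in \Mod_K$, which is exactly the statement that $Q_\lambda$ is injective in $\Mod_K$. There is no real obstacle here given that Proposition~\ref{prop:ext1Q} has already done the substantive work of analyzing extensions by simples; the only thing to be careful about is invoking the finite-length hypothesis (Corollary~\ref{cor:modKfinitelength}) to justify the induction, rather than trying to check injectivity directly against an arbitrary lift to $\Mod_A$.
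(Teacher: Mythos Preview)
Your proof is correct and is essentially the same as the paper's: both reduce to Proposition~\ref{prop:ext1Q} via the finite-length property (Corollary~\ref{cor:modKfinitelength}) and the classification of simples (Corollary~\ref{cor:allsimples}), with the paper simply stating ``finite filtration by $L_\mu$'s'' where you spell out the induction on length and the long exact sequence explicitly.
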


\begin{proof}
Every object of $\Mod_K$ has a finite filtration by objects of the form $L_\mu$ by Corollary~\ref{cor:modKfinitelength} and Corollary~\ref{cor:allsimples}. So by Proposition~\ref{prop:ext1Q}, $\Ext^1_K(M, Q_\lambda) = 0$ for all $M \in \Mod_K$, which implies that $Q_\lambda$ is injective.
\end{proof}

\begin{proposition} \label{HomLQ}
We have $\dim\Hom(L_{\lambda}, Q_{\mu})=\delta_{\lambda,\mu}$.
\end{proposition}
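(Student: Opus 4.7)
The plan is to reduce to computing $\Hom_K(L_\lambda, L_\mu)$ using the socle of $Q_\mu$, and then to lift this to an explicit $A$-module Hom computation via Proposition~\ref{prop:extsurj}.

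First, since $L_\lambda$ is simple, any nonzero morphism $\phi \colon L_\lambda \to Q_\mu$ is injective, and its image is a simple subobject of $Q_\mu$, hence contained in $\soc(Q_\mu)$. By Proposition~\ref{prop:Qsoc}, $\soc(Q_\mu) = L_\mu$, which is itself simple, so $\im(\phi) = L_\mu$ and $\phi$ factors uniquely through the inclusion $L_\mu \hookrightarrow Q_\mu$. This gives a canonical identification $\Hom_K(L_\lambda, Q_\mu) = \Hom_K(L_\lambda, L_\mu)$, reducing the problem to showing $\dim \Hom_K(L_\lambda, L_\mu) = \delta_{\lambda,\mu}$.

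Next, Proposition~\ref{prop:extsurj} says that $\Hom_K(L_\lambda, L_\mu) = \Hom_K(T(L_\lambda^0), T(L_\mu^0))$ is a quotient of the direct limit $\varinjlim_{d \ge \lambda_1} \Hom_A(L_\lambda^{\ge d}, L_\mu^0)$, where by Proposition~\ref{Llambda-sub} the indexing set exhausts the (necessarily finite-colength) nonzero submodules of $L_\lambda^0$. Since $L_\lambda^{\ge d}$ is generated as an $A$-module by its unique copy of $\bS_{(d,\lambda)}$, any such map is determined by the image of this generator, which must land in the $\bS_{(d,\lambda)}$-isotypic component of $L_\mu^0 = \bigoplus_{e \ge \mu_1} \bS_{(e,\mu)}$. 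This decomposition is multiplicity-free and contains a copy of $\bS_{(d,\lambda)}$ precisely when $\lambda=\mu$. Hence $\Hom_A(L_\lambda^{\ge d}, L_\mu^0)$ is zero when $\lambda \ne \mu$, and one-dimensional (spanned by the inclusion) when $\lambda = \mu$; the transition maps in the directed system are identities, so the colimit is one-dimensional exactly when $\lambda = \mu$.

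This proves $\dim \Hom_K(L_\lambda, L_\mu) \le \delta_{\lambda,\mu}$. When $\lambda = \mu$ the identity on $L_\lambda$ is a nonzero element of $\Hom_K(L_\lambda, L_\lambda)$ (since $L_\lambda \ne 0$ in $\Mod_K$), supplying the reverse inequality. I do not anticipate any serious obstacle: the whole argument is a two-step reduction resting on previously established structural results (Propositions~\ref{prop:Qsoc},~\ref{Llambda-sub}, and~\ref{prop:extsurj}), together with a multiplicity check from Pieri's formula. The only point requiring care is orienting the colimit in Proposition~\ref{prop:extsurj} correctly (it runs over shrinking submodules, so increasing $d$), after which the calculation is immediate.
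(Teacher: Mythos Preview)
Your proof is correct and follows essentially the same approach as the paper: both lift the $\Mod_K$ computation back to $\Mod_A$ via the classification of submodules of $L_\lambda^0$ (Proposition~\ref{Llambda-sub}) and then finish with a multiplicity count. The only organizational difference is that you first reduce to $\Hom_K(L_\lambda,L_\mu)$ using the socle and then lift to maps into $L_\mu^0$, whereas the paper lifts the map $L_\lambda \to Q_\mu$ directly to a map $L_\lambda^{\ge D} \to A\otimes\bS_\mu$ and argues it must be injective; your route is slightly cleaner since $L_\mu^0$ is already multiplicity-free in the relevant isotypic piece.
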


\begin{proof}
First suppose $\lambda=\mu$. The inclusion $L^0_{\lambda} \to A \otimes \bS_{\lambda}$ induces a non-zero map in $\Mod_K$, and so $\Hom(L_{\lambda}, Q_{\lambda}) \ne 0$. That the dimension is at most $1$ follows formally from the fact that $L_{\lambda}$ has multiplicity $1$ in $Q_{\lambda}$.

Now suppose $\lambda \ne \mu$, and we have a non-zero map $L_{\lambda} \to Q_{\mu}$. Since every $A$-submodule of $L^0_{\lambda}$ is of the form $L^{\ge D}_{\lambda}$ and $A \otimes \bS_\mu$ has no finite length submodules, the given map lifts to a non-zero map of $A$-modules $L^{\ge D}_{\lambda} \to A \otimes \bS_{\mu}$, for some $D$. Since any proper quotient of $L^{\ge D}_\lambda$ has finite length, this map must be injective. However, Pieri's rule and Proposition~\ref{prop:pierisubmod} imply that $L_{\lambda}^{\ge D}$ can be a submodule of $A \otimes \bS_{\mu}$ only if $\lambda = \mu$.
\end{proof}

\begin{lemma}
\label{zerohom}
Suppose $M$ is an object of $\Mod_K$ which does not contain $L_{\mu}$ as a constituent. Then $\Hom_K(M, Q_{\mu})=0$.
\end{lemma}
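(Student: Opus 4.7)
The plan is to derive a contradiction from the assumption that there is a non-zero map $f \colon M \to Q_{\mu}$. The key inputs are the description of the socle of $Q_{\mu}$ (Proposition~\ref{prop:Qsoc}) together with the fact that every object of $\Mod_K$ has finite length (Corollary~\ref{cor:modKfinitelength}); the latter lets us speak of composition factors of arbitrary objects.

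Here is the outline. Suppose $f \colon M \to Q_{\mu}$ is a non-zero morphism and set $N = \im(f) \subseteq Q_{\mu}$. Then $N$ is a non-zero subobject, so by Proposition~\ref{prop:Qsoc}, which says $\soc(Q_{\mu})=L_{\mu}$, the intersection $N \cap L_{\mu}$ is non-zero. Since $L_{\mu}$ is simple, this intersection must equal $L_{\mu}$, so $L_{\mu} \subseteq N$. In particular, $L_{\mu}$ appears as a composition factor of $N$.

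Now $N$ is a quotient of $M$ and both have finite length (Corollary~\ref{cor:modKfinitelength}), so every composition factor of $N$ is a composition factor of $M$. Hence $L_{\mu}$ is a constituent of $M$, contradicting the hypothesis. Therefore no non-zero morphism $M \to Q_{\mu}$ exists, i.e.\ $\Hom_K(M,Q_{\mu})=0$.

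There is no real obstacle here: the statement is a direct consequence of the simple-socle property of $Q_{\mu}$ proved in the previous proposition, combined with finite length of objects in $\Mod_K$. The only thing worth being slightly careful about is the use of ``constituent,'' which is legitimate because $M$ has finite length, so its Jordan--Hölder constituents are well defined and every quotient inherits its constituents from $M$.
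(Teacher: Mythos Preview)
Your proof is correct and follows essentially the same approach as the paper: both argue that a non-zero image in $Q_{\mu}$ must contain $\soc(Q_{\mu})=L_{\mu}$, forcing $L_{\mu}$ to be a constituent of $M$. You are just slightly more explicit about invoking finite length to justify the use of Jordan--H\"older constituents.
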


\begin{proof}
Suppose $f \colon M \to Q_{\mu}$ is a non-zero map. Then $f(M)$ is a non-zero subobject of $Q_{\mu}$, and therefore contains $\soc(Q_{\mu})=L_{\mu}$. This implies $L_{\mu}$ is a constituent of $M$, which is a contradiction.
\end{proof}

\begin{proposition}
\label{prop:injhomsets1}
Given partitions $\lambda, \mu$, we have
\[
\hom_K(Q_\lambda, Q_\mu) \cong \begin{cases} \bC & \text{if $\lambda / \mu \in \HS$}\\
0 & \text{otherwise} \end{cases}.
\]
\end{proposition}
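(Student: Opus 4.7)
The strategy is to exploit the injectivity of $Q_\mu$ together with the explicit structure of $Q_\lambda$ as a finite length module. Since $Q_\mu$ is injective (Proposition~\ref{prop:modKinj}), the functor $\hom_K(-, Q_\mu)$ is exact, so $\dim \hom_K(-, Q_\mu)$ is additive on short exact sequences. Combined with the description of the simple constituents of $Q_\lambda$ and the Hom computation of Proposition~\ref{HomLQ}, this reduces the proposition to counting composition factors.

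Concretely, I would first apply the exact functor $T$ to the filtration $0 = F_{-1} \subset F_0 \subset \cdots \subset F_{\lambda_1} = A \otimes \bS_\lambda$ provided by Proposition~\ref{prop:filtration}. Because $T$ preserves direct sums and sends each $L_\nu^{\ge \lambda_1}$ to the simple object $L_\nu$, the successive quotients $T(F_i)/T(F_{i-1})$ are semisimple of the form $\bigoplus_{\nu : \lambda/\nu \in \HS_i} L_\nu$. Different layers involve disjoint sets of partitions (since $i = |\lambda| - |\nu|$), so refining to a composition series of $Q_\lambda$ shows that each $L_\nu$ with $\lambda/\nu \in \HS$ appears as a composition factor with multiplicity exactly one, and no other simples appear.

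Finally, an easy induction on the length of this composition series, using the short exact sequences and the vanishing $\Ext^1_K(-, Q_\mu) = 0$, yields
\[
\dim \hom_K(Q_\lambda, Q_\mu) = \sum_{\nu : \lambda/\nu \in \HS} \dim \hom_K(L_\nu, Q_\mu).
\]
By Proposition~\ref{HomLQ}, $\dim \hom_K(L_\nu, Q_\mu) = \delta_{\nu,\mu}$, so the sum collapses to $1$ if $\lambda/\mu \in \HS$ (from the single term $\nu = \mu$) and $0$ otherwise. Every step uses machinery already in hand, so there is no real obstacle; the proof is essentially a bookkeeping exercise assembling the structural results of this section.
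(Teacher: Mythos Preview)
Your proof is correct and takes essentially the same approach as the paper: both arguments reduce the computation to the composition factors of $Q_\lambda$ (Corollary~\ref{Qlambda-gr}/Proposition~\ref{prop:filtration}), the injectivity of $Q_\mu$ (Proposition~\ref{prop:modKinj}), and the Hom computation $\dim\Hom_K(L_\nu,Q_\mu)=\delta_{\nu,\mu}$ (Proposition~\ref{HomLQ}). Your phrasing via additivity of $\dim\Hom_K(-,Q_\mu)$ on short exact sequences is slightly slicker than the paper's explicit subquotient argument through Lemma~\ref{zerohom}, but the content is the same.
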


\begin{proof}
Suppose $\lambda/\mu \in \HS$. Since $L_{\mu}$ is a constituent of $L_{\lambda}$, we can find a subobject $N$ of $Q_{\lambda}$ such that $L_{\mu}$ is a subobject of $M=Q_{\lambda}/N$. Since $Q_{\lambda}$ is multiplicity-free and $L_{\mu}$ is a constituent of $M$, it follows that $L_{\mu}$ is not a constituent of $N$, and so $\Hom(N, Q_{\mu})=0$ by Lemma~\ref{zerohom}. It follows that the natural map $\Hom(M, Q_{\mu}) \to \Hom(Q_{\lambda}, Q_{\mu})$ is an isomorphism. Any map $M \to Q_{\mu}$ killing $L_{\mu}$ is zero by Lemma~\ref{zerohom}, and so the restriction map $\Hom(M, Q_{\mu}) \to \Hom(L_{\mu}, Q_{\mu})$ is injective, and thus bijective since $Q_{\mu}$ is injective. Any map $L_{\mu} \to Q_{\mu}$ has image contained in $\soc(Q_{\mu})$, and so the natural map $\bC=\Hom(L_{\mu}, L_{\mu}) \to \Hom(L_{\mu}, Q_{\mu})$ is an isomorphism. We have thus shown that $\Hom(Q_{\lambda}, Q_{\mu})$ is one-dimensional.

Now suppose that $\lambda/\mu \not\in \HS$. Since $L_{\mu}$ is not a constituent of $Q_{\lambda}$, we have $\Hom_K(Q_{\lambda}, Q_{\mu})=0$ by Lemma~\ref{zerohom}.
\end{proof}

\begin{corollary} \label{cor:injenv}
The object $Q_{\lambda}$ is indecomposable and is the injective envelope of $L_\lambda$.
\end{corollary}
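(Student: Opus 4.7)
The plan is to deduce both claims directly from the results already collected above: Proposition~\ref{prop:modKinj} (which gives injectivity of $Q_\lambda$), Proposition~\ref{prop:Qsoc} (which identifies $\soc(Q_\lambda)$ with $L_\lambda$), and Proposition~\ref{prop:injhomsets1} (which computes $\Hom$-spaces between the $Q$'s).

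For indecomposability, I would specialize Proposition~\ref{prop:injhomsets1} to $\mu=\lambda$. Since the empty skew shape $\lambda/\lambda$ is trivially a horizontal strip, this gives $\End_K(Q_\lambda) \cong \bC$. A one-dimensional endomorphism ring is local, so $Q_\lambda$ admits no nontrivial idempotents and is therefore indecomposable. (Alternatively, one can argue that any direct summand of $Q_\lambda$ contains part of $\soc(Q_\lambda)=L_\lambda$; since $L_\lambda$ is simple, one of the summands must have zero socle, and hence, as every object of $\Mod_K$ has finite length by Corollary~\ref{cor:modKfinitelength}, must itself be zero.)

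For the injective envelope claim, by definition I need to show that $Q_\lambda$ is injective, that it contains $L_\lambda$, and that this inclusion is essential. Injectivity is Proposition~\ref{prop:modKinj}, and the inclusion $L_\lambda \hookrightarrow Q_\lambda$ is provided by Proposition~\ref{prop:Qsoc}. For essentiality, I note that the socle of an object is, by the definition recalled just before Proposition~\ref{prop:Qsoc}, a subobject meeting every nonzero subobject nontrivially; since $\soc(Q_\lambda)=L_\lambda$, every nonzero subobject of $Q_\lambda$ meets $L_\lambda$ nontrivially, which is exactly essentiality of the extension $L_\lambda \subset Q_\lambda$.

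I do not expect any real obstacle here: both statements are direct consequences of previously established facts, with the only small point being the observation that $\lambda/\lambda$ qualifies as a (trivial) horizontal strip so that Proposition~\ref{prop:injhomsets1} applies with $\mu=\lambda$.
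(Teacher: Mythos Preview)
Your proof is correct and follows essentially the same approach as the paper. The paper's proof is the single line ``This follows from $\End(Q_{\lambda})=\bC$,'' which is exactly your application of Proposition~\ref{prop:injhomsets1} at $\mu=\lambda$; your additional remarks on essentiality via Proposition~\ref{prop:Qsoc} and injectivity via Proposition~\ref{prop:modKinj} simply make explicit what the paper leaves implicit.
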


\begin{proof}
This follows from $\End(Q_{\lambda})=\bC$.
\end{proof}

\begin{corollary} \label{cor:indecinj}
Every indecomposable injective object of $\Mod_K$ is isomorphic to $Q_\lambda$ for some $\lambda$. In particular, every injective object of $\Mod_K$ is a direct sum of $Q_\lambda$'s.
\end{corollary}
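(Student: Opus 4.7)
The plan is to deduce both statements from the fact that every object of $\Mod_K$ has finite length (Corollary~\ref{cor:modKfinitelength}), combined with the classification of simples (Corollary~\ref{cor:allsimples}) and the identification of $Q_\lambda$ as an injective envelope (Corollary~\ref{cor:injenv}).

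First I would handle the indecomposable case. Let $I$ be a nonzero indecomposable injective object of $\Mod_K$. Since $I$ has finite length, its socle is nonzero, hence contains some simple subobject $L_\lambda$. The injective envelope of $L_\lambda$ is $Q_\lambda$ (Corollary~\ref{cor:injenv}), so the inclusion $L_\lambda \hookrightarrow I$ extends to a morphism $Q_\lambda \to I$. This morphism is injective because its kernel must intersect the essential submodule $L_\lambda \subseteq Q_\lambda$ trivially, hence is zero. Since $Q_\lambda$ is injective, it splits off as a direct summand of $I$; indecomposability of $I$ then forces $I \cong Q_\lambda$.

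For the general statement, I would argue by induction on the length of an arbitrary injective object $I$ of $\Mod_K$ (which is finite by Corollary~\ref{cor:modKfinitelength}). If $I = 0$ there is nothing to show. Otherwise, pick a simple subobject $L_\lambda \subseteq I$ and, exactly as above, obtain a split embedding $Q_\lambda \hookrightarrow I$, giving a decomposition $I \cong Q_\lambda \oplus I'$ with $I'$ injective of strictly smaller length. By induction, $I'$ is a direct sum of objects of the form $Q_\mu$, and therefore so is $I$.

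No step here is really an obstacle: everything reduces to combining finite length, the extension of the inclusion $L_\lambda \hookrightarrow I$ along $L_\lambda \hookrightarrow Q_\lambda$ using injectivity of $I$ (or, in the direction I used, injectivity of $Q_\lambda$), and the essentiality of $L_\lambda$ in $Q_\lambda$ to ensure the resulting map is mono. The only thing one has to be slightly careful about is invoking essentiality rather than trying to induct naively on simple objects (which would be wrong, since simples are not themselves injective).
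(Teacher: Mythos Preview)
Your argument is correct and is precisely the standard unpacking of the paper's one-line proof, which just cites Corollary~\ref{cor:allsimples} and Corollary~\ref{cor:injenv}; you have spelled out the use of finite length, essentiality of $L_\lambda$ in $Q_\lambda$, and the splitting via injectivity of $Q_\lambda$ that those citations encode.
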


\begin{proof}
Follows from Corollary~\ref{cor:allsimples} and Corollary~\ref{cor:injenv}.
\end{proof}

\begin{corollary} \label{cor:A-K-isom}
The natural map $\Hom_A(A \otimes \bS_{\lambda}, A \otimes \bS_{\mu}) \to \Hom_K(Q_{\lambda}, Q_{\mu})$ is an isomorphism.
\end{corollary}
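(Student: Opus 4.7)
The plan is to compute both Hom spaces directly and show that the natural map is injective; matching one-dimensional (or zero) dimensions then forces it to be an isomorphism.

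First I would compute the left-hand side using the universal property of the free $A$-module on an object of $\cV$: since $A \otimes -$ is left adjoint to the forgetful functor $\Mod_A \to \cV$, we have
\[
\Hom_A(A \otimes \bS_{\lambda}, A \otimes \bS_{\mu}) = \Hom_{\cV}(\bS_{\lambda}, A \otimes \bS_{\mu}).
\]
By Pieri's formula, $A \otimes \bS_{\mu} = \bigoplus_{\nu/\mu \in \HS} \bS_{\nu}$ in $\cV$, which is multiplicity-free. Hence the right-hand Hom is one-dimensional if $\lambda/\mu \in \HS$ and zero otherwise. On the $\Mod_K$ side, Proposition~\ref{prop:injhomsets1} already tells us that $\Hom_K(Q_{\lambda}, Q_{\mu})$ has exactly the same dimensions in the two cases.

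Next I would show that the natural map is injective. Suppose $\phi \colon A \otimes \bS_{\lambda} \to A \otimes \bS_{\mu}$ is a map whose image $T(\phi)$ in $\Mod_K$ vanishes; this means the image $\phi(A \otimes \bS_{\lambda})$ is a torsion, hence finite length, $A$-submodule of $A \otimes \bS_{\mu}$. I claim $A \otimes \bS_{\mu}$ has no non-zero finite length submodules: if $N \subseteq A \otimes \bS_{\mu}$ were a non-zero such submodule, it would contain some $\bS_{\nu}$ with $\nu/\mu \in \HS$, but by Proposition~\ref{prop:pierisubmod} the $A$-submodule generated by this $\bS_{\nu}$ contains all $\bS_{\nu'} \subset A \otimes \bS_{\mu}$ with $\nu \subseteq \nu'$, which is infinite-dimensional. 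Hence $\phi = 0$.

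Combining these two observations finishes the proof: in the case $\lambda/\mu \notin \HS$ both sides are zero, and in the case $\lambda/\mu \in \HS$ the injective map between two one-dimensional spaces is automatically an isomorphism. There is no real obstacle here; the only non-trivial ingredient is the torsion-freeness observation in the injectivity step, which is a direct consequence of Proposition~\ref{prop:pierisubmod}.
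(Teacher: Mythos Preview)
Your proof is correct and is essentially the argument the paper leaves implicit: the corollary is stated without proof immediately after Proposition~\ref{prop:injhomsets1}, and the intended reasoning is exactly your dimension count (via adjunction and Pieri on the source, via Proposition~\ref{prop:injhomsets1} on the target) together with the torsion-freeness of $A \otimes \bS_{\mu}$ for injectivity. The torsion-freeness fact you extract from Proposition~\ref{prop:pierisubmod} is also used elsewhere in the paper (e.g.\ in the proof of Proposition~\ref{HomLQ}), so nothing here goes beyond what the authors assume the reader will supply.
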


\begin{proposition}
\label{prop:injhomsets2}
Let $f \colon Q_\lambda \to Q_\mu$ be a non-zero map. Then $L_{\nu}$ is a constituent of $\ker(f)$ if and only if it is a constituent of $Q_{\lambda}$ and not $Q_{\mu}$. Similarly, $L_{\eta}$ is a constituent of $\coker(f)$ if and only if it is a constituent of $Q_{\mu}$ and not $Q_{\lambda}$.
\end{proposition}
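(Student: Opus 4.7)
The plan is to lift $f$ to an $A$-module map, compute its image explicitly using Pieri's formula, and translate the result back to $\Mod_K$.

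By Proposition~\ref{prop:injhomsets1}, a nonzero $f$ exists only when $\lambda/\mu \in \HS$ (so $\mu \subseteq \lambda$); Corollary~\ref{cor:A-K-isom} then lifts $f$, up to a nonzero scalar, to the unique nonzero $A$-linear map $\pi \colon A \otimes \bS_\lambda \to A \otimes \bS_\mu$, namely the one obtained from the Pieri embedding $\bS_\lambda \hookrightarrow A \otimes \bS_\mu$. Let $I^0 = \im(\pi) \subseteq A \otimes \bS_\mu$, the $A$-submodule generated by this copy of $\bS_\lambda$. I claim
\[
I^0 = \bigoplus_{\nu \supseteq \lambda,\ \nu/\mu \in \HS} \bS_\nu.
\]
The inclusion $\supseteq$ is Proposition~\ref{prop:pierisubmod}, while the reverse inclusion holds because $I^0$ is a quotient of $A \otimes \bS_\lambda$, whose $\cV$-constituents $\bS_\nu$ all satisfy $\nu \supseteq \lambda$ by Pieri. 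Exactness of $T$ then gives $\im(f) = T(I^0)$, $\ker(f) = T(\ker \pi)$, and $\coker(f) = T(\coker \pi)$.

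Next, I would determine which simples appear in $T(I^0)$. For any $A$-submodule $M \subseteq A \otimes \bS_\mu$ (which is $\cV$-multiplicity-free), Proposition~\ref{prop:pierisubmod} shows that once $\bS_{(d,\rho)} \subset M$ for some $d$, the same holds for all $d' \ge d$; combined with the description of simples in $\Mod_K$ from \S\ref{ss:simp-inj}, this yields the criterion that $L_\rho$ is a constituent of $T(M)$ iff $\bS_{(d,\rho)} \subset M$ for all sufficiently large $d$. Applying this with $M = I^0$, $L_\rho$ is a constituent of $\im(f)$ iff both $(d,\rho) \supseteq \lambda$ and $(d,\rho)/\mu \in \HS$ hold for $d \gg 0$. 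A direct unpacking reduces these two conditions (for large $d$) to $\rho_i \ge \lambda_{i+1}$ for all $i \ge 1$ and $\mu_i \ge \rho_i \ge \mu_{i+1}$ for all $i \ge 1$ respectively; since $\mu/\rho \in \HS$ gives $\rho \subseteq \mu \subseteq \lambda$ and hence $\lambda_i \ge \rho_i$, the combined conditions are jointly equivalent to $\mu/\rho \in \HS$ together with $\lambda/\rho \in \HS$, that is, to $L_\rho$ being a constituent of both $Q_\mu$ and $Q_\lambda$.

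The proposition then follows from the short exact sequences $0 \to \ker(f) \to Q_\lambda \to \im(f) \to 0$ and $0 \to \im(f) \to Q_\mu \to \coker(f) \to 0$, together with the multiplicity-freeness of $Q_\lambda$ and $Q_\mu$: a simple $L_\nu$ appears in $\ker(f)$ iff it appears in $Q_\lambda$ but not in $\im(f)$, hence iff it is a constituent of $Q_\lambda$ but not of $Q_\mu$, and similarly for $\coker(f)$. The main obstacle is the combinatorial verification in the middle paragraph, but it reduces to straightforward bookkeeping with horizontal-strip inequalities.
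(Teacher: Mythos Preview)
Your proof is correct and takes essentially the same approach as the paper: lift $f$ to the unique nonzero $A$-linear map $A\otimes\bS_\lambda\to A\otimes\bS_\mu$ via Corollary~\ref{cor:A-K-isom} and use Proposition~\ref{prop:pierisubmod} to control the image (equivalently, the kernel). The paper's proof is a single sentence asserting that the kernel of this lift consists of all Schur functors in $A\otimes\bS_\lambda$ not in $A\otimes\bS_\mu$; you supply the extra bookkeeping that translates this $\cV$-level description into the list of $\Mod_K$-constituents, which the paper leaves implicit.
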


\begin{proof}
By the above corollary, a non-zero map $Q_\lambda \to Q_\mu$ can be represented by a non-zero $A$-linear map $A \otimes \bS_\lambda \to A \otimes \bS_\mu$. By Proposition~\ref{prop:pierisubmod}, its kernel is the sum of all Schur functors which appear in $A \otimes \bS_\lambda$, but which do not appear in $A \otimes \bS_\mu$.
\end{proof}

\subsection{Injective resolutions and consequences} \label{ss:injres}

We now determine the injective resolutions of simple objects.

\begin{theorem} \label{thm:BGGresolution}
Let $\lambda$ be a partition. Define $\bI^j = \bigoplus_{\mu,\, \lambda/\mu \in \VS_j} Q_\mu$. There are morphisms $\bI^j \to \bI^{j+1}$ so that $L_\lambda \to \bI^\bullet$ is an injective resolution. In particular, the injective dimension of $L_\lambda$ is $\ell(\lambda)$.
\end{theorem}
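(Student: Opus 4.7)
My plan is to construct the differentials $d^j$ explicitly via Pieri maps with Koszul signs, verify $d^2 = 0$, and deduce exactness from a Grothendieck-group identity combined with a $\Hom$-based socle analysis.

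\textbf{Construction and $d^2 = 0$.} By Corollary~\ref{cor:A-K-isom} and Proposition~\ref{prop:injhomsets1}, for each pair of partitions $\nu \subsetneq \mu$ with $\mu/\nu$ a single box, the space $\Hom_K(Q_\mu, Q_\nu) = \Hom_A(A \otimes \bS_\mu, A \otimes \bS_\nu)$ is one-dimensional, spanned by the map $\phi_{\mu,\nu}$ induced from the Pieri embedding $\bS_\mu \hookrightarrow A_1 \otimes \bS_\nu$; I would normalize these using the explicit Pieri maps of \cite{olver}. Define $d^j$ as the signed sum $\sum (-1)^{k(\mu, \nu)} \phi_{\mu, \nu}$ over pairs with $\lambda/\mu \in \VS_j$ and $\lambda/\nu \in \VS_{j+1}$, where $k(\mu, \nu)$ is the row-order position of the box $\mu/\nu$ within the vertical strip $\lambda/\nu$. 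To verify $d^{j+1} d^j = 0$ componentwise on $Q_{\mu_0} \to Q_{\mu_2}$: if the two boxes of $\mu_0/\mu_2$ lie in the same column then $\mu_0/\mu_2 \notin \HS$ and Pieri's formula forces $\Hom_A(A\otimes\bS_{\mu_0}, A\otimes\bS_{\mu_2})=0$, so both composites vanish automatically; otherwise both composites lie in a one-dimensional $\Hom$-space and the two Koszul signs differ by $-1$, producing cancellation provided the $\phi_{\mu,\nu}$ are coherently scaled. I expect this normalization coherence to be the main technical obstacle, to be resolved via the canonical Olver construction.

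\textbf{Exactness.} The crucial input is the Grothendieck-group identity $\sum_j (-1)^j [\bI^j] = [L_\lambda]$ in $K(\Mod_K)$: by Corollary~\ref{Qlambda-gr}, $[Q_\mu] = \sum_{\mu/\nu \in \HS}[L_\nu]$, and the coefficient of $[L_\nu]$ in $\sum_j(-1)^j [\bI^j]$ reduces to $\sum_{j,\mu}(-1)^j [\lambda/\mu \in \VS_j][\mu/\nu \in \HS]$, which I evaluate by the $\perp$-adjoint form $(\sum_k h_k^\perp)(\sum_j (-1)^j e_j^\perp) = \id$ of the symmetric-function identity $E(-t)H(t)=1$ applied to $s_\lambda$; the coefficient of $s_\nu$ is $\delta_{\lambda, \nu}$. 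For exactness at $\bI^0$, Proposition~\ref{HomLQ} gives $\Hom_K(L_\lambda, Q_\mu)=0$ for $\lambda/\mu \in \VS_1$, so $L_\lambda \subseteq \ker d^0$. Moreover the complex of vector spaces $\Hom_K(L_\nu, \bI^\bullet)$ has $j$-th term $[\lambda/\nu \in \VS_j]\bC$, which is nonzero for at most one $j$, so all of its differentials vanish for dimension reasons. Using the hypercohomology spectral sequence relating $H^*(\Hom_K(L_\nu, \bI^\bullet))$ to $\Ext^*_K(L_\nu, H^*(\bI^\bullet))$, combined with the Euler-characteristic identity and the fact that $H^j$ is a finite-length subquotient of $\bI^j$, an iterative socle-counting argument forces $H^0(\bI^\bullet) = L_\lambda$ and $H^j(\bI^\bullet) = 0$ for $j \geq 1$.

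\textbf{Length.} Vertical strips in $\lambda$ contain at most $\ell(\lambda)$ boxes, so $\bI^j = 0$ for $j > \ell(\lambda)$. Removing the rightmost box of each nonempty row of $\lambda$ produces a vertical strip of size exactly $\ell(\lambda)$, so $\bI^{\ell(\lambda)} \neq 0$; hence the injective dimension of $L_\lambda$ equals $\ell(\lambda)$.
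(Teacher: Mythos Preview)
The serious gap is in your exactness argument. The Grothendieck-group identity together with the observation that $\Hom_K(L_\nu,\bI^\bullet)$ has at most one nonzero term does \emph{not} force acyclicity: if every differential of $\bI^\bullet$ were zero, both your Euler-characteristic identity and your $\Hom$-complex would be unchanged (the differentials on $\Hom_K(L_\nu,\bI^\bullet)$ vanish for dimension reasons regardless of what the maps on $\bI^\bullet$ actually are), yet the complex would be far from exact. So your abstract inputs cannot distinguish the true complex from one with zero differentials. The hypercohomology spectral sequence you invoke has $E_2^{p,q}=\Ext_K^p(L_\nu,H^q(\bI^\bullet))$, and running any ``socle-counting'' argument through it requires controlling these Ext groups; but $\Ext^p_K(L_\nu,L_\mu)$ is precisely Corollary~\ref{cor:extsimples}, a \emph{consequence} of the theorem, so the reasoning is circular as stated. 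The paper instead proves exactness by a direct element-chase: $H^0=L_\lambda$ follows immediately from Proposition~\ref{prop:injhomsets2} (the kernel of any nonzero $Q_\lambda\to Q_\mu$ consists exactly of the constituents of $Q_\lambda$ not in $Q_\mu$), and for $j>0$ the argument lifts to the $A$-module level via Corollary~\ref{cor:A-K-isom} and explicitly constructs a preimage for each cycle, again leaning on Proposition~\ref{prop:injhomsets2}.

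Two smaller points. Your coherent normalization of the $\phi_{\mu,\nu}$ is deferred rather than resolved --- the paper handles it by showing the nerve of the horizontal-strip poset is contractible (Lemma~\ref{lem:partcontract}), so the obstruction $2$-cocycle is a coboundary and the maps can be rescaled to compose strictly. And $\bI^{\ell(\lambda)}\neq 0$ only gives the upper bound on injective dimension; the lower bound requires a nonzero $\Ext^{\ell(\lambda)}$, which (once exactness is established) follows from your own observation on $\Hom_K(L_\nu,\bI^\bullet)$ by taking $\nu=(\lambda_1-1,\ldots,\lambda_{\ell(\lambda)}-1)$, exactly as the paper does.
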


We start with a definition and two lemmas. We call $(\mu, \mu', \mu'', \nu)$ a square if $\mu / \mu' \in \HS_1$, $\mu / \mu'' \in \HS_1$, $\mu' / \nu \in \HS_1$, $\mu'' / \nu \in \HS_1$, and $\nu / \mu \in \HS_2$.

\begin{lemma} \label{lem:inj-equal}
There exist a choice of maps $f_{\mu, \mu'} \colon Q_\mu \to Q_{\mu'}$ for all $\mu/\mu' \in \HS$ such that $f_{\mu', \nu} f_{\mu, \mu'} = f_{\mu'', \nu} f_{\mu, \mu''}$ for all squares $(\mu, \mu', \mu'', \nu)$.
\end{lemma}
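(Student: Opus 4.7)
The plan is to realize the maps $f_{\mu,\mu'}$ for $\mu/\mu' \in \HS_1$ as Pieri embeddings into the free modules $A \otimes \bS_{\mu'}$ and to exploit the commutativity of the multiplication in $A$ to satisfy the square relations; for $\HS_d$-edges with $d \ne 1$ the square relations impose no constraint, so those maps may be chosen arbitrarily. By Corollary~\ref{cor:A-K-isom}, freeness of $A \otimes \bS_\mu$, and Pieri's formula, the space $\Hom_K(Q_\mu, Q_{\mu'})$ is one-dimensional for each $\mu/\mu' \in \HS$, canonically identified with $\Hom_{\GL_\infty}(\bS_\mu, A \otimes \bS_{\mu'})$.

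For each $\HS_1$-edge $(\mu,\mu')$ choose an initial nonzero candidate $\tilde{f}_{\mu,\mu'}$ obtained by lifting the Pieri embedding $\bS_\mu \hookrightarrow \bC^\infty \otimes \bS_{\mu'}$ to an $A$-linear map. The key step is to show that for any square $(\mu,\mu',\mu'',\nu)$, both compositions $\tilde{f}_{\mu',\nu}\tilde{f}_{\mu,\mu'}$ and $\tilde{f}_{\mu'',\nu}\tilde{f}_{\mu,\mu''}$ are nonzero elements of the one-dimensional space $\Hom_K(Q_\mu, Q_\nu)$. Restricting to $\bS_\mu \subset A \otimes \bS_\mu$ and projecting onto the degree-$2$ component of $A \otimes \bS_\nu$, both compositions factor as
\[
\bS_\mu \hookrightarrow \bC^\infty \otimes \bC^\infty \otimes \bS_\nu \twoheadrightarrow \Sym^2(\bC^\infty) \otimes \bS_\nu,
\]
where the first arrow uses one of the two orderings of the boxes in $\mu/\nu$ and the second is the canonical multiplication. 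The multiplicity of $\bS_\mu$ in $\Sym^2(\bC^\infty) \otimes \bS_\nu$ is $1$ (since $\mu/\nu \in \HS_2$), while inside $\bC^\infty \otimes \bC^\infty \otimes \bS_\nu = (\Sym^2(\bC^\infty) \oplus \bigwedge^2(\bC^\infty)) \otimes \bS_\nu$ it is $1$ or $2$, depending on whether $\mu/\nu$ also lies in $\VS_2$. An analysis of the $S_2$-swap action on the $\bS_\mu$-isotypic component shows that the image of each individual ordering has nontrivial projection to the $S_2$-invariant summand $\Sym^2(\bC^\infty) \otimes \bS_\nu$.

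Having both compositions nonzero, they differ by a scalar $c_\sigma \in \bC^\times$ for each square $\sigma$, and it remains to rescale $\tilde{f}_{\mu,\mu'} \mapsto \alpha_{\mu,\mu'} \tilde{f}_{\mu,\mu'}$ so that every $c_\sigma$ becomes $1$. This is a multiplicative $1$-coboundary problem on the graph of $\HS_1$-edges whose prescribed coboundary is $\{c_\sigma\}$. The main obstacle is showing solvability, i.e., that the $2$-cocycle $\{c_\sigma\}$ on the Young lattice is trivial; this can be established by induction on $|\mu|$---fix one incoming edge per vertex, propagate via the square relations, and verify consistency on rank-$3$ intervals (which are $3$-cubes in Young's lattice)---where the higher-rank consistency ultimately reduces to the full $S_k$-symmetry of the multiplication $(\bC^\infty)^{\otimes k} \to \Sym^k(\bC^\infty)$. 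Alternatively, one may adopt a canonical normalization of the Pieri embeddings as sections of the Pieri projections $\bC^\infty \otimes \bS_{\mu'} \twoheadrightarrow \bS_\mu$, under which the square equality becomes immediate from the symmetry of multiplication in $A$.
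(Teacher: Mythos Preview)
Your overall strategy (choose nonzero maps, show the two compositions in each square are nonzero and hence proportional, then rescale to kill the resulting scalars) matches the paper's, but the execution diverges at the crucial step and leaves a genuine gap.

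The paper does not restrict to $\HS_1$-edges. It works with the entire category $S$ whose morphisms are all $\HS$-relations, observes that the scalars $\alpha_{\mu,\mu',\nu}$ assemble into a class in $\rH^2(N(S),\bC^\times)$, and then proves in Lemma~\ref{lem:partcontract} that $N(S)$ is contractible (via the ``delete the first row'' map and Proposition~\ref{prop:contract}). Contractibility makes the $2$-cocycle a coboundary in one stroke; the square relation is then a special case of the full coherence $f_{\mu,\nu}=f_{\mu',\nu}f_{\mu,\mu'}$. The nonzeroness of compositions is obtained in one line from Proposition~\ref{prop:pierisubmod}, without the $S_2$-swap analysis you give.

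Your inductive scheme---fix one downward edge at each vertex, propagate via squares, check consistency on cubes---can be made to work, but you have not actually carried out the key verification. The assertion that ``rank-$3$ intervals are $3$-cubes in Young's lattice'' is false as stated (e.g.\ $[\emptyset,(3)]$ is a chain); what is true is that any three removable corners of $\mu$ generate a Boolean $B_3$ below $\mu$, and on that cube one needs the relation $\prod_\sigma c_\sigma^{\pm 1}=1$ over the six faces. This relation does follow from the fact that all six length-$3$ paths land in the one-dimensional $\Hom_K(Q_\mu,Q_\rho)$, but you must exhibit it; saying it ``ultimately reduces to $S_k$-symmetry of multiplication'' is a slogan, not a proof. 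Your proposed alternative---``canonical'' Pieri embeddings as sections of Pieri projections---does not help, because the projections $\bC^\infty\otimes\bS_{\mu'}\twoheadrightarrow\bS_\mu$ are themselves only canonical up to scalar, so no normalization is singled out. In short, you have correctly identified the problem as a cohomological vanishing, but you have replaced the paper's clean contractibility argument with a combinatorial induction whose essential step remains unproved.
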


\begin{proof}
By Proposition~\ref{prop:injhomsets1}, $\hom_K(Q_\mu, Q_{\mu'}) \cong \bC$ whenever $\mu/\mu' \in \HS$. Pick arbitrary non-zero maps $\tilde{f}_{\mu, \mu'} \colon Q_\mu \to Q_{\mu'}$ for all $\mu / \mu' \in \HS$. If $\mu' / \nu \in \HS$ and $\mu / \nu \in \HS$, then the composition $\tilde{f}_{\mu', \nu} \tilde{f}_{\mu, \mu'} \colon Q_\mu \to Q_\nu$ is non-zero (this follows from Proposition~\ref{prop:pierisubmod}), so there is a non-zero scalar $\alpha_{\mu, \mu', \nu}$ so that $\tilde{f}_{\mu, \nu} = \alpha_{\mu, \mu', \nu} \tilde{f}_{\mu', \nu} \tilde{f}_{\mu, \mu'}$. This gives a $2$-cocycle in $\rH^2(N(S); \bC^\times)$, where $S$ is the category whose objects are all partitions and where there is a morphism $\mu \to \mu'$ if and only if $\mu / \mu' \in \HS$, and $N(S)$ is the nerve of $S$. We show in Lemma~\ref{lem:partcontract} that $N(S)$ is contractible, so this $2$-cocycle is cohomologous to the identity, which means we can rescale each $\tilde{f}_{\mu,\mu'}$ to some $f_{\mu, \mu'}$ such that $f_{\mu, \nu} = f_{\mu', \nu} f_{\mu, \mu'}$. 
\end{proof}

\begin{lemma} \label{lem:inj-signs}
Let $\cP$ be the set of partitions $\mu$ such that $\lambda / \mu \in \VS$. For every pair of partitions $\mu, \mu' \in \cP$ with $\mu/\mu' \in \HS_1$, there exists a choice of signs $s_{\mu, \mu'}$ so that $s_{\mu', \nu} s_{\mu, \mu'} = - s_{\mu'', \nu} s_{\mu, \mu''}$ for all squares $(\mu, \mu', \mu'', \nu)$ whose elements belong to $\cP$.
\end{lemma}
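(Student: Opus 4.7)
The plan is to use a Koszul-type sign rule based on encoding each element of $\cP$ as a finite subset of row indices. The vertical strip condition $\lambda/\mu \in \VS$ forces $\lambda_i - \mu_i \in \{0,1\}$ for all $i$, so I can set
\[
T_\mu = \{i \ge 1 : \lambda_i > \mu_i\},
\]
a finite subset of $\bN$ of size $|\lambda| - |\mu|$ (finiteness is immediate since $\lambda$ has finitely many nonzero parts). For a pair $\mu, \mu' \in \cP$ with $\mu/\mu' \in \HS_1$, the removed box lies in a unique row $i(\mu,\mu')$, and $T_{\mu'} = T_\mu \sqcup \{i(\mu,\mu')\}$. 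I then define
\[
s_{\mu,\mu'} := (-1)^{|\{k \in T_\mu\, :\, k > i(\mu,\mu')\}|}.
\]
This depends only on the edge $(\mu,\mu')$, with no additional choices.

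To check the identity on a square $(\mu,\mu',\mu'',\nu)$, set $i := i(\mu,\mu') = i(\mu'',\nu)$ and $j := i(\mu,\mu'') = i(\mu',\nu)$; these are distinct because $\mu'\ne \mu''$, and $T_\nu = T_\mu \cup \{i,j\}$. Using the elementary relation
\[
|\{k \in T_\mu \cup \{i\}\, :\, k > j\}| = |\{k \in T_\mu\, :\, k > j\}| + [i > j],
\]
unwinding the definition gives
\[
s_{\mu',\nu}\,s_{\mu,\mu'} = (-1)^{|\{k \in T_\mu\, :\, k > i\}| + |\{k \in T_\mu\, :\, k > j\}| + [i > j]},
\]
and an analogous expression for $s_{\mu'',\nu}\,s_{\mu,\mu''}$ with the roles of $i$ and $j$ swapped. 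Since $i \ne j$, exactly one of the Iverson brackets $[i > j]$, $[j > i]$ equals $1$, so the two exponents differ by an odd integer and the two products differ by a factor of $-1$, yielding $s_{\mu',\nu}s_{\mu,\mu'} = -s_{\mu'',\nu}s_{\mu,\mu''}$.

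I do not anticipate any real obstacle: this is just the standard Koszul sign convention for a cubical complex on the Boolean lattice of subsets, transported along the injection $\mu \mapsto T_\mu$. Combined with Lemma~\ref{lem:inj-equal}, which makes the unsigned $f_{\mu,\mu'}$ strictly commute on squares, the signs $s_{\mu,\mu'}$ ensure that the signed differentials $s_{\mu,\mu'} f_{\mu,\mu'}$ on $\bI^\bullet$ anticommute on every square and hence square to zero, which is the only purpose of the signs in the proof of Theorem~\ref{thm:BGGresolution}.
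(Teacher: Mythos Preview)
Your proof is correct and follows essentially the same idea as the paper: both assign Koszul-type signs to covering relations in $\cP$ and verify anticommutativity on squares by a parity count. The only cosmetic difference is the choice of coordinates---the paper identifies $\cP$ with the product of chains $[m_1(\lambda)]\times[m_2(\lambda)]\times\cdots$ (recording how many parts of each size have been shortened) and sets the sign on the edge incrementing the $i$th coordinate to $(-1)^{a_1+\cdots+a_{i-1}}$, whereas you embed $\cP$ into the Boolean lattice on row indices via $\mu\mapsto T_\mu$ and count elements of $T_\mu$ above the changed row; the verification on a square is the same Iverson-bracket argument in either encoding.
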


\begin{proof}
Consider $\cP$ as a poset under the inclusion relation. For a nonnegative integer $n$, let $[n]$ be the totally ordered poset on the set $\{0,1,\dots,n\}$. Let $m_i(\lambda)$ be the number of parts of $\lambda$ that are equal to $i$. Then the poset above is isomorphic to the product $[m_1(\lambda)] \times [m_2(\lambda)] \times \cdots$ (starting with a vertical strip, the element in $[m_i(\lambda)]$ is how many parts of size $i$ had a box removed from them). A covering relation is of the form $(a_1, \dots, a_i, \dots, a_N) \to (a_1, \dots, a_i + 1, \dots, a_N)$. We assign to this covering relation the sign $(-1)^{a_1 + \cdots + a_{i-1}}$. These can be used for the signs $s_{\mu,\mu'}$: in each square of covering relations, there is an odd number of signs which are $-1$. 
\end{proof}

\begin{proof}[Proof of Theorem~\ref{thm:BGGresolution}]
Using the notation from Lemmas~\ref{lem:inj-equal} and \ref{lem:inj-signs}, define the differential $\bI^j \to \bI^{j+1}$ to be $\sum_{\mu, \mu'} s_{\mu, \mu'} f_{\mu, \mu'}$ where the sum is over all pairs $\mu' \subset \mu$ such that $\lambda/\mu \in \VS_j$ and $\lambda/\mu' \in \VS_{j+1}$. Given $Q_\mu \subset \bI^j$ and $Q_\nu \subset \bI^{j+2}$, there are at most two injective summands of $\bI^{j+1}$ which can receive non-zero maps from $Q_\mu$ and which can map to $Q_\nu$ nontrivially. If there is just one, then $\nu / \mu \notin \HS_2$, so the composition $\bI^j \to \bI^{j+2}$ restricted to $Q_\mu \to Q_\nu$ is $0$. Otherwise, we get a square, and the composition $\bI^j \to \bI^{j+2}$ restricted to $Q_\mu \to Q_\nu$ is $0$ by  Lemmas~\ref{lem:inj-equal} and \ref{lem:inj-signs}.

Now we prove acyclicity of the complex. By Proposition~\ref{prop:injhomsets2}, the kernel of $\bI^0 \to \bI^1$ has a composition series consisting of $L_\nu$ such that $L_\nu$ appears in a composition series of $Q_\lambda$ but not in a composition series of any $Q_\mu$ such that $\mu \subset \lambda$ and $|\lambda| = |\mu| + 1$. If $L_\nu$ appears in a composition series of $Q_\lambda$, then $\lambda / \nu \in \HS$ by Corollary~\ref{Qlambda-gr}. If $|\lambda| > |\nu|$, then pick $i$ such that $\lambda_i > \nu_i$ and let $\mu$ be the partition defined by $\mu_j = \lambda_j$ for $j \ne i$ and $\mu_i = \lambda_i - 1$. Then $L_\nu$ appears in a composition series of $Q_\mu \subset \bI^1$. In conclusion, the kernel of $\bI^0 \to \bI^1$ is $L_\lambda$, which proves acyclicity at $\bI^0$.

Now we prove acyclicity at $\bI^j$ for $j>0$. Recall that $T(A \otimes \bS_\alpha) = Q_\alpha$ and that the natural map $\hom_A(A \otimes \bS_\alpha, A \otimes \bS_\beta) \to \hom_K(Q_\alpha, Q_\beta)$ is an isomorphism (Corollary~\ref{cor:A-K-isom}). Hence we can lift $\bI^\bullet$ to a complex of $A$-modules uniquely, so that it makes sense to pick elements. Pick an element $x$ in the kernel of $\bI^j \to \bI^{j+1}$. We wish to show that it is in the image of $\bI^{j-1} \to \bI^j$. Without loss of generality, we may assume that $x$ is the highest weight vector for the generator of a simple $L_\eta$. Say that the simple $L_\eta$ appears in $Q_{\mu^1}, \dots, Q_{\mu^r} \subset \bI^j$. We have a decomposition of $x = x_1 + \cdots + x_r$ into a sum of highest weight vectors coming from these $r$ copies of $L_\eta$. We throw out any $\mu^i$ from the list $\{\mu^1, \dots, \mu^r\}$ for which $x_i = 0$. If $r=1$, then we have $\lambda / \mu^1 \in \VS_j$ and $\mu^1/\nu \in \HS$. Pick $i$ so that $\lambda^\dagger_i > (\mu^1)^\dagger_i$ and define $\eta$ by $\eta^\dagger_j = (\mu^1)^\dagger_j$ for $j \ne i$ and $\eta^\dagger_i = (\mu^1)^\dagger_i + 1$. Then $\lambda / \eta \in \VS_{j-1}$ and $\eta / \nu \in \HS$, so in the differential $\bI^{j-1} \to \bI^j$, the map $f_{\eta,\mu^1} \colon Q_\eta \to Q_{\mu^1}$ is non-zero and $x$ is in its image by Proposition~\ref{prop:injhomsets2}.

So suppose now that $r>1$. If for some $\mu^a$ and $\mu^b$, it is the case that $Q_{\mu^a}$ and $Q_{\mu^b}$ do not map to any common $Q_\nu \subset \bI^{j+1}$, then there is some column of $\lambda$ such that $\lambda / \mu^a$ has at least $2$ more boxes than $\lambda / \mu^b$ (and vice versa). In particular, one cannot remove a horizontal strip from both to get a common partition, so they contain no common simple. In particular, this situation never occurs in our set $\{\mu^1, \dots, \mu^r\}$.

So for each $\mu^a, \mu^b$, we can get from one to the other by removing a box in some column and then adding a box in a different column. So $\rho = \mu^a \cup \mu^b$ (we define $(\alpha \cup \beta)_i = \max(\alpha_i, \beta_i)$) is independent of $a,b$ (for any other $c$, we must have $\mu^c \subset \mu^a \cup \mu^b$ or else either the pair $\mu^a, \mu^c$ or $\mu^b,\mu^c$ would not have the property that we can get one from the other by moving a single box from one column to another column). Furthermore, $\lambda / \rho \in \VS_{j-1}$ and $\rho / \eta \in \HS$, so $Q_\rho \subset \bI^{j-1}$ and $L_\eta$ appears in a composition series of $Q_\rho$. We claim that there is a highest weight vector $y$ in this $L_\eta$ which maps to $x$. For any $a,b$, the quadruple $(\rho, \mu^a, \mu^b, \mu^a \cap \mu^b)$ is a square, so that $f_{\rho, \mu^a} f_{\mu^a, \mu^a \cap \mu^b} = -f_{\rho, \mu^b} f_{\mu^b, \mu^a \cap \mu^b}$. By Proposition~\ref{prop:injhomsets2}, these maps are non-zero on each $L_\nu$, so in particular, there exists a unique $y \in Q_\rho$ such that $f_{\rho, \mu^a}(y) = x_a$ and $f_{\rho, \mu^b}(y) = x_b$. By uniqueness of $y$, and the fact that this is true for all $a,b$, we conclude that $y \mapsto x$ under the differential $\bI^{j-1} \to \bI^j$, so we are done with acyclicity.

The length of the resolution $\bI^\bullet$ is $\ell(\lambda)$, so we have just shown that the injective dimension of $L_\lambda$ is at most $\ell(\lambda)$. Let $\alpha = (\lambda_1 - 1, \dots, \lambda_{\ell(\lambda)} - 1)$. By Proposition~\ref{HomLQ}, we get $\Hom_K(L_\alpha, \bI^j) = 0$ for $j<\ell(\lambda)$ and $\Hom_K(L_\alpha, \bI^{\ell(\lambda)}) \cong \bC$. In particular, $\Ext^{\ell(\lambda)}_K(L_\alpha, L_\lambda) \cong \bC$, so the injective dimension of $L_\lambda$ is at least $\ell(\lambda)$, and hence it is exactly $\ell(\lambda)$.
\end{proof}

The idea for the above proof was loosely modeled on \cite[\S 10]{bgg}.

\begin{corollary} \label{cor:extsimples}
$\Ext^d_K(L_\lambda, L_\mu) \cong \begin{cases} \bC & \textrm{if $\mu / \lambda \in \VS_d$}\\ 0 & \text{otherwise} \end{cases}$
\end{corollary}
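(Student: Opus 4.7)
The plan is to compute $\Ext^d_K(L_\lambda, L_\mu)$ directly from the injective resolution of $L_\mu$ produced in Theorem~\ref{thm:BGGresolution}. Namely, I will apply $\Hom_K(L_\lambda, -)$ to the complex $\bI^\bullet$ with $\bI^j = \bigoplus_{\nu,\ \mu/\nu \in \VS_j} Q_\nu$, and then read off cohomology.

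First I would observe, using Proposition~\ref{HomLQ}, that $\Hom_K(L_\lambda, Q_\nu)$ is one-dimensional when $\lambda = \nu$ and zero otherwise. Applied summand-by-summand to $\bI^j$, this gives
\[
\Hom_K(L_\lambda, \bI^j) \cong \begin{cases} \bC & \text{if } \mu/\lambda \in \VS_j, \\ 0 & \text{otherwise.} \end{cases}
\]
Next I would note that for a fixed pair $(\lambda,\mu)$, the condition $\mu/\lambda \in \VS_j$ can hold for at most one value of $j$, namely $j = |\mu| - |\lambda|$ (and only when $\mu/\lambda$ is a vertical strip at all). Hence in the complex $\Hom_K(L_\lambda, \bI^\bullet)$ there is at most one nonzero term, so all differentials are forced to vanish and the cohomology equals the complex itself in each degree.

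Putting these two observations together yields the claim: if $\mu/\lambda \in \VS_d$, then $\Hom_K(L_\lambda,\bI^d) \cong \bC$ while neighboring terms vanish, so $\Ext^d_K(L_\lambda,L_\mu) \cong \bC$; and if $\mu/\lambda \notin \VS$ (or if it is a vertical strip of a different size than $d$), then the relevant term in the Hom complex is zero, so $\Ext^d_K(L_\lambda,L_\mu) = 0$.

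There is essentially no obstacle here beyond setting up the computation correctly; all the real work has been done in Theorem~\ref{thm:BGGresolution} and Proposition~\ref{HomLQ}. The only subtlety worth a sentence is observing that $|\mu|-|\lambda|$ is uniquely determined by $\lambda$ and $\mu$, so we never have to check that the differentials in $\Hom_K(L_\lambda,\bI^\bullet)$ actually vanish on a nontrivial chain—they do so vacuously.
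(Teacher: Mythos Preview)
Your proof is correct and follows exactly the approach the paper intends: compute $\Ext$ by applying $\Hom_K(L_\lambda,-)$ to the injective resolution of Theorem~\ref{thm:BGGresolution} and use Proposition~\ref{HomLQ} to see that at most one term of the resulting complex is nonzero. The paper's proof is simply the one-line ``This follows from Proposition~\ref{HomLQ},'' of which your argument is a fully spelled-out version.
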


\begin{proof}
This follows from Proposition~\ref{HomLQ}.
\end{proof}

\begin{corollary} \label{cor:finiteinj}
Every object in $\Mod_K$ has finite injective dimension.
\end{corollary}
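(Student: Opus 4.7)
The plan is to induct on the length of $M$, which is finite by Corollary~\ref{cor:modKfinitelength}. Every object of $\Mod_K$ has only finitely many composition factors, each isomorphic to some $L_\lambda$ by Corollary~\ref{cor:allsimples}, and by Theorem~\ref{thm:BGGresolution} each $L_\lambda$ has injective dimension $\ell(\lambda) < \infty$. So the base case is immediate.

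For the inductive step, pick a simple subobject $L \hookrightarrow M$ (which exists since $M$ is of finite length) and consider the short exact sequence $0 \to L \to M \to M/L \to 0$. Both $L$ and $M/L$ have strictly smaller length than $M$, so by induction both have finite injective dimension. The long exact sequence for $\Ext_K^\bullet(N, -)$ applied to this short exact sequence gives, for any $N \in \Mod_K$ and any $n$,
\[
\Ext^n_K(N, L) \to \Ext^n_K(N, M) \to \Ext^n_K(N, M/L),
\]
from which we read off $\mathrm{inj.dim}(M) \le \max(\mathrm{inj.dim}(L),\, \mathrm{inj.dim}(M/L))$, which is finite. In fact, iterating this argument through a composition series of $M$ with factors $L_{\lambda^{(1)}}, \dots, L_{\lambda^{(r)}}$ yields the explicit bound $\mathrm{inj.dim}(M) \le \max_i \ell(\lambda^{(i)})$.

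There is no real obstacle here: Theorem~\ref{thm:BGGresolution} already does all the heavy lifting, and the corollary is a straightforward bookkeeping argument combining it with the finite-length property (Corollary~\ref{cor:modKfinitelength}) and the standard long-exact-sequence estimate for injective dimension.
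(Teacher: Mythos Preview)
Your proof is correct and is precisely the argument the paper has in mind: the corollary is stated without proof immediately after Theorem~\ref{thm:BGGresolution}, and the intended reasoning is exactly to combine the finite injective dimension of each $L_\lambda$ with the finite-length property of $\Mod_K$ via the long exact sequence, as you do. Your explicit bound $\mathrm{inj.dim}(M) \le \max_i \ell(\lambda^{(i)})$ is a nice addition.
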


\subsection{\texorpdfstring{$\rK$-theory of $\Mod_K$}{K-theory of ModK}}
\label{ss:modKKtheory}

We now describe the Grothendieck group of $\Mod_K$, and several structures on it. For an abelian category $\cA$, we use $\rK(\cA)$ to denote the free abelian group generated by symbols $[M]$ for each object $M \in \cA$ modulo the relations $[M_2] = [M_1] + [M_3]$ for every short exact sequence $0 \to M_1 \to M_2 \to M_3 \to 0$. We will also use the notation $\rK(\cA)_\bQ = \rK(\cA) \otimes \bQ$. 

There is an action of $\rK(\cV_{\rf})$ on $\rK(\Mod_A)$ via tensor product, and this descends to an action of $\rK(\cV_{\rf})$ on $\rK(\Mod_K)$. 

\begin{proposition} \label{prop:cartanmatrix}
The group $\rK(\Mod_K)$ has for a basis the elements $[L_{\lambda}]$. It has another basis consisting of the elements $[Q_\lambda]$. As a module over $\rK(\cV_{\rf})$, it is free of rank $1$ and spanned by $[Q_0] = [L_0]$.
We have the following change of basis matrices in $\rK(\Mod_K)$:
\begin{align*}
[Q_\lambda] &= \sum_{\mu,\, \lambda / \mu \in \HS} [L_\mu]\\
[L_\mu] &= \sum_{\nu,\, \mu / \nu \in \VS} (-1)^{|\mu|-|\nu|} [Q_\nu].
\end{align*}
\end{proposition}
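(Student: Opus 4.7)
The claim has several parts, and the plan is to stitch them together from facts already established.

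First, I would note that by Corollaries~\ref{cor:modKfinitelength} and \ref{cor:allsimples}, every object of $\Mod_K$ has finite length and every simple object is of the form $L_\lambda$ for a uniquely determined partition $\lambda$. The Jordan--Hölder theorem then immediately yields that $\{[L_\lambda]\}$ is a $\bZ$-basis of $\rK(\Mod_K)$.

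Next, I would establish the first change-of-basis formula. By Corollary~\ref{Qlambda-gr}, $Q_\lambda$ has a finite composition series whose simple factors are the $L_\mu$ with $\lambda/\mu \in \HS$. The multiplicities are all $1$, since Pieri's formula (together with the filtration of $A \otimes \bS_\lambda$ in Proposition~\ref{prop:filtration}) tells us that $A \otimes \bS_\lambda$ is multiplicity-free as an object of $\cV$, and each $\bS_{(d,\mu)}$ occurring in $A \otimes \bS_\lambda$ lies in a unique $L_\mu^{\ge \lambda_1}$ subquotient. This gives $[Q_\lambda] = \sum_{\lambda/\mu \in \HS} [L_\mu]$ in $\rK(\Mod_K)$.

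The second change-of-basis formula then comes directly from Theorem~\ref{thm:BGGresolution}: the finite injective resolution
\[
0 \to L_\lambda \to \bI^0 \to \bI^1 \to \cdots \to \bI^{\ell(\lambda)} \to 0, \qquad \bI^j = \bigoplus_{\lambda/\nu \in \VS_j} Q_\nu,
\]
gives
\[
[L_\lambda] = \sum_{j \ge 0} (-1)^j [\bI^j] = \sum_{\nu,\, \lambda/\nu \in \VS} (-1)^{|\lambda|-|\nu|} [Q_\nu]
\]
in $\rK(\Mod_K)$. Since this expresses each $[L_\lambda]$ as a finite integer combination of $[Q_\nu]$'s, and the two formulas must be mutually inverse (they are inverse integer matrices, as follows formally from the fact that the $[L_\mu]$ form a basis and both transitions are finite), the $\{[Q_\lambda]\}$ also form a $\bZ$-basis of $\rK(\Mod_K)$.

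Finally, for the module structure, $\rK(\cV_\rf)$ has $\bZ$-basis $\{[\bS_\lambda]\}$. Since $Q_0 = T(A)$ and tensor product in $\Mod_K$ is induced from that in $\Mod_A$, we have
\[
[\bS_\lambda]\cdot[Q_0] = [T(A \otimes \bS_\lambda)] = [Q_\lambda].
\]
As the $[Q_\lambda]$ form a basis of $\rK(\Mod_K)$, this shows $\rK(\Mod_K)$ is free of rank one over $\rK(\cV_\rf)$ on the generator $[Q_0]$. The identity $[Q_0]=[L_0]$ is the special case $\lambda = \varnothing$ of the first change-of-basis formula (the only $\mu \subseteq \varnothing$ is $\varnothing$ itself). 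The only step requiring any real care is verifying that the $[Q_\lambda]$ span; the cleanest way is the one above via the BGG-type resolution, which makes both the spanning property and the explicit inverse matrix fall out simultaneously, rather than proving an abstract combinatorial identity between horizontal- and vertical-strip sums.
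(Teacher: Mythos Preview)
Your proof is correct and follows essentially the same approach as the paper: both invoke Corollaries~\ref{cor:modKfinitelength} and~\ref{cor:allsimples} for the $[L_\lambda]$ basis, Proposition~\ref{prop:filtration} (via Corollary~\ref{Qlambda-gr}) for the first formula, the Euler characteristic of the resolution in Theorem~\ref{thm:BGGresolution} for the second, and the identity $[\bS_\lambda]\cdot[Q_0]=[Q_\lambda]$ for freeness over $\rK(\cV_\rf)$. The only cosmetic difference is that the paper phrases the $[Q_\lambda]$-basis step as ``upper unitriangular change of basis,'' whereas you argue via mutual invertibility of the two transition matrices; these amount to the same thing.
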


\begin{proof}
The first formula comes from Proposition~\ref{prop:filtration}. The second formula is the Euler characteristic of the injective resolution of $L_\lambda$ given in Theorem~\ref{thm:BGGresolution}. The fact that the $[L_\lambda]$ form a basis is immediate from Corollary~\ref{cor:modKfinitelength} and Corollary~\ref{cor:allsimples}. The fact that the $[Q_\lambda]$ form a basis follows since we can relate it to the $[L_\lambda]$ basis via an upper unitriangular change of basis matrix given by the formulas above. The freeness over $\rK(\cV_{\rf})$ follows from the identity $[\bS_{\lambda}][Q_0]=[Q_{\lambda}]$.
\end{proof}

\begin{remark} \label{rmk:changeofbasis}
We can prove directly that the change of basis matrices in Proposition~\ref{prop:cartanmatrix} are inverse to one another. Given $\nu \subseteq \lambda$, this is equivalent to showing that
\begin{align} \label{eqn:stripid}
\sum_{\mu,\, \lambda / \mu \in \HS,\, \mu / \nu \in \VS} (-1)^{|\mu|-|\nu|} = \delta_{\lambda, \nu}
\end{align}
where $\delta$ means Kronecker delta. We can prove this identity as follows. Let $h_d(x) = {\rm char}(\Sym^d)$ and $e_d(x) = {\rm char}(\bigwedge^d)$ and define
\begin{align*}
H(t) &= \sum_{d \ge 0} h_d(x) t^d = \prod_{i \ge 1} \frac{1}{1-tx_i}\\
E(t) &= \sum_{d \ge 0} e_d(x) t^d = \prod_{i \ge 1} (1+tx_i).
\end{align*}
Then clearly $H(t)E(-t) = 1$. But the left hand side of \eqref{eqn:stripid} is the coefficient of the Schur function $s_\lambda$ in $s_\nu H(t) E(-t)$ (here we are using both forms of Pieri's formula from \S\ref{sec:background}), so we are done.
\end{remark}

\begin{remark}
\label{rmk:modAmodKsym}
In $\Mod_A$, the simple objects are given by $\bS_\lambda$, while the projective objects are $[A \otimes \bS_\lambda]$, and we have the following change of basis matrices:
\begin{align*}
[A \otimes \bS_\lambda] &= \sum_{\mu,\, \mu / \lambda \in \HS} [\bS_\mu]\\
[\bS_\mu] &= \sum_{\nu,\, \nu / \mu \in \VS} (-1)^{|\nu|-|\mu|} [A \otimes \bS_\nu].
\end{align*}
We point out that since these are infinite sums, they do not literally hold in $\rK(\Mod_A)$. It is curious that the rule is exactly the same as the one in Remark~\ref{rmk:changeofbasis}, except that we {\it add} horizontal/vertical strips rather than {\it remove} them. In fact, the existence of such a symmetry can be deduced from properties of the Fourier transform defined in \S \ref{sec:fourier}.
\end{remark}

\begin{remark}[The pairing on $\rK$-theory]
\label{rmk:modKpairing}
There is a natural pairing
\begin{displaymath}
\langle\, , \rangle \colon \rK(\Mod_K) \otimes \rK(\Mod_K) \to \bZ, \qquad
\langle [M], [N] \rangle=\chi(\Ext^*(M, N)),
\end{displaymath}
where $\chi$ denotes Euler characteristic.  We have
\begin{align*}
\langle Q_{\lambda}, Q_{\mu} \rangle &= \begin{cases}
1 & \textrm{if $\lambda/\mu \in \HS$} \\
0 & \textrm{if not}
\end{cases}
&
\langle L_{\lambda}, Q_{\mu} \rangle &= \delta_{\lambda,\mu}
\\
\langle L_\lambda, L_\mu \rangle &= \begin{cases} (-1)^{|\mu| - |\lambda|} & \textrm{if $\mu / \lambda \in \VS$} \\
0 & \textrm{if not} 
\end{cases}
&
\langle Q_{\lambda}, L_{\mu} \rangle &= \sum_{\nu,\ \lambda/\nu \in \HS,\ \mu/\nu \in \VS} (-1)^{\vert \mu \vert-\vert \nu \vert}
\end{align*}
The top left formula follows from Proposition~\ref{prop:injhomsets1}; the bottom left from Corollary~\ref{cor:extsimples}; the top right is immediate; and the bottom right follows from Theorem~\ref{thm:BGGresolution} and Proposition~\ref{prop:injhomsets1}. Furthermore, $\langle Q_\lambda, L_\mu \rangle \in \{-1,0,1\}$, but we will wait until Remark~\ref{rmk:modKpairing2} to explain this as more combinatorial notions need to be introduced first. Consider the involution $F \colon [Q_{\lambda}] \leftrightarrow (-1)^{\vert \lambda \vert} [L_{\lambda^{\dag}}]$. Then these formulas have the symmetry $\langle M, N \rangle = \langle F(N), F(M) \rangle$. In \S \ref{ss:modKfourier}, we will see that this map on $\rK$-theory is induced by a version of the Fourier transform on $\rD^b(\Mod_K)$.
\end{remark}

\begin{remark}[The ring structure on $\rK$-theory] \label{rem:tensorK}
The tensor product on $\Mod_A$ descends to define a tensor product on $\Mod_K$, which is exact. (Exactness can be proven as follows. It is clear that $\Tor_K^i(Q_{\lambda}, -)=0$. Now use the exact sequence $0 \to L_{\lambda} \to Q_{\lambda} \to Q_{\lambda}/L_{\lambda} \to 0$ and the fact that the constituents of $Q_{\lambda}/L_{\lambda}$ are $L_{\mu}$ with $\mu$ smaller than $\lambda$ to inductively prove $\Tor_K^i(L_{\lambda}, -)=0$ for all $\lambda$.) It follows that $\rK(\Mod_K)$ has the structure of a ring. The map $\phi \colon \rK(\cV_{\rf}) \to \rK(\Mod_K)$ given by $\bS_{\lambda} \mapsto Q_{\lambda}$ is a ring isomorphism.

Since $\phi$ is a ring isomorphism, the tensor product of classes of injective objects in $\rK(\Mod_K)$ is computed using the Littlewood--Richardson rule, that is
\begin{displaymath}
[Q_{\lambda}] [Q_{\mu}] = \sum_{\nu} c_{\lambda,\mu}^{\nu} [Q_{\nu}],
\end{displaymath}
where $c$ denotes the Littlewood--Richardson coefficient.  (In fact, this even holds before passing to $\rK$-theory.)  Somewhat surprisingly, the product of classes of simple objects in $\rK$-theory is also computed with the Littlewood--Richardson rule.  We now show this.  Since $\phi$ is a ring isomorphism, we can calculate with the $[Q_\lambda]$ using Schur functions $s_\lambda$. From Proposition~\ref{prop:cartanmatrix}, we see that $[L_\lambda]$ becomes $\sum_{d \ge 0} (-1)^d s_{1^d}^\perp s_\lambda$ where $s_{1^d}^\perp$ is the skewing operator ($s_\lambda^\perp$ is the adjoint to multiplication by $s_\lambda$ with respect to the inner product for which the Schur functions are orthonormal). In general, we have the identity
\[
s_\eta^\perp(fg) = \sum_{\xi, \theta} c^\eta_{\xi, \theta} s_\xi^\perp(f) s_\theta^\perp(g)
\]
\cite[Example I.5.25(d)]{macdonald}. Hence we get the identity
\[
[L_\lambda][L_\mu] = \Big( \sum_{d \ge 0} (-1)^d s_{1^d}^\perp s_\lambda \Big)\Big( \sum_{d \ge 0} (-1)^d s_{1^d}^\perp s_\mu \Big) = \sum_{d \ge 0} (-1)^d s_{1^d}^\perp(s_\lambda s_\mu) = \sum_\nu c_{\lambda, \mu}^\nu [L_\nu]. \qedhere
\]
\end{remark}

\subsection{\texorpdfstring{The equivalence of $\Mod_K$ with $\Mod_A^{\tors}$}{The equivalence of ModK with ModAtors}}
\label{ss:modKmodAequiv}

We now give an equivalence between $\Mod_K$ and $\Mod_A^{\tors}$.  For clarity, let $V$ and $W$ be two copies of $\bC^{\infty}$ (in particular, we have a canonical identification $V=W$), let $A=\Sym(V)$ and $A'=\Sym(W)$, and let $\Mod_K$ be the usual quotient of $\Mod_A$.  Put $\cK=\Sym(V) \otimes \Sym(V \otimes W)$.  Then $\cK$ is an $A$-module, via the first factor, and an $A'$-comodule, via the maps
\begin{align*}
\Sym(V) \otimes \Sym(V \otimes W)
&\to \Sym(V) \otimes \Sym(V \otimes W) \otimes \Sym(V \otimes W) \\
&\to \Sym(V) \otimes \Sym(V \otimes W) \otimes \Sym(V) \otimes \Sym(W)\\
&\to \Sym(V) \otimes \Sym(V \otimes W) \otimes \Sym(W),
\end{align*}
where the first map uses the comultiplication on $\Sym(V \otimes W)$, the second is the natural map coming from the Segre embedding $\bP(V) \times \bP(W) \subset \bP(V \otimes W)$, and the third uses multiplication on $\Sym(V)$. To check that this is a comodule structure, we use the geometric interpretation of this map. Note that $\Sym(V)$ is the ring of functions on $V^*$ (to avoid technicalities, we note that to check the comodule structure on any given graded piece, we may replace $V$ and $W$ with finite dimensional vector spaces of large enough dimension, so we may use this to take duals). Then the above map corresponds to the map
\begin{align*}
V^* \times (V \otimes W)^* \times W^* &\to V^* \times (V \otimes W)^*\\
(x, y, z) &\mapsto (x, y + (x \otimes z)).
\end{align*}
The fact that we get an $A'$-comodule follows from the identity 
\[
(x, y + (x \otimes (z_1 + z_2))) = (x, y + (x \otimes z_1) + (x \otimes z_2)).
\]

We obtain functors
\begin{displaymath}
\Phi' \colon\Mod_A \to \Mod_{A'}, \qquad M \mapsto \Hom_A(M, \cK)^{\vee}
\end{displaymath}
and
\begin{displaymath}
\Psi' \colon\Mod_{A'} \to \Mod_A, \qquad M \mapsto \Hom_{A'}(M^{\vee}, \cK).
\end{displaymath}
Note that the $\Hom$ in the definition of $\Psi'$ is as $A'$-comodules. In \S\ref{ss:saturated}, we will show that $\cK$ is ``saturated'' as an $A$-module. This implies that we can find a functor $\Phi \colon \Mod_K \to \Mod_{A'}$ such that $\Phi'=\Phi T$. We show in the proof below that $\Phi$ takes values in $\Mod_{A'}^{\tors}$. We let $\Psi \colon \Mod_{A'}^{\tors} \to \Mod_K$ be the restriction of $T \Psi'$ to $\Mod_{A'}^{\tors}$.

\begin{theorem} \label{thm:equivmodKmodA}
The functors $\Phi \colon \Mod_K \to \Mod_{A'}^\tors$ and $\Psi \colon \Mod_{A'}^\tors \to \Mod_K$ are mutually quasi-inverse equivalences of categories.
\end{theorem}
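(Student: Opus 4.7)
The plan is to establish the equivalences by checking that the unit and counit of an adjunction between $\Phi$ and $\Psi$ are isomorphisms, and to reduce this check to the injective generators on both sides. The approach splits into four stages.

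First, I would ensure $\Phi$ is well-defined as a functor into $\Mod_{A'}^{\tors}$. Factoring $\Phi'$ through the localization $T$ requires the saturation of $\cK$ as an $A$-module, which is to be proved in \S\ref{ss:saturated}; that the image lands in the torsion subcategory requires a direct computation on the projective generators $A \otimes \bS_\lambda$. The Cauchy decomposition $\Sym(V \otimes W) = \bigoplus_\nu \bS_\nu(V) \otimes \bS_\nu(W)$ yields an $A$-module decomposition
\[
\cK \;=\; \bigoplus_\nu \bigl(A \otimes \bS_\nu(V)\bigr) \otimes \bS_\nu(W),
\]
and hence $\Hom_A(A \otimes \bS_\lambda, \cK)^{\vee}$ can be identified explicitly. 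I expect the result to be the indecomposable injective envelope of $\bS_\lambda(W)$ inside $\Mod_{A'}^{\tors}$ (the torsion analogue of $Q_\lambda$), and a parallel computation should identify $\Psi$ of this object back with $Q_\lambda$.

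Second, I would establish a natural adjunction $\Hom_{A'}(\Phi(M), N) \cong \Hom_K(M, \Psi(N))$ by formal manipulation of the defining $\Hom$'s against $\cK$, using the canonical double-duality $V \cong V^{\vee\vee}$ in the graded-finite setting and the symmetry of $\cK$ with respect to the $A$- and $A'$-structures (after the obvious identification $V = W$). Third, I would verify that both $\Phi$ and $\Psi$ are exact on the relevant subcategories: exactness of $\Phi$ follows from $\cK$ playing the role of an injective cogenerator in the saturated setting, while exactness of $\Psi$ on $\Mod_{A'}^{\tors}$ follows from the exactness of $(-)^\vee$ on graded-finite objects together with injectivity properties of $\cK$ as a comodule.

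Fourth, I would conclude by showing the unit $\id \to \Psi\Phi$ and counit $\Phi\Psi \to \id$ are isomorphisms on the injective generators (by the explicit computation from the first stage), and then extending this to all objects via exactness and the fact that every object of $\Mod_K$ embeds into a sum of $Q_\lambda$'s (Corollary~\ref{cor:indecinj}), with an analogous injective embedding in $\Mod_{A'}^{\tors}$. A two-step resolution by such injectives combined with the five-lemma propagates the isomorphism to all objects on both sides.

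The main obstacle will be managing the three distinct dualities in play simultaneously: the functor $(-)^{\vee}$ on $\cV$, the passage between $A'$-modules and $A'$-comodules, and the transposition that swaps the $A$- and $A'$-roles of the two tensor factors in $\cK$. In particular, correctly identifying $\Phi(Q_\lambda)$ with a named object of $\Mod_{A'}^{\tors}$ and verifying that the resulting matching of injective generators is compatible with the natural transformations produced by adjunction is where the Cauchy identity plays its pivotal role and where the bulk of the bookkeeping lies.
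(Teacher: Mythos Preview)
Your approach differs from the paper's in strategy: you target the injective generators $Q_\lambda$ and seek an adjunction plus full exactness, while the paper targets the simple objects $L_\lambda$ and uses only right-exactness together with natural ``double-dual'' maps $\id \to \Phi\Psi$ and $\id \to \Psi\Phi$ (note both point inward, so they are not the unit and counit of any single adjunction). The paper's route is lighter for one specific reason: a simple $\GL(W)$-representation admits a unique $A'$-module structure, so once one computes $\Phi(L_\lambda) = \bS_\lambda(W)$ in $\cV$ (immediate from Cauchy and Proposition~\ref{HomLQ}) the module structure is forced with no further bookkeeping; the substantive work is then the computation $\Psi(\bS_\lambda) = L_\lambda$, after which right-exactness (automatic, since each functor is a contravariant $\Hom$ composed with the exact duality $(-)^\vee$) lets one check the natural transformations on simples and conclude. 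Your route through $Q_\lambda$ is viable in principle, but it requires matching $\Phi(Q_\lambda)$ with the torsion injective $I_\lambda$ \emph{as $A'$-modules} rather than merely as objects of $\cV$, which is precisely the bookkeeping you flag as the main obstacle; the adjunction you propose would need its own verification; and your exactness claim for $\Psi$ rests on comodule-injectivity of $\cK$ that you assert without proof. None of this is fatal, but the paper's simple-object shortcut sidesteps all of it.
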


\begin{proof}
We first show that $\Phi$ takes values in $\Mod_{A'}^{\tors}$. The Cauchy formula \cite[(3.13)]{expos} gives
\begin{displaymath}
T(\cK)=\bigoplus_{\lambda} Q_{\lambda} \otimes \bS_{\lambda}(W)
\end{displaymath}
as a $\GL(W)$-equivariant object of $\Mod_K$. Let $M$ be an object of $\Mod_A$. Since $T(M)$ has finite length in $\Mod_K$, it has only finitely many constituents, and so $\Hom(T(M), Q_{\lambda})$ is non-zero for only finitely many $\lambda$, by Lemma~\ref{hom-constituent} below. As $\Phi(M)=\Hom_{\Mod_K}(T(M), T(\cK))$, we see that $\Phi(M)$ has finite length as a representation of $\GL(W)$, and is thus a torsion $A'$-module.

We now compute what $\Phi$ does to simple objects. We have $\Phi(L_{\lambda})=\Hom_{\Mod_K}(L_{\lambda}, T(\cK))$. By the above formula and another application of Lemma~\ref{hom-constituent}, we find $\Phi(L_{\lambda})=\bS_{\lambda}(W)$ as a representation of $\GL(W)$. Since this is simple as a representation of $\GL(W)$, there is only one possible $A'$-module structure on it. We therefore see that $\Phi$ sends the simple object $L_{\lambda}$ to the simple $A'$-module $\bS_{\lambda}(W)$.

We now compute what $\Psi$ does to simple objects. Let $\bS_\lambda(W)$ be a simple module over $\Sym(W)$. Write $(-)^{\vee}$ for the duality on $\cV^{\otimes 2}$ (=category of polynomial representations of $\GL(V) \times \GL(W)$) induced by $(-)^{\vee}$ on each factor. Via this duality map $(-)^\vee$, an $A'$-comodule map $M^\vee \to \cK$ is the same as an $A'$-module map $\cK^\vee \to M$, that is, we have a natural isomorphism of $A$-modules
\begin{displaymath}
\hom_{A'}(M^\vee,\cK) = \hom_{A'}(\cK^\vee,M)
\end{displaymath}
Then $\hom_{A'}(\cK^\vee, \bS_\lambda(W))$ is the $\bS_\lambda(W)$-isotypic component of $(\cK^\vee / \fm_W\cK^\vee)^\vee$ where $\fm_W$ is the maximal ideal of $\Sym(W)^\vee$. Note that the $\bS_\lambda(W)^\vee$-isotypic component of $\cK^\vee$ is $\Sym(V)^\vee \otimes \bS_\lambda(V)^\vee$, so we need to calculate the quotient of $\Sym(V)^\vee \otimes \bS_\lambda(V)^\vee$ by its intersection with the $\Sym(W)$-submodule generated by $\Sym(V)^\vee \otimes \bS_\mu(V)^\vee$ for $|\mu| < |\lambda|$. Pick $\mu \subset \lambda$ with $|\mu| = |\lambda|-1$. Let $W \subset \Sym(W)$ be the degree $1$ part. Then the multiplication of $A'$ on $\cK^{\vee}$ is described by
\[
W^\vee \otimes \Sym^d(V)^\vee \otimes \bS_\mu(V)^\vee \otimes \bS_\mu(W)^\vee \to \Sym^{d-1}(V)^\vee \otimes (V \otimes W)^\vee \otimes \bS_\mu(V)^\vee \otimes \bS_\mu(W)^\vee,
\]
where we use comultiplication on $\Sym^d(V)^\vee$. The image of the multiplication $(V \otimes W)^\vee \otimes \bS_\mu(V)^\vee \otimes \bS_\mu(W)^\vee$ contains $\bS_\lambda(V)^\vee \otimes \bS_\lambda(W)^\vee$ (see for example \cite[\S 4]{dep}). In particular, the $\bS_\mu(W)^\vee$-isotypic component has a non-zero image in the $\bS_\lambda(W)^\vee$-isotypic component under multiplication. Now $T(\Sym(V) \otimes \bS_\mu(V))^\vee$ is generated by $T(L^0_\mu)^\vee$, so the image of $(L^0_\mu)^\vee$ under multiplication by $W^\vee$ is non-zero in the $\bS_\lambda(W)^\vee$-isotypic component. From this, we conclude that the quotient of $\Sym(V)^\vee \otimes \bS_\lambda(V)^\vee$ by the image of multiplication by $W^\vee$ is $(L^0_\lambda)^\vee$. So $\hom_A(M^\vee, \cK) = L^0_\lambda$.

We now show that $\Phi$ and $\Psi$ are mutually quasi-inverse. There are injective natural transformations $\id \to \Phi \Psi$ and $\id \to \Psi \Phi$.  Since $\Psi$ and $\Phi$ are both right exact, to check that these maps are isomorphisms it is enough to do so on simple objects, and this follows from the explanation above.
\end{proof}

\begin{remark}
Both $\Mod_A^{\tors}$ and $\Mod_K$ have tensor structures (see Remark~\ref{rem:tensorK}), but they are not equivalent as tensor categories.  To see this, simply note that no object of $\Mod_A^{\tors}$ is flat, but the injective objects of $\Mod_K$ are flat.
\end{remark}


\section{\texorpdfstring{Quiver descriptions of $\Mod_K$ and $\Mod_A^{\tors}$}{Quiver descriptions of ModK and ModAtors}}
\label{sec:quiver}

As we have seen in \S\ref{ss:modKmodAequiv}, the categories $\Mod_K$ and $\Mod_A^{\tors}$ are equivalent. It turns out that there is a third way to describe these categories which is also useful: as the category of representations of an explicitly described locally finite quiver with relations.

We first abstract the important properties of $\Mod_K$ in the concept of a facile abelian category. The main result of \S\ref{ss:facile} is a complete classification of such categories. The data that controls them is an ordered set together with a cohomology class in its nerve. We also give a general criterion for the nerve to be contractible (so that all cohomological data is trivial). This theory is applied in \S\ref{ss:quivermodK} to describe both $\Mod_K$ and $\Mod_A^{\tors}$ as representations of a combinatorially defined quiver.

\subsection{Facile abelian categories} \label{ss:facile}

In this section, we completely classify a certain class of abelian categories.  This result is purely abstract, and will be applied in the next section to the categories of interest.

We begin by recalling some terminology. Let $\cA$ be an abelian category in which every object has finite length. Let $M$ be an object of $\cA$. A {\bf composition series} of $M$ is a chain of subobjects $0=F^0M \subset F^1 M \subset \cdots \subset F^nM=M$ such that $F^{i+1}M/F^iM$ is a simple object for each $0 \le i < n$. Every object has a composition series, but they are typically far from unique. For a simple object $L$, we define the {\bf multiplicity} of $L$ in $M$ to be the number of indices $i$ such that $F^{i+1}M/F^iM$ is isomorphic to $L$. This is independent of the choice of composition series by the Jordan--H\"older theorem. We say that $M$ is {\bf multiplicity-free} if every simple object has multiplicity 0 or 1 in $M$. We say that a simple object is a {\bf constituent} of $M$ if it has non-zero multiplicity.

Fix an algebraically closed field $\bk$.  

\begin{definition}
\label{defn:facile}
A $\bk$-linear abelian category is {\bf facile} if it satisfies the following conditions:
\begin{enumerate}[(a)]
\item It is essentially small, it has enough injectives, every object has finite length, and all $\Hom$ spaces are finite dimensional over $\bk$.
\item Indecomposable injectives are multiplicity-free.
\item For any non-zero map $f \colon I \to I'$ of indecomposable injective objects, the simple constituents of $\ker(f)$ are precisely the constituents of $I$ which are not constituents of $I'$. \qedhere
\end{enumerate}
\end{definition}

The finiteness conditions in (a) can be relaxed, but the categories of interest to us all satisfy these conditions.  Suppose $\cA$ is facile.  Let $S=S(\cA)$ denote the set of isomorphism classes of simple objects (which could be infinite).  For $i \in S$ write $L_i$ for the corresponding simple and $Q_i$ for the injective envelope of $L_i$.  Recall that the {\bf Cartan matrix} of $\cA$ is the $S \times S$ matrix whose entry at $(i, j)$ is the multiplicity of $L_i$ in $Q_j$.  Since $\cA$ is facile, the entries of the Cartan matrix are all 0 or 1.  We prefer to record this data in a slightly different form.  Define $i \le j$ if $L_i$ is a constituent of $Q_j$, i.e., if the $(i,j)$ entry of the Cartan matrix is 1.  We call $\le$ the {\bf Cartan relation}.  It is reflexive and, as we show below, antisymmetrical (i.e., $x \le y$ and $y \le x$ implies $x=y$), but not necessarily transitive.  It is also downwards finite, i.e., for any $j \in S$ there are only finitely many $i \in S$ for which $i \le j$. We write $i_1 \le \cdots \le i_n$ to mean $i_a \le i_b$ for all $a \le b$.

\begin{lemma}
\label{hom-constituent}
Suppose $M$ is an object of $\cA$ which does not admit $L_i$ as a constituent. Then $\Hom(M, Q_i)=0$.
\end{lemma}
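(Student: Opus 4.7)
The plan is to use only the fact that $Q_i$ is the injective envelope of $L_i$, together with the finite-length hypothesis. So I would argue by contradiction as follows.

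Suppose there were a nonzero morphism $f \colon M \to Q_i$. Then $f(M)$ is a nonzero subobject of $Q_i$. Since $Q_i$ is by definition the injective envelope of $L_i$, the inclusion $L_i \hookrightarrow Q_i$ is essential, so every nonzero subobject of $Q_i$ has nonzero intersection with $L_i$. In particular, $f(M) \cap L_i \ne 0$, and because $L_i$ is simple this forces $L_i \subseteq f(M)$. Hence $L_i$ is a constituent of $f(M)$, which is a quotient of $M$, and so $L_i$ appears in any composition series of $M$ (concatenate composition series of $\ker f$ and of $f(M) \cong M/\ker f$). This contradicts the hypothesis that $L_i$ is not a constituent of $M$.

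The only thing to verify is that the notion of injective envelope behaves as expected in a facile category, i.e., that the embedding $L_i \hookrightarrow Q_i$ is essential in the usual sense; this is standard in any abelian category with enough injectives in which every object has finite length, so it follows from condition (a) of Definition~\ref{defn:facile}. No use is made of conditions (b) or (c) here, which is reassuring since the lemma is really a general statement about socles of injective envelopes. I do not anticipate any substantive obstacle; the argument is essentially a one-line observation, and the only mild care needed is to ensure that ``constituent of a quotient is a constituent of the whole'' is invoked correctly via the Jordan--H\"older theorem, which holds by (a).
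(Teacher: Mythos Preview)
Your proof is correct and follows essentially the same approach as the paper. The paper defers to the proof of Lemma~\ref{zerohom}, which argues that a nonzero map $f \colon M \to Q_i$ has image containing $\soc(Q_i)=L_i$, forcing $L_i$ to be a constituent of $M$; your phrasing via essentiality of the inclusion $L_i \hookrightarrow Q_i$ in the injective envelope is the same observation in slightly different language.
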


\begin{proof}
The proof is exactly the same as that of Lemma~\ref{zerohom}.
\end{proof}

\begin{lemma}
\label{lem:facileQhom}
$\Hom(Q_j, Q_i) \cong \begin{cases} \bk & \textrm{if $i \le j$} \\ 0 &  \textrm{otherwise} \end{cases}$.
\end{lemma}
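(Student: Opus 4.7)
The plan is to mimic the argument of Proposition~\ref{prop:injhomsets1}, since condition (c) of Definition~\ref{defn:facile} was precisely designed to make that proof work abstractly. The case $i \not\le j$ is immediate: by definition of the Cartan relation, $L_i$ is not a constituent of $Q_j$, so Lemma~\ref{hom-constituent} gives $\Hom(Q_j, Q_i)=0$. So the work is in the case $i \le j$.

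Assume $i \le j$. The first step is to locate $L_i$ as an actual subobject of a quotient of $Q_j$: pick any composition series $0=F^0 \subset F^1 \subset \cdots \subset F^n=Q_j$ and let $N=F^{r-1}$ where $r$ is the smallest index with $F^r/F^{r-1}\cong L_i$. Setting $M=Q_j/N$, we have a short exact sequence $0 \to N \to Q_j \to M \to 0$ and an inclusion $\iota\colon L_i \hookrightarrow M$ induced by $F^r \to Q_j/N$. Multiplicity-freeness of $Q_j$ (condition (b)) forces $L_i$ to appear as a constituent of neither $N$ nor $M/L_i$.

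Now I would run the standard two applications of Lemma~\ref{hom-constituent}. Applying $\Hom(-,Q_i)$ to $0\to N \to Q_j \to M \to 0$ gives the exact sequence
\[
0 \to \Hom(M,Q_i) \to \Hom(Q_j,Q_i) \to \Hom(N,Q_i),
\]
and the last term vanishes by Lemma~\ref{hom-constituent}, so $\Hom(M,Q_i)\cong \Hom(Q_j,Q_i)$. Next, apply $\Hom(-,Q_i)$ to $0 \to L_i \to M \to M/L_i \to 0$. Injectivity of $Q_i$ makes the restriction map $\Hom(M,Q_i)\to \Hom(L_i,Q_i)$ surjective, while Lemma~\ref{hom-constituent} applied to $M/L_i$ makes it injective. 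Thus $\Hom(Q_j,Q_i)\cong \Hom(L_i,Q_i)$. Finally, since $Q_i$ is the injective envelope of $L_i$, the socle $\soc(Q_i)$ equals $L_i$ (any nonzero subobject must meet $L_i$ essentially, and multiplicity-freeness rules out a larger semisimple socle), so every nonzero map $L_i \to Q_i$ factors through $L_i=\soc(Q_i)$, and $\End(L_i)=\bk$ by Schur's lemma (using that $\bk$ is algebraically closed and $\Hom$-spaces are finite dimensional). Hence $\Hom(L_i,Q_i)\cong \bk$, completing the proof.

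The only mildly delicate step is the construction of $N$: one must be careful that $L_i$ really embeds into $Q_j/N$ rather than merely appears as a subquotient. Once that is arranged via the composition-series trick (and multiplicity-freeness then does its job), the rest is formal. Note that condition (c) of facility is not actually needed for this lemma; it will only enter when studying composites or kernels of maps between the $Q_i$'s.
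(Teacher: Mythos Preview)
Your proof is correct and follows exactly the same route as the paper's own argument (which simply refers back to the proof of Proposition~\ref{prop:injhomsets1}): construct $N \subset Q_j$ so that $L_i$ embeds in $M=Q_j/N$, use Lemma~\ref{hom-constituent} twice to reduce $\Hom(Q_j,Q_i)$ to $\Hom(L_i,Q_i)$, and finish with the socle computation. Your explicit composition-series construction of $N$ and your closing remark that condition~(c) is unused here are both accurate observations that the paper leaves implicit.
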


\begin{proof}
The proof is exactly the same as that of Proposition~\ref{prop:injhomsets1}.
\end{proof}

\begin{lemma}
The Cartan relation is antisymmetrical.
\end{lemma}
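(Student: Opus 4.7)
The plan is to derive antisymmetry by analyzing nonzero maps in both directions between $Q_i$ and $Q_j$. Suppose $i \le j$ and $j \le i$. By Lemma~\ref{lem:facileQhom}, both $\Hom(Q_j, Q_i)$ and $\Hom(Q_i, Q_j)$ are one-dimensional, so I pick nonzero maps $f \colon Q_j \to Q_i$ and $g \colon Q_i \to Q_j$. The same lemma gives $\End(Q_j) \cong \bk$, so the composition $g \circ f$ is either zero or a nonzero scalar multiple of the identity.

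In the ``nonzero'' case, $f$ admits a left inverse and is therefore a split monomorphism, so $Q_j$ is a direct summand of $Q_i$. Since $Q_i$ is the injective envelope of a simple object it is indecomposable, so $f$ must be an isomorphism, giving $L_i \cong L_j$ and hence $i = j$. This case is routine.

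The main obstacle is excluding the case $g \circ f = 0$. My approach is to use axiom (c) of facile together with multiplicity-freeness (axiom (b)) to track simple constituents. Applying (c) to $f$, the constituents of $\ker(f)$ are exactly the constituents of $Q_j$ not contained among those of $Q_i$; by multiplicity-freeness of $Q_j$, the constituents of $\im(f) = Q_j/\ker(f)$ are therefore exactly the simples appearing in both $Q_i$ and $Q_j$. In particular, $L_j$ is a constituent of $\im(f)$, since $L_j$ appears in $Q_j$ tautologically and in $Q_i$ by the hypothesis $j \le i$. On the other hand, $g \circ f = 0$ forces $\im(f) \subseteq \ker(g)$; applying (c) to $g$, every constituent of $\ker(g)$ appears in $Q_i$ but not in $Q_j$, so $L_j$ cannot appear there. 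This contradiction rules out $g \circ f = 0$ and completes the proof.

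Beyond careful bookkeeping of constituents, I do not expect further difficulty, since the needed tools---Lemma~\ref{lem:facileQhom}, axioms (b) and (c), and the indecomposability of injective envelopes of simples---are already available.
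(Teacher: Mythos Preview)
Your proof is correct and follows essentially the same approach as the paper: both use axiom~(c) to track constituents and show that the composition $g\circ f$ cannot vanish (since $L_j$ survives in $\im(f)$ but cannot lie in $\ker(g)$), then conclude that a nonzero endomorphism of an indecomposable injective is an isomorphism. The only cosmetic differences are that you organize the argument as a case split on whether $g\circ f=0$ and invoke Lemma~\ref{lem:facileQhom} for $\End(Q_j)\cong\bk$, whereas the paper shows $gf\neq 0$ directly and deduces invertibility from axiom~(c) itself (a nonzero map $Q_j\to Q_j$ has trivial kernel by~(c), hence is an isomorphism by finite length).
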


\begin{proof}
Suppose $i \le j$ and $j \le i$.  We then have non-zero maps $f \colon Q_i \to Q_j$ and $g \colon Q_j \to Q_i$.  Since both $Q_i$ and $Q_j$ have $L_i$ and $L_j$ as constituents, neither $\ker(f)$ nor $\ker(g)$ has $L_i$ or $L_j$ as a constituent.  Thus $\im(f)$ has an $L_i$ in it, and so $g(\im(f))$ is non-zero.  We thus see the $gf$, and symmetrically $fg$, are non-zero. Condition (c) of Definition~\ref{defn:facile} implies that any non-zero endomorphism of an indecomposable injective is an isomorphism.  Thus $gf$ and $fg$ are invertible, so $f$ and $g$ are isomorphisms, and $i=j$.
\end{proof}

Let $S$ be a set with a reflexive antisymmetrical downwards finite relation $\le$.  An {\bf $S$-rigidified facile abelian category} is a ($\bk$-linear) facile abelian category $\cA$ equipped with a bijection $S \to S(\cA)$ under which $\le$ corresponds to the Cartan relation.  An equivalence of $S$-rigidified facile abelian categories is required to be compatible with the rigidification.  We define the {\bf nerve} $N(S)$ of $S$ to be the simplicial set whose $n$-simplices are chains $i_0 \le \cdots \le i_n$ in $S$. (See \cite[\S VII.5]{maclane} for some generalities on simplicial sets.)  The following theorem is our main result on facile abelian categories.

\begin{theorem}
\label{thm:facile}
Fix a set $S$ with a reflexive antisymmetric downwards finite relation $\le$.
\begin{enumerate}[\rm (a)]
\item The set of equivalence classes of $S$-rigidified facile abelian categories is canonically in bijection with $\rH^2(N(S), \bk^{\times})$.
\item The auto-equivalence group of an $S$-rigidified facile abelian category is canonically isomorphic to $\rH^1(N(S), \bk^{\times})$.
\item The automorphism group of the identity functor of an $S$-rigidified abelian category is canonically isomorphic to $\rH^0(N(S), \bk^{\times})$.
\end{enumerate}
\end{theorem}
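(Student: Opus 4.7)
The plan is to extract from a rigidified facile category a cohomological invariant by choosing bases for the one-dimensional spaces $\Hom(Q_j,Q_i)$ (Lemma~\ref{lem:facileQhom}, for $i\le j$) and to show that everything about the category can be recovered from this data. For part (a), pick nonzero $\phi_{i,j}\in\Hom(Q_j,Q_i)$ for every $i\le j$. The first key step is to check that for every 2-simplex $i_0\le i_1\le i_2$ of $N(S)$ the composition $\phi_{i_0,i_1}\circ\phi_{i_1,i_2}$ is nonzero: by axiom (c) of Definition~\ref{defn:facile}, the image of $\phi_{i_1,i_2}$ has precisely those simple constituents of $Q_{i_2}$ that also lie in $Q_{i_1}$, and since the 2-simplex condition gives $i_0\le i_1,i_0\le i_2$, the simple $L_{i_0}$ survives into this image; symmetrically $\phi_{i_0,i_1}$ is nontrivial on $L_{i_0}$. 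Defining $c(i_0,i_1,i_2)\in\bk^\times$ by $\phi_{i_0,i_1}\phi_{i_1,i_2}=c(i_0,i_1,i_2)\phi_{i_0,i_2}$, associativity of composition on a 3-simplex yields the 2-cocycle equation, and rescaling the $\phi_{i,j}$ by a 1-cochain $\lambda$ shifts $c$ by $\delta\lambda$, so $[c]\in\rH^2(N(S),\bk^\times)$ is a well-defined invariant of the rigidified equivalence class of $\cA$.

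Conversely, from a cocycle $c$ build the associative $\bk$-algebra with local units $R_c=\bigoplus_{i\le j}\bk e_{i,j}$ with $e_{i,j}e_{j,k}:=c(i,j,k)e_{i,k}$ on 2-simplices (and zero in all other cases); the cocycle condition gives associativity. Let $\cA_c$ be the category of finite-length right $R_c$-modules compatible with the local idempotents $e_i=e_{i,i}$. A direct inspection shows that $\cA_c$ is facile, is naturally $S$-rigidified with the prescribed Cartan relation, and has cocycle class $[c]$. For an arbitrary rigidified facile $\cA$, the functor $M\mapsto\bigoplus_i\Hom_\cA(M,Q_i)$ (with its evident right $R_{c_\cA}$-action coming from precomposition with the $\phi_{i,j}$) provides an equivalence $\cA\simeq\cA_{c_\cA}$, yielding the inverse of the forward map and proving (a).

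For (b), a rigidified auto-equivalence $F$ sends each $Q_i$ to an isomorphic copy; pick isomorphisms $\alpha_i\colon F(Q_i)\xrightarrow{\sim}Q_i$ and define $\mu(i,j)\in\bk^\times$ by $\alpha_iF(\phi_{i,j})=\mu(i,j)\phi_{i,j}\alpha_j$. Applying $F$ to the relation $\phi_{i,j}\phi_{j,k}=c(i,j,k)\phi_{i,k}$ and comparing yields $\mu(i,j)\mu(j,k)=\mu(i,k)$, so $\mu\in Z^1(N(S),\bk^\times)$; rescaling the $\alpha_i$ by $\beta_i\in\bk^\times$ shifts $\mu$ by $\delta\beta$. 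Conversely, each $\mu\in Z^1$ defines an auto-equivalence of $\cA_c$ by scaling each $e_{i,j}$ by $\mu(i,j)$, and these constructions are mutually inverse group homomorphisms to $\rH^1(N(S),\bk^\times)$. For (c), an automorphism $\eta$ of $\id_\cA$ is a family $\eta_{Q_i}\in\End(Q_i)=\bk^\times$; naturality against each $\phi_{i,j}$ forces $\eta_{Q_i}=\eta_{Q_j}$ whenever $i\le j$, identifying $\Aut(\id_\cA)$ with the group of $\bk^\times$-valued functions on $S$ that are constant on connected components of $N(S)$, which is precisely $\rH^0(N(S),\bk^\times)$.

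The hardest part will be the reconstruction direction of (a): one must verify that the algebra-theoretic $\cA_c$ actually satisfies axiom (c) of Definition~\ref{defn:facile}, which amounts to a concrete computation of the constituents of kernels of compositions of the $e_{i,j}$'s, and then check that the $\Hom$-into-the-injective-cogenerator functor is an equivalence. A subtlety, particularly when $\le$ is not transitive, is that axiom (c) forces some products $\phi_{i,j}\phi_{j,k}$ to vanish beyond what is directly visible from $N(S)$, so the algebra $R_c$ and its relations have to be arranged so that all such forced vanishings are built in consistently with the cocycle identity.
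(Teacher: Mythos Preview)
Your extraction of the 2-cocycle from the one-dimensional $\Hom$-spaces, and your arguments for parts (b) and (c), match the paper's proof essentially verbatim. Your twisted algebra $R_c$ is also nothing other than the path algebra of the quiver-with-relations $\cQ^{\alpha}$ that the paper constructs in Proposition~\ref{prop:quiv}, so your realization of a facile category with prescribed invariant is the same as the paper's.

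The one substantive divergence, and the one place with a genuine gap, is the reconstruction step in (a). You claim that $M\mapsto\bigoplus_i\Hom_{\cA}(M,Q_i)$ furnishes an equivalence $\cA\simeq\cA_c$. But this functor is \emph{contravariant} in $M$, and the $\phi_{i,j}\colon Q_j\to Q_i$ act by \emph{post}-composition (not precomposition), making the target a \emph{left} $R_c$-module; under this functor $Q_j$ is sent to the projective $R_c e_j$, not to an injective of $\cA_c$. So what you actually obtain is a contravariant equivalence $\cA^{\op}\simeq(\text{finite-length left }R_c\text{-modules})$, which is not the claimed statement. The repair is straightforward---dualize to get the covariant functor $M\mapsto\bigl(\bigoplus_i\Hom_{\cA}(M,Q_i)\bigr)^*$, which now lands in right $R_c$-modules and takes $Q_j$ to the injective envelope of the simple at $j$---but you have to say this and then still verify fullness, faithfulness, and essential surjectivity.

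The paper avoids all of this bookkeeping with a cleaner device, Lemma~\ref{prop:equiv}: an equivalence of abelian categories with enough injectives is the same datum as an equivalence of their full subcategories of indecomposable injectives. Once two facile categories have the same cocycle, one can rescale so that the chosen generators $\phi_{i,j}$ literally agree, whence $Q_i\mapsto Q_i'$, $\phi_{i,j}\mapsto\phi'_{i,j}$ is already an equivalence of $\ul{\cI}_0$'s, and the lemma promotes it to an equivalence of abelian categories for free. This route requires no variance or module-category gymnastics, and also immediately handles the forced vanishings you flag in your last paragraph (they are encoded directly in axiom~(c), hence in the morphisms of $\ul{\cI}_0$). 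Your $R_c$-module approach is viable after the dualization fix, but Lemma~\ref{prop:equiv} is the more economical tool.
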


We require a lemma first.  For an abelian category $\cA$, let $\ul{\cI}(\cA)$ (resp.\ $\ul{\cI}_0(\cA)$) denote the full subcategory of $\cA$ on the injective objects (resp.\ indecomposable injective objects).  Write $\Equiv(\cA, \cB)$ for the category of equivalences between two categories $\cA$ and $\cB$.

\begin{lemma}
\label{prop:equiv}
Let $\cA$ and $\cB$ be abelian categories with enough injectives.  Then the natural functor
\begin{displaymath}
\Equiv(\cA, \cB) \to \Equiv(\ul{\cI}(\cA), \ul{\cI}(\cB))
\end{displaymath}
is an equivalence.  If every injective object of $\cA$ and $\cB$ is a finite direct sum of indecomposable injectives then the same is true with $\ul{\cI}_0$ in place of $\ul{\cI}$.
\end{lemma}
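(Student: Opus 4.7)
The plan is the standard ``reconstruction from injective copresentations'' argument. Since $\cA$ has enough injectives, every object $M$ fits into an exact sequence $0 \to M \to I^0 \to I^1$ with $I^0, I^1$ injective (first embed $M$ into $I^0$, then embed $I^0/M$ into $I^1$). By injectivity of the $I^j$, any morphism $M \to N$ extends to a morphism of chosen copresentations, and this extension is unique up to a chain homotopy. Thus $M$ can be recovered from the map $I^0 \to I^1$ as its kernel, and likewise for morphisms.

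For full faithfulness of the restriction functor: given equivalences $F, G \colon \cA \to \cB$ and a natural isomorphism $\eta \colon F|_{\ul{\cI}(\cA)} \xrightarrow{\sim} G|_{\ul{\cI}(\cA)}$, I would use exactness of equivalences to obtain $F(M) = \ker(F(I^0) \to F(I^1))$ and likewise for $G$. The pair $(\eta_{I^0}, \eta_{I^1})$ then induces a unique morphism between these kernels, defining an extension $\tilde{\eta}_M$. Independence of the chosen copresentation, naturality in $M$, and the property of being an isomorphism follow from the uniqueness-up-to-homotopy of lifts.

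For essential surjectivity: given an equivalence $F_0 \colon \ul{\cI}(\cA) \to \ul{\cI}(\cB)$, define $\tilde{F} \colon \cA \to \cB$ by choosing, for each $M$, a copresentation $0 \to M \to I^0 \to I^1$ and setting $\tilde{F}(M) = \ker(F_0(I^0) \to F_0(I^1))$; extend to morphisms by lifting them to copresentations and applying $F_0$. Performing the same construction with a quasi-inverse of $F_0$ on the $\cB$ side yields a quasi-inverse of $\tilde{F}$, and by construction $\tilde{F}|_{\ul{\cI}(\cA)} \cong F_0$. For the second claim, when every injective is a finite direct sum of indecomposable injectives, $\ul{\cI}$ is canonically the additive (direct-sum) completion of $\ul{\cI}_0$, so restriction gives an equivalence $\Equiv(\ul{\cI}(\cA), \ul{\cI}(\cB)) \to \Equiv(\ul{\cI}_0(\cA), \ul{\cI}_0(\cB))$; composing with the first part yields the desired equivalence.

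The main obstacle is the coherent well-definedness of $\tilde{F}$ in the essential-surjectivity step: one must verify that different copresentations of $M$ give canonically isomorphic kernels, that composition is respected (at least up to coherent isomorphism, which can then be strictified), and that the resulting functor is additive and exact. None of this is conceptually difficult, but it requires patient diagram-chasing using the defining lifting property of injective objects.
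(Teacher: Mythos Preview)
Your approach is correct and essentially the same as the paper's: both recover $\cA$ from $\ul{\cI}(\cA)$ via injective copresentations and the kernel functor, and both handle the second claim via the additive completion of $\ul{\cI}_0$. The paper sidesteps the coherence issue you flag as the ``main obstacle'' by first constructing a category $\ul{\cA}(\cI)$ whose objects are maps $[I^0 \to I^1]$ in $\cI$ and whose morphisms are commutative squares modulo chain homotopy, and then proving that $[I^0 \to I^1] \mapsto \ker(d)$ is an equivalence $\ul{\cA}(\ul{\cI}(\cA)) \to \cA$; with this packaging, functoriality of the inverse construction is automatic rather than requiring choice-independence checks.
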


\begin{proof}
Let $\cI$ be an additive category.  Define $\ul{\wt{\cA}}(\cI)$ to be the additive category whose objects are morphisms $I=[I^0 \stackrel{d}{\to} I^1]$ in $\cI$ and whose morphisms $I \to J$ are commutative squares. We say that a morphism $I \to J$ in $\ul{\wt{\cA}}(\cI)$ is null-homotopic if the map $I^0 \to J^0$ factors through $d \colon I^0 \to I^1$.  We define $\ul{\cA}(\cI)$ to be the category obtained from $\ul{\wt{\cA}}(\cI)$ by identifying morphisms whose difference is null-homotopic. Now suppose that $\cA$ is an abelian category with enough injectives.  Then an easy argument shows that the functor
\begin{displaymath}
\ul{\cA}(\ul{\cI}(\cA)) \to \cA, \qquad [I^0 \stackrel{d}{\to} I^1] \mapsto \ker(d)
\end{displaymath}
is an equivalence.

Now, if $\cI$ and $\cI'$ are additive categories then any equivalence $\cI \to \cI'$ induces an equivalence $\ul{\cA}(\cI) \to \ul{\cA}(\cI')$. It follows from the above discussion that if $\cA$ and $\cB$ are abelian categories with enough injectives then there is a natural functor
\begin{displaymath}
\Equiv(\ul{\cI}(\cA), \ul{\cI}(\cB)) \to \Equiv(\cA, \cB).
\end{displaymath}
We leave to the reader the easy verification that this functor is naturally a quasi-inverse to the obvious one in the opposite direction. This proves the first claim of the lemma.

Now let $\cC$ be a category enriched over the category of abelian groups. Define $\cC^+$ to be the following category, also enriched over abelian groups. An object is a finite sequence $(A_1, \ldots, A_n)$ of objects of $\cC$. The space of morphisms $(A_1, \ldots, A_n) \to (B_1, \ldots, B_m)$ is $\bigoplus_{i,j} \Hom_{\cC}(A_i, B_j)$. Composition of morphisms is given by matrix multiplication. Then $\cC^+$ is an additive category, and in fact is the universal additive category equipped with an enriched functor from $\cC$. 

Now suppose that $\cC$ is an additive category and every object is a finite sum of indecomposable objects. Let $\cC_0$ be the full subcategory on the indecomposable objects. Then the natural functor $\cC_0^+ \to \cC$ is an equivalence of categories: it is essentially surjective by the assumption on $\cC$, and it is fully faithful since $\Hom$'s in $\cC_0^+$ are computed just like $\Hom$'s of direct sums in $\cC$. If $\cC'$ is a second such category, then using this and functoriality of $(-)^+$ we obtain a functor
\begin{displaymath}
\Equiv(\cC_0, \cC'_0) \to \Equiv(\cC, \cC').
\end{displaymath}
This is easily seen to be quasi-inverse to the natural functor in the opposite direction. Taking $\cC=\ul{\cI}(\cA)$ and $\cC'=\ul{\cI}(\cB)$ yields the second claim of the lemma.
\end{proof}

\begin{proof}[Proof of Theorem~\ref{thm:facile}]
(a) Let $\cA$ be an $S$-rigidified facile abelian category.  For each $i \le j$ in $S$, choose a non-zero map $f_{ji} \colon Q_j \to Q_i$.  If $i \le j \le k$ then $f_{ji} f_{kj} \ne 0$ (since it does not kill the $L_i$ inside of $Q_k$, by condition (c) of Definition~\ref{defn:facile}), and so
\begin{displaymath}
f_{ki}=\alpha_{kji}  f_{ji} f_{kj}
\end{displaymath}
for some $\alpha_{kji} \in \bk^{\times}$, by Lemma~\ref{lem:facileQhom}.  The $\alpha_{kji}$ satisfy the cocycle condition, and thus define a class $[\cA]$ in $\rH^2(N(S), \bk^{\times})$, which we call the {\bf cohomological invariant} of $\cA$.

It is clear that invariants of equivalent categories are equal.  Suppose now that $\cA$ and $\cB$ are $S$-rigidified facile abelian categories with equal invariants.  We use our usual notation for $\cA$ and primed versions for $\cB$.  For $j \ge i$ in $S$, choose non-zero maps $f_{ji} \colon Q_j \to Q_i$ in $\cA$ and $f_{ji}' \colon Q'_j \to Q'_i$ in $\cB$.  Since the cocycles $[\cA]$ and $[\cB]$ are equal, we can scale the $f'_{ji}$ so that $\alpha_{kji}=\alpha'_{kji}$.  It follows that $Q_i \mapsto Q'_i$ and $f_{ji} \mapsto f'_{ji}$ defines an equivalence of categories $\ul{\cI}_0(\cA) \to \ul{\cI}_0(\cB)$. Thus $\cA$ and $\cB$ are equivalent by Lemma~\ref{prop:equiv}.

We have thus shown that there is a natural injection from the set of equivalence classes of $S$-rigidified facile abelian categories into $\rH^2(N(S), \bk^{\times})$.  Surjectivity of this map will follow from Proposition~\ref{prop:quiv}.

(b) Let $\cA$ be an $S$-rigidified facile abelian category and let $F$ be an auto-equivalence of $\cA$.  Since $F$ is required to be compatible with the $S$-rigidification, $F(L_i)$ is isomorphic to $L_i$ for all $i$.  It follows that $F(Q_i)$ is isomorphic to $Q_i$ for each $i$, as $Q_i$ is the injective envelope of $L_i$.  Choose an isomorphism $\phi_i \colon Q_i \to F(Q_i)$.  For $i \le j$, we have
\begin{displaymath}
F(f_{ji})=\alpha_{ji} \phi_i f_{ji} \phi_j^{-1}
\end{displaymath}
for some $\alpha_{ji} \in \bk^{\times}$, where $f_{ij} \colon Q_j \to Q_i$ is a chosen non-zero map.  The $\alpha_{ij}$ satisfy the cocycle condition, and define an element $[F] \in \rH^1(N(S), \bk^{\times})$.

It is clear that $F \mapsto [F]$ is a well-defined group homomorphism $\Aut(\cA) \to \rH^1(N(S), \bk^{\times})$.  It is also clear that the restriction of $F$ to $\cI_0=\ul{\cI}_0(\cA)$ is determined, up to isomorphism, by $[F]$.  It follows from Lemma~\ref{prop:equiv} that $F$ is determined, up to isomorphism, by $[F]$.  Now let $\alpha$ be a 1-cocycle on $N(S)$ with values in $\bk^{\times}$.  Define a functor $F \colon \cI_0 \to \cI_0$ by $F(Q_i)=Q_i$ and $F(f_{ji})=\alpha_{ji} f_{ji}$.  The cocycle condition implies that $F$ is actually a functor (i.e., compatible with composition), and it is clearly an equivalence.  Another application of Lemma~\ref{prop:equiv} shows that $F$ extends canonically to an equivalence of $\cA$, and it is clear that $[F]=\alpha$.  We have thus shown that $F \mapsto [F]$ is a bijection.

(c) Let $\cA$ be an $S$-rigidified facile abelian category and let $\phi$ be an automorphism of the identity functor.  Then $\phi(L_i)$ is given by multiplication by a scalar $\alpha_i$ on $L_i$.  We leave it to the reader to verify that $\phi \mapsto \alpha$ defines an isomorphism $\Aut(\id_{\cA}) \to \rH^0(N(S), \bk^{\times})$.
\end{proof}

We now show how to construct facile abelian categories.  Let $S$ be a set with a relation $\le$ which is reflexive, antisymmetrical and downwards finite and let $\alpha$ be a 2-cocycle representing a class in $\rH^2(N(S), \bk^{\times})$.  Define a quiver-with-relations $\cQ^\alpha$ as follows.  The vertices of $\cQ^\alpha$ are the elements of $S$.  If $i \le j$ are elements of $S$ then there is an arrow $\gamma_{ij} \colon i \to j$ in $\cQ^\alpha$.  Suppose that $i \le j$ and $j \le k$.  If $i \le k$ then we impose the relation $\gamma_{ik}=\alpha_{kji}^{-1} \gamma_{jk} \gamma_{ij}$, while if $i \not \le k$ then we impose the relation $\gamma_{jk} \gamma_{ij}=0$.

A representation $V$ of $\cQ^\alpha$ is a choice of vector space $V_i$ for each vertex $i$ and a linear map $f_{\gamma} \colon V_i \to V_j$ for each arrow $\gamma \colon i \to j$ such that composition of the linear maps are compatible with the relations imposed on $\cQ^\alpha$ whenever they exist. A morphism of representations $\phi \colon V \to V'$ is a choice of linear maps $\phi_i \colon V_i \to V'_i$ such that $f'_{\gamma} \phi_i = \phi_j f_{\gamma}$ for all arrows $\gamma \colon i \to j$. We denote by $\Rep(\cQ^\alpha)$ the category of representations where each $V_i$ is finite dimensional and all but finitely many of the $V_i$ are $0$. It is easy to see that this is an abelian category.

\begin{proposition}
\label{prop:quiv}
The category $\Rep(\cQ^\alpha)$ is naturally an $S$-rigidified facile abelian category with cohomological invariant $\alpha$.
\end{proposition}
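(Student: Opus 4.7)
The plan is to construct the indecomposable injective objects of $\Rep(\cQ^\alpha)$ explicitly using the cocycle $\alpha$, verify the three clauses of Definition~\ref{defn:facile} directly, and then read off the cohomological invariant.

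First I would describe the simples and check the basic finiteness properties of Definition~\ref{defn:facile}(a). For each $i \in S$, let $L_i$ be the representation with $(L_i)_i = \bk$, $(L_i)_m = 0$ for $m \ne i$, and all arrows acting by zero. These are simple, and any representation in $\Rep(\cQ^\alpha)$ (finite support, finite-dimensional at each vertex) has a composition series with factors of this form by induction on $\sum_m \dim V_m$. Hom spaces between such representations are finite-dimensional over $\bk$, and essential smallness is automatic.

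Next, construct the candidate indecomposable injectives. For each $j \in S$, define a representation $Q_j$ by putting $(Q_j)_i = \bk \cdot e_i^j$ if $i \le j$ and $(Q_j)_i = 0$ otherwise; for each arrow $\gamma_{ii'}$ of $\cQ^\alpha$ (so $i \le i'$), let it act on $(Q_j)_i$ by $e_i^j \mapsto \alpha_{ji'i}\, e_{i'}^j$ when $i' \le j$, and by zero otherwise. Compatibility with the relation $\gamma_{ik} = \alpha_{kji}^{-1}\gamma_{jk}\gamma_{ij}$ at any chain $i \le i' \le k$ for which $(i, i', k, j)$ is a 3-simplex of $N(S)$ reduces, by a direct computation on the generator $e_i^j$, to the 2-cocycle identity for $\alpha$ on that 3-simplex; the degenerate relations $\gamma_{jk}\gamma_{ij} = 0$ (when $i \not\le k$) are verified analogously. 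Thus each $Q_j$ is a well-defined multiplicity-free representation whose composition factors are precisely the $L_i$ with $i \le j$, and whose simple socle is $L_j$, which immediately yields (b).

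Finally, I would verify injectivity of $Q_j$ together with clause (c), and then read off the cocycle. Writing $B = B(\alpha)$ for the quotient of the path algebra $\bk\bar\cQ^\alpha$ by the imposed relations, the standard identification $Q_j \cong \Hom_\bk(e_j B, \bk)$ realizes $Q_j$ as the indecomposable injective at vertex $j$ for $B$, and yields $\Hom(Q_j, Q_i) \cong e_i B e_j$, which is one-dimensional for $i \le j$ (spanned by the class of $\gamma_{ij}$) and zero otherwise. Any non-zero map $f\colon Q_j \to Q_i$ can then be evaluated on each basis vector $e_m^j$ directly, and its kernel is spanned by those $e_m^j$ with $m \le j$ but $m \not\le i$, giving (c) and identifying the Cartan relation on $S(\Rep(\cQ^\alpha))$ with $\le$. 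To extract the cohomological invariant, take $f_{ji}\colon Q_j \to Q_i$ to be the map corresponding to $\gamma_{ij}$; then for any chain $i \le j \le k$ in $S$, the composition $f_{ji}\circ f_{kj}$ is, by the explicit description of the action on $Q_k$, precisely $\alpha_{kji}\cdot f_{ki}$, so the invariant is $\alpha$. The main technical step is the construction of the $Q_j$ and the verification that the action of the quiver relations on them is consistent; the nontrivial content is precisely that the cocycle condition on $\alpha$ is equivalent to well-definedness on every 3-simplex of $N(S)$, so it is forced by hypothesis on $\alpha$.
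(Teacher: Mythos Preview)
Your approach is essentially the same as the paper's: both construct the simples $L_i$ and the indecomposable injectives $Q_j$ explicitly (with the same formula, the arrow $\gamma_{ii'}$ acting on $Q_j$ by the scalar $\alpha_{ji'i}$), verify the facile axioms directly, and then compute the invariant by choosing explicit nonzero maps $f_{ji}$ and comparing $f_{ji}\circ f_{kj}$ with $f_{ki}$. Two small slips to clean up: the path-algebra identification should read $\Hom(Q_j,Q_i)\cong e_jBe_i$ (not $e_iBe_j$), and your displayed equation $f_{ji}\circ f_{kj}=\alpha_{kji}\,f_{ki}$, when compared with the defining relation $f_{ki}=\alpha_{kji}\,f_{ji}f_{kj}$ for the invariant, literally gives $\alpha^{-1}$ rather than $\alpha$; this is a convention issue (it is present in the paper's own computation as well) and does not affect the argument's substance.
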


\begin{proof}
For $k \in S$, we let $L_k$ be the representation of $\cQ^{\alpha}$ which assigns $\bk$ to the vertex $k$ and 0 to all other vertices.  Then $L_k$ is a simple and $k \mapsto L_k$ defines a bijection $S \to S(\Rep(\cQ^{\alpha}))$.  The injective envelope $Q_k$ of $L_k$ assigns to the vertex $j$ the space $\bk$ if $j \le k$ and 0 otherwise, and to the arrow $\gamma_{ij}$ the map scaling by $\alpha_{kji}$, if $i \le j \le k$, and 0 otherwise (the arrow $\gamma_{ik}$ is assigned the identity map).  From this description, it is clear that the Cartan relation corresponds to $\le$.

To calculate the cohomological invariants, we define $f_{ji} \colon Q_j \to Q_i$ when $j \ge i$ as follows: for a vertex $\ell$ with $\ell \not\le i$, we set $(f_{ji})_\ell = 0$; for a vertex $\ell$ with $\ell \le i$ and $\ell \le j$, we define $(f_{ji})_\ell$ to be multiplication by $\alpha_{ji\ell}$. Using these maps, if $i \le j \le k$, we get $f_{ki} = \alpha_{kji} f_{ji} f_{kj}$, which shows that $\Rep(\cQ^\alpha)$ has cohomological invariant $\alpha$.
\end{proof}

\begin{corollary}
Let $\cA$ be an $S$-rigidified facile abelian category with cohomological invariant $\alpha$.  Then $\cA$ is (non-canonically) equivalent to $\Rep(\cQ^{\alpha})$, with $\cQ^{\alpha}$ as above.
\end{corollary}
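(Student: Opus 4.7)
The plan is to read this off as an immediate corollary of Theorem~\ref{thm:facile}(a) combined with Proposition~\ref{prop:quiv}. By Proposition~\ref{prop:quiv}, the category $\Rep(\cQ^{\alpha})$ is an $S$-rigidified facile abelian category whose cohomological invariant is exactly $\alpha$. By hypothesis, $\cA$ is also an $S$-rigidified facile abelian category with cohomological invariant $\alpha$. Since Theorem~\ref{thm:facile}(a) asserts that the map sending an $S$-rigidified facile abelian category to its class in $\rH^2(N(S), \bk^{\times})$ is a bijection, in particular is injective, the two categories must be equivalent.

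To make the argument self-contained, I would spell out what the injectivity part of Theorem~\ref{thm:facile}(a) amounts to in this setting. Choose nonzero maps $f_{ji} \colon Q_j \to Q_i$ in $\cA$ for each $i \le j$, and likewise nonzero maps $f'_{ji} \colon Q'_j \to Q'_i$ in $\Rep(\cQ^{\alpha})$ (for the latter, one may take the explicit maps constructed in the proof of Proposition~\ref{prop:quiv}). Each of these collections produces a 2-cocycle on $N(S)$ with values in $\bk^\times$ via the scalars $\alpha_{kji}$, $\alpha'_{kji}$ defined by $f_{ki} = \alpha_{kji} f_{ji}f_{kj}$ and similarly for the primed maps. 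Since both cocycles represent the same cohomology class $\alpha$, after rescaling the $f'_{ji}$ within their one-dimensional $\Hom$ spaces (using Lemma~\ref{lem:facileQhom}) we may assume the two cocycles are literally equal. Then $Q_i \mapsto Q'_i$ and $f_{ji} \mapsto f'_{ji}$ defines an equivalence of the full subcategories $\ul{\cI}_0(\cA) \to \ul{\cI}_0(\Rep(\cQ^{\alpha}))$ of indecomposable injectives.

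Finally, I would invoke Lemma~\ref{prop:equiv} to promote this equivalence of the full subcategories of indecomposable injectives to an equivalence of the full abelian categories $\cA \simeq \Rep(\cQ^{\alpha})$; the hypothesis that every injective object is a finite direct sum of indecomposable ones is part of being facile (every object has finite length). The equivalence is non-canonical precisely because it depends on the choices of the $f_{ji}$ and $f'_{ji}$, and different choices differ by a 1-cocycle, producing an auto-equivalence classified by $\rH^1(N(S), \bk^{\times})$ as in part (b) of the theorem. There is no real obstacle here: all the hard work was done in Theorem~\ref{thm:facile}(a), Proposition~\ref{prop:quiv}, and Lemma~\ref{prop:equiv}; the corollary is essentially a restatement packaging these together.
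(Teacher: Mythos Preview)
Your proposal is correct and matches the paper's approach exactly: the paper states this corollary with no proof at all, treating it as immediate from Theorem~\ref{thm:facile}(a) and Proposition~\ref{prop:quiv}, and your first paragraph captures precisely this logic. The additional detail you provide (rescaling the $f'_{ji}$, invoking Lemma~\ref{prop:equiv}) simply unpacks the injectivity argument already given in the proof of Theorem~\ref{thm:facile}(a), so there is nothing new or divergent here.
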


\begin{remark}
We can rephrase Theorem~\ref{thm:facile} neatly in the language of homotopy theory as follows. Fix $S$. Let $\cC$ be the 2-groupoid whose objects are $S$-rigidified facile abelian categories, whose 1-morphisms are equivalences of $S$-rigidified facile abelian categories, and whose 2-morphisms are isomorphisms of equivalences. Then we have
\begin{displaymath}
\pi_i(\cC, x)=\rH^{2-i}(N(S), \bk^{\times})
\end{displaymath}
for $0 \le i \le 2$ and for any base point $x$.
\end{remark}

\begin{remark}
The theory of facile abelian categories is reminiscent of the theory of gerbes.  Let $X$ be a topological space and let $\cC'$ be the 2-groupoid of $\bk^{\times}$-gerbes on $X$.  Then one has $\pi_i(\cC', x)=\rH^{2-i}(X, \bk^{\times})$, just as above.  Given a gerbe $\alpha$, one can form the abelian category of $\alpha$-twisted sheaves on $X$, and the subcategory of such sheaves which are constructible with respect to some stratification.  Not all such categories are facile, and not all facile abelian categories come from this construction, but there is overlap.
\end{remark}

\begin{question}
\label{ques1}
Let $\cA$ and $\cB$ be two facile abelian categories.  What are necessary and sufficient conditions on the invariants $(S(\cA), [\cA])$ and $(S(\cB), [\cB])$ so that $\cA$ and $\cB$ are derived equivalent?  How does one recover the group $\Aut(\rD^b(\cA))$ in terms of the invariants $(S(\cA), [\cA])$?
\end{question}

We close this section with the following simple result, which gives a criterion for the nerve to be contractible.

\begin{proposition}
\label{prop:contract}
Let $S$ be a set with a reflexive antisymmetric relation $\le$.  Suppose there is a map $\tau \colon S \to S$ and a point $x_0 \in S$ satisfying the following two conditions:
\begin{enumerate}[\rm (a)]
\item If $x \le y$ then $\tau y \le x$. (Applying this twice, this implies that $\tau x \le \tau y$.)
\item For any $x \in S$ we have $\tau^n(x)=x_0$ for $n \gg 0$.
\end{enumerate}
Then $N(S)$ is contractible.
\end{proposition}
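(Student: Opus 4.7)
The plan is to exhibit a discrete Morse matching on the non-degenerate simplices of $N(S)$ whose unique critical cell is the vertex $(x_0)$, and then conclude contractibility via the standard discrete Morse theorem.

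First, I would record two immediate consequences of (a): setting $x = y$ yields $\tau x \le x$ for all $x$, and applying (a) twice yields that $\tau$ is order-preserving ($x \le y$ implies $\tau x \le \tau y$). Combined with (b), these force $\tau x_0 = x_0$, since the sequence $(\tau^k x_0)_{k\ge 0}$ is eventually constantly $x_0$ and hence must fix $x_0$. The key ``coning'' observation is that for any $n$-simplex $\sigma = (i_0 \le \cdots \le i_n)$ of $N(S)$, applying (a) to each relation $i_j \le i_n$ yields $\tau i_n \le i_j$, so $C(\sigma) := (\tau i_n, i_0, \ldots, i_n)$ is a valid $(n+1)$-simplex.

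I would then define the matching by pairing each non-degenerate $\sigma$ with $C(\sigma)$ whenever the latter is non-degenerate (equivalently, $\tau i_n \ne i_0$). Let $\cD$ denote the class of such non-degenerate $\sigma$ and $\cF$ the class of non-degenerate simplices $(b_0, \ldots, b_m)$ with $b_0 = \tau b_m$. A direct check shows $C \colon \cD \to \cF$ is a bijection, inverted by $d_0$: given $(b_0, \ldots, b_m) \in \cF$, the simplex $(b_1, \ldots, b_m)$ lies in $\cD$ (by non-degeneracy $b_1 \ne b_0 = \tau b_m$) and $C$ returns to $(b_0, \ldots, b_m)$. Every non-degenerate simplex of positive dimension thus lies in $\cD \cup \cF$ and is matched. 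At dimension zero, a vertex $(a)$ lies in $\cF$ iff $\tau a = a$, and iterating (b) pins $a = x_0$; hence $(x_0)$ is the only unmatched non-degenerate simplex.

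Establishing acyclicity of this matching is the main obstacle, and is where condition (b) is crucially used. Any hypothetical cycle $\sigma_0 \to C(\sigma_0) \to \sigma_1 \to C(\sigma_1) \to \cdots \to \sigma_0$ has each $\sigma_{i+1}$ a face of $C(\sigma_i)$ distinct from $\sigma_i$, and therefore still begins with the entry $\tau \cdot \mathrm{top}(\sigma_i)$. This throws $\sigma_{i+1}$ back into $\cF$---breaking the cycle---unless the face taken is $d_{n+1}$, which removes the old top and replaces it by the strictly smaller second-to-top element of $\sigma_i$. Tracking this ``top'' through the cycle, after $n+1$ such forced steps the original $\sigma_0 = (a_0, \ldots, a_n)$ has morphed into $\tau \sigma_0 = (\tau a_0, \ldots, \tau a_n)$, and more generally one obtains $\tau^k \sigma_0$ after $k(n+1)$ steps. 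Condition (b) forces $\tau^k \sigma_0 = (x_0, \ldots, x_0)$ for large $k$, which is degenerate and so falls outside the matching, making closure of the cycle back to the non-degenerate $\sigma_0$ impossible. With acyclicity established, the discrete Morse theorem yields that $N(S)$ is homotopy equivalent to a single point and is therefore contractible.
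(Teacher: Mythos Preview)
Your approach via discrete Morse theory is correct and takes a genuinely different route from the paper's. The paper builds an explicit simplicial homotopy between $\id$ and $\tau$ on $N(S)$, sending a prism simplex $((\lambda_0 \le \cdots \le \lambda_n),\, [n]=\{0,\ldots,i\}\cup\{i+1,\ldots,n\})$ to the chain $(\tau\lambda_{i+1} \le \cdots \le \tau\lambda_n \le \lambda_0 \le \cdots \le \lambda_i)$; since $\tau^m$ collapses $N(S_m)=N(\{x:\tau^m x=x_0\})$ to a point and is homotopic to the identity there, each $N(S_m)$ is contractible, and $N(S)=\varinjlim N(S_m)$. Both arguments exploit the same coning relation $\tau y \le x \le y$, but you package it as a Morse matching while the paper packages it as a homotopy. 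Your route is more combinatorial and makes the ``one critical cell'' picture vivid; the paper's route is shorter and entirely self-contained.

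One point deserves care. Your last sentence invokes ``the standard discrete Morse theorem,'' but $N(S)$ is in general an infinite, non-locally-finite simplicial complex (in the paper's application the empty partition lies in infinitely many edges), and Forman's theorem as usually stated is for finite complexes. Happily, your acyclicity argument already proves the stronger statement that every gradient path is \emph{finite}: the forced path $\sigma_0,\sigma_1,\ldots$ satisfies $\sigma_{k(n+1)}=\tau^k\sigma_0$, and for large $k$ this has all entries $x_0$, so the path must have entered $\cF$ before that step. This termination of gradient paths is exactly the extra hypothesis that makes the infinite version of discrete Morse theory go through. You should make this explicit and either cite a formulation that covers infinite complexes with terminating gradient flow, or supply the short direct collapsing argument; otherwise the final step is a genuine (though easily repaired) gap.
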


\begin{proof}
Let $\Delta$ be the simplex category (objects are sets $[n]=\{0, \ldots, n\}$, morphisms are weakly order-preserving functions), so that a simplicial set is a contravariant functor from $\Delta$ to the category of sets.  The simplicial set $N(S)$ takes $[n]$ to the set of chains $\lambda_0 \le \cdots \le \lambda_n$.  The interval $I$ is the simplicial set represented by the object $[1]$ of $\Delta$; thus $I([n])$ is the set of partitions of $[n]$ into two intervals of the form $\{0,\dots,i\} \cup \{i+1, \dots, n\}$.

Since $x \le y$ implies $\tau x \le \tau y$, we have a map $\tau \colon N(S) \to N(S)$.  We claim that it is homotopic to the identity map.  To see this, define a map $h \colon N(S) \times I \to N(S)$ as follows.  An $n$-simplex of $N(S) \times I$ is a chain $\lambda_0 \le \cdots \le \lambda_n$ in $S$ together with a partition $[n]=\{0,\ldots,i\} \cup \{i+1,\ldots,n\}$.  We map this to the $n$-simplex of $N(S)$ given by $\tau(\lambda_{i+1}) \le \cdots \le \tau(\lambda_n) \le \lambda_0 \le \cdots \le \lambda_i$.  Restricting $h$ to the vertex 0 of $I$ (which corresponds to partitions $[n]=[n] \cup \emptyset$), we get the identity map, while restricting $h$ to the vertex 1 of $I$ (which corresponds to partitions $[n]=\emptyset \cup [n]$), we get $\tau$.  This proves the claim.

Let $S_n \subset S$ be the set of elements $x$ for which $\tau^n(x)=x_0$.  Then $\tau$ induces a map $N(S_n) \to N(S_n)$, and $h$ induces a homotopy of this map with the identity map.  Since $\tau^n$ collapses $N(S_n)$ to a point, but is also homotopic to the identity map, we see that $N(S_n)$ is contractible.  Finally, $N(S)$ is the filtered colimit of the spaces $N(S_n)$, and is therefore contractible as well. 
\end{proof}

\subsection{\texorpdfstring{Description of $\Mod_K$ and $\Mod_A^{\tors}$}{Description of ModK and ModAtors}} \label{ss:quivermodK}

Let $S$ be the set of partitions.  Define a relation $\le$ on $S$ by $\mu \le \lambda$ if $\lambda/\mu \in \HS$.  Let $\Part_{\HS}$ be the quiver-with-relations $\cQ$ constructed in \S\ref{ss:facile} from $S$, using the trivial cocycle. Representations of this quiver coincide with the notion of a ``hypercomplex'' in the sense of Olver \cite[Definition 7.1]{olver}. The main theorem of this section is the following:  

\begin{theorem}
\label{thm:cat}
The categories $\Mod_K$ and $\Mod_A^{\tors}$ are equivalent to $\Rep(\Part_{\HS})$.  Furthermore, if $\cC$ denotes any of these categories then $\Aut(\cC)=1$ and $\Aut(\id_{\cC})=\bC^{\times}$.
\end{theorem}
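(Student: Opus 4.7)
The plan is to verify that $\Mod_K$ is a facile abelian category (Definition~\ref{defn:facile}) and apply Theorem~\ref{thm:facile}. Condition (a) follows from essential smallness of $\Mod_A$, Corollary~\ref{cor:modKfinitelength} (finite length), Corollary~\ref{cor:indecinj} and Corollary~\ref{cor:injenv} (every object has an injective envelope built from its socle, giving enough injectives), and the observation that $\Hom$-spaces between finite-length objects with one-dimensional simple endomorphism rings (Proposition~\ref{HomLQ}) are automatically finite-dimensional. Condition (b) says $Q_\lambda = T(A \otimes \bS_\lambda)$ is multiplicity-free, which is immediate from Pieri's formula. Condition (c) is precisely Proposition~\ref{prop:injhomsets2}. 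The labeling of simples by partitions (Corollary~\ref{cor:allsimples}) supplies the $S$-rigidification, and Corollary~\ref{Qlambda-gr} identifies the Cartan relation as $\mu \le \lambda$ if and only if $\lambda/\mu \in \HS$.

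Next, I show that $N(S)$ is contractible by applying Proposition~\ref{prop:contract} with base point $x_0 = \emptyset$ and $\tau(\lambda_1, \lambda_2, \ldots) = (\lambda_2, \lambda_3, \ldots)$. If $\mu \le \lambda$, so that $\mu \subseteq \lambda$ and $\mu_i \ge \lambda_{i+1}$ for all $i$, then $(\tau\lambda)_i = \lambda_{i+1} \le \mu_i$ and $(\tau\lambda)_i = \lambda_{i+1} \ge \mu_{i+1}$, which exhibits $\mu/\tau\lambda$ as a horizontal strip and gives $\tau\lambda \le \mu$. The iteration condition holds since $\tau^n\lambda = \emptyset$ for $n \ge \ell(\lambda)$. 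Contractibility yields $\rH^0(N(S), \bC^\times) = \bC^\times$ and $\rH^i(N(S), \bC^\times) = 0$ for $i \ge 1$.

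By Theorem~\ref{thm:facile}(a) and the vanishing of $\rH^2$, up to equivalence there is a unique $S$-rigidified facile abelian category; Proposition~\ref{prop:quiv} realizes it as $\Rep(\Part_\HS)$ (the quiver built from the trivial cocycle). Hence $\Mod_K \simeq \Rep(\Part_\HS)$, and composing with Theorem~\ref{thm:equivmodKmodA} gives $\Mod_A^\tors \simeq \Rep(\Part_\HS)$. Theorem~\ref{thm:facile}(c) gives $\Aut(\id_\cC) = \rH^0(N(S), \bC^\times) = \bC^\times$, and Theorem~\ref{thm:facile}(b) computes the group of rigidified auto-equivalences as $\rH^1(N(S), \bC^\times) = 1$.

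The remaining task is to upgrade this to the unrestricted claim $\Aut(\cC) = 1$, which I expect to be the main technical obstacle since Theorem~\ref{thm:facile}(b) only sees rigidified equivalences. Any auto-equivalence $F$ induces a bijection $\sigma$ on partitions preserving the Cartan relation (because $F(Q_\lambda)$ is the injective envelope of $L_{\sigma(\lambda)} = F(L_\lambda)$, hence is $Q_{\sigma(\lambda)}$, and $F$ preserves composition factors). I would then argue by induction on $|\lambda|$ that $\sigma(\lambda) = \lambda$: the down-set $D(\lambda) = \{\mu : \mu \le \lambda\}$ satisfies $D(\lambda) \setminus \{\lambda\} \subseteq \{\mu : |\mu| < |\lambda|\}$, so by the inductive hypothesis $\sigma$ fixes $D(\lambda) \setminus \{\lambda\}$ pointwise, which forces $D(\sigma(\lambda)) \setminus \{\sigma(\lambda)\} = D(\lambda) \setminus \{\lambda\}$. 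A short argument using the existence of a size-$(|\sigma(\lambda)|-1)$ predecessor of $\sigma(\lambda)$ within its own down-set pins down $|\sigma(\lambda)| = |\lambda|$, and a partition is uniquely determined among those of a fixed size by its set of outer-corner predecessors (as their pointwise join when there are at least two, and via the shape of the single predecessor in the rectangular case). This closes the induction.
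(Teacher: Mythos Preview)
Your approach is essentially that of the paper: verify that $\Mod_K$ is facile, show $N(S)$ is contractible (the paper's Lemma~\ref{lem:partcontract} uses the same $\tau$), and invoke Theorem~\ref{thm:facile}. One minor difference is that for $\Mod_A^{\tors}$ the paper constructs a direct equivalence with $\Rep(\Part_{\HS})$ via a kernel built from the projectors of Lemma~\ref{prop:proj}, whereas you compose with the equivalence of Theorem~\ref{thm:equivmodKmodA}; both routes are valid.

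There is, however, a genuine gap in your final paragraph. The claim that a partition of a given size is determined by its set of outer-corner predecessors is false: $(2)$ and $(1,1)$ both have size $2$ and both have $(1)$ as their unique outer-corner predecessor. Your phrase ``via the shape of the single predecessor in the rectangular case'' does not resolve this, since $(1) = (n^{m-1}, n-1)$ for both $(n,m)=(2,1)$ and $(n,m)=(1,2)$. Fortunately you have more information than you are using: you know the \emph{entire} set $D(\lambda)\setminus\{\lambda\}$, and this does determine $\lambda$. Indeed, the unique minimal element of $D(\lambda)$ is $\tau\lambda=(\lambda_2,\lambda_3,\ldots)$ (any $\mu$ with $\lambda/\mu\in\HS$ satisfies $\mu_i\ge\lambda_{i+1}$), and together with $|\lambda|$ this recovers $\lambda_1$ and hence $\lambda$. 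In the example, $D((2))\setminus\{(2)\}=\{\emptyset,(1)\}$ has minimum $\emptyset$, while $D((1,1))\setminus\{(1,1)\}=\{(1)\}$ has minimum $(1)$, so the two are distinguished. The paper's Lemma~\ref{lem:partrigid} takes a somewhat different route to the same rigidity statement, first characterizing intrinsically the set $S_n$ of $n$-row partitions and then reading off $\lambda_1$ and $\tau\lambda$ from chain lengths and minimal predecessors in $S_{n-1}$.
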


\begin{remark}
Heuristically, $\Mod_K$ is the representation category of the stabilizer subgroup $P \subset \GL_{\infty}$ of a generic point $\bC^\infty$. We can interpret this as the category of homogeneous bundles on $\GL_\infty/P$, which is the total space of the line bundle $\cO_{\bP^\infty}(-1)$ on $\bP^\infty$. Then our description of this category as representations of the quiver $\Part_{\HS}$ is analogous to the result of \cite{or}, which describes the category of homogeneous bundles on a Hermitian symmetric variety (for example, projective space) as representations of a quiver.
\end{remark}

\begin{remark}
\label{rmk:modKop}
It is also true that there are no equivalences $\cC^{\op} \to \cC$: indeed, there are no non-zero projective objects in $\cC$ but there are non-zero injective objects.
\end{remark}

\begin{remark} \label{rmk:wildtype}
We note that the quiver $\Part_\HS$ has wild representation type. In particular, it contains a vertex with valency $5$ (for example, the partition $(5,4,3,2,1)$), and describing the moduli space of representations of this subquiver contains as a subproblem the moduli space of 5 points in $\bP^1$, which is known to have dimension 2. General principles then imply that the representation theory of this quiver contains as a subproblem the moduli space of pairs of linear operators, which is known to be ``wild''.
\end{remark}

We begin with some lemmas.

\begin{lemma} \label{lem:partcontract}
The space $N(S)$ is contractible.
\end{lemma}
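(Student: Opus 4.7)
The natural tool here is Proposition~\ref{prop:contract} just established: it reduces contractibility of $N(S)$ to producing a map $\tau \colon S \to S$ and a basepoint $x_0$ such that (a) $\mu \le \lambda$ implies $\tau \lambda \le \mu$, and (b) $\tau^n(\lambda) = x_0$ for $n \gg 0$. My plan is simply to exhibit such a $\tau$ and $x_0$ for the horizontal-strip order on partitions, and then verify the two axioms directly from the definition of $\HS$.

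Concretely, I would take $x_0 = \varnothing$ (the empty partition) and define
\[
\tau(\lambda) = (\lambda_2, \lambda_3, \lambda_4, \ldots),
\]
i.e., $\tau$ deletes the first row of $\lambda$. Condition (b) is then immediate: $\ell(\tau^n \lambda) = \max(\ell(\lambda) - n, 0)$, so $\tau^n \lambda = \varnothing$ for $n \ge \ell(\lambda)$. For condition (a), suppose $\mu \le \lambda$, meaning $\lambda/\mu \in \HS$, which by definition says $\lambda_i \ge \mu_i \ge \lambda_{i+1}$ for all $i \ge 1$. I need $\tau \lambda \le \mu$, i.e., $\mu/\tau\lambda \in \HS$, which unfolds to $\mu_i \ge (\tau\lambda)_i \ge \mu_{i+1}$, i.e., $\mu_i \ge \lambda_{i+1} \ge \mu_{i+1}$ for all $i \ge 1$. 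The first inequality is exactly the second half of the horizontal-strip condition for $\lambda/\mu$, and the second inequality follows from the containment $\mu \subseteq \lambda$ applied in row $i+1$. So both hypotheses of Proposition~\ref{prop:contract} hold, and $N(S)$ is contractible.

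There is essentially no obstacle here beyond guessing the right $\tau$: the shift-by-one map is forced by the desire to use only one box of inequality per row, and it matches the intuition that ``adding a horizontal strip, then chopping off the top row'' lands inside the original partition. The verification is purely combinatorial and takes only a few lines, so I would present it as a short lemma whose proof consists of the two bullet points above followed by a citation of Proposition~\ref{prop:contract}.
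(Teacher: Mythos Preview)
Your proof is correct and takes essentially the same approach as the paper: both define $\tau$ to delete the first row, take $x_0 = \varnothing$, and invoke Proposition~\ref{prop:contract}. Your verification of condition~(a) is simply more explicit than the paper's one-line assertion.
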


\begin{proof}
Let $\tau \colon S \to S$ be the map which deletes the first row of a partition (and takes the empty partition to itself).  If $x \le y$ then $\tau y \le x$.  Of course, any partition is mapped to the empty partition by some iterate of $\tau$.  The result now follows from Proposition~\ref{prop:contract}.
\end{proof}

\begin{lemma} \label{lem:partrigid}
The structure $(S, \le)$ has no nontrivial automorphisms.
\end{lemma}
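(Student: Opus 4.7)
The plan is to reduce the statement to two well-known rigidities: first, that the data of $\le$ is rich enough to reconstruct the poset $(S,\subseteq)$ of Young's lattice; and second, that Young's lattice itself has only the identity and the transpose $\dagger$ as poset automorphisms, while $\dagger$ is visibly incompatible with $\le$.

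For the first reduction, I would show that the covering relation $\mu \prec \lambda$ of Young's lattice — i.e., $\mu \subseteq \lambda$ with $|\lambda| = |\mu|+1$ — is definable purely from $\le$:
\[
\mu \prec \lambda \quad\Longleftrightarrow\quad \mu \le \lambda,\ \mu \ne \lambda,\ \text{and no } \nu \text{ satisfies } \mu \le \nu \le \lambda,\ \mu \ne \nu \ne \lambda.
\]
The ($\Rightarrow$) direction is immediate since $\mu \le \nu \le \lambda$ forces $\mu \subseteq \nu \subseteq \lambda$. For ($\Leftarrow$), I assume $\mu \le \lambda$ with $|\lambda|-|\mu| \ge 2$ and produce a strict intermediate: take the largest $i$ with $\lambda_i > \mu_i$; the horizontal-strip condition $\mu_i \ge \lambda_{i+1}$ combined with $\mu_i < \lambda_i$ gives $\lambda_{i+1} < \lambda_i$, so $(i,\lambda_i)$ is a removable corner of $\lambda$ lying in $\lambda/\mu$. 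Setting $\nu = \lambda \setminus \{(i,\lambda_i)\}$, one checks that $\mu \le \nu$ (the skew shape $\nu/\mu$ is a subset of a horizontal strip, with the row-$i$ check using $\mu_i \le \lambda_i - 1 = \nu_i$) and $\nu \le \lambda$ (a single box). Since $\prec$ is definable from $\le$, any automorphism of $(S,\le)$ is an automorphism of the Hasse diagram of Young's lattice, hence of the poset $(S,\subseteq)$.

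For the second reduction, I claim $\operatorname{Aut}(S,\subseteq) = \{\mathrm{id}, \dagger\}$, proven by induction on partition size: automorphisms preserve rank (since $\emptyset$ is the unique minimum), and for a partition $\lambda$ of size $n \ge 3$ the set of covers determines $\lambda$ — either $\lambda$ is the union of its covers (when it has $\ge 2$ inner corners), or $\lambda$ is a rectangle $(a^b)$ and a short case analysis shows distinct rectangles of the same size $\ne 2$ have distinct covers $(a^{b-1},a-1)$. The only ambiguity is at $n=2$, where $(2)$ and $(1,1)$ share the cover $\{(1)\}$, accounting for the two automorphisms. Finally, $\dagger$ does not preserve $\le$: for instance, $(2) \le (2,2)$ since $(2,2)/(2)$ is a horizontal strip, but $(1,1) = (2)^\dagger \not\le (2,2) = (2,2)^\dagger$ because $(2,2)/(1,1)$ has two boxes in column $2$. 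Hence $\sigma = \mathrm{id}$. The main technical point is the case analysis for rectangles in Step 3; the geometric lemma on outer corners in Step 2 is straightforward but essential.
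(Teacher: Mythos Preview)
Your proof is correct but takes a genuinely different route from the paper's. The paper reconstructs each partition $\lambda$ directly from its position in $(S,\le)$: it characterizes the empty partition as the unique $\le$-minimal element, then inductively defines the set $S_n$ of partitions with $n$ rows (as those lying $\le$-above something in $S_{n-1}$ but not in $S_{n-1}$), recovers $\lambda_1$ as the length of a maximal $\le$-chain down to $S_{n-1}$, and recovers $(\lambda_2,\lambda_3,\ldots)$ as the $\le$-minimal element of $S_{n-1}$ below $\lambda$. This is entirely internal to the relation $\le$.

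Your approach instead factors through Young's lattice: you show that the Young-lattice covering relation $\prec$ is first-order definable from $\le$ (so $\Aut(S,\le) \hookrightarrow \Aut(S,\subseteq)$), then invoke the classification $\Aut(S,\subseteq)=\{\id,\dagger\}$, and finally rule out $\dagger$ by exhibiting $(2)\le(2,2)$ but $(1,1)\not\le(2,2)$. This is more structural and ties the result to a well-known rigidity, at the cost of having to prove (or cite) the classification of $\Aut(S,\subseteq)$; your sketch of that classification via the injectivity of $\lambda \mapsto \{\text{lower covers of }\lambda\}$ in ranks $\ge 3$ is correct, with the rectangle case handled cleanly. The paper's argument is shorter and self-contained; yours makes the connection to Young's lattice explicit, which is a pleasant bonus even if not needed elsewhere in the paper.
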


\begin{proof}
The zero partition can be recovered as the unique element $x$ of $S$ such that $y \le x$ implies $x=y$.  Put $S_0=\{0\}$.  The set $S_n$ of partitions with $n$ rows can be characterized as the set of $x \in S$ such that $x \ge y$ for some $y \in S_{n-1}$, but $x \not \in S_{n-1}$.  Furthermore, the length of the first row of $x$ is the maximal length of a chain $x_0<x_1<\cdots<x_r = x$ with $x_0 \in S_{n-1}$ (note: this implies $x_i \le x$ for all $i$ by our conventions); the partition obtained from $x$ by deleting the first row is $x_0$. Also, this partition $x_0$ can be characterized as the minimal object $y \in S_{n-1}$ satisfying $y < x$.  This allows $x$ to be recovered inductively.  We have thus shown that a partition can be recovered by how it fits into the order $\le$, which shows that each partition is fixed by an automorphism of $(S, \le)$.  This proves the lemma.
\end{proof}

\begin{lemma}
\label{prop:proj}
One can choose for each $\mu \le \lambda$ a projector $p_{\mu,\lambda} \colon A \otimes \bS_{\mu} \to \bS_{\lambda}$ (in the category $\cV$) such that for $\nu \le \mu \le \lambda$ the square
\begin{displaymath}
\xymatrix{
A \otimes A \otimes \bS_{\nu} \ar[r]^-{\id \otimes p_{\nu,\mu}} &
A \otimes \bS_{\mu} \ar[d]^-{p_{\mu,\lambda}} \\
A \otimes \bS_{\nu} \ar[r]^-{p_{\nu,\lambda}} \ar[u]^-{c \otimes \id} & \bS_{\lambda} }
\end{displaymath}
commutes, where $c$ denotes the comultiplication map on $A$.
\end{lemma}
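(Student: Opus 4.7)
The plan is to choose projectors $\tilde p_{\mu,\lambda}$ arbitrarily, measure the failure of the coherence square by a $2$-cocycle on the nerve $N(S)$ of the partition poset under the horizontal-strip relation, and then trivialize this cocycle using the contractibility of $N(S)$ established in Lemma~\ref{lem:partcontract}, paralleling the strategy of Lemma~\ref{lem:inj-equal}.

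By Pieri's rule $A\otimes\bS_\mu=\bigoplus_{\lambda/\mu\in\HS}\bS_\lambda$ is multiplicity free, so $\Hom_\cV(A\otimes\bS_\mu,\bS_\lambda)$ is one-dimensional when $\mu\le\lambda$ and zero otherwise; pick any nonzero $\tilde p_{\mu,\lambda}$ in the nonzero case. For $\nu\le\mu\le\lambda$ the composite $\tilde p_{\mu,\lambda}\circ(\id\otimes\tilde p_{\nu,\mu})\circ(c\otimes\id)\colon A\otimes\bS_\nu\to\bS_\lambda$ lies in a Hom space of dimension at most one, so there is a scalar $\beta_{\nu,\mu,\lambda}\in\bC$ with
\[
\tilde p_{\mu,\lambda}\circ(\id\otimes\tilde p_{\nu,\mu})\circ(c\otimes\id)=\beta_{\nu,\mu,\lambda}\,\tilde p_{\nu,\lambda}
\]
(where $\tilde p_{\nu,\lambda}=0$ if $\nu\not\le\lambda$, in which case the coherence square commutes trivially for any rescaling). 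I would verify $\beta_{\nu,\mu,\lambda}\ne 0$ whenever $\nu\le\lambda$ by composing with a Pieri inclusion $\iota_{\nu,\lambda}\colon\bS_\lambda\hookrightarrow A\otimes\bS_\nu$: the resulting endomorphism of $\bS_\lambda$ is a scalar, and that scalar can be evaluated on a highest-weight vector using $c(v)=v\otimes 1+1\otimes v$ on degree-one generators, giving a visibly nonzero combinatorial expression (in the toy case $\nu=\varnothing$, $\mu=(d)$, $\lambda=(e)$ it is exactly $\binom{e}{d}$).

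Next I would check that the $\beta$'s satisfy the $2$-cocycle identity $\beta_{\xi,\nu,\mu}\,\beta_{\xi,\mu,\lambda}=\beta_{\xi,\nu,\lambda}\,\beta_{\nu,\mu,\lambda}$ for every chain $\xi\le\nu\le\mu\le\lambda$ with all pairwise relations holding. This is extracted by writing the iterated composite
\[
\tilde p_{\mu,\lambda}\circ(\id\otimes\tilde p_{\nu,\mu})\circ(\id\otimes\id\otimes\tilde p_{\xi,\nu})\circ\Delta^{(2)}\colon A\otimes\bS_\xi\to\bS_\lambda,
\]
where $\Delta^{(2)}=(c\otimes\id\otimes\id)\circ(c\otimes\id)$ denotes the twice-iterated coproduct, in two different ways. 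Collapsing the inner two projectors first (using the $(\xi,\nu,\mu)$-square and then the $(\xi,\mu,\lambda)$-square) produces $\beta_{\xi,\nu,\mu}\,\beta_{\xi,\mu,\lambda}\,\tilde p_{\xi,\lambda}$, while collapsing the outer two first (using the $(\nu,\mu,\lambda)$-square and then the $(\xi,\nu,\lambda)$-square) produces $\beta_{\xi,\nu,\lambda}\,\beta_{\nu,\mu,\lambda}\,\tilde p_{\xi,\lambda}$. Coassociativity of $c$ ensures that the two regroupings give the same map $A\otimes\bS_\xi\to\bS_\lambda$, yielding the cocycle identity after comparing coefficients.

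Finally, by Lemma~\ref{lem:partcontract} the nerve $N(S)$ is contractible, so $\rH^2(N(S),\bC^\times)=0$ and the cocycle $\beta$ is a coboundary: there exists a $\bC^\times$-valued $1$-cochain $u$ on pairs $\mu\le\lambda$ with $\beta_{\nu,\mu,\lambda}=u_{\nu,\mu}u_{\mu,\lambda}/u_{\nu,\lambda}$. Setting $p_{\mu,\lambda}:=u_{\mu,\lambda}^{-1}\tilde p_{\mu,\lambda}$ then makes the square in the statement commute. The step I expect to be most delicate is the cocycle verification above: both regroupings of the three-projector composite must be traced through the iterated coproduct carefully, keeping track of which tensor factor feeds into which Pieri projection; everything else is a direct application of the contractibility argument already used in the proof of Lemma~\ref{lem:inj-equal}.
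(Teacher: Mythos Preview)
Your approach is essentially the same as the paper's: choose arbitrary projectors, extract a $2$-cocycle from the failure of commutativity, and trivialize it using the contractibility of $N(S)$. The one substantive difference is the nonvanishing step: the paper applies the duality $(-)^\vee$, which turns the diagram into one of Pieri \emph{inclusions} and multiplication, and then invokes Proposition~\ref{prop:pierisubmod} directly; this dispatches nonvanishing uniformly for all $\nu\le\mu\le\lambda$ without any explicit computation. Your highest-weight-vector argument is correct in spirit and works in the toy case you cite, but ``a visibly nonzero combinatorial expression'' is not a proof for general $\nu,\mu,\lambda$, and making it rigorous would essentially reprove Proposition~\ref{prop:pierisubmod} by hand.
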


\begin{proof}
Begin by choosing arbitrary projectors $p_{\mu,\lambda}$.  Since $A \otimes \bS_{\nu}$ is multiplicity-free, the above diagram commutes up to scalars; that is, we have
\begin{displaymath}
p_{\mu,\lambda}(\id \otimes p_{\nu,\mu})(c \otimes \id)=\alpha_{\lambda,\mu,\nu}p_{\nu,\lambda}
\end{displaymath}
for some scalar $\alpha_{\lambda,\mu,\nu}$, which we claim is non-zero. To see this, apply the duality functor $(-)^{\vee}$.  This reverses the directions of the arrows and replaces comultiplication with multiplication and projectors with injectors. Then the fact that the scalar is non-zero is the content of Proposition~\ref{prop:pierisubmod}.  Hence $\alpha$ defines a 2-cocycle on $N(S)$ with values in $\bC^{\times}$.  Since $N(S)$ is contractible, $\alpha$ is a coboundary, and so we can rescale our projectors so that $\alpha=1$.
\end{proof}

\begin{proof}[Proof of Theorem~\ref{thm:cat}]
We first show that $\Mod_K$ and $\Rep(\Part_{\HS})$ are equivalent.  The results of \S\ref{ss:simp-inj} show that $\Mod_K$ is a facile abelian category.  Furthermore, we know that $L_{\mu}$ appears as a constituent in $Q_{\lambda}$ in $\Mod_K$ if and only if $\mu \le \lambda$, and so $\le$ is the Cartan relation for $\Mod_K$.  The cohomological invariant of $\Mod_K$ vanishes, since $N(S)$ is contractible, and so $\Mod_K$ is equivalent to $\Rep(\Part_{\HS})$.

We now show that $\Mod_A^{\tors}$ and $\Rep(\Part_{\HS})$ are equivalent.  For a partition $\lambda$, put $\cK_{\lambda}=A \otimes \bS_{\lambda}$.  The chosen projector $p_{\mu,\lambda}$ extends uniquely to a map of $A$-comodules $\cK_{\mu} \to \cK_{\lambda}$, and Lemma~\ref{prop:proj} exactly says that these maps respect the relations in $\Part_{\HS}$.  We can therefore think of $\cK$ as a representation of $\Part_{\HS}$ in the category of $A$-comodules.  We obtain a functor
\begin{displaymath}
\Phi \colon \Mod_A^{\tors} \to \Rep(\Part_{\HS}), \qquad M \mapsto \Hom_A(M^{\vee}, \cK)
\end{displaymath}
and a functor
\begin{displaymath}
\Psi \colon \Rep(\Part_{\HS}) \to \Mod_A, \qquad M \mapsto \Hom_A(M, \cK)^{\vee}.
\end{displaymath}
Here $(-)^{\vee}$ denotes the duality functor on $\cV$, which takes $A$ to itself and $A$-modules to $A$-comodules.  It is clear that $\Phi$ takes the simple $A$-module $\bS_{\lambda}$ to the simple representation $L_{\lambda}$ of $\Part_{\HS}$, and that $\Psi$ does the opposite.  There are injective natural transformations $\id \to \Phi \Psi$ and $\id \to \Psi \Phi$ which are easily seen to be isomorphisms on simple objects.  Since $\Phi$ and $\Psi$ are both right exact, these maps are necessarily isomorphisms.

The results on $\Aut(\cC)$ and $\Aut(\id_{\cC})$ follow from the contractibility of $N(\cC)$ (Lemma~\ref{lem:partcontract}) and the fact that $(S, \le)$ has no automorphisms (Lemma~\ref{lem:partrigid}), which implies that any auto-equivalence of $\cC$ induces the identity map on $S$.
\end{proof}

\begin{remark}
The functors $\Phi$ and $\Psi$ in the above proof are actually very concrete.  Let $M$ be a finite length object of $\cV$, and let $M_{\lambda}$ denote the multiplicity space of $\bS_{\lambda}$ in $M$.  By Pieri's formula, giving a map $A \otimes M \to M$ is the same as giving a map $f_{\mu,\lambda} \colon M_{\mu} \to M_{\lambda}$ for each $\mu \le \lambda$.  Thus if $M$ is an $A$-module then we get something that looks like it should be a representation of $\Part_{\HS}$.  In fact, the above proof shows that this thing is in fact a representation of $\Part_{\HS}$ (i.e., the relations are satisfied), and this representation is none other than $\Phi(M)$.  Similar comments hold in the reverse direction.
\end{remark}


\section{\texorpdfstring{The structure of $\Mod_A$}{The structure of ModA}} \label{sec:sectionfunctor}

In the previous two sections, we have studied the structure of $\Mod_A^{\tors}$ and $\Mod_K$.  In this section, we study $\Mod_A$ and especially the manner in which it is built out of these two pieces.
We begin in \S \ref{ss:saturated} by defining a class of modules called {\bf saturated} modules, and show that the modules $L_{\lambda}^0$ (defined in \S\ref{ss:simp-inj}) and $A \otimes \bS_{\lambda}$ are saturated.  In \S \ref{ss:sectiondefn}, we show that the localization functor $T \colon \Mod_A \to \Mod_K$ has a right adjoint, which we denote by $S$ and call the {\bf section functor}.  It is almost formal that such an adjoint exists if we allow non-finitely generated modules, but to show that the section functor takes finitely generated modules to finitely generated modules requires the special results on saturated modules.  As an immediate consequence of this work, we show in \S \ref{ss:modAinjectives} that every object of $\Mod_A$ has finite injective dimension.  In \S \ref{ss:loccoh}, we define a right adjoint $\rH^0_{\fm}$ to the inclusion $\Mod_A^{\tors} \to \Mod_A$ and define the {\bf local cohomology} functors $\rH^i_{\fm}$ as the derived functors of $\rH^0_{\fm}$. We will also use $\rR \Gamma_\fm$ to denote the total derived functor of $\rH^0_\fm$. 

At this point, we have a diagram
\begin{displaymath}
\xymatrix{
\rD^b_{\tors}(A) \ar@<.5ex>[r] & \rD^b(A) \ar@<.5ex>[l]^-{\rR \Gamma_{\fm}} \ar@<.5ex>[r]^-T &
\rD^b(K) \ar@<.5ex>[l]^-{\rR S} }.
\end{displaymath}
In \S \ref{ss:deriveddecomp} we show that whenever one is in a situation like this, one can describe $\rD^b(A)$ as the category of triples $(X, Y, f)$ with $X \in \rD^b(K)$, $Y \in \rD^b_{\tors}(A)$ and $f \in \Hom_{\rD^b(A)}(\rR S(X), Y)$. This is specialized to $\rD^b(A)$ in \S \ref{ss:pt-decomp}. This is not a completely satisfactory description of $\rD^b(A)$, as $f$ makes reference to $\rD^b(A)$.  In \S \ref{ss:DbA} we show that, in our specific situation, the data of $f$ can be defined without any reference to $\rD^b(A)$.  This allows us to describe $\rD^b(A)$ purely in terms of the simpler category $\rD^b(K)$.

Finally, in \S \ref{ss:modArigid} we show that the abelian category $\Mod_A$ is rigid, in a suitable sense, and in \S \ref{ss:modAKtheory} we study the $\rK$-theory of $\Mod_A$.

\subsection{Saturated modules} \label{ss:saturated}

We say that an $A$-module $M$ is {\bf saturated} if $\Ext^i_A(N, M)=0$ for $i=0,1$ whenever $N$ is a torsion $A$-module.  The relevance of this condition will be seen in the following sections.

\begin{proposition} \label{prop:Llamsat}
The module $L_{\lambda}^0$ is saturated.
\end{proposition}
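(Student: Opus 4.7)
The plan is to verify the two vanishing statements $\Ext^i_A(N, L_\lambda^0)=0$ for $i=0,1$ separately, for an arbitrary torsion $A$-module $N$.

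\textbf{Step 1 ($i=0$).} Any $A$-linear map $f \colon N \to L_\lambda^0$ has image a finite-length submodule of $L_\lambda^0$. But Proposition~\ref{Llambda-sub} classifies every nonzero submodule as $L_\lambda^{\ge d}$, and each of these is infinite-dimensional. So $f=0$.

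\textbf{Step 2 ($i=1$, reduction).} By d\'evissage along a composition series of $N$, it suffices to handle $N=\bS_\mu$. Fix a short exact sequence
\begin{equation*}
0 \longrightarrow L_\lambda^0 \longrightarrow E \longrightarrow \bS_\mu \longrightarrow 0.
\end{equation*}
By Step~1, $L_\lambda^0$ contains no nonzero finite-length submodule, so the maximal torsion submodule $E_{\tors} \subseteq E$ injects into $\bS_\mu$ and is either $0$ or $\bS_\mu$. If $E_{\tors}=\bS_\mu$, its inclusion splits the extension. So assume $E$ is torsion-free.

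\textbf{Step 3 ($i=1$, splitting).} Choose a $\cV$-splitting $\iota \colon \bS_\mu \hookrightarrow E$ of the projection. The multiplication $A \otimes \iota(\bS_\mu) \to E$ restricts to an $A$-linear defect map $\phi \colon A_{>0} \otimes \bS_\mu \to L_\lambda^0$, and the extension is split iff some replacement $\iota + \iota_1$ with $\iota_1 \in \Hom_\cV(\bS_\mu, L_\lambda^0)$ kills $\phi$. I would split into two cases depending on whether $\bS_\mu$ occurs in $L_\lambda^0$, i.e., whether $\mu=(d,\lambda)$ for some $d \ge \lambda_1$. In the ``flexible'' case $\mu=(d_0,\lambda)$, Pieri forces $\phi$ to take values in the single summand $\bS_{(d_0+1,\lambda)} \subseteq L_\lambda^0$; Proposition~\ref{prop:pierisubmod} says multiplication by $A_1$ on the $\bS_\mu \subseteq L_\lambda^0$ summand hits this component nontrivially, so one can cancel $\phi|_{A_1}$ by a suitable $\iota_1$; since $A_{>0}$ is generated by $A_1$, associativity propagates the cancellation to all of $A_{>0}$. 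In the ``rigid'' case $\mu \ne (d,\lambda)$ the lift $\iota$ is essentially unique, and one must show $\phi = 0$ automatically: Pieri pins down the only $\mu$ for which a nonzero $\phi$ is $\cV$-theoretically possible (namely $\mu$ obtained from $(d,\lambda)$ by removing one box from some row $i \ge 2$), and then comparing the action of $A_2$ computed directly versus via $A_1 \cdot A_1$, combined with the non-vanishing Pieri constants of Proposition~\ref{prop:pierisubmod}, forces the scalar defining $\phi$ to vanish.

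\textbf{Main obstacle.} The serious content is the rigid subcase of Step~3: ruling out a hypothetical nonzero $A$-linear $\phi \colon A_{>0} \otimes \bS_\mu \to L_\lambda^0$ when no adjustment of $\iota$ is available. This requires carefully tracking the isotypic decomposition of $A \otimes A \otimes \bS_\mu$ relative to the symmetrization $A_1 \otimes A_1 \to A_2$, so that the commutativity of $A$ yields a linear relation forcing the one free Pieri coefficient to equal zero.
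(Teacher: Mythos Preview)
Your plan is correct and its core computation coincides with the paper's, though the packaging differs.  The paper resolves $\bS_\mu$ by the Koszul complex and computes $\Ext^1_A(\bS_\mu,L_\lambda^0)$ as the first cohomology of
\[
\Hom_\cV(\bS_\mu,L_\lambda^0)\;\to\;\Hom_\cV(\bS_\mu\otimes{\textstyle\bigwedge}^1,L_\lambda^0)\;\to\;\Hom_\cV(\bS_\mu\otimes{\textstyle\bigwedge}^2,L_\lambda^0).
\]
Each term is at most one-dimensional (multiplicity-freeness of both $\bS_\mu\otimes\bigwedge^i$ and $L_\lambda^0$), so everything reduces to checking that certain differentials are \emph{nonzero}.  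Your flexible case $\mu=(d_0,\lambda)$ is exactly the case where the zeroth term is $\bC$: the paper notes $\Hom_A(\bS_\mu,L_\lambda^0)=0$ forces the first differential to be injective, hence an isomorphism onto the $1$-dimensional first term, hence $\Ext^1=0$.  Your rigid case is where the zeroth term vanishes, and the task is to show the second differential is injective.

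Your ``main obstacle'' and the paper's second-differential computation are literally the same fact viewed dually.  A class $\phi_1\in\Hom_\cV(A_1\otimes\bS_\mu,L_\lambda^0)$ is a Koszul $1$-cocycle if and only if the antisymmetrization $(a\wedge b)\otimes m\mapsto a\cdot\phi_1(b\otimes m)-b\cdot\phi_1(a\otimes m)$ vanishes, i.e., if and only if the induced map $A_1\otimes A_1\otimes\bS_\mu\to L_\lambda^0$ is symmetric --- which is precisely your commutativity constraint for $E$ to be an $A$-module.  The paper verifies the antisymmetrization is nonzero by exhibiting the composite
\[
A\otimes\bS_{(D+1,\lambda)}\;\to\;A\otimes\bS_{(D,\lambda)}\;\to\;L_\lambda^0
\]
as nonzero (via Proposition~\ref{prop:pierisubmod}) and tracing it through the Koszul differential.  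This is the same nonvanishing of Pieri constants you would invoke.

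What the Koszul packaging buys is that the dimension count is automatic and the two cases become symmetric: one never has to separately verify that cancelling $\phi_1|_{A_1}$ in the flexible case propagates to all of $A_{>0}$, nor set up the $A_2$-versus-$A_1\cdot A_1$ comparison by hand.  Your extension-theoretic approach is more concrete but requires rebuilding that scaffolding.
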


\begin{proof}
The statement for $\Ext^0_A$ translates to $\hom_A(N,L_\lambda^0) = 0$ if $N$ is torsion. But $L_\lambda^0$ is defined as a submodule of $A \otimes \bS_\lambda$ which does not contain any torsion submodules, so the statement is clear.

For the statement for $\Ext^1_A$, it suffices to show that $\Ext^1_A(\bS_{\mu}, L_{\lambda}^0)=0$ for all $\mu$.  The Koszul complex $A \otimes \bS_{\mu} \otimes \lw{\bullet}$ gives a projective resolution of $\bS_{\mu}$.  Applying $\Hom_A(-, L_{\lambda}^0)$, we obtain a complex
\begin{equation}
\label{eq:cx}
\Hom_\cV(\bS_{\mu}, L_{\lambda}^0) \to \Hom_\cV(\bS_{\mu} \otimes \lw{1}, L_{\lambda}^0) \to \Hom_\cV(\bS_{\mu} \otimes \lw{2}, L_{\lambda}^0) \to \cdots
\end{equation}
which calculates $\Ext^{\bullet}_A(\bS_{\mu}, L_{\lambda}^0)$. Notice that $\bS_{\mu} \otimes \lw{i}$ is multiplicity-free and all partitions occurring in it have the same size, while $L_{\lambda}^0$ is also multiplicity-free but no two partitions occurring in it have the same size.  It follows that each $\Hom$ space in \eqref{eq:cx} is 0 or 1 dimensional.  In particular, $\Ext^i_A(\bS_{\mu}, L_{\lambda}^0)$ is 0 or 1 dimensional for all $i$.

Suppose that $\mu$ occurs in $L_{\lambda}^0$, so that $\Hom_\cV(\bS_{\mu}, L_{\lambda}^0) \ne 0$.  Since $\Hom_A(\bS_{\mu}, L_{\lambda}^0)=0$, the first differential in \eqref{eq:cx} is injective.  Since the second term of \eqref{eq:cx} has dimension at most 1, the first differential is an isomorphism, and so  $\Ext^1_A(\bS_{\mu}, L_{\lambda}^0)=0$.

Suppose now that $\mu$ does not occur in $L_{\lambda}^0$.  We claim that the second differential in \eqref{eq:cx} is injective. We may assume that $\Hom_\cV(\bS_\mu \otimes \bigwedge^1, L_\lambda^0) \ne 0$, otherwise there is nothing to prove. Since we assumed that $\mu$ does not occur in $L_\lambda^0$, it must be the case that $\mu = (D,\nu)$ where $D \ge \lambda_1$, $\nu \subset \lambda$, and $|\lambda| - |\nu| = 1$. Then $\bS_{(D,\lambda)} \subset \bS_\mu \otimes \bigwedge^1$ and $\bS_{(D+1,\lambda)} \subset \bS_\mu \otimes \bigwedge^2$. Therefore $\Hom_\cV(\bS_\mu \otimes \bigwedge^2, L_\lambda^0) \cong \bC$, and we just need to show that the second differential in \eqref{eq:cx} is non-zero. An application of Proposition~\ref{prop:pierisubmod} shows that the composition of the top row of
\[
\xymatrix{
A \otimes \bS_{(D+1,\lambda)} \ar[r] \ar[d] & A \otimes \bS_{(D,\lambda)} \ar[r]^-f \ar[d] & L_\lambda^0 \subset A \otimes \bS_\lambda \\
A \otimes \bS_\mu \otimes \bigwedge^2 \ar[r] & A \otimes \bS_\mu \otimes \bigwedge^1 \ar[ur]
}
\]
is non-zero when $f$ is non-zero. To see that the image of the left map is not annihilated by the Koszul differential, we can use an exterior algebra version of Proposition~\ref{prop:pierisubmod} (either apply the transpose functor $\dagger$, or see \cite[Corollary 1.8]{sw}). Here the bottom row is the differential from the Koszul complex and the vertical maps are the inclusions. Hence the second differential in \eqref{eq:cx} is injective.
\end{proof}

\begin{proposition} \label{prop:freesat}
The module $A \otimes \bS_{\lambda}$ is saturated.
\end{proposition}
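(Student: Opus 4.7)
I will reduce the problem to a Koszul-complex calculation, paralleling the proof of Proposition~\ref{prop:Llamsat}. Every torsion $A$-module has a finite composition series with simple subquotients $\bS_\alpha$, so by the usual long-exact-sequence argument it suffices to verify $\Ext^i_A(\bS_\alpha, A \otimes \bS_\lambda) = 0$ for $i=0,1$ and all partitions $\alpha$. The $i=0$ case is immediate: $A \otimes \bS_\lambda$ is a free, hence torsion-free, $A$-module, so it admits no nonzero homomorphisms from a torsion module.

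For $\Ext^1$, I would compute via the Koszul resolution $A \otimes \bS_\alpha \otimes \bigwedge^\bullet V \to \bS_\alpha$. The resulting complex is
\[
C^i = \Hom_\cV(\bS_\alpha \otimes {\textstyle\bigwedge^i V}, A \otimes \bS_\lambda) \cong \bigoplus_\gamma \bC,
\]
where the sum ranges over partitions $\gamma$ with $\gamma/\alpha \in \VS_i$ and $\gamma/\lambda \in \HS$, by Pieri's formula applied to each factor. Unlike in Proposition~\ref{prop:Llamsat}, where the analogous spaces were at most one-dimensional (because $L_\lambda^0$ contained at most one Schur functor of each size), here $C^i$ can have higher dimension, so the direct dimension-counting argument breaks down.

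To handle the general case, my approach is to combine this Koszul computation with the filtration from Proposition~\ref{prop:filtration}: $A \otimes \bS_\lambda$ sits at the top of a finite filtration $F_0 \subset F_1 \subset \cdots \subset F_{\lambda_1}$ with bottom $F_0 = L_\lambda^0$ (saturated by Proposition~\ref{prop:Llamsat}) and subquotients $F_i/F_{i-1} \cong \bigoplus_{\mu,\, \lambda/\mu \in \HS_i} L_\mu^{\ge \lambda_1}$. I would induct on the filtration step and apply $\Hom_A(N, -)$ to the short exact sequences $0 \to F_{i-1} \to F_i \to F_i/F_{i-1} \to 0$. The inductive hypothesis that $F_{i-1}$ is saturated reduces the statement, via the long exact sequence
\[
\Ext^1_A(N, F_{i-1}) \to \Ext^1_A(N, F_i) \to \Ext^1_A(N, F_i/F_{i-1}) \xrightarrow{\delta} \Ext^2_A(N, F_{i-1}),
\]
to the injectivity of the boundary map $\delta$ on $\Ext^1_A(N, F_i/F_{i-1})$.

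The main obstacle is that the individual summands $L_\mu^{\ge \lambda_1}$ appearing in the subquotients are \emph{not} saturated when $\mu_1 < \lambda_1$: applying Proposition~\ref{prop:Llamsat} to the short exact sequence $0 \to L_\mu^{\ge \lambda_1} \to L_\mu^0 \to L_\mu^0/L_\mu^{\ge \lambda_1} \to 0$ identifies $\Ext^1_A(N, L_\mu^{\ge \lambda_1}) \cong \Hom_A(N, L_\mu^0/L_\mu^{\ge \lambda_1})$, which is typically nonzero. Thus the obstructions produced inside $F_i/F_{i-1}$ must be genuinely cancelled by the gluing of $F_{i-1}$ with these subquotients inside the free module $A \otimes \bS_\lambda$. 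I would verify the required injectivity of $\delta$ by an explicit description in terms of the Pieri maps used in the proof of Proposition~\ref{prop:filtration}, matching each nonzero class in $\Hom_A(N, L_\mu^0/L_\mu^{\ge \lambda_1})$ with a genuine second-order extension of $N$ by $F_{i-1}$. This combinatorial identification---ensuring that the torsion quotient of each $L_\mu^0$ is ``seen'' by a lower piece of the filtration---is where I expect the bulk of the work to lie.
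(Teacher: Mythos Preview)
Your setup is sound up to the point you flag: the reduction to simple torsion modules $\bS_\mu$, the vanishing of $\Ext^0$, and the identification $\Ext^1_A(N, L_\mu^{\ge \lambda_1}) \cong \Hom_A(N, L_\mu^0/L_\mu^{\ge \lambda_1})$ via Proposition~\ref{prop:Llamsat} are all correct. But the step you single out---injectivity of the connecting map $\delta$---is a genuine gap, and your proposal gives no mechanism for closing it. The target of $\delta$ is $\Ext^2_A(N, F_{i-1})$, and your inductive hypothesis (that $F_{i-1}$ is saturated) says nothing about $\Ext^2$. To analyze $\delta$ you would need either an explicit computation of $\Ext^2$ of the intermediate filtered pieces $F_{i-1}$, or some independent structural input about how the Pieri gluing interacts with second-order extensions; neither is available at this point in the paper, and carrying either out looks at least as hard as the original statement.

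The paper avoids all of this with a direct, elementary argument: given an extension $0 \to A \otimes \bS_\lambda \to E \to \bS_\mu \to 0$, it produces a splitting by hand. The proof splits into two cases according to whether $\mu/\lambda \in \HS$. When $\mu/\lambda \notin \HS$, one builds a partition $\eta$ with $\eta/\lambda \in \HS$ but $\eta/\mu \notin \HS$ and derives a contradiction from Proposition~\ref{prop:pierisubmod}. When $\mu/\lambda \in \HS$, $E$ contains $\bS_\mu$ with multiplicity two, and a multiplicity count using Pieri's rule and Proposition~\ref{prop:pierisubmod} locates a copy of $\bS_\mu \subset E$ whose $A$-span is just itself, giving the splitting. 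This is short and needs no control over higher $\Ext$ or filtration boundary maps. Your filtration route is not wrong in principle, but it trades a one-page combinatorial argument for a bookkeeping problem that you have not shown how to solve.
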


\begin{proof}
The statement for $\Ext^0$ is clear. It suffices to show that $\Ext^1_A(\bS_{\mu}, A \otimes \bS_{\lambda})=0$. Let $0 \to A \otimes \bS_\lambda \to E \to \bS_\mu \to 0$ be an extension. We may suppose that some subrepresentation of $\bS_1 \otimes \bS_\mu$ appears in $A \otimes \bS_\lambda$ (if not, then no nontrivial extension is possible because $\bS_\mu \subset E$ would be closed under multiplication by $A$). 

First suppose that $\mu / \lambda \notin \HS$. Then we can add a box to some (unique) column of $\mu$ to get a partition $\nu$ with $\bS_\nu \subset A \otimes \bS_\lambda$; by our assumption $\mu / \lambda \notin \HS$, we may add another box in the same column (and more boxes to the left of it if necessary to make the result a partition) to get a partition $\eta$ so that $\eta / \lambda \in \HS$ and $\eta / \mu \notin \HS$. If the extension is non-split, then the smallest $A$-submodule of $E$ containing $\bS_\mu$ must also contain $\bS_\nu$. By Proposition~\ref{prop:pierisubmod}, the $A$-submodule generated by $\bS_\nu$ 
contains $\bS_\eta$. In particular, $\bS_\eta \subset A \otimes \bS_\mu$, but this contradicts that $\eta / \mu \notin \HS$.

Now we suppose that $\mu / \lambda \in \HS$ so that $E$ contains $\bS_\mu$ with multiplicity $2$. Let $\nu$ be a partition such that $\nu / \lambda \in \HS$ and $\bS_\nu$ appears in $\bS_1 \otimes \bS_\mu$. Since $\bS_\nu$ appears with multiplicity $1$ in $A \otimes \bS_\lambda$, there must be a copy of $\bS_\mu$ in $E$, call it $N$, that does not generate $\bS_\nu$ under $A$. If there is only $1$ such $\nu$ with $\nu / \lambda \in \HS$ and $\bS_\nu \subset \bS_1 \otimes \bS_\mu$, then we conclude that $N$ is an $A$-submodule and this shows that $E$ is a trivial extension. Otherwise, let $\eta$ be another partition with the same properties of $\nu$. Define $\theta_i = \max(\eta_i, \nu_i)$. Then $\theta$ is a partition and $\theta / \lambda \in \HS$. The $A$-submodule generated by $\bS_\mu$ contains $\bS_\theta$, and also the $A$-submodule generated by $\bS_\eta$ contains $\bS_\theta$ (here we use Proposition~\ref{prop:pierisubmod}). We have a map $\pi \colon A \otimes \bS_\mu \to E$ by sending $\bS_\mu$ to $N$. By construction, $\ker \pi$ contains $\bS_\nu$ and hence $\bS_\theta$, so it must necessarily also contain $\bS_\eta$. So the image of $\pi$ is just $N$, and again we see that $N$ is an $A$-submodule, which shows that $E$ is a trivial extension.
\end{proof}

\subsection{The section functor} \label{ss:sectiondefn}

Recall that $T$ denotes the natural functor $\Mod_A \to \Mod_K$.

\begin{theorem} \label{thm:Tadjoint}
The functor $T \colon \Mod_A \to \Mod_K$ has a right adjoint.
\end{theorem}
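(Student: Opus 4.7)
The plan is to build the right adjoint $S$ using the class of saturated modules from \S\ref{ss:saturated}. The key observation, almost formal from Proposition~\ref{prop:extsurj}, is this: if $M \in \Mod_A$ is saturated then for every $N \in \Mod_A$ the natural map $\Hom_A(N, M) \to \Hom_K(T(N), T(M))$ is an isomorphism. Indeed, for any finite-colength submodule $N' \subseteq N$, the long exact sequence obtained from $0 \to N' \to N \to N/N' \to 0$ combined with saturation ($\Ext^{0,1}_A(N/N', M) = 0$ since $N/N'$ is torsion) makes restriction $\Hom_A(N, M) \to \Hom_A(N', M)$ an isomorphism, so the colimit appearing in Proposition~\ref{prop:extsurj} collapses to $\Hom_A(N, M)$, giving surjectivity onto $\Hom_K(T(N), T(M))$; injectivity is automatic since a map $N \to M$ with torsion image must vanish by $\Hom_A(\text{torsion}, M) = 0$.

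To construct $S(\ol{M})$ for $\ol{M} \in \Mod_K$, I would use that $\ol{M}$ has finite length (Corollary~\ref{cor:modKfinitelength}) and that its injective envelope is a finite direct sum of $Q_\lambda$'s (Corollary~\ref{cor:injenv}). Embed $\ol{M} \hookrightarrow Q^0$ into such a finite sum; since the cokernel again has finite length, embed it into a second finite sum $Q^1$, yielding a three-term exact sequence $0 \to \ol{M} \to Q^0 \to Q^1$ in $\Mod_K$. Let $\tilde{Q}^i$ be the tautological lifts (finite direct sums of modules $A \otimes \bS_\lambda$, so that $T(\tilde{Q}^i) = Q^i$); these are saturated by Proposition~\ref{prop:freesat}. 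By the key observation, the map $Q^0 \to Q^1$ in $\Mod_K$ lifts uniquely to a map $\tilde{Q}^0 \to \tilde{Q}^1$ in $\Mod_A$. Define $S(\ol{M}) := \ker(\tilde{Q}^0 \to \tilde{Q}^1)$, which is finitely generated by noetherianity of $\Mod_A$, and note that exactness of $T$ gives a canonical isomorphism $T(S(\ol{M})) \cong \ol{M}$.

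The adjunction $\Hom_A(N, S(\ol{M})) \cong \Hom_K(T(N), \ol{M})$ now follows by diagram chase. Given $\phi \colon T(N) \to \ol{M}$, compose with $\ol{M} \hookrightarrow Q^0$ to get $T(N) \to T(\tilde{Q}^0)$; the key observation (saturation of $\tilde{Q}^0$) lifts this uniquely to $N \to \tilde{Q}^0$. Its composite with $\tilde{Q}^0 \to \tilde{Q}^1$ vanishes in $\Mod_K$ (since $\ol{M} \to Q^1$ is zero), hence vanishes in $\Mod_A$ by saturation of $\tilde{Q}^1$, so we obtain a factorization $N \to S(\ol{M})$. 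The inverse map is obtained by applying $T$ and using $T(S(\ol{M})) = \ol{M}$. Once this natural bijection is established, $S$ extends to a functor and its independence of the chosen resolution is automatic from the universal property of adjoints.

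The only substantive potential obstacle is finite generation of $S(\ol{M})$: in the unrestricted setting the right adjoint would exist by abstract nonsense, so the real content is producing a \emph{finitely generated} saturated model. This is secured above by two ingredients --- finite length of objects of $\Mod_K$ allows a \emph{finite} injective presentation $Q^0 \to Q^1$, and Proposition~\ref{prop:freesat} provides an explicit finitely generated saturated lift of each $Q_\lambda$ --- after which noetherianity does the rest.
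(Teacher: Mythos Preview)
Your argument is correct and takes a genuinely different route from the paper. The paper first passes to the large category $\tilde{\Mod}_A$ of all $A$-modules, invokes Gabriel's theory (Lemma~\ref{lem:tildeS}) to produce a right adjoint $\tilde{S}$ to $\tilde{T}$, and then uses saturation of $L_\lambda^0$ (Proposition~\ref{prop:Llamsat}) together with left exactness of $\tilde{S}$ to conclude that $\tilde{S}$ carries finite-length objects of $\tilde{\Mod}_K$ to finitely generated $A$-modules. You instead work entirely inside the finitely generated world: you use saturation of the \emph{projective} modules $A\otimes\bS_\lambda$ (Proposition~\ref{prop:freesat}) to lift a finite injective presentation $0\to\ol{M}\to Q^0\to Q^1$ to $\Mod_A$, and then noetherianity guarantees the kernel is finitely generated. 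Your approach avoids the detour through non-finitely-generated modules and the cardinality argument in Lemma~\ref{lem:tildeS}; it is essentially the construction sketched in Remark~\ref{rmk:differentS}, with the added virtue that you directly verify the adjunction (which the remark flags as the nontrivial point). The paper's route, on the other hand, yields $S(L_\lambda)=L_\lambda^0$ immediately (Corollary~\ref{cor:S-L-free}), whereas with your construction this requires a small additional argument.
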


We denote this right adjoint by $S$ and call it the {\bf section functor}.  We need a lemma before proving the theorem.  Let $\tilde{\Mod}_A$ denote the category of all $A$-modules (recall that $\Mod_A$ only contains the finitely generated ones), and let $\tilde{T} \colon \tilde{\Mod}_A \to \tilde{\Mod}_K$ be the quotient by the Serre subcategory $\ttors$ of torsion $A$-modules. We say that an object of $\wt{\Mod}_A$ is saturated if $\Ext^i(N, M)=0$ for $i=0,1$ whenever $N$ is a torsion module of $\wt{\Mod}_A$. We note that if $M$ is a torsion-free object of $\wt{\Mod}_A$ and $T=\varinjlim T_i$ is a torsion object of $\wt{\Mod}_A$, then the natural map $\Ext^1(T, M) \to \varprojlim \Ext^1(T_i, M)$ is injective. It thus suffices to check that $\Ext^1(N, M)=0$ for $N$ of finite length to show that $M$ is saturated, as every torsion object is a direct limit of finite length objects. In particular, if $M$ is an object of $\Mod_A$ then the notion of saturated defined here agrees with the one defined in the previous section.

\begin{lemma} \label{lem:tildeS}
The functor $\tilde{T}$ has a right adjoint $\tilde{S}$. If $M$ is saturated then the natural map $M \to \wt{S}(\wt{T}(M))$ is an isomorphism.
\end{lemma}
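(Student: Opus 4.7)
The plan is to derive both claims from Gabriel's theorem on Serre localization in Grothendieck categories. First I would verify that $\wt{\Mod}_A$ is a Grothendieck abelian category: it is cocomplete, filtered colimits are exact (inherited from $\cV$), and the objects $A \otimes V$, as $V$ ranges over finite-length objects of $\cV$, form a generating family. The subcategory $\ttors$ is moreover localizing, i.e.\ a Serre subcategory closed under arbitrary direct sums, since any element of a direct sum of $A$-modules has support in finitely many summands and hence lies in a finitely generated (and thus, since it is torsion, finite length) submodule. Gabriel's theorem then produces a right adjoint $\wt{S}$ to the quotient functor $\wt{T}$ and guarantees that, for every $M$, the kernel and cokernel of the unit map $\eta_M \colon M \to \wt{S}(\wt{T}(M))$ lie in $\ttors$.

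Now I would use saturation to upgrade this into an isomorphism. The kernel of $\eta_M$ is a torsion submodule of $M$, and the vanishing of $\Hom_A(N, M)$ for torsion $N$ (the $\Ext^0$ part of saturation) forces it to be zero. Setting $Q = \coker(\eta_M) \in \ttors$, the short exact sequence
\[
0 \to M \to \wt{S}(\wt{T}(M)) \to Q \to 0
\]
represents a class in $\Ext^1_A(Q, M)$, which vanishes by the $\Ext^1$ part of saturation; hence the sequence splits and $\wt{S}(\wt{T}(M)) \cong M \oplus Q$. To conclude that $Q = 0$, I would show that $\wt{S}(\wt{T}(M))$ has no nonzero torsion submodule: given a torsion $T' \hookrightarrow \wt{S}(\wt{T}(M))$, the defining adjunction gives
\[
\Hom_A(T', \wt{S}(\wt{T}(M))) = \Hom_{\wt{\Mod}_K}(\wt{T}(T'), \wt{T}(M)) = 0,
\]
since $\wt{T}(T') = 0$. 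Thus the inclusion $T' \hookrightarrow \wt{S}(\wt{T}(M))$ is zero, so $T' = 0$; applied to $Q$ (viewed as the torsion summand), this forces $Q = 0$ and hence $\eta_M$ is an isomorphism.

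The main potential obstacle is the verification of the Grothendieck/localizing setup, but this is routine once a generating family is named. The real content of the lemma is the second statement, and the argument above makes clear that the saturation hypothesis, a condition involving only $\Ext^0$ and $\Ext^1$ from torsion, is precisely what is needed: $\Ext^0$-vanishing kills the kernel, $\Ext^1$-vanishing splits off the cokernel, and a formal adjunction argument removes the resulting torsion summand.
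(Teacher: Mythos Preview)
Your proof is correct, and both parts follow cleanly from the framework you set up. The approach, however, differs from the paper's in emphasis. The paper does not invoke the general ``Grothendieck category + localizing subcategory $\Rightarrow$ right adjoint'' theorem; instead, for a torsion-free $M$ it explicitly constructs the saturation $\wt{M}$ as the colimit over the category of extensions $0 \to M \to N \to N/M \to 0$ with torsion quotient, and the technical work is a cardinality bound on essential extensions showing this colimit is taken over a \emph{set}. Having built $\wt{M}$ by hand and observed that every object has a maximal torsion subobject, the paper then cites specific results from Gabriel (Prop.~III.2.4 and the corollary of Prop.~III.3.3) to conclude both statements of the lemma at once.

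Your route is more streamlined and closer to how the result is usually packaged today: once you know $\wt{\Mod}_A$ is Grothendieck and $\ttors$ is localizing, the adjoint comes for free, and your direct argument for the second claim (kill the kernel with $\Ext^0$-vanishing, split off the cokernel with $\Ext^1$-vanishing, then use adjunction to see $\wt{S}\wt{T}(M)$ has no torsion) is a nice unpacking of what Gabriel's results say in this case. The paper's approach, by contrast, is more constructive---it tells you concretely what $\wt{S}(\wt{T}(M))$ looks like as a colimit of extensions---which is in the spirit of the explicit computations that follow (e.g.\ identifying $S(L_\lambda)$ and $S(Q_\lambda)$). One small remark: in your argument that $\ttors$ is closed under direct sums, the phrase ``since it is torsion'' reads as circular; it would be cleaner to say that each component $x_i$ of an element $x \in \bigoplus M_i$ generates a finite length submodule of the torsion module $M_i$, so $x$ lies in a finite sum of finite length modules.
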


\begin{proof}
Let $M$ be a torsion-free object of $\wt{\Mod}_A$. Let $\cC(M)$ denote the category of extensions
$$ 0 \to M \to N \to N/M \to 0 $$
in $\wt{\Mod}_A$, where $N/M$ is torsion. We say that an extension as above is {\bf essential} if every non-zero subobject of $N$ has non-zero intersection with $M$. It is easy to see that the essential extensions are cofinal in $\cC(M)$, i.e., every extension maps to an essential one.

We claim that the cardinality of the multiplicity spaces of simple objects of an essential extension is bounded. To see this, first observe the following:
\begin{itemize}
\item If $L$ is a torsion module and $n$ is a cardinal then an essential extension of $L^{\oplus n}$ by $M$ can exist only if $n \le \dim(\Ext^1(L, M))$.
\item Let $N$ be an extension of $M$, and let $N_n$ be the inverse image in $N$ of the $\fm^n$ torsion in $N/M$. Then $N$ is essential if and only if each $N_n$ is. 
\item Let $\cC_n$ be the category of torsion $A$-modules annihilated by $\fm^n$. Then the collection $\cI_n$ of isomorphism classes of indecomposable objects in $\cC_n$ forms a set. This is easy to see since there are only finitely many simples in $\cC_n$ (up to isomorphism) and extensions only go in one direction.
\end{itemize}
Suppose now that $N$ is an essential extension. Then $N_n/M$ decomposes as $\bigoplus_{I \in \cI_n} I^{\oplus m(I)}$, where $\cI_n$ is a set and the cardinality $m(I)$ is bounded in terms of $M$. It follows that the cardinality of $N_n$ is bounded. Since $N$ is the union of the $N_n$, its cardinality is bounded as well.

It follows that there is a cofinal \emph{set} of objects in $\cC(M)$. Therefore, the colimit $\wt{M}$ of the natural functor $\cC(M) \to \wt{\Mod}_A$ exists, as $\wt{\Mod}_A$ has all small colimits. It is easy to see from the definition of $\wt{M}$ that it is saturated, and the map $M \to \wt{M}$ is injective. Thus every torsion-free object injects into a saturated object. Clearly, every object has a maximal torsion subobject. The two statements of the lemma now follow from \cite[Prop.~III.2.4]{gabriel} and the corollary of \cite[Prop.~III.3.3]{gabriel}.
\end{proof}

\begin{proof}[Proof of Theorem~\ref{thm:Tadjoint}]
Lemma~\ref{lem:tildeS} and Proposition~\ref{prop:Llamsat} show that the natural map $L_{\lambda}^0 \to \tilde{S}(\tilde{T}(L_{\lambda}^0))$ is an isomorphism.  In particular, $\tilde{S}(L_{\lambda})$ is a finitely generated $A$-module.  The functor $\tilde{S}$ is left exact, since it is a right adjoint.  It follows that $\tilde{S}$ carries all finite length objects of $\tilde{\Mod}_K$ to finitely generated objects of $\tilde{\Mod}_A$, i.e., $\tilde{S}$ restricts to a functor $S \colon \Mod_K \to \Mod_A$. For $M \in \Mod_A$ and $N \in \Mod_K$ we have identifications
\begin{displaymath}
\Hom_K(T(M), N) = \Hom_K(\wt{T}(M), N) \cong \Hom_A(M, \wt{S}(N)) = \Hom_A(M, S(N)),
\end{displaymath}
and so $S$ is the right adjoint of $T$.
\end{proof}

We now give some of the basic properties of the section functor $S$.

\begin{proposition}
\label{prop:Sprop}
In the following, $M$ is an object of $\Mod_A$ and $M'$ is an object of $\Mod_K$.
\begin{enumerate}[\rm (a)]
\item The functor $S$ is left exact.
\item The functor $S$ takes injective objects of $\Mod_K$ to injective objects of $\Mod_A$.
\item The adjunction $T(S(M')) \to M'$ is an isomorphism.
\item The module $S(M')$ is saturated.
\item The adjunction $M \to S(T(M))$ is an injection (resp.\ an isomorphism) if and only if $M$ is torsion-free (resp.\ saturated).
\end{enumerate}
\end{proposition}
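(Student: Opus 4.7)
My plan is to prove parts (a)--(e) in order, leveraging Lemma~\ref{lem:tildeS} and the formalism of Serre quotients throughout. Parts (a) and (b) are formal: (a) holds because any right adjoint preserves finite limits; (b) holds because the right adjoint of an exact functor preserves injectives. Explicitly, for (b), given an injection $N \hookrightarrow N'$ in $\Mod_A$ and an $\Mod_K$-injective $M'$, exactness of $T$ keeps $T(N) \hookrightarrow T(N')$ an injection, so via the adjunction $\Hom_A(-, S(M')) \cong \Hom_K(T(-), M')$ the lifting property of $S(M')$ against $N \hookrightarrow N'$ reduces to that of $M'$ against $T(N) \hookrightarrow T(N')$.

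Part (c) is the standard Serre-quotient counit identity. By the Gabriel references cited in Lemma~\ref{lem:tildeS}, the counit $\wt T \wt S \to \id$ is an isomorphism on $\wt{\Mod}_K$; restricting to $\Mod_K$ (where $T$ and $S$ are simply the restrictions of $\wt T$ and $\wt S$) gives the claim. Part (d) then follows from (c) and the adjunction. For $\Ext^0$ vanishing: $\Hom_A(N, S(M')) \cong \Hom_K(T(N), M') = 0$ whenever $N$ is torsion, since $T(N) = 0$. For $\Ext^1$ vanishing, given an extension $0 \to S(M') \to E \to N \to 0$ with $N$ torsion, applying the exact functor $T$ and using $T(N) = 0$ together with (c) yields a canonical isomorphism $T(E) \to M'$. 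Under the adjunction, this iso corresponds to a map $\psi \colon E \to S(M')$; its restriction along the inclusion $S(M') \hookrightarrow E$ corresponds under $T$ to the identity of $M'$, and hence by the uniqueness in the adjunction must equal $\id_{S(M')}$, splitting the extension.

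For (e), I treat the two characterizations separately. Let $\eta_M \colon M \to S(T(M))$ denote the unit. I claim $\ker \eta_M$ equals the maximal torsion submodule of $M$. The containment of the torsion submodule in $\ker \eta_M$ is immediate from the $\Ext^0$ vanishing in (d), since $S(T(M))$ is saturated so admits no nonzero maps from any torsion module. The reverse containment uses the adjunction: the composition $\ker \eta_M \hookrightarrow M \xrightarrow{\eta_M} S(T(M))$ vanishes, and under adjunction this corresponds to $T(\ker \eta_M) \to T(M)$ being zero; since $T$ is exact, this map is injective, forcing $T(\ker \eta_M) = 0$, i.e., $\ker \eta_M$ is torsion. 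For the isomorphism characterization: if $M$ is saturated, Lemma~\ref{lem:tildeS} gives $\eta_M$ an isomorphism; conversely, if $\eta_M$ is an iso, then $M \cong S(T(M))$, which is saturated by (d).

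The main obstacle is part (d): the $\Ext^1$ vanishing requires carefully combining (c) with the adjunction and tracing a composition through the counit to identify the splitting. Everything else is a routine formal consequence of the adjoint formalism combined with Lemma~\ref{lem:tildeS}.
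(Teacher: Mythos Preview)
Your proof is correct and follows essentially the same route as the paper: parts (a) and (b) are the same formal observations, and (c) is the same Gabriel citation. The only difference is that where the paper dispatches (d) and (e) by direct citation to \cite{gabriel}, you instead derive (d) from (c) plus the adjunction and give a self-contained argument for the kernel characterization in (e); these are standard unwindings of the cited results and not a genuinely different approach.
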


\begin{proof}
\begin{enumerate}[\rm (a)] 
\item This is true for every right adjoint. 

\item This is true for any functor that has  an exact left adjoint. 

\item This is \cite[Prop.~III.2.3]{gabriel}.  

\item This is \cite[Lem.~III.2.2]{gabriel}.

\item This follows from \cite[Prop.~III.2.3]{gabriel} and its corollary. \qedhere
\end{enumerate}
\end{proof}

\begin{corollary} \label{cor:S-L-free}
We have $S(L_{\lambda})=L_{\lambda}^0$ and $S(Q_{\lambda})=A \otimes \bS_{\lambda}$.
\end{corollary}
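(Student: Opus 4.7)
The plan is to apply the saturation results directly. Recall that $L_\lambda$ and $Q_\lambda$ are defined as the images under $T$ of $L_\lambda^0$ and $A \otimes \bS_\lambda$ respectively, so the statements to be proved are
\[
S(T(L_\lambda^0)) \cong L_\lambda^0, \qquad S(T(A \otimes \bS_\lambda)) \cong A \otimes \bS_\lambda.
\]

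First, I would invoke Proposition~\ref{prop:Llamsat} and Proposition~\ref{prop:freesat}, which establish that both $L_\lambda^0$ and $A \otimes \bS_\lambda$ are saturated objects of $\Mod_A$. Then I would cite Proposition~\ref{prop:Sprop}(e), which says that for any saturated $M$ the unit of the $(T, S)$-adjunction $M \to S(T(M))$ is an isomorphism. Applying this with $M = L_\lambda^0$ and $M = A \otimes \bS_\lambda$, and using the definitions $L_\lambda = T(L_\lambda^0)$ and $Q_\lambda = T(A \otimes \bS_\lambda)$ from \S\ref{ss:simp-inj}, yields the two claimed identifications.

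There is no real obstacle here: all of the technical content has already been packaged into the saturation propositions and into the general properties of the section functor. The corollary is essentially a bookkeeping consequence, recording in a convenient form the fact that $S$ sends each simple object of $\Mod_K$ to its canonical torsion-free ``highest'' lift in $\Mod_A$, and similarly lifts the indecomposable injective $Q_\lambda$ back to the free module $A \otimes \bS_\lambda$.
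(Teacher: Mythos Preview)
Your proof is correct and is essentially identical to the paper's own argument: the paper simply cites Proposition~\ref{prop:Sprop}(e) together with the saturation results of \S\ref{ss:saturated} (namely Propositions~\ref{prop:Llamsat} and~\ref{prop:freesat}).
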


\begin{proof}
These follow from Proposition~\ref{prop:Sprop}(e) and the results of \S\ref{ss:saturated}.
\end{proof}

\begin{corollary} \label{cor:projinj}
Every projective object of $\Mod_A$ is also injective.
\end{corollary}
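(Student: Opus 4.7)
The plan is to invoke Corollary~\ref{cor:S-L-free} together with Proposition~\ref{prop:Sprop}(b). Recall that the indecomposable projective objects of $\Mod_A$ are exactly the modules of the form $A \otimes \bS_\lambda$, and every projective object is a finite direct sum of these (since projectives are $A \otimes V$ for $V$ a finite length object of $\cV$). So it suffices to show that each $A \otimes \bS_\lambda$ is injective in $\Mod_A$.

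First, I would recall from Proposition~\ref{prop:modKinj} that $Q_\lambda$ is injective in $\Mod_K$. Next, Proposition~\ref{prop:Sprop}(b) says that the section functor $S$ carries injective objects of $\Mod_K$ to injective objects of $\Mod_A$ (this is formal, since $S$ is right adjoint to the exact functor $T$). Therefore $S(Q_\lambda)$ is injective in $\Mod_A$. Finally, Corollary~\ref{cor:S-L-free} identifies $S(Q_\lambda)$ with $A \otimes \bS_\lambda$, completing the proof for indecomposable projectives.

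To finish, I would note that a finite direct sum of injective objects in any abelian category is injective, so arbitrary projectives $A \otimes V$ (with $V$ a finite length object of $\cV$, hence a finite direct sum of $\bS_\lambda$'s) are injective as well.

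There is essentially no obstacle: all the real work has already been done. The saturation of $A \otimes \bS_\lambda$ (Proposition~\ref{prop:freesat}) was the technical heart, since it is what allowed the identification $S(Q_\lambda) = A \otimes \bS_\lambda$ via Proposition~\ref{prop:Sprop}(e). Given that, the present corollary is a one-line consequence of the adjunction formalism.
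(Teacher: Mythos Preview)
Your proof is correct and follows essentially the same route as the paper's: the paper invokes Corollary~\ref{cor:S-L-free} and Proposition~\ref{prop:Sprop}(b) in a single line, and you have simply unpacked those references (noting that $Q_\lambda$ is injective so that Proposition~\ref{prop:Sprop}(b) applies, and that finite direct sums of injectives are injective). There is no substantive difference in approach.
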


\begin{proof}
We noted preceding Proposition~\ref{prop:pierisubmod} that every projective object of $\Mod_A$ are finite direct sums of objects of the form $A \otimes \bS_\lambda$. Injectivity of $A \otimes \bS_{\lambda}$ follows from Corollary~\ref{cor:S-L-free} and Proposition~\ref{prop:Sprop}(b).
\end{proof}

\begin{corollary}
The functor $S$ takes injective objects of $\Mod_K$ to projective objects of $\Mod_A$.
\end{corollary}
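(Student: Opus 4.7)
The plan is to reduce to the indecomposable case and then invoke the explicit computation of $S$ on the $Q_\lambda$.

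First, I would observe that every injective object $I$ of $\Mod_K$ is a finite direct sum of indecomposable injectives. Indeed, Corollary~\ref{cor:modKfinitelength} says every object of $\Mod_K$ has finite length, so in particular $I$ does, and then Corollary~\ref{cor:indecinj} (or just the Krull--Schmidt-style decomposition) forces $I \cong \bigoplus_{i=1}^n Q_{\lambda_i}$ for some partitions $\lambda_1, \dots, \lambda_n$.

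Next, since $S$ is a right adjoint (Theorem~\ref{thm:Tadjoint}), it preserves all limits, and in particular finite direct sums. Combined with Corollary~\ref{cor:S-L-free}, which identifies $S(Q_\lambda) = A \otimes \bS_\lambda$, this gives
\[
S(I) \;\cong\; \bigoplus_{i=1}^n S(Q_{\lambda_i}) \;\cong\; \bigoplus_{i=1}^n A \otimes \bS_{\lambda_i}.
\]
The right-hand side is a finite direct sum of indecomposable projectives of $\Mod_A$ (as noted just before Proposition~\ref{prop:pierisubmod}), hence projective. There is essentially no obstacle here; the real work was done in Corollary~\ref{cor:S-L-free} (which relies on the saturation of $A \otimes \bS_\lambda$ proved in Proposition~\ref{prop:freesat}), and this corollary is a clean packaging of that fact together with the classification of injectives in $\Mod_K$.
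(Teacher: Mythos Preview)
Your proof is correct and follows essentially the same approach as the paper: decompose an arbitrary injective of $\Mod_K$ as a finite direct sum of $Q_\lambda$'s, then apply Corollary~\ref{cor:S-L-free} to each summand. The paper's proof is just the one-line observation that every injective is a sum of $Q_\lambda$'s; you have spelled out the details (finite length, additivity of $S$) that the paper leaves implicit.
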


\begin{proof}
This follows since every injective object of $\Mod_K$ is a direct sum of $Q_{\lambda}$'s.
\end{proof}

\begin{remark} \label{rmk:differentS}
It is possible to give a different definition of $S$.  First, the natural map
\begin{displaymath}
\Hom_A(A \otimes \bS_{\lambda}, A \otimes \bS_{\mu}) \to \Hom_K(Q_{\lambda}, Q_{\mu})
\end{displaymath}
is an isomorphism (Corollary~\ref{cor:A-K-isom}).  Since every projective object of $\Mod_A$ is a direct sum of $A \otimes \bS_{\lambda}$'s, and every injective object of $\Mod_K$ is a direct sum of $Q_{\lambda}$'s, we see that
\begin{displaymath}
T \colon \ul{\cP}(\Mod_A) \to \ul{\cI}(\Mod_K)
\end{displaymath}
is an equivalence of categories, where $\ul{\cP}$ (resp.\ $\ul{\cI}$) denotes the full subcategory on projectives (resp.\ injectives).  We can therefore define
\begin{displaymath}
S' \colon \ul{\cI}(\Mod_K) \to \ul{\cP}(\Mod_A)
\end{displaymath}
as the inverse of $T$.  This functor induces an equivalence of categories
\begin{displaymath}
\rR S' \colon \rD^b(\Mod_K) \to \Perf,
\end{displaymath}
where $\Perf$ is the full subcategory of $\rD^b(\Mod_A)$ on complexes quasi-isomorphic to a bounded complex of projectives, and we can define $S'$ on all of $\Mod_K$ as the zeroth cohomology of $\rR S'$.  One can show that $S$ and $S'$ are isomorphic.  The advantage of this construction is that it is immediate and explicit:  there is no question of existence, and it is obvious that $S'(Q_{\lambda})=A \otimes \bS_{\lambda}$.  The disadvantage is that it is not clear that $S'$ is the adjoint of $T$.
\end{remark}

\begin{remark}
To give a feel for the above discussion in a more familiar setting, let us examine the case of $\bC[t]$-modules.  Let $\cA$ denote the category of finitely generated nonnegatively graded $\bC[t]$-modules and let $\cB$ be the quotient of $\cA$ by the subcategory of torsion modules.  Let $T \colon \cA \to \cB$ be the natural functor.  Let $F$ be the module $\bC[t]$, let $F[n]$ denote the graded shifted module and put $\ol{F}=T(F)$.  Then $\cB$ is equivalent to $\Vec$, every object being a direct sum of copies of $\ol{F}$.  (Note that there is an injection $F[n] \to F$ with torsion cokernel, which induces an isomorphism $T(F[n]) \to T(F)=\ol{F}$.)  The right adjoint of $T$ exists, and we call it $S$.  We have $S(\ol{F})=F$.  In particular, this shows that $F$ is an injective object of $\cA$, which can also be verified directly.  As below, we say that an object $M$ of $\cA$ is ``saturated'' if the map $M \to S(T(M))$ is an isomorphism.  Clearly, $M$ is saturated if and only if it is isomorphic to a direct sum of $F$'s. In particular, $F[n]$ is not saturated for $n>0$.
\end{remark}

\subsection{Injective resolutions} \label{ss:modAinjectives}

The main result is the following.

\begin{theorem} \label{thm:injA}
Every object of $\Mod_A$ has finite injective dimension.
\end{theorem}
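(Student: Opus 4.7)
The plan is to decompose $M$ via the adjunction $(T, S)$ into a saturated piece and a torsion piece, and to handle each using the finite injective dimension already established for $\Mod_K$.

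The starting observation is that the adjunction unit $\eta_M \colon M \to S(T(M))$ has kernel and cokernel that are torsion, since $T(\eta_M)$ is an isomorphism by Proposition~\ref{prop:Sprop}(c); being subquotients of noetherian objects, they are of finite length. The short exact sequences
\[
0 \to \ker(\eta_M) \to M \to \im(\eta_M) \to 0, \qquad 0 \to \im(\eta_M) \to S(T(M)) \to \coker(\eta_M) \to 0
\]
reduce the problem to two subcases: (i) the saturated module $S(T(M))$ has finite injective dimension, and (ii) every finite-length $A$-module has finite injective dimension.

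For (i), I would take a finite injective resolution $T(M) \to J^\bullet$ in $\Mod_K$ (Corollary~\ref{cor:finiteinj}), arranging each $J^i$ to be a finite direct sum of $Q_\lambda$'s (Corollary~\ref{cor:indecinj}). Applying $S$ yields a bounded complex $S(J^\bullet)$ whose terms are injective in $\Mod_A$, since $S(Q_\lambda) = A \otimes \bS_\lambda$ by Corollary~\ref{cor:S-L-free} and this is injective by Corollary~\ref{cor:projinj}. By left exactness of $S$, the zeroth cohomology of $S(J^\bullet)$ is $S(T(M))$; in positive degrees, exactness of $T$ combined with $T \circ S = \mathrm{id}_{\Mod_K}$ (Proposition~\ref{prop:Sprop}(c)) shows that $T$ applied to $S(J^\bullet)$ recovers the exact complex $J^\bullet$, so the higher cohomologies of $S(J^\bullet)$ are torsion of finite length. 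In $\rD^b(A)$ we therefore obtain a distinguished triangle
\[
S(T(M)) \longrightarrow S(J^\bullet) \longrightarrow C^\bullet \xrightarrow{+1}
\]
with $S(J^\bullet)$ a bounded complex of injectives and $C^\bullet$ a bounded complex with finite-length cohomology; granted (ii), both right-hand terms have finite injective dimension, forcing the same for $S(T(M))$.

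For (ii), d\'evissage reduces to the simple case $\bS_\lambda$. Computing $\Ext^i_A(\bS_\mu, \bS_\lambda)$ via the Koszul resolution $A \otimes \bigwedge^\bullet(\bC^\infty) \otimes \bS_\mu \to \bS_\mu$ of a simple source, and observing that $A_+$ annihilates $\bS_\lambda$ so the induced differentials vanish, gives $\Ext^i_A(\bS_\mu, \bS_\lambda) \cong \Hom_{\cV}(\bigwedge^i \otimes \bS_\mu, \bS_\lambda)$. By the dual Pieri formula this is nonzero only when $\lambda/\mu \in \VS_i$, which forces $i \le \ell(\lambda)$. The main obstacle is promoting this uniform bound from simple sources to arbitrary finitely generated sources $N$, which need not be finite length: my plan is to apply the decomposition of the first paragraph once again to the source $N$, reducing $\Ext^i(N, \bS_\lambda)$ via long exact sequences to the torsion subproblem (handled by further d\'evissage) and the saturated subproblem $\Ext^i(S(T(N)), \bS_\lambda)$, and then to bound the latter by computing against the bounded complex of injectives $S(J^\bullet)$ constructed in (i), inducting on the length of the $\Mod_K$ injective resolution of $T(N)$ to obtain a uniform bound.
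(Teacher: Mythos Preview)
Your part (i) matches the paper's argument for the saturated piece exactly. The divergence is entirely in (ii): the paper proves a short lemma (Lemma~\ref{lem:torsinj}) showing that injectives of $\Mod_A^{\tors}$ remain injective in $\Mod_A$, so the finite injective resolutions already available in $\Mod_A^{\tors} \simeq \Mod_K$ (via Corollary~\ref{cor:finiteinj}) serve as finite injective resolutions in $\Mod_A$. This handles torsion modules in one stroke.

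Your alternative for (ii)---bounding $\Ext^i_A(N,\bS_\lambda)$ uniformly in $N$---has a genuine gap. First, your sketch computes against the entire complex $S(J^\bullet)$ for $T(N)$; the bound you then get on $\Ext^i(C^\bullet,\bS_\lambda)$ via d\'evissage involves the top cohomological degree of $C^\bullet$, hence the length of $J^\bullet$, so uniformity in $N$ is not clear. (A one-term-at-a-time induction, embedding $N \hookrightarrow S(J^0)$, re-saturating the cokernel, and recursing, \emph{does} preserve the uniform bound $\ell(\lambda)$, but you do not spell this out.) Second, and more fundamentally, even granting Ext vanishing you have not produced any injective of $\Mod_A$ into which a nonzero torsion module can embed: the only injectives at your disposal are the torsion-free $A \otimes \bS_\mu$, and $\Hom_A(\bS_\lambda, A \otimes \bS_\mu)=0$. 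Thus you cannot even begin an injective resolution of $\bS_\lambda$, and your appeal to (ii) inside (i)---``both right-hand terms have finite injective dimension, forcing the same for $S(T(M))$''---fails in the resolution sense the theorem requires (cf.\ result~2 in the introduction). The missing ingredient is precisely Lemma~\ref{lem:torsinj}.
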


We first require a lemma. Let $\fm \subset A$ denote the unique homogeneous maximal ideal, i.e., the kernel of the surjection $A \to \bC$. As an object of $\cV$, we have $\fm = \bigoplus_{d \ge 1} \Sym^d$. The $k$th power is described similarly as $\fm^k = \bigoplus_{d \ge k} \Sym^d$. For an $A$-module $M$, we can think of $\fm M$ as the submodule of $M$ obtained by multiplying elements of $M$ by variables $x_1, x_2, \dots$, and we have $\fm^k M = \fm(\fm^{k-1} M)$ in general.

\begin{lemma} \label{lem:torsinj}
An injective object of $\Mod_A^{\tors}$ remains injective in $\Mod_A$.
\end{lemma}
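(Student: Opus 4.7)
The plan is a Baer-style extension argument, with Artin--Rees used to reconcile $\fm$-adic truncations of $M$ with those of $N$. I will show that for any inclusion $M \hookrightarrow N$ in $\Mod_A$ and any map $f \colon M \to I$, there exists an extension $\tilde f \colon N \to I$. Since $I$ has finite length in $\Mod_A^\tors$ and each simple constituent $\bS_\nu$ is annihilated by $\fm$, a fixed power $\fm^k$ annihilates $I$; consequently $f(\fm^k M) \subseteq \fm^k I = 0$, so $f$ factors through $M/\fm^k M$. The difficulty is that the induced map $M/\fm^k M \to N/\fm^k N$ need not be injective, so one cannot directly invoke the injectivity of $I$ within $\Mod_A^\tors$.

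The key ingredient should be Artin--Rees in the twisted commutative setting: there exists $c$ with $M \cap \fm^j N \subseteq \fm^{j-c} M$ for all $j \geq c$. The usual Rees-algebra proof should adapt: the graded algebra $A^{\ast}=\bigoplus_{j \geq 0} \fm^j$ is generated over $A$ by the finitely generated submodule $\fm \subseteq A$, and hence is Noetherian in $\cV$; the Rees submodule $\bigoplus_{j \geq 0}(M \cap \fm^j N)$ of $\bigoplus_{j \geq 0}\fm^j N$ is therefore finitely generated over $A^{\ast}$, so it is generated in degrees $\leq c$, which yields the claimed inclusion. Choosing $j \geq k+c$ gives $M \cap \fm^j N \subseteq \fm^k M \subseteq \ker f$, so $f$ descends to a map from $M/(M \cap \fm^j N) \cong (M + \fm^j N)/\fm^j N$, which sits inside $N/\fm^j N$.

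The last step is to verify that $N/\fm^j N$ lies in $\Mod_A^\tors$, so that the hypothesized injectivity applies. Writing $N$ as a quotient of some $A \otimes V$ with $V$ of finite length in $\cV$, the module $N/\fm^j N$ is a quotient of $(A/\fm^j) \otimes V$; since $A/\fm^j = \bigoplus_{d<j} \Sym^d$ is of finite length in $\cV$, Pieri's formula makes $(A/\fm^j) \otimes V$ of finite length as well. Thus $(M + \fm^j N)/\fm^j N \hookrightarrow N/\fm^j N$ is an inclusion in $\Mod_A^\tors$, and by the hypothesis on $I$ the map $(M + \fm^j N)/\fm^j N \to I$ extends to $N/\fm^j N \to I$; precomposing with $N \twoheadrightarrow N/\fm^j N$ produces the required $\tilde f$. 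The only substantive obstacle is confirming Artin--Rees in this twisted commutative setting, i.e.\ that the Rees algebra $A^{\ast}$ is Noetherian as an algebra object; once this is in place, the rest is a formal Baer-type argument using standard properties of finitely generated modules over $A$.
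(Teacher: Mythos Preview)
Your Baer-criterion approach is valid and ultimately works, but the step you flag as the ``substantive obstacle''---Noetherianity of the Rees algebra $A^{\ast}$ in the twisted setting---is not established anywhere in the paper (only Noetherianity of $A$ itself is cited), and it is also unnecessary. You can bypass Artin--Rees entirely using the $\cV$-grading. Since $\fm^j = A_{\ge j}$, one has $\fm^j N \subseteq N_{\ge j}$, hence $M \cap \fm^j N \subseteq M_{\ge j}$. Because $I$ has finite length it is concentrated in $\cV$-degrees $\le n$ for some $n$, and $f$, being a map in $\cV$, preserves degree; thus $f(M_{\ge n+1}) \subseteq I_{\ge n+1}=0$. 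Taking $j=n+1$ already gives $M\cap\fm^jN\subseteq\ker f$, and the rest of your argument goes through unchanged. So your proof can be repaired, but the detour through the Rees algebra is a red herring.

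The paper's proof is different and shorter: instead of the Baer criterion it shows $\Ext^1_A(N,I)=0$ directly for every $N$, via the sequence
\[
0 \to \fm^{n+1}N \to N \to N/\fm^{n+1}N \to 0
\]
with $n$ the maximal $\cV$-degree in $I$. The right-hand term is torsion, so its $\Ext^1$ against $I$ vanishes by hypothesis. For the left-hand term, any extension $0\to I\to E\to \fm^{n+1}N\to 0$ splits uniquely already in $\cV$ (the two pieces live in disjoint degrees), and a one-line degree check shows this $\cV$-splitting is automatically $A$-linear. Both arguments rest on the same principle---truncate by a power of $\fm$ and exploit that $I$ is degree-bounded---but the paper's $\Ext^1$ formulation avoids ever introducing the auxiliary submodule $M$ or reconciling two filtrations, which is exactly what forced you toward Artin--Rees.
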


\begin{proof}
First, let $N$ and $I$ be objects of $\Mod_A$ such that the maximal size of a partition in $I$ is less than the minimal size of a partition in $N$.  Then $\Ext^1_A(N, I)=0$.  Indeed, let
\begin{displaymath}
0 \to I \to M \to N \to 0
\end{displaymath}
be an extension.  The hypotheses imply that this extension splits uniquely in $\cV$, and it is easy to see that this splitting is a map of $A$-modules.

We now prove the lemma.  Let $I$ be an injective of $\Mod_A^{\tors}$, let $n$ be the maximal size of a partition in $I$ and let $N$ be an arbitrary object of $\Mod_A$.  We have an exact sequence
\begin{displaymath}
0 \to \fm^{n+1} N \to N \to N/\fm^{n+1} N \to 0
\end{displaymath}
and therefore an exact sequence
\begin{displaymath}
\Ext^1_A(N/\fm^{n+1} N, I) \to \Ext^1_A(N, I) \to \Ext^1_A(\fm^{n+1} N, I).
\end{displaymath}
The leftmost group vanishes since $I$ is injective in $\Mod_A^{\tors}$ and any extension of torsion modules is again torsion.  The rightmost group vanishes by the previous paragraph.  Thus the middle group vanishes, and so $I$ is injective.
\end{proof}

\begin{proof}[Proof of Theorem~\ref{thm:injA}]
From the equivalence $\Mod_A^{\tors}=\Mod_K$ and our results on $\Mod_K$, we see that $\Mod_A^{\tors}$ has enough injectives and every object has finite injective dimension.  (This can be seen much more directly, in fact.)  The lemma shows that an injective resolution in $\Mod_A^{\tors}$ remains an injective resolution in $\Mod_A$, and so all torsion $A$-modules have finite injective dimension in $\Mod_A$.

Now let $M$ be an arbitrary $A$-module.  Since every object of $\Mod_K$ has finite injective dimension, we can choose a finite injective resolution $T(M) \to \bI^{\bullet}$ in $\Mod_K$.  Applying $S$, we obtain $S(T(M)) \to S(\bI^{\bullet})$ with $S(\bI^{\bullet})$ a complex of injectives.  Since $TS=\id$, the cohomology of this complex is torsion, and thus has finite injective dimension.  It follows that $S(T(M))$ has finite injective dimension.  Since the kernel and cokernel of $M \to S(T(M))$ are torsion, and thus of finite injective dimension, it follows that $M$ has finite injective dimension.
\end{proof}

Let $I_{\lambda}$ be the injective envelope of $\bS_{\lambda}$ in $\Mod_A^{\tors}$. 

\begin{remark} \label{rmk:torsion-inj}
The modules $I_\lambda$ have a simple description. First, consider the embedding $\bC^\infty \subset \bC \oplus \bC^\infty$. The subgroup of $G=\GL(\bC \oplus \bC^\infty)$ that fixes $\bC$ is a semidirect product $G' = \GL_\infty \ltimes \bC^\infty$ and it is clear that $A$-modules are the same as polynomial $G'$-modules: picking a basis $x_1, x_2, \dots$ for $\bC^\infty$, a module over the additive group $\bC^\infty$ is the choice of commuting operators $X_i$, one for each $x_i$, and hence is a module over $\bC[x_1, x_2, \dots]$; the semidirect product translates to this module being a representation of $\GL_\infty$ which is compatible with the action of $\bC[x_1, x_2, \dots]$.

It follows from \cite[Proposition 5.2.5]{infrank} that the restriction of $\bS_\lambda(\bC \oplus \bC^\infty)$ from $G$ to $G'$ is injective and indecomposable in the category of torsion objects and its socle is $\bS_\lambda(\bC^\infty)$, so $I_\lambda = {\rm Res}^G_{G'}(\bS_\lambda(\bC \oplus \bC^\infty))$. Furthermore, if we set $\mu = (\lambda_2, \lambda_3, \lambda_4, \dots)$ and $\nu = (\lambda_1 + 1, \lambda_3, \lambda_4, \dots)$, then we have a presentation
\[
A \otimes \bS_\nu \to A \otimes \bS_\mu \to I_\lambda \to 0,
\]
and the $I_\lambda$ are exactly the $A$-modules that are resolved by the EFW complexes discussed in \S\ref{ss:exampleEFW} if we take $\alpha = \mu$ and $e = \lambda_1 + 1 - \lambda_2$. 
\end{remark}

\begin{theorem} \label{thm:Ainj}
Every injective object of $\Mod_A$ is a direct sum of modules of the form $I_{\lambda}$ and $A \otimes \bS_{\lambda}$. 
\end{theorem}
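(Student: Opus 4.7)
The plan is to split $J$ into a torsion part and a torsion-free part, handle each separately, and assemble the result. Given an injective $J \in \Mod_A$, let $J_t \subseteq J$ be its maximal torsion (= finite length) submodule. By noetherianness, $J_t$ is finitely generated; being torsion, it has finite length. I would first show that $J_t$ is injective in $\Mod_A^{\tors}$: any inclusion $M \hookrightarrow N$ of finite length modules, together with $M \to J_t$, extends to $N \to J$ by injectivity of $J$ in $\Mod_A$, and the image necessarily lands in $J_t$ since $N$ is torsion. Using the equivalence $\Mod_A^{\tors} \simeq \Mod_K$ and Corollary~\ref{cor:indecinj}, $J_t$ is therefore a direct sum of $I_\mu$'s (the injective envelopes of the simples $\bS_\mu$ in $\Mod_A^{\tors}$).

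Next, by Lemma~\ref{lem:torsinj}, $J_t$ is also injective in $\Mod_A$, so the inclusion $J_t \hookrightarrow J$ splits and we may write $J = J_t \oplus J'$ with $J'$ injective and torsion-free (any torsion submodule would be absorbed by $J_t$). Since $J'$ is injective in $\Mod_A$, it satisfies $\Ext^i_A(N,J') = 0$ for $i \le 1$ and every torsion $N$; that is, $J'$ is saturated, so Proposition~\ref{prop:Sprop}(e) gives $J' \cong S(T(J'))$.

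The key remaining step is to show that $T(J')$ is injective in $\Mod_K$; this is where the adjunction $T \dashv S$ is exploited. Given an injection $Y \hookrightarrow Z$ in $\Mod_K$ together with $f \colon Y \to T(J')$, applying the left exact functor $S$ yields an injection $S(Y) \hookrightarrow S(Z)$ in $\Mod_A$ (both are finitely generated by the construction of $S$ in the proof of Theorem~\ref{thm:Tadjoint}) and a map $S(f) \colon S(Y) \to S(T(J')) = J'$. Injectivity of $J'$ in $\Mod_A$ provides an extension $S(Z) \to J'$; applying $T$ and using the counit isomorphism $TS \cong \id$ of Proposition~\ref{prop:Sprop}(c) produces the required extension $Z \to T(J')$. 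Since $T(J')$ has finite length in $\Mod_K$ by Corollary~\ref{cor:modKfinitelength}, Corollary~\ref{cor:indecinj} then identifies it with a finite direct sum $\bigoplus Q_{\lambda_i}$, and Corollary~\ref{cor:S-L-free} gives $J' \cong S(T(J')) \cong \bigoplus A \otimes \bS_{\lambda_i}$. Combining with the torsion part yields the theorem.

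The main obstacle is the injectivity claim for $T(J')$ in $\Mod_K$: exactness of $T$ alone does not force $T$ to preserve injectives, and $S$ is only left exact, so neither adjoint automatically transports injectivity in the desired direction. The argument has to use that $J'$ is itself the image of an injective under $S$ (via saturation), together with the counit isomorphism, to convert the lifting problem in $\Mod_K$ into a genuinely solvable lifting problem in $\Mod_A$.
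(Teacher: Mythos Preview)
Your proof is correct and follows essentially the same strategy as the paper: split off the maximal torsion submodule using Lemma~\ref{lem:torsinj}, identify it via the equivalence with $\Mod_K$, and then show the torsion-free summand $J'$ is saturated so that $J' \cong S(T(J'))$ with $T(J')$ injective in $\Mod_K$. The only difference is cosmetic: where the paper observes that for saturated $M$ one has $\Hom_A(N,M)=\Hom_K(T(N),T(M))$ (via adjunction and $M\cong ST(M)$) and deduces injectivity of $T(M)$ from exactness of $T$, you instead verify the lifting property directly by applying $S$, extending in $\Mod_A$, and pushing back through the counit $TS\cong\id$; the triangle identity for $T\dashv S$ is what makes your extension restrict to the original map.
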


\begin{proof}
Let $I$ be an injective object of $\Mod_A$.  It is clear that its maximal torsion subobject $I_{\tors}$ is an injective object of $\Mod_A^{\tors}$, and therefore a sum of $I_{\lambda}$'s.  Since $I_{\tors}$ remains injective in $\Mod_A$ by Lemma~\ref{lem:torsinj}, we have $I=I_{\tors} \oplus I'$ with $I'$ a torsion-free injective.

It thus suffices to show that a torsion-free injective $I$ is a sum of modules of the form $A \otimes \bS_{\lambda}$. Note that $I$ is saturated, by definition. For any saturated module $M$ we have $\Hom_A(N, M)=\Hom_K(T(N), T(M))$, which shows that $T$ takes saturated injective objects of $\Mod_A$ to injective objects of $\Mod_K$.  Thus $T(I)$ is injective in $\Mod_K$, and therefore a sum of $Q_{\lambda}$'s.  Finally, we see that $I=S(T(I))$ is a direct sum of modules of the form $S(Q_{\lambda})=A \otimes \bS_{\lambda}$.
\end{proof}

\subsection{Local cohomology}
\label{ss:loccoh}

For an $A$-module $M$, define $\rH^0_{\fm}(M)$ to be the maximal torsion submodule of $M$.  The functor $\rH^0_{\fm} \colon \Mod_A \to \Mod_A^{\tors}$ is the right adjoint to the inclusion $\Mod_A^{\tors} \to \Mod_A$.  As $\rH^0_{\fm}$ is left exact, its right derived functors $\rH^i_{\fm}$ exist.  We call $\rH_{\fm}^{\bullet}$ the {\bf local cohomology} functors. Set $\rD^b_{\tors}(A) = \rD^b(\Mod_A^{\tors})$. We write $\rR\Gamma_{\fm} \colon \rD^b(A) \to \rD^b_{\tors}(A)$ for the derived functor of $\rH^0_{\fm}$ at the level of derived categories.

\begin{proposition}
Let $M$ be in $\Mod_A$.  Then $\rH^i_{\fm}(M)$ is a finitely generated torsion $A$-module for all $i$, and is zero for $i \gg 0$.
\end{proposition}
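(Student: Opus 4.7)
The strategy is to compute $\rH^i_{\fm}(M)$ using a finite resolution $M \to J^\bullet$ by finitely generated injective $A$-modules, and then observe that $\rH^0_{\fm}$ acts transparently on such $J^i$, leaving a bounded complex of finitely generated torsion modules whose cohomology is the local cohomology of $M$.

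To produce such a resolution I would follow the construction behind Theorem~\ref{thm:injA} and Theorem~\ref{thm:Ainj}. First, $T(M) \in \Mod_K$ has finite length (Corollary~\ref{cor:modKfinitelength}) and therefore admits a finite injective resolution $T(M) \to \bI^\bullet$ whose terms are finite direct sums of $Q_\lambda$'s (Theorem~\ref{thm:BGGresolution} and Corollary~\ref{cor:indecinj}). Since $S$ sends injectives to injectives (Proposition~\ref{prop:Sprop}(b)) and $S(Q_\lambda) = A \otimes \bS_\lambda$ (Corollary~\ref{cor:S-L-free}), applying $S$ produces a finite resolution of $S(T(M))$ by finitely generated injective $A$-modules. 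The kernel and cokernel of the unit $M \to S(T(M))$ are finitely generated torsion modules (Proposition~\ref{prop:Sprop}(e) together with noetherianity of $A$), and each is resolved in finite length by the finitely generated torsion injectives $I_\lambda$, which are finitely generated by the presentation in Remark~\ref{rmk:torsion-inj}. Splicing via the horseshoe lemma yields a finite resolution $M \to J^\bullet$, each $J^i$ being a finite direct sum of modules of the form $A \otimes \bS_\lambda$ and $I_\lambda$.

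Now I would compute $\rH^i_{\fm}(M) = \rH^i(\rH^0_{\fm}(J^\bullet))$. The functor $\rH^0_{\fm}$ kills each torsion-free summand $A \otimes \bS_\lambda$ and fixes each torsion summand $I_\lambda$, so $\rH^0_{\fm}(J^\bullet)$ is a bounded complex of finitely generated torsion $A$-modules. Since $\Mod_A$ is noetherian, its cohomology is finitely generated and torsion, and it vanishes in degrees exceeding the length of $J^\bullet$.

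The main obstacle is the first step: arranging that the injective resolution is both finite and by finitely generated injectives. Once the classification of injectives (Theorem~\ref{thm:Ainj}) and the explicit identity $S(Q_\lambda) = A \otimes \bS_\lambda$ are in hand, the rest is a mechanical inspection of which summands survive $\rH^0_{\fm}$.
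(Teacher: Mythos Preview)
Your overall strategy matches the paper's: use a finite resolution by finitely generated injectives, apply $\rH^0_{\fm}$ termwise, and read off the conclusion. The paper's proof simply invokes the existence of such a resolution (from Theorem~\ref{thm:injA} and Theorem~\ref{thm:Ainj}) and then notes that $\rH^0_{\fm}$ takes values in $\Mod_A^{\tors}$, so its derived functors do too.

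However, your reconstruction of the resolution has a gap. You write that ``applying $S$ produces a finite resolution of $S(T(M))$ by finitely generated injective $A$-modules.'' This is false: $S$ is only left exact (Proposition~\ref{prop:Sprop}(a)), so $S(\bI^{\bullet})$ is a complex of injectives whose $0$th cohomology is $S(T(M))$, but whose higher cohomologies need not vanish. Indeed, by Corollary~\ref{prop:localsheaf} these higher cohomologies are $\rR^i S(T(M)) \cong \rH^{i+1}_{\fm}(M)$ for $i \ge 1$, which are typically nonzero (see e.g.\ Proposition~\ref{prop:simplelocal}). Consequently the horseshoe splicing you describe does not go through as stated.

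The fix is exactly what the paper does in the proof of Theorem~\ref{thm:injA}: observe that since $TS = \id$, the cohomology of $S(\bI^{\bullet})$ is torsion, hence has finite injective dimension in $\Mod_A^{\tors}$ (and these torsion injectives are finitely generated). One then argues inductively (or via a spectral sequence / repeated cone construction) that $S(T(M))$, and then $M$, has a finite resolution by finitely generated injectives. Once you accept Theorem~\ref{thm:injA} as stated in the introduction (finite resolution by \emph{finitely generated} injectives), your final paragraph is correct and identical to the paper's argument.
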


\begin{proof}
Since $M$ has a finite length resolution by finitely generated injectives we see that $\rH^i_{\fm}(M)$ is finitely generated for all $i$ and zero for $i \gg 0$.  Since $\rH^0_{\fm}$ takes values in $\Mod_A^{\tors}$ so do its derived functors.
\end{proof}

\begin{proposition}
\label{prop:locS}
Let $M$ be an object of $\rD^b(A)$.  Then we have a distinguished triangle
\begin{displaymath}
\rR\Gamma_{\fm}(M) \to M \to \rR S(T(M)) \to
\end{displaymath}
in $\rD^b(A)$, where the first two maps are the (co)units of the adjunctions.
\end{proposition}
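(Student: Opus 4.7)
The plan is to model $M$ by a particular injective resolution whose structure makes the triangle visible, then identify the pieces with the claimed derived functors. By Theorem~\ref{thm:Ainj} (and Theorem~\ref{thm:injA}), we may replace $M$ by a bounded complex $J^\bullet$ of injectives of the form $J^i = J^i_{\tors} \oplus J^i_{\rf}$, where $J^i_{\tors}$ is a direct sum of the torsion injectives $I_\mu$ and $J^i_{\rf}$ is a direct sum of modules $A \otimes \bS_\mu$. Because each $J^i_{\rf}$ is torsion-free, the summand $J^i_{\tors}$ is exactly the maximal torsion submodule of $J^i$, so any differential $J^i \to J^{i+1}$ carries $J^i_{\tors}$ into $J^{i+1}_{\tors}$. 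This exhibits $J^\bullet_{\tors}$ as a subcomplex and yields a short exact sequence of complexes
\[
0 \to J^\bullet_{\tors} \to J^\bullet \to J^\bullet_{\rf} \to 0,
\]
hence a distinguished triangle $J^\bullet_{\tors} \to J^\bullet \to J^\bullet_{\rf} \to$ in $\rD^b(A)$.

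I would then identify the outer terms with $\rR\Gamma_\fm(M)$ and $\rR S(T(M))$. For the first, $\rH^0_\fm(J^i) = J^i_{\tors}$ by construction, and since injective objects are acyclic for $\rH^0_\fm$, the complex $\rH^0_\fm(J^\bullet) = J^\bullet_{\tors}$ represents $\rR\Gamma_\fm(M)$. For the third, observe that $T$ is exact, so $T(J^\bullet)$ computes $T(M)$; moreover $T$ kills each $J^i_{\tors}$ and sends each $A \otimes \bS_\mu$ to $Q_\mu$, which is injective in $\Mod_K$ by Proposition~\ref{prop:modKinj}. Hence $T(J^\bullet) = T(J^\bullet_{\rf})$ is an injective resolution of $T(M)$ in $\Mod_K$, giving $\rR S(T(M)) = S(T(J^\bullet_{\rf}))$. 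By Proposition~\ref{prop:freesat} each $A \otimes \bS_\mu$ is saturated, so Proposition~\ref{prop:Sprop}(e) implies the natural map $J^\bullet_{\rf} \to S(T(J^\bullet_{\rf}))$ is an isomorphism.

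Finally, I would check that the maps in the triangle are indeed the derived (co)units. The inclusion $J^\bullet_{\tors} \hookrightarrow J^\bullet$ is termwise the inclusion of the maximal torsion submodule, which is the counit $i\rH^0_\fm(J^i) \to J^i$, and this represents the derived counit $\rR\Gamma_\fm(M) \to M$. The composite $J^\bullet \twoheadrightarrow J^\bullet_{\rf} \xrightarrow{\sim} S(T(J^\bullet_{\rf}))$ agrees termwise with the adjunction unit $J^i \to S(T(J^i))$: on $J^i_{\tors}$ both sides vanish since $T(J^i_{\tors}) = 0$, and on $J^i_{\rf}$ both sides are the canonical isomorphism of a saturated module with $ST$ of itself. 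This represents the derived unit $M \to \rR S(T(M))$.

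The computational content is genuinely light; the main obstacle is the bookkeeping to confirm that the termwise adjunction maps one reads off from the resolution coincide with the derived (co)units. The essential inputs beyond formal manipulations are (i) Theorem~\ref{thm:Ainj}, which guarantees the convenient splitting $J^i = J^i_{\tors} \oplus J^i_{\rf}$, and (ii) Proposition~\ref{prop:freesat} together with Proposition~\ref{prop:Sprop}(e), which ensures that the torsion-free summands compute $\rR S \circ T$ on the nose.
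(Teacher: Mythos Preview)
Your proof is correct and follows essentially the same route as the paper's: represent $M$ by a bounded complex of injectives, use the torsion/torsion-free splitting of injectives (Theorem~\ref{thm:Ainj}) to produce the short exact sequence of complexes, and identify the two outer terms with $\rR\Gamma_{\fm}(M)$ and $\rR S(T(M))$ respectively. You are more explicit than the paper about invoking Proposition~\ref{prop:freesat} and Proposition~\ref{prop:Sprop}(e) to see that $J^\bullet_{\rf} \to S(T(J^\bullet_{\rf}))$ is an isomorphism, and about verifying that the maps in the triangle agree with the derived (co)units, but the underlying argument is the same.
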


\begin{proof}
It suffices to treat the case where $M$ is a finite length complex of injectives.  We have an exact sequence of complexes
\begin{displaymath}
0 \to \rH^0_{\fm}(M) \to M \to M' \to 0.
\end{displaymath}
The leftmost complex is nothing other than $\rR\Gamma_{\fm}(M)$. The kernel of the natural map $M \to S(T(M))$ contains $\rH^0_\fm(M)$ since it is torsion and the induced map $M' \to S(T(M))$ is an isomorphism by the structure of injective objects of $\Mod_A$ (Theorem~\ref{thm:Ainj}). Thus $M'$ is identified with $\rR S(T(M))$.
\end{proof}

\begin{corollary} \label{prop:localsheaf}
Let $M$ be in $\Mod_A$ and let $M'=T(M)$ be its image in $\Mod_K$.  We have an exact sequence
\begin{displaymath}
0 \to \rH^0_{\fm}(M) \to M \to S(M') \to \rH^1_{\fm}(M) \to 0,
\end{displaymath}
and for each $i>1$, an isomorphism
\begin{displaymath}
\rH^{i+1}_{\fm}(M)=\rR^iS(M').
\end{displaymath}
\end{corollary}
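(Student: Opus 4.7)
The plan is to deduce this corollary immediately from the distinguished triangle
\[
\rR\Gamma_{\fm}(M) \to M \to \rR S(T(M)) \to
\]
of Proposition~\ref{prop:locS} by passing to the associated long exact sequence of cohomology. Since $M$ is concentrated in degree zero as a complex, its cohomology is $M$ in degree $0$ and vanishes elsewhere; the cohomology of $\rR\Gamma_{\fm}(M)$ is by definition $\rH^i_{\fm}(M)$; and the cohomology of $\rR S(T(M)) = \rR S(M')$ is $\rR^i S(M')$. The unit map $M \to S(T(M))$ sits in degree $0$ and agrees with the map on $\rH^0$ of the triangle, by the construction in Proposition~\ref{prop:locS}.

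Concretely, the long exact sequence reads
\[
\cdots \to \rR^{i-1} S(M') \to \rH^i_{\fm}(M) \to \rH^i(M) \to \rR^i S(M') \to \rH^{i+1}_{\fm}(M) \to \rH^{i+1}(M) \to \cdots
\]
Plugging in $\rH^i(M) = 0$ for $i \neq 0$ and $\rH^0(M) = M$, the segment in low degrees becomes the four-term exact sequence
\[
0 \to \rH^0_{\fm}(M) \to M \to S(M') \to \rH^1_{\fm}(M) \to 0,
\]
while for each $i \geq 1$ the sequence collapses to an isomorphism $\rR^i S(M') \xrightarrow{\sim} \rH^{i+1}_{\fm}(M)$.

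There is no substantive obstacle: the entire content of the corollary is packaged in Proposition~\ref{prop:locS}, and the argument is purely formal once one recognises that $M$, viewed as an object of $\rD^b(A)$, has cohomology concentrated in a single degree. The only point worth checking carefully is that the map $M \to S(M')$ appearing in the exact sequence really is the unit of the $(T,S)$-adjunction, rather than some other map; this is built into the statement of Proposition~\ref{prop:locS}, whose proof identifies the second morphism in the triangle with that unit.
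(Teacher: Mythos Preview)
Your proof is correct and is exactly the approach the paper takes: the paper's proof is the single sentence ``This is the long exact sequence on cohomology that comes from the distinguished triangle in Proposition~\ref{prop:locS},'' and you have simply written out that long exact sequence explicitly. (Incidentally, your argument shows the isomorphism $\rH^{i+1}_{\fm}(M)\cong \rR^i S(M')$ already for $i\ge 1$, not just $i>1$.)
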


\begin{proof}
This is the long exact sequence on cohomology that comes from the distinguished triangle in Proposition~\ref{prop:locS}.
\end{proof}

This statement should be compared with \cite[Proposition A1.11]{syzygies} which provides a comparison between local cohomology for local rings and sheaf cohomology on the punctured spectrum.

\subsection{Torsion theories and decompositions of derived categories}
\label{ss:deriveddecomp}

In this section, we give a general result on decomposing derived categories which we apply in the next section to $\rD^b(A)$.  Let $\cA$ be an abelian category.  A {\bf hereditary torsion theory} is a pair $(\cT, \cF)$ of strictly full subcategories, whose objects are called torsion and torsion-free respectively, satisfying the following axioms:
\begin{itemize}
\item We have $\Hom(T, F)=0$ for $T \in \cT$ and $F \in \cF$.
\item For any $M \in \cA$ there is a short exact sequence
\begin{displaymath}
0 \to M_t \to M \to M_f \to 0
\end{displaymath}
with $M_t \in \cT$ and $M_f \in \cF$.
\item Any subobject of an object in $\cT$ belongs to $\cT$.
\end{itemize}
In this situation, $\cT$ is a Serre subcategory of $\cA$.  Furthermore, the assignment $M \mapsto M_t$ is the adjoint to the right inclusion $\cT \to \cA$, and therefore functorial. See \cite[\S 1.12]{borceux} for a detailed treatment of torsion theories.

Fix a torsion theory $(\cT, \cF)$ on $\cA$.  We assume the following two conditions hold:
\begin{itemize}
\item Every object of $\cA$ has finite injective dimension.
\item The inclusion $\cT \to \cA$ takes injective objects to injective objects.
\end{itemize}
If $I$ is an injective object of $\cA$ then $I_t$ is automatically injective in $\cT$, and thus injective in $\cA$ by the above axiom, and so we have a (non-canonical) splitting $I=I_t \oplus I_f$; this shows that $I_f$ is injective as well.

Let $\cD$ be the bounded derived category of $\cA$, which we think of as the homotopy category of finite length complexes of injectives.  Let $\cD_t$ (resp.\ $\cD_f$) be the full subcategory of $\cD$ on complexes of torsion injective objects (resp.\ torsion-free injective objects).  Define $\cD'$ to be the category of triples $(X, Y, f)$ with $X \in \cD_f$, $Y \in \cD_t$ and $f \in \Hom_{\cD}(X, Y)$.  A morphism $(X, Y, f) \to (X', Y', f')$ in $\cD'$ consists of morphisms $X \to X'$ and $Y \to Y'$ in $\cD$ such that the obvious square commutes. The following is the main result of this section.

\begin{proposition}
\label{prop:derived0}
There is a natural equivalence of categories $\Phi \colon \cD \to \cD'$.
\end{proposition}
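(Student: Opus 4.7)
The plan is to work in the homotopy category $\cK^b(\ul{\cI}(\cA))$ of bounded complexes of injective objects, which is equivalent to $\cD$ by our hypotheses. The essential tool is that every injective $I$ of $\cA$ splits non-canonically as $I = I_t \oplus I_f$ with $I_t \in \cT$ and $I_f \in \cF$ both injective (as already noted in the setup), and that $\Hom_\cA(I_t, I_f) = 0$ by the torsion-theory axiom $\Hom_\cA(\cT, \cF) = 0$.

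To define $\Phi$ on objects, I would choose for each complex $I^\bullet$ a termwise splitting $I^i = I^i_t \oplus I^i_f$. With respect to this splitting, each differential must be block upper-triangular,
\[ d^i \;=\; \begin{pmatrix} d^i_{tt} & d^i_{ft} \\ 0 & d^i_{ff} \end{pmatrix}, \]
since the $(t,f)$-entry would live in $\Hom_\cA(I^i_t, I^{i+1}_f) = 0$. Hence $I^\bullet_t := (I^\bullet_t, d_{tt})$ is a subcomplex lying in $\cD_t$, $I^\bullet_f := (I^\bullet_f, d_{ff})$ is a quotient complex lying in $\cD_f$, and the identity $d^2 = 0$ forces $d_{ft}$ to be a chain map $I^\bullet_f \to I^\bullet_t[1]$. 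I would then set $\Phi(I^\bullet) = (I^\bullet_f,\, I^\bullet_t[1],\, [d_{ft}])$, interpreting the last entry as a class in $\Hom_\cD(I^\bullet_f, I^\bullet_t[1])$; the ``$Y$'' in the triple $(X,Y,f)$ is thus the shifted torsion complex $I^\bullet_t[1]$. A short $2 \times 2$ matrix computation shows that changing the chosen splittings modifies $d_{ft}$ by a chain homotopy, so the triple is well-defined in $\cD'$.

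On morphisms, any chain map $\phi \colon I^\bullet \to J^\bullet$ decomposes in the same block-triangular fashion, $\phi = \bigl(\begin{smallmatrix} \phi_{tt} & \phi_{ft} \\ 0 & \phi_{ff} \end{smallmatrix}\bigr)$, giving chain maps $\phi_{tt}$, $\phi_{ff}$, and an off-diagonal piece $\phi_{ft}$ which, via the chain-map relation, is precisely a homotopy witnessing that the square with $d_{ft}$'s commutes in $\cD$. Thus $(\phi_{ff}, \phi_{tt}[1])$ is a morphism in $\cD'$; the same block decomposition applied to null-homotopies shows that $\Phi$ descends to the homotopy category. For the quasi-inverse $\Psi$, given a triple $(X, Y, f)$ with $Y = Y'[1]$ and $f$ represented by a chain map $X \to Y'[1]$, I would form the complex with terms $\Psi^i = (Y')^i \oplus X^i$ and differential $\bigl(\begin{smallmatrix} d_{Y'} & f^i \\ 0 & d_X \end{smallmatrix}\bigr)$. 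By construction $\Phi(\Psi(X,Y,f))$ recovers $(X, Y, f)$ via the tautological splitting, and $\Psi(\Phi(I^\bullet))$ recovers $I^\bullet$ from the chosen splitting up to an obvious isomorphism.

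The main obstacle is full-faithfulness at the level of morphisms. Well-definedness of $\Phi$ on objects is immediate modulo splitting choices, but bijectivity on Hom sets requires tracking how the homotopy witness $\phi_{ft}$ interacts with both rescaling of splittings and chain homotopies between chain maps. In one direction, given a morphism $(a,b)$ in $\cD'$, one must lift representatives $\phi_{ff}, \phi_{tt}$ and choose a specific chain homotopy $\phi_{ft}$ realizing the commutativity of the $d_{ft}$-square; the uniqueness of the resulting chain map up to homotopy in $\cD$ is the content of full-faithfulness. All of this reduces to routine but careful bookkeeping with $2 \times 2$ matrix differentials, and the upper-triangular structure forced by $\Hom_\cA(\cT, \cF) = 0$ is what makes the relevant obstructions vanish.
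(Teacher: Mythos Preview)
Your proposal is correct and follows essentially the same strategy as the paper's proof: splitting each injective termwise as torsion $\oplus$ torsion-free, using $\Hom_\cA(\cT,\cF)=0$ to force the differential into block upper-triangular form, extracting the off-diagonal as the datum $f$, and building the quasi-inverse $\Psi$ as a shifted cone. The paper phrases the same construction in terms of a termwise section $s\colon I_f\to I$ and the commutator $f=sd-ds$ rather than $2\times 2$ matrices, but this is only a notational difference (beware that the paper uses cohomological indexing with $I[n]^i=I^{i-n}$, so its shift is written $I_t[-1]$ rather than your $I_t[1]$).
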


\begin{proof}
We begin by defining a functor $\Phi \colon \cD \to \cD'$.  Let $I$ be an object of $\cD$.  We have a short exact sequence of complexes
\begin{displaymath}
0 \to I_t \to I \to I_f \to 0.
\end{displaymath}
Our hypotheses imply that this sequence splits termwise, i.e., we can find a splitting $s \colon I_f \to I$, which may not be compatible with differentials.  Put $f=sd-ds$.  Then one readily verifies that $f$ defines a map of complexes $f \colon I_f \to I_t[-1]$ (here we are using cohomological indexing and we remind that the shift is defined by $I[n]^i = I^{i-n}$).  A different choice of $s$ results in a homotopic $f$, so $f$ as an element of $\Hom_{\cD}(I_f, I_t[-1])$ depends only on $I$.  We put $X=I_f$, $Y=I_t[-1]$ and $\Phi(I)=(X, Y, f)$.

Suppose now that $\phi \colon I \to I'$ is a map in $\cD$.  Then $\varphi$ induces maps $\psi_X \colon X \to X'$ and $\psi_Y \colon Y \to Y'$.  Choose termwise sections $s \colon I_f \to I$ and $s' \colon I'_f \to I'$ and let $f$ and $f'$ be the resulting maps.  Then
\begin{displaymath}
\xymatrix{
X \ar[r]^f \ar[d]_{\psi_X} & Y \ar[d]^{\psi_Y} \\
X' \ar[r]^{f'} & Y' }
\end{displaymath}
commutes up to the homotopy $\phi s - s'\phi \colon X \to Y'[1]$.  Thus $\phi$ induces a map $\psi \colon (X, Y, f) \to (X', Y', f')$ in $\cD'$. If $\phi$ is replaced by a homotopic map, then so is $\psi$.  This completes the definition of $\Phi$.

We now define a functor $\Psi \colon \cD' \to \cD$.  Thus suppose that $(X, Y, f)$ is an object of $\cD'$.  Let $I$ be the cone of $f$, shifted by $-1$.  Precisely, $I^n=X^n \oplus Y^{n-1}$, with the differential being defined by the usual formula $(x,y) \mapsto (d(x), f(x)-d(y))$.  Then $I$ is an object of $\cD$, and we put $\Psi(X, Y, f)=I$.

Now suppose that $\psi \colon (X, Y, f) \to (X', Y', f')$ is a map in $\cD'$.  Choose actual maps of complexes $\tilde{\psi}_X$, $\tilde{\psi}_Y$, $\tilde{f}$ and $\tilde{f}'$ representing the given homotopy classes of maps. Choose a map $h \colon X \to Y'[1]$ such that $\tilde{\psi}_Y \tilde{f}-\tilde{f}' \tilde{\psi}_X=dh+hd$.  Define a map $\varphi \colon I \to I'$ by letting $\varphi^n$ be the map
\begin{displaymath}
X^n \oplus Y^{n-1} \to (X')^n \oplus (Y')^{n-1}, \qquad
(x, y) \mapsto (\tilde{\psi}_X(x), \tilde{\psi}_Y(y)+h(x)).
\end{displaymath}
Then $\varphi$ is a map of complexes.  Furthermore, up to homotopy it is independent of all choices.  This completes the definition of $\Psi$.

Now we show that $\Phi$ and $\Psi$ are quasi-inverse to one another. Start with an object $I \in \cD$. We choose the splitting $I = I_t \oplus I_f$. Then the differential can be written as a sum of three maps $d_t \colon I^\bullet_t \to I^{\bullet+1}_t$, $d_f \colon I^\bullet_f \to I^{\bullet+1}_f$, and $u \colon I^\bullet_f \to I^{\bullet+1}_t$. (Here we use that $\Hom(I^n_t, I^m_f)=0$.) The map $sd-ds$ can be taken to be $u$ in the construction of $\Phi$. When we apply $\Psi$ to $(I_f, I_t[-1], u)$, we get back $I$, so $\Psi \Phi \simeq \id_\cD$. Running this argument backwards shows that $\Phi \Psi \simeq \id_{\cD'}$.
\end{proof}

\begin{corollary}
\label{cor-decomp}
We have the following:
\begin{enumerate}[\rm (a)]
\item Let $M$ be an object of $\cD$. Then there exists a distinguished triangle
\begin{displaymath}
M_t \to M \to M_f \to
\end{displaymath}
with $M_t \in \cD_t$ and $M_f \in \cD_f$. This decomposition is unique up to unique isomorphism.
\item Let $g \colon M \to N$ be a map in $\cD$. Then there exist maps $f \colon M_t \to N_t$ and $h \colon M_f \to N_f$ such that
\begin{displaymath}
\xymatrix{
M_t \ar[r] \ar[d]^f & M \ar[r] \ar[d]^g & M_f \ar[r] \ar[d]^h & \\
N_t \ar[r] & N \ar[r] & N_f \ar[r] & }
\end{displaymath}
is a map of triangles. Suppose $(f', g', h')$ is a second map of triangles. If either $g=g'$, or $f=f'$ and $h=h'$, then $(f,g,h)=(f',g',h')$.
\end{enumerate}
\end{corollary}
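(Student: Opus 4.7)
Both parts of the corollary follow directly from the equivalence $\Phi \colon \cD \to \cD'$ established in Proposition~\ref{prop:derived0}; the proof is essentially a matter of unpacking the dictionary between objects/morphisms of $\cD$ and triples in $\cD'$.

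For the existence part of (a), I would represent $M \in \cD$ by a bounded complex $I$ of injective objects of $\cA$. The hypothesis that torsion injectives remain injective in $\cA$ implies that the canonical exact sequence $0 \to I^n_t \to I^n \to I^n_f \to 0$ splits at each level, yielding a termwise-split short exact sequence of complexes
\[
0 \to I_t \to I \to I_f \to 0,
\]
with $I_t$ a complex of torsion injectives and $I_f$ a complex of torsion-free injectives. Taking $M_t := I_t$ and $M_f := I_f$, the associated distinguished triangle $M_t \to M \to M_f \to M_t[1]$ has the required form. For the uniqueness clause in (a), I would observe that, by the construction of $\Phi$ in the proof of Proposition~\ref{prop:derived0}, the data of such a triangle is exactly encoded by the triple $\Phi(M) = (M_f, M_t[-1], \partial) \in \cD'$, where $\partial$ encodes the connecting morphism. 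Since $\Phi$ is an equivalence, $\Phi(M)$ is determined up to unique isomorphism, and unwinding this gives the unique isomorphism between any two such triangles.

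For part (b), I would invoke the full faithfulness of $\Phi$. A morphism $g \colon M \to N$ in $\cD$ corresponds bijectively to a morphism $\Phi(M) \to \Phi(N)$ in $\cD'$, which by definition consists of a pair $(\psi_X, \psi_Y)$ of morphisms $M_f \to N_f$ and $M_t[-1] \to N_t[-1]$ in $\cD$ making the square involving the connecting morphisms commute. Setting $h := \psi_X$ and $f := \psi_Y[1]$ produces morphisms $h \colon M_f \to N_f$ and $f \colon M_t \to N_t$ that complete $g$ to a map of triangles. Bijectivity of the correspondence $g \leftrightarrow (\psi_X, \psi_Y)$ then yields both uniqueness assertions: knowing $g$ determines $(\psi_X, \psi_Y)$, hence $(f,h)$; conversely, knowing $(f,h)$ determines $(\psi_X, \psi_Y)$, hence $g$.

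The only non-bookkeeping step is to verify that the connecting morphism $\partial \colon M_f \to M_t[1]$ of the distinguished triangle matches the datum $f \colon X \to Y$ of the triple $(X,Y,f) \in \cD'$ under the identifications $X = M_f$ and $Y = M_t[-1]$; this is built into the proofs of the functors $\Phi$ and $\Psi$ in Proposition~\ref{prop:derived0} (where the cone-shifted-by-$-1$ construction of $\Psi$ and the homotopy $sd - ds$ defining $\Phi$ implement exactly this identification). I expect no further obstacle, since once this dictionary is in place the corollary is a formal consequence of the equivalence.
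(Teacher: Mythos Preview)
Your proposal is correct and takes essentially the same approach as the paper; indeed, the paper's proof is the single sentence ``This is clear from working with $\cD'$,'' and your argument is a faithful unpacking of what that sentence means. The one point worth making explicit (which you gesture at but do not spell out) is why \emph{every} map of triangles $(f,g,h)$ satisfies $\Phi(g)=(h,f[-1])$, not just the particular completion produced by $\Phi$: this follows because $\Hom_{\cD}(M_t[n],N_f)=0$ for all $n$ (termwise, torsion objects have no maps to torsion-free objects), which forces $h$ and $f$ to be uniquely determined by $g$ via the first two squares of the map of triangles; once that is known, the second uniqueness assertion follows from faithfulness of $\Phi$ exactly as you say.
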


\begin{proof}
This is clear from working with $\cD'$. 
\end{proof}

\begin{remark}
\label{rmk:tstruc}
The category $\cD$ has a standard t-structure $\cD^{\le 0}$ consisting of objects $M$ for which $\rH^i(M)=0$ for $i>0$.  Under the above equivalence, an object $(X, Y, f)$ of $\cD'$ belongs to $\cD^{\le 0}$ if $\rH^0(f)$ is surjective and $\rH^i(f)$ is an isomorphism for $i>0$.    
\end{remark}

\begin{example}
Let $\cA$ be the category of finitely generated nonnegatively graded $\bC[t]$-modules, let $\cT$ be the full subcategory on torsion (finite length) objects and let $\cF$ be the full subcategory on torsion-free (projective) objects.  Then $(\cT, \cF)$ is a torsion theory in the above sense, and satisfies the additional axioms we gave.  The indecomposable torsion injective objects are of the form $\bC[t]/t^n$.  There is a single (up to isomorphism) indecomposable torsion-free injective, which is $\bC[t]$.  

The category $\cD_t$ is equivalent to the derived category of $\cT$ (as is always the case), while $\cD_f$ is equivalent to $\rD^b(\Vec)$, the bounded derived category of vector spaces; the equivalence $\rD^b(\Vec) \to \cD_f$ is given by $- \otimes \bC[t]$.  If $V \in \rD^b(\Vec)$ and $M \in \cD_t$ then giving a map $V \otimes \bC[t] \to M$ in $\cD$ is the same as giving a map $V \to M_0$ in $\rD^b(\Vec)$, where $M_0$ is the degree 0 piece of $M$.  Thus the category $\cD'$ can be described equivalently as follows:  its objects are triples $(V, M, f)$, with $V \in \rD^b(\Vec)$, $M \in \cD_t$ and $f \in \Hom_{\rD^b(\Vec)}(V, M_0)$.

The equivalence $\cD=\cD'$ provided by Proposition~\ref{prop:derived0} gives a description of $\cD$ which involves only the derived category of torsion objects and the derived category of vector spaces.  This is the sort of equivalence we seek for $\rD^b(A)$ in the following section.
\end{example}

\subsection{The decomposition into perfect and torsion pieces}
\label{ss:pt-decomp}

Let $\cT$ be the subcategory of $\Mod_A$ consisting of torsion objects, and let $\cF$ be the subcategory consisting of torsion-free objects. Then $(\cT, \cF)$ forms a hereditary torsion theory, in the sense of the previous section. We set $\Tors = \cD_t$, which is the subcategory of $\rD^b(A)$ consisting of objects which are quasi-isomorphic to finite length complexes of torsion modules, and we set $\Perf = \cD_f$, which is the subcategory of $\rD^b(A)$ consisting of perfect objects, i.e., those which are quasi-isomorphic to a finite length complex of projective (= torsion-free injective) modules.

Let $\cD'$ be the category whose objects are triples $(X, Y, f)$ with $X \in \Perf$, $Y \in \Tors$, and $f \in \Hom_{\rD^b(A)}(X, Y)$; morphisms are defined in the obvious manner (see \S\ref{ss:deriveddecomp}). Proposition~\ref{prop:derived0} applies in our situation to give:

\begin{proposition}
\label{prop:derived1}
There is a natural equivalence $\rD^b(A) \cong \cD'$.
\end{proposition}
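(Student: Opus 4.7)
The plan is to reduce this statement to the abstract result Proposition~\ref{prop:derived0}. So I first need to check that the data $(\Mod_A, \cT, \cF)$ satisfies the hypotheses of that proposition, and then identify the resulting categories $\cD_f$ and $\cD_t$ with $\Perf$ and $\Tors$ respectively.

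First I would verify that $(\cT, \cF)$ is a hereditary torsion theory on $\Mod_A$. The $\Hom$-vanishing between torsion and torsion-free is immediate; any submodule of a torsion (finite length) module is again torsion; and for any $M \in \Mod_A$ the maximal torsion submodule $M_t$ exists (since $A$ is noetherian, so an ascending chain of finite-length submodules stabilizes) and yields the required exact sequence $0 \to M_t \to M \to M_f \to 0$ with $M_f$ torsion-free. Next I would invoke Theorem~\ref{thm:injA} to conclude that every object of $\Mod_A$ has finite injective dimension, and Lemma~\ref{lem:torsinj} to conclude that the inclusion $\cT \to \Mod_A$ preserves injectives. These are exactly the hypotheses of Proposition~\ref{prop:derived0}.

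Proposition~\ref{prop:derived0} then yields an equivalence $\rD^b(A) \simeq \cD'_{\mathrm{abs}}$, where $\cD'_{\mathrm{abs}}$ is the category of triples $(X, Y, f)$ with $X \in \cD_f$, $Y \in \cD_t$, $f \in \Hom_{\rD^b(A)}(X, Y)$, and where $\cD_f$ (resp.\ $\cD_t$) is the homotopy category of bounded complexes of torsion-free (resp.\ torsion) injective objects of $\Mod_A$. To match this against the statement, I would identify $\cD_f = \Perf$ and $\cD_t = \Tors$. For the first identification, Theorem~\ref{thm:Ainj} shows that every torsion-free injective is a direct sum of modules of the form $A \otimes \bS_\lambda$, hence projective; conversely Corollary~\ref{cor:projinj} shows every projective is injective (and obviously torsion-free), so the torsion-free injectives coincide with the projectives, giving $\cD_f = \Perf$. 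For the second identification, the modules $I_\lambda$ are injective objects of $\cT$ (and remain injective in $\Mod_A$ by Lemma~\ref{lem:torsinj}), and since $\cT$ has enough injectives, a standard argument shows the natural functor from the homotopy category $\cD_t$ into $\rD^b(\cT) = \Tors$ is an equivalence.

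There is essentially no serious obstacle; everything needed has been prepared in the preceding sections. The only thing requiring a small amount of care is the identification $\cD_t \simeq \Tors$, since a priori $\cD_t$ is just a homotopy category of injective complexes and one needs to know this computes the bounded derived category of $\cT$ — but this is standard given that torsion objects have bounded injective resolutions (which follows from the equivalence $\Mod_A^\tors \simeq \Mod_K$ together with Corollary~\ref{cor:finiteinj}).
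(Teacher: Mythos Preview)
Your proposal is correct and follows essentially the same approach as the paper: verify the hypotheses of the abstract Proposition~\ref{prop:derived0} (hereditary torsion theory, finite injective dimension via Theorem~\ref{thm:injA}, preservation of injectives via Lemma~\ref{lem:torsinj}) and then apply it. The only difference is organizational: the paper absorbs the verification into the paragraph preceding the proposition and simply \emph{defines} $\Tors = \cD_t$ and $\Perf = \cD_f$ (with the identification ``projective $=$ torsion-free injective'' asserted parenthetically), so the identifications you carry out explicitly are definitional in the paper rather than something to prove.
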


We will refine this equivalence in the next section. For now, we note that Corollary~\ref{cor-decomp} gives the following important result:

\begin{theorem} \label{thm:pt-decomp}
We have the following:
\begin{enumerate}[\rm (a)]
\item Let $M$ be an object of $\rD^b(A)$. Then there exists a distinguished triangle
\begin{displaymath}
M_t \to M \to M_f \to
\end{displaymath}
with $M_t \in \Tors$ and $M_f \in \Perf$. This decomposition is unique up to unique isomorphism.
\item Let $g \colon M \to N$ be a map in $\rD^b(A)$. Then there exist maps $f \colon M_t \to N_t$ and $h \colon M_f \to N_f$ such that
\begin{displaymath}
\xymatrix{
M_t \ar[r] \ar[d]^f & M \ar[r] \ar[d]^g & M_f \ar[r] \ar[d]^h & \\
N_t \ar[r] & N \ar[r] & N_f \ar[r] & }
\end{displaymath}
is a map of triangles. Suppose $(f', g', h')$ is a second map of triangles. If either $g=g'$, or $f=f'$ and $h=h'$, then $(f,g,h)=(f',g',h')$.
\end{enumerate}
\end{theorem}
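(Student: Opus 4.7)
The plan is to obtain Theorem~\ref{thm:pt-decomp} as a direct specialization of Corollary~\ref{cor-decomp} applied to the torsion theory $(\cT, \cF)$ on $\Mod_A$, so essentially no new argument is required beyond checking that the abstract framework of \S\ref{ss:deriveddecomp} genuinely applies to our $\cA = \Mod_A$.

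First I would verify that $(\cT, \cF)$ is a hereditary torsion theory in the sense of \S\ref{ss:deriveddecomp}. The three axioms are routine: $\Hom_A(T, F) = 0$ for $T$ torsion and $F$ torsion-free is immediate from the definition of torsion-free; every $M$ sits in a short exact sequence $0 \to \rH^0_\fm(M) \to M \to M/\rH^0_\fm(M) \to 0$, with $\rH^0_\fm(M)$ torsion and the quotient torsion-free; and any submodule of a torsion module is torsion. Next I would verify the two additional hypotheses needed to invoke Proposition~\ref{prop:derived0} (and hence Corollary~\ref{cor-decomp}): that every object of $\Mod_A$ has finite injective dimension, which is Theorem~\ref{thm:injA}; and that the inclusion $\cT \to \Mod_A$ takes injective objects to injective objects, which is Lemma~\ref{lem:torsinj}.

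With these hypotheses in place, Proposition~\ref{prop:derived1} gives the equivalence $\rD^b(A) \cong \cD'$ with $\cD_t = \Tors$ and $\cD_f = \Perf$ in the present notation. Part (a) of Theorem~\ref{thm:pt-decomp} is then simply part (a) of Corollary~\ref{cor-decomp}, and similarly for part (b). Concretely, given $M \in \rD^b(A)$, one may assume $M$ is a bounded complex of injectives; the termwise exact sequence $0 \to M_t \to M \to M_f \to 0$ splits termwise (because each injective decomposes as an injective torsion part plus an injective torsion-free part), and this splitting produces the desired distinguished triangle. The uniqueness and functoriality claims translate under $\Phi$ into the obvious statements about triples $(X, Y, f)$ in $\cD'$.

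There is no real obstacle in this proof: the content is entirely in Theorem~\ref{thm:injA} and Lemma~\ref{lem:torsinj}, which were already established, and in the abstract categorical machinery of \S\ref{ss:deriveddecomp}. The only place requiring a moment of care is the uniqueness statement in part (b): checking that a map $g \colon M \to N$ determines $(f, h)$ uniquely, and conversely that $(f, h)$ determines $g$ uniquely. Both follow at once from the description of morphisms in $\cD'$ together with the vanishing of $\Hom$'s between torsion and torsion-free injectives in $\Mod_A$, which forces the homotopy $h$ chosen in the construction of $\Psi$ in the proof of Proposition~\ref{prop:derived0} to be immaterial in our specific setting.
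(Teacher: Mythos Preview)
Your proposal is correct and matches the paper's approach exactly: the paper simply states that $(\cT,\cF)$ is a hereditary torsion theory satisfying the extra hypotheses of \S\ref{ss:deriveddecomp} (via Theorem~\ref{thm:injA} and Lemma~\ref{lem:torsinj}), so that Proposition~\ref{prop:derived0} applies and Corollary~\ref{cor-decomp} yields Theorem~\ref{thm:pt-decomp} verbatim. Your write-up in fact spells out the verification of the torsion-theory axioms slightly more explicitly than the paper does.
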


\begin{remark}
By Proposition~\ref{prop:locS}, one can take $M_t=\rR\Gamma_{\fm}(M)$ and $M_f=\rR S(T(M))$.
\end{remark}

\begin{remark} \label{rmk:newIT}
A consequence of Theorem~\ref{thm:pt-decomp} is that a non-exact projective complex of $A$-modules whose homology consists of torsion modules has infinite length. This statement can be seen as an instance of the ``new intersection theorem'' \cite[Theorem 13.4.1]{roberts}.
\end{remark}

\subsection{Description of $\rD^b(A)$}
\label{ss:DbA}

In the previous section, we gave an equivalence $\rD^b(A)=\cD'$ (Proposition~\ref{prop:derived1}). This is not an entirely satisfactory description of $\rD^b(A)$, since the definition of $\cD'$ refers to $\rD^b(A)$ (in the choice of $f$). We now remove this flaw. Let $\Phi \colon \Mod_K \to \Mod_A^{\tors}$ be the equivalence defined in \S \ref{ss:modKmodAequiv}.

\begin{proposition}
\label{prop:derived2}
For $M, N \in \rD^b(K)$ we have a natural isomorphism
\begin{displaymath}
\Hom_{\rD^b(A)}(\rR S(M), \Phi(N))=\Hom_{\rD^b(K)}(N, M)^*.
\end{displaymath}
\end{proposition}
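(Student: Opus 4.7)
The plan is to devisse to shifts of simple objects using the cohomological nature of both sides, verify the isomorphism by direct $\Ext$ computation with the injective resolution from Theorem~\ref{thm:BGGresolution}, and construct the natural pairing chain-by-chain from an injective resolution of $M$.

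First I would observe that both sides are bifunctors $\rD^b(K)^{\op} \times \rD^b(K) \to \Vec$ (contravariant in $M$, covariant in $N$) that are cohomological in each variable: the LHS because $\Hom$ in a triangulated category turns distinguished triangles into long exact sequences, and the RHS because duality is exact on finite-dimensional vector spaces. By Corollary~\ref{cor:modKfinitelength} and Corollary~\ref{cor:allsimples}, every object of $\rD^b(K)$ is built by finitely many distinguished triangles from shifts of the simples $L_\lambda$. So once I produce a natural transformation, it suffices to verify it is an isomorphism for $M = L_\lambda[a]$ and $N = L_\nu[b]$, i.e., to check
\[
\Ext^{b-a}_A(\rR S(L_\lambda), \bS_\nu) \cong \Ext^{a-b}_K(L_\nu, L_\lambda)^*,
\]
where I have used $\Phi(L_\nu) = \bS_\nu$ from the proof of Theorem~\ref{thm:equivmodKmodA}.

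For the computation on simples, the RHS is $\bC$ exactly when $\lambda/\nu \in \VS_{a-b}$ and $0$ otherwise, by Corollary~\ref{cor:extsimples}. For the LHS, I would apply $S$ to the injective resolution $L_\lambda \to \bI^\bullet$ from Theorem~\ref{thm:BGGresolution}: since $S(Q_\mu) = A \otimes \bS_\mu$ by Corollary~\ref{cor:S-L-free}, this yields a finite projective complex $C^\bullet$ with $C^j = \bigoplus_{\lambda/\mu \in \VS_j} A \otimes \bS_\mu$ representing $\rR S(L_\lambda)$. Then $\Ext^\bullet_A(\rR S(L_\lambda), \bS_\nu)$ is computed as the cohomology of $\Hom_A^\bullet(C^\bullet, \bS_\nu)$, and using $\Hom_A(A \otimes \bS_\mu, \bS_\nu) = \Hom_\cV(\bS_\mu, \bS_\nu) = \delta_{\mu,\nu} \bC$ the Hom complex is supported in at most one cohomological degree, namely $-|\lambda/\nu|$ when $\lambda/\nu \in \VS$. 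Hence the differentials vanish for degree reasons, and $\Ext^{b-a}_A(\rR S(L_\lambda), \bS_\nu) = \bC$ iff $\lambda/\nu \in \VS_{a-b}$, matching the RHS.

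The hard part will be constructing the natural pairing itself, rather than just matching dimensions. My approach is to define a pairing $\Hom_{\rD^b(A)}(\rR S(M), \Phi(N)) \otimes \Hom_{\rD^b(K)}(N, M) \to \bC$ at the level of an injective resolution $M \to I^\bullet$ in $\Mod_K$, which presents $\rR S(M)$ as the perfect complex $S(I^\bullet)$ and presents $g \colon N \to M$ by components $g_\mu \colon N \to Q_\mu$ for the indecomposable summands $Q_\mu$ of $I^\bullet$. Transporting $g_\mu$ across the equivalence $\Phi$ of Theorem~\ref{thm:equivmodKmodA} yields $\Phi(g_\mu) \colon \Phi(N) \to \Phi(Q_\mu) = I_\mu$ (using Remark~\ref{rmk:torsion-inj} to identify $\Phi(Q_\mu)$ with the torsion injective hull $I_\mu$), and composing with the corresponding component $f_\mu \colon A \otimes \bS_\mu \to \Phi(N)$ of $f$ lands in the one-dimensional space $\Hom_A(A \otimes \bS_\mu, I_\mu)$, producing the desired scalar. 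The technical verification is that this pairing descends from the chain level to the derived $\Hom$'s and is independent of the chosen injective resolution; once that is in place, the dimension computation on simples above shows the pairing is perfect there, and the devissage above propagates the isomorphism to all of $\rD^b(K)$.
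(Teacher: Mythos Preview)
Your proposal is correct and the pairing you sketch is essentially the same trace pairing the paper uses: in both cases one exploits the one-dimensionality of $\Hom_A(A\otimes\bS_\mu, I_\mu)=\Hom_A(S(Q_\mu),\Phi(Q_\mu))$ to extract a scalar from the composite of a component of $f$ with a component of $g$ transported across $S$ or $\Phi$. The paper packages this as a single \emph{trace map} $\trace_N\colon \Hom_A(S(N),\Phi(N))\to\bC$ defined on injective $N$ (using the obvious trace on the multiplicity spaces $U_\lambda$ when $N=\bigoplus U_\lambda\otimes Q_\lambda$), and pairs $g\in\Hom_K(N,M)$ with $f\in\Hom_A(S(M),\Phi(N))$ via $\trace_N(f\circ S(g))$.

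Where you genuinely differ is in the reduction step. You d\'evisse to the simples $L_\lambda$ and then compute both sides via Theorem~\ref{thm:BGGresolution}; this works, but the verification requires unwinding $\rR S(L_\lambda)$ as the full complex $\bigoplus_{\lambda/\mu\in\VS_j} A\otimes\bS_\mu$ and computing $\Hom_A(-,\bS_\nu)$ termwise. The paper instead reduces to indecomposable \emph{injectives} $M=Q_\lambda$, $N=Q_\mu$. This is cleaner: for injective $M,N$ one has $\rR S(M)=S(M)$ with no higher terms, so the statement is about ordinary $\Hom$'s, and the check that $F_{Q_\lambda,Q_\mu}$ is an isomorphism is a one-line consequence of the definition of $\trace$ together with Proposition~\ref{prop:injhomsets1}. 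Passing to complexes of injectives is then formal. Your route trades this simplicity for an independent verification of the numerical coincidence $\dim\Ext^{-j}_A(\rR S(L_\lambda),\bS_\nu)=\dim\Ext^j_K(L_\nu,L_\lambda)$, which is a nice sanity check but not needed once the pairing is in place.
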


\begin{proof}
Let $N$ be an injective object of $\Mod_K$.  We begin by defining a map
\begin{displaymath}
\trace_N \colon \Hom_A(S(N), \Phi(N)) \to \bC.
\end{displaymath}
Suppose first that $N=Q_{\lambda}$. Then $S(Q_{\lambda})=A \otimes \bS_{\lambda}$ (Corollary~\ref{cor:S-L-free}), while $\Phi(Q_{\lambda})$ is the injective envelope $I$ of the simple $A$-module $\bS_{\lambda}$, and therefore contains $\bS_{\lambda}$ with multiplicity 1. We therefore have natural isomorphisms
\begin{displaymath}
\Hom_A(S(Q_{\lambda}), \Phi(Q_{\lambda})) \to \Hom_{\cV}(\bS_{\lambda}, I) \to \bC
\end{displaymath}
We let $\trace_N$ be the composite map.

Now suppose that $N=\bigoplus_{\lambda} U_{\lambda} \otimes Q_{\lambda}$, where the $U_{\lambda}$ are multiplicity spaces, almost all of which are zero. We define $\trace_N$ to be the composition
\begin{displaymath}
\Hom_A(S(N), \Phi(N)) \to \bigoplus_{\lambda} \End(U_{\lambda}) \otimes \Hom_A(S(Q_{\lambda}), \Phi(Q_{\lambda})) \to \bC,
\end{displaymath}
where the second map is the trace map on $\End(U_{\lambda})$ combined with $\trace_{Q_{\lambda}}$.  Since $\Aut(N) \subset \GL(\bigoplus_\lambda U_\lambda)$ is block upper-triangular (by Proposition~\ref{prop:injhomsets1}), it is clear that $\trace_N$ is invariant under $\Aut(N)$.  Thus given a general injective object $N$ of $\Mod_K$, we can choose an isomorphism $N=\bigoplus_\lambda U_{\lambda} \otimes Q_{\lambda}$ and apply the above definition, and the result is independent of the choice of isomorphism.

Suppose now that $N$ and $M$ are injectives of $\Mod_K$.  We have a pairing
\begin{displaymath}
\begin{split}
\Hom_K(N, M) \otimes \Hom_A(S(M), \Phi(N))
&\to \Hom_A(S(N), S(M)) \otimes \Hom_A(S(M), \Phi(N)) \\
&\to \Hom_A(S(N), \Phi(N)) \\
&\to \bC.
\end{split}
\end{displaymath}
The first map is $S$ on the first factor, the second map is composition, the third map is the trace map. This yields a map
\begin{displaymath}
F_{M,N} \colon \Hom_A(S(M), \Phi(N)) \to \Hom_K(N, M)^*
\end{displaymath}
We claim $F_{M,N}$ is an isomorphism. One readily verifies that $(M,N) \mapsto F_{M,N}$ respects direct sums in the obvious sense, so it suffices to verify the claim when $M$ and $N$ are indecomposable injectives. This follows easily from the definition of the trace map.

The definition of $F_{M,N}$ extends easily to complexes of injective objects, as does the fact that it is an isomorphism. This establishes the result.
\end{proof}

We now give our main result on $\rD^b(A)$.  Define a category $\cD''$ as follows.  The objects of $\cD''$ are triples $(X, Y, \gamma)$ where $X$ and $Y$ are objects of $\rD^b(\Mod_K)$ and $\gamma$ is an element of $\Hom_{\rD^b(K)}(Y, X)^*$.  A morphism $\phi \colon (X, Y, \gamma) \to (X', Y', \gamma')$ consists of morphisms $\phi_X \colon X \to X'$ and $\phi_Y \colon Y \to Y'$ in $\rD^b(K)$ such that the diagram
\begin{displaymath}
\xymatrix{
\Hom(Y', X) \ar[r]^{\phi_Y^*} \ar[d]_{(\phi_X)_*} & \Hom(Y, X) \ar[d]^{\gamma} \\
\Hom(Y', X') \ar[r]^-{\gamma'} & \bC }
\end{displaymath}
commutes.  Note that $\cD''$ is defined completely in terms of $\rD^b(\Mod_K)$. We then have:

\begin{theorem} \label{thm:Dequiv}
There is a natural equivalence of categories $\rD^b(A)=\cD''$.
\end{theorem}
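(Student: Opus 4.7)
The plan is to rewrite the auxiliary category $\cD'$ of Proposition~\ref{prop:derived1} purely in terms of $\rD^b(K)$, using two equivalences and one Hom-identification already established: the equivalence $\rR S \colon \rD^b(K) \to \Perf$ of Remark~\ref{rmk:differentS}, the equivalence $\Phi \colon \rD^b(K) \to \Tors$ extending Theorem~\ref{thm:equivmodKmodA}, and the natural isomorphism
\[
\Hom_{\rD^b(A)}(\rR S(X), \Phi(Y)) \;\cong\; \Hom_{\rD^b(K)}(Y, X)^*
\]
of Proposition~\ref{prop:derived2}.

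Concretely, I would construct a functor $G \colon \cD'' \to \cD'$ sending an object $(X, Y, \gamma)$ to $(\rR S(X), \Phi(Y), \tilde\gamma)$, where $\tilde\gamma$ is the image of $\gamma$ under the isomorphism of Proposition~\ref{prop:derived2}, and sending a morphism $(\phi_X, \phi_Y)$ to $(\rR S(\phi_X), \Phi(\phi_Y))$. Essential surjectivity of $G$ is immediate from the essential surjectivity of $\rR S$ and $\Phi$ together with bijectivity of the Hom-identification, and full faithfulness follows from full faithfulness of $\rR S$ and $\Phi$ combined with that same bijectivity. Composing $G$ with the equivalence $\rD^b(A) \cong \cD'$ of Proposition~\ref{prop:derived1} then yields the desired equivalence $\rD^b(A) \cong \cD''$.

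The one genuinely non-formal step is checking that $G$ respects the compatibility condition for morphisms: the commutativity of the square in $\cD''$ defining a morphism must translate, under the Hom-isomorphism, into the commutativity of the square $\phi_{Y'} \circ \tilde\gamma = \tilde\gamma' \circ \phi_{X'}$ in $\rD^b(A)$ that defines a morphism in $\cD'$. This is precisely the statement that the isomorphism in Proposition~\ref{prop:derived2} is natural in both $X$ and $Y$. The main obstacle is therefore to verify this bifunctorial naturality, which is not spelled out in the proof of that proposition; one would do it by tracing through the construction of the isomorphism via the trace pairing $\trace_N$ and the composition map, both of which are manifestly natural in each argument (the trace being additive and behaving functorially under direct-sum decompositions of injectives, and composition being bifunctorial in the usual sense). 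Once this naturality is verified, the remainder of the argument is a formal unwinding of the identifications.
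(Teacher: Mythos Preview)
Your proposal is correct and follows essentially the same approach as the paper: both construct an equivalence $\cD'' \to \cD'$ via $(X,Y,\gamma) \mapsto (\rR S(X), \Phi(Y), f)$ with $f$ corresponding to $\gamma$ under Proposition~\ref{prop:derived2}, and then compose with Proposition~\ref{prop:derived1}. The paper's proof is two sentences long and leaves the verification of functoriality (your ``one genuinely non-formal step'') implicit in the word ``natural'' of Proposition~\ref{prop:derived2}; you have correctly identified and unpacked exactly what needs checking there.
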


\begin{proof}
We have an equivalence of categories
\begin{displaymath}
\cD'' \to \cD', \qquad (X, Y, \gamma) \mapsto (\rR S(X), \Phi(Y), f),
\end{displaymath}
where $f$ corresponds to $\gamma$ under the isomorphism given in Proposition~\ref{prop:derived2}.  The result now follows from Proposition~\ref{prop:derived1}.
\end{proof}

\begin{question}
\label{ques2}
Remark~\ref{rmk:tstruc} gives a description of the standard t-structure on $\cD'$.  Is there a nice description of this t-structure on $\cD''$?
\end{question}

\subsection{\texorpdfstring{Rigidity of $\Mod_A$}{Rigidity of ModA}}
\label{ss:modArigid}

We now show that the category $\Mod_A$ has as few symmetries as possible.

\begin{proposition} \label{prop:autoAtriv}
The auto-equivalence group of $\Mod_A$ is trivial.
\end{proposition}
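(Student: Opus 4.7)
The plan is to bootstrap off the rigidity already established for the two ``simpler'' pieces of $\Mod_A$, namely $\Mod_A^{\tors}$ and $\Mod_K$. Specifically, Theorem~\ref{thm:cat} tells us that neither of these categories admits any nontrivial auto-equivalences. The strategy is to package this up using the derived description of $\Mod_A$ from \S\ref{ss:DbA}.

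So let $F$ be an auto-equivalence of $\Mod_A$. Since torsion objects are exactly the finite-length objects, $F$ preserves $\Mod_A^{\tors}$, hence restricts to an auto-equivalence $F^{\tors}$ and descends to an auto-equivalence $\bar F$ of the Serre quotient $\Mod_K$. By Theorem~\ref{thm:cat}, both $F^{\tors}$ and $\bar F$ are isomorphic to the identity; fix natural isomorphisms $\alpha \colon F^{\tors} \to \id_{\Mod_A^{\tors}}$ and $\beta \colon \bar F \to \id_{\Mod_K}$.

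Next I would use $\alpha$ and $\beta$ to show that $F$ commutes, up to natural isomorphism, with all the structural functors relating the three categories. From $\beta$ and uniqueness of adjoints one gets $TF \cong T$ and $FS \cong S$; from $\alpha$ one gets $F \circ \rH^0_{\fm} \cong \rH^0_{\fm}$ and, derivedly, $F \circ \rR\Gamma_{\fm} \cong \rR\Gamma_{\fm}$. Consequently, the derived extension of $F$ preserves the subcategories $\Tors$ and $\Perf$ of $\rD^b(A)$, and the restrictions are naturally isomorphic to the identity (via $\alpha$ and via $\rR S (\beta)$ respectively).

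The final step is to invoke the equivalence $\rD^b(A) \cong \cD''$ of Theorem~\ref{thm:Dequiv}: this exhibits $\rD^b(A)$ as a category constructed entirely out of $\rD^b(\Mod_K)$, namely triples $(X, Y, \gamma)$ with $X, Y \in \rD^b(\Mod_K)$ and $\gamma \in \Hom(Y, X)^*$. Unwinding the construction through the intermediate $\cD'$ of Proposition~\ref{prop:derived1} and the trace pairing of Proposition~\ref{prop:derived2}, the auto-equivalence of $\cD''$ induced by $F$ acts diagonally, sending $(X, Y, \gamma) \mapsto (\bar F(X), \bar F(Y), \bar F_*(\gamma))$, where $\bar F_*$ denotes the induced map on the dualized Hom-pairing. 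Since $\beta$ exhibits $\bar F$ as naturally isomorphic to the identity on all of $\rD^b(\Mod_K)$, this diagonal auto-equivalence of $\cD''$ is naturally isomorphic to the identity. Passing back through the equivalence of Theorem~\ref{thm:Dequiv} and restricting to degree zero yields $F \cong \id_{\Mod_A}$.

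The main obstacle lies in the last step: verifying carefully that the induced action of $F$ on $\cD''$ really is diagonal in this sense. This is a bookkeeping exercise, but a nontrivial one, because it requires tracing the trace pairing of Proposition~\ref{prop:derived2} through the identifications $\Perf \cong \rD^b(\Mod_K)$ (via $\rR S$) and $\Tors \cong \rD^b(\Mod_K)$ (via $\Phi$), and then checking that the compatibilities $TF \cong T$, $FS \cong S$, and $F \rR\Gamma_{\fm} \cong \rR\Gamma_{\fm}$ established in the second paragraph are enough to transport the pairing $\gamma$ coherently.
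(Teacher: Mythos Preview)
Your approach via the derived description $\cD''$ is structurally different from the paper's direct argument on indecomposable injectives, but there is a genuine gap in the last step. In the passage from $\cD'$ to $\cD''$, the two slots are identified with $\rD^b(K)$ via \emph{different} functors: $X \in \Perf$ goes to $\rD^b(K)$ via $T$ (inverse to $\rR S$), while $Y \in \Tors$ goes via $\Phi^{-1}$. So the induced auto-equivalence of $\cD''$ does not act by $\bar F$ on both slots as you write: it acts by $\bar F$ on the first and by $\tilde F := \Phi^{-1} \circ F^{\tors} \circ \Phi$ on the second. Both are isomorphic to the identity, but via the independently chosen $\beta$ and (the $\Phi$-transport of) $\alpha$; the action on $\gamma$ entangles the two, and nothing you have established forces these trivializations to be compatible on the cross-$\Hom$ spaces $\Hom_{\rD^b(A)}(\rR S(-), \Phi(-))$.

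This missing compatibility is precisely what the paper's proof handles. Working in the category $\cI_0$ of indecomposable injectives (and invoking Lemma~\ref{prop:equiv}), the paper first uses the rigidity of $\Mod_A^{\tors}$ and $\Mod_K$ to arrange that $F$ fixes the torsion injectives $I_\lambda$, the projectives $P_\lambda = A \otimes \bS_\lambda$, and all maps \emph{within} each type --- this is your $\alpha$ and $\beta$. But $F$ may still scale the cross maps $h_{\lambda,\mu} \colon P_\lambda \to I_\mu$, and the paper uses a commutative square (built from one $f$, one $g$, and two $h$'s) to show that the scalars $\alpha_\lambda$ defined by $F(h_{\lambda,\lambda}) = \alpha_\lambda h_{\lambda,\lambda}$ are all equal; only then can one absorb everything into a single isomorphism $\id \to F$. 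In your language, this is the argument needed to show that a \emph{single} rescaling of $\alpha$ relative to $\beta$ makes the action on every $\gamma$ trivial. That is the substantive content of the proof, not bookkeeping, and your outline does not supply it.
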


\begin{proof}
Let $\cI_0$ be the category of indecomposable injectives in $\Mod_A$.  By Theorem~\ref{thm:Ainj}, there are two sorts of objects in this category:  the injective envelopes $I_{\lambda}$ of the simple objects $\bS_{\lambda}$ and the projective objects $P_{\lambda}=A \otimes \bS_{\lambda}$.  Define $\lambda \le \mu$ if $\mu/\lambda \in \HS$.  For $\lambda \le \mu$ we can choose non-zero maps 
\begin{displaymath}
f_{\mu,\lambda} \colon I_{\mu} \to I_{\lambda}, \qquad
g_{\mu,\lambda} \colon P_{\mu} \to P_{\lambda}, \qquad
h_{\lambda,\mu} \colon P_{\lambda} \to I_{\mu}
\end{displaymath}
such that when $\lambda \le \mu \le \nu$ we have
\begin{displaymath}
f_{\nu,\lambda}=f_{\nu,\mu} f_{\mu,\lambda}, \qquad
g_{\nu,\lambda}=g_{\nu,\mu} g_{\mu,\lambda}, \qquad
h_{\mu,\nu}=h_{\lambda,\nu} g_{\mu,\lambda}, \qquad
h_{\lambda,\mu}=f_{\nu,\mu} h_{\lambda,\nu}.
\end{displaymath}
This can be seen as follows. First, we can choose $f$ and $g$ so that the first two equations hold since $N(S)$ is contractible (see \S \ref{ss:quivermodK}).  Next, note that there is a canonical choice for $h_{\lambda,\lambda}$ and then the definition of $h_{\lambda,\mu}$ is forced by either of the third or fourth equation above. The third and fourth equations then follow in general.

Let $F$ be an auto-equivalence of $\Mod_A$.  Then $F$ induces an auto-equivalence of $\cI_0$ that preserves the set $\{I_\lambda\}$ (since they are the finite length injectives) and the set $\{P_\lambda\}$.  Since $\Mod_A^{\tors}$ and $\Mod_K$ have no auto-equivalences, we can assume that $F$ is the identity on the $I$'s, $P$'s, $f$'s and $g$'s.  We have $F(h_{\lambda,\lambda})=\alpha_{\lambda} h_{\lambda,\lambda}$ for some scalar $\alpha_{\lambda}$.  Suppose $\lambda \le \mu$.  Applying $F$ to the commutative square
\begin{displaymath}
\xymatrix{
P_{\lambda} \ar[r]^{h_{\lambda,\lambda}} & I_{\lambda} \\
P_{\mu} \ar[u]^{g_{\mu,\lambda}} \ar[r]^{h_{\mu,\mu}} & I_{\mu} \ar[u]_{f_{\mu,\lambda}} }
\end{displaymath}
shows that $\alpha_{\lambda}=\alpha_{\mu}$.  We conclude that $\alpha_{\lambda}=\alpha$ is independent of $\lambda$.  We now obtain an isomorphism $\phi \colon \id \to F$ by defining $\phi(P_{\lambda})$ to be the identity but $\phi(I_{\lambda})$ to be scaling by $\alpha$.  This isomorphism extends to one on all of $\Mod_A$ by Lemma~\ref{prop:equiv}.
\end{proof}

\begin{remark}
It is also true that there are no equivalences $\Mod_A \to \Mod_A^{\op}$:  indeed, every object of $\Mod_A$ has finite injective dimension, but every non-projective object has infinite projective dimension.
\end{remark}

\begin{proposition}
The automorphism group of the identity functor of $\Mod_A$ is $\bC^{\times}$.
\end{proposition}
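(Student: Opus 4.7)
The plan is to reduce to the category of indecomposable injectives via Lemma~\ref{prop:equiv}, show each such object carries a scalar, and then use the classification of indecomposable injectives together with the maps $f_{\mu,\lambda}, g_{\mu,\lambda}, h_{\lambda,\mu}$ from the proof of Proposition~\ref{prop:autoAtriv} to force all those scalars to coincide.

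First I would apply Lemma~\ref{prop:equiv} to identify $\Aut(\id_{\Mod_A})$ with $\Aut(\id_{\ul{\cI}_0})$, where $\ul{\cI}_0$ is the full subcategory of $\Mod_A$ on indecomposable injectives. By Theorem~\ref{thm:Ainj}, the objects of $\ul{\cI}_0$ are exactly the modules $I_\lambda$ and $P_\lambda = A \otimes \bS_\lambda$. For each such object, the endomorphism ring is $\bC$: for $P_\lambda$ this follows from $\Hom_A(A \otimes \bS_\lambda, A \otimes \bS_\lambda) = \Hom_{\cV}(\bS_\lambda, A \otimes \bS_\lambda) = \bC$, and for $I_\lambda$ this follows via the equivalence $\Mod_A^\tors \simeq \Mod_K$ of Theorem~\ref{thm:equivmodKmodA} from $\End(Q_\lambda) = \bC$ (Corollary~\ref{cor:injenv}). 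Hence a natural automorphism $\phi$ of $\id_{\Mod_A}$ must act on each indecomposable injective $I$ by a scalar $\alpha_I \in \bC^\times$.

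Next I would show that these scalars are all equal. Choose the nonzero maps $f_{\mu,\lambda} \colon I_\mu \to I_\lambda$, $g_{\mu,\lambda} \colon P_\mu \to P_\lambda$, $h_{\lambda,\mu} \colon P_\lambda \to I_\mu$ (defined whenever $\mu / \lambda \in \HS$) exactly as in the proof of Proposition~\ref{prop:autoAtriv}. Naturality of $\phi$ forces $\alpha_{I_\mu} f_{\mu,\lambda} = f_{\mu,\lambda} \alpha_{I_\lambda}$ (I am being slightly loose: really $\phi_{I_\lambda} \circ f_{\mu,\lambda} = f_{\mu,\lambda} \circ \phi_{I_\mu}$), which gives $\alpha_{I_\mu} = \alpha_{I_\lambda}$ whenever $\mu/\lambda \in \HS$; the same for $g$ gives $\alpha_{P_\mu}=\alpha_{P_\lambda}$; and the same for $h$ gives $\alpha_{P_\lambda} = \alpha_{I_\mu}$. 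Since the poset of partitions under the relation $\le$ (where $\mu \le \lambda$ iff $\lambda/\mu \in \HS$) is connected --- indeed the nerve is even contractible by Lemma~\ref{lem:partcontract} --- all scalars $\alpha_{I_\lambda}$ and $\alpha_{P_\lambda}$ coincide with a single $\alpha \in \bC^\times$, so $\phi$ is scalar multiplication by $\alpha$ on $\ul{\cI}_0$, and Lemma~\ref{prop:equiv} propagates this to all of $\Mod_A$. Conversely, any $\alpha \in \bC^\times$ obviously gives a natural automorphism by scalar multiplication, so the map $\Aut(\id_{\Mod_A}) \to \bC^\times$ is an isomorphism.

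The only subtle point is checking that the existence of the $h_{\lambda,\mu}$ with $\mu/\lambda \in \HS$ connects the two families $\{I_\lambda\}$ and $\{P_\lambda\}$, which is immediate once one notes that $\Hom_A(P_\lambda, I_\mu) = \Hom_\cV(\bS_\lambda, I_\mu) \neq 0$ for $\mu/\lambda \in \HS$ by the structure of $I_\mu$ given in Remark~\ref{rmk:torsion-inj}; everything else is a direct transcription of the argument at the end of Proposition~\ref{prop:autoAtriv}, but tracking scalars on identities instead of rescaling maps.
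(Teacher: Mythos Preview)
Your proposal is correct and takes essentially the same approach as the paper, whose proof is simply the one-line statement ``The proof is the same as the proof of Proposition~\ref{prop:autoAtriv}.'' You have carefully spelled out what this means: reduce to $\ul{\cI}_0$ via Lemma~\ref{prop:equiv}, use that each indecomposable injective has scalar endomorphism ring, and then use naturality along the maps $f$, $g$, $h$ together with connectedness of the Cartan relation to force a single scalar.
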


\begin{proof}
The proof is the same as the proof of Proposition~\ref{prop:autoAtriv}.
\end{proof}

\subsection{\texorpdfstring{$\rK$-theory of $\Mod_A$}{K-theory of ModA}}
\label{ss:modAKtheory}

We now compute the Grothendieck group of $\Mod_A$.

\begin{proposition}
\label{prop:KA}
The group $\rK(\Mod_A)$ is a free module of rank $2$ over $\rK(\cV_\rf)$ with basis $\{[\bC], [A]\}$.
\end{proposition}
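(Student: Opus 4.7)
The plan is to verify generation and freeness separately, using the perfect/torsion decomposition together with the localization and local cohomology functors to separate the two summands.

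For generation, I would invoke Theorem~\ref{thm:pt-decomp}: any $M \in \Mod_A \subset \rD^b(A)$ fits into a distinguished triangle $M_t \to M \to M_f \to$, giving $[M] = [M_t] + [M_f]$ in $\rK(\Mod_A)$. The perfect part $M_f$ is quasi-isomorphic to a finite complex of indecomposable projectives $A \otimes \bS_\lambda$, and since $[A \otimes \bS_\lambda] = [\bS_\lambda]\cdot[A]$, its class lies in $\rK(\cV_\rf)\cdot[A]$. For the torsion part, use the equivalence $\Mod_A^{\tors} \simeq \Mod_K$ (Theorem~\ref{thm:equivmodKmodA}) together with Proposition~\ref{prop:cartanmatrix}: under this equivalence, the simple $L_\lambda$ corresponds to $\bS_\lambda = [\bS_\lambda]\cdot[\bC]$ (viewed as an $A$-module with $\fm$ acting by zero). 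Since $\rK(\Mod_K)$ is free of rank one over $\rK(\cV_\rf)$ with generator $[Q_0] = [L_0]$, transport along the equivalence gives that $\rK(\Mod_A^{\tors})$ is free of rank one over $\rK(\cV_\rf)$ with generator $[\bC]$; hence $[M_t] \in \rK(\cV_\rf)\cdot[\bC]$.

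For freeness, suppose $\alpha\cdot[\bC] + \beta\cdot[A] = 0$ in $\rK(\Mod_A)$ with $\alpha,\beta \in \rK(\cV_\rf)$. First apply the localization functor $T \colon \Mod_A \to \Mod_K$. Since $T$ is exact it descends to a $\rK(\cV_\rf)$-linear map $\rK(\Mod_A) \to \rK(\Mod_K)$. It annihilates $[\bC]$ (as $\bC$ is torsion) and sends $[A]$ to $[Q_0]$. So we obtain $\beta\cdot[Q_0] = 0$, and Proposition~\ref{prop:cartanmatrix} forces $\beta = 0$. Next apply $\rR\Gamma_\fm$: by Theorem~\ref{thm:injA} every $M \in \Mod_A$ has a finite resolution by finitely generated injectives, so the local cohomology modules $\rH^i_\fm(M)$ are finitely generated torsion modules, vanishing for $i \gg 0$. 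The Euler characteristic $\sum_i (-1)^i[\rH^i_\fm(M)]$ is therefore well-defined and additive on short exact sequences, yielding a $\rK(\cV_\rf)$-linear map $\rK(\Mod_A) \to \rK(\Mod_A^{\tors})$. Since $\bC$ is torsion, $\rR\Gamma_\fm(\bC) = \bC$, so the remaining relation $\alpha\cdot[\bC] = 0$ holds in $\rK(\Mod_A^{\tors})$; freeness of that group over $\rK(\cV_\rf)$ forces $\alpha = 0$.

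The main obstacle, though not a deep one, is confirming that $\rR\Gamma_\fm$ passes to K-theory (finite injective dimension makes this routine) and that $[\bC]$ freely generates $\rK(\Mod_A^{\tors})$ over $\rK(\cV_\rf)$. The latter reduces, via the equivalence $\Mod_A^{\tors}\simeq\Mod_K$, to the observation that $Q_0 = L_0$ corresponds to $\bC$ and to Proposition~\ref{prop:cartanmatrix}'s statement that $\rK(\Mod_K)$ is $\rK(\cV_\rf)$-free of rank one on $[Q_0]$.
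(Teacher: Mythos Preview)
Your proof is correct. The paper's proof is a single line citing only Theorem~\ref{thm:injA} and Theorem~\ref{thm:Ainj}: every module has a finite injective resolution, and each indecomposable injective is either $A \otimes \bS_\lambda = [\bS_\lambda][A]$ or a torsion $I_\lambda$ whose class is a $\bZ$-combination of simples $[\bS_\mu] = [\bS_\mu][\bC]$; freeness is left implicit. Your use of the perfect/torsion triangle (Theorem~\ref{thm:pt-decomp}) for generation is an equivalent repackaging, and your explicit retractions via $T$ and $\rR\Gamma_\fm$ fill in the freeness argument the paper omits. The $\rK(\cV_\rf)$-linearity of both retractions, which you assert, is routine: $T$ is exact and $T(V \otimes M) = V \otimes T(M)$, while $\rH^0_\fm$ commutes with $V \otimes -$ for $V \in \cV_\rf$ (a torsion-free module stays torsion-free after tensoring with $\bS_\lambda$, as it embeds in a projective), hence so do its derived functors.

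One unnecessary detour: to show $\rK(\Mod_A^{\tors})$ is free of rank one on $[\bC]$, you route through the equivalence $\Mod_A^{\tors}\simeq\Mod_K$ and Proposition~\ref{prop:cartanmatrix}. This transport tacitly assumes the equivalence is $\rK(\cV_\rf)$-linear at the K-theory level, which is true but rests on the Littlewood--Richardson identity $[L_\lambda][L_\mu] = \sum_\nu c_{\lambda,\mu}^\nu [L_\nu]$ of Remark~\ref{rem:tensorK}. It is far more direct to observe that the simples of $\Mod_A^{\tors}$ are exactly the $\bS_\lambda$, giving a $\bZ$-basis $\{[\bS_\lambda]\}$ of $\rK(\Mod_A^{\tors})$; since $[\bS_\lambda] \cdot [\bC] = [\bS_\lambda]$, the map $x \mapsto x[\bC]$ from $\rK(\cV_\rf)$ carries one basis to the other.
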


\begin{proof}
This follows immediately from finiteness of injective dimension (Theorem~\ref{thm:injA}) and the classification of injectives (Theorem~\ref{thm:Ainj}).
\end{proof}

The following proposition is similar to the above, but is more convenient in applications.

\begin{proposition}
\label{prop:ind}
Let $\cP$ be a property of finitely generated $A$-modules.  Suppose that:
\begin{enumerate}[\rm (a)]
\item Given a short exact sequence
\begin{displaymath}
0 \to M_1 \to M_2 \to M_3 \to 0
\end{displaymath}
in which $\cP$ holds on two of the modules, it holds on the third.
\item The property $\cP$ holds for all simple modules $\bS_{\lambda}$.
\item The property $\cP$ holds for all projective modules $A \otimes \bS_\lambda$.
\end{enumerate}
Then $\cP$ holds for all finitely generated $A$-modules.
\end{proposition}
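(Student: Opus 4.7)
The plan is to exploit the finite injective dimension of finitely generated $A$-modules (Theorem~\ref{thm:injA}) together with the classification of injectives (Theorem~\ref{thm:Ainj}), then dimension-shift along the resolution using axiom (a).

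First I would record two consequences of the hypotheses. Applying (a) to the split sequence $0 \to X \to X \oplus Y \to Y \to 0$ shows that $\cP$ is closed under finite direct sums. Applying (a) to $0 \to \bC \to \bC \to 0 \to 0$ (using (b) for $\bS_\emptyset = \bC$) shows that $\cP$ holds on the zero module.

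Next I would verify $\cP$ on each indecomposable finitely generated injective. By Theorem~\ref{thm:Ainj}, these come in two families: the projectives $A \otimes \bS_\lambda$, which are handled by (c), and the torsion injective envelopes $I_\lambda$. For $I_\lambda$, the branching rule for $\GL$ (alternatively, Remark~\ref{rmk:torsion-inj}) gives, as an object of $\cV$, $I_\lambda = \bigoplus_{\mu,\ \lambda/\mu \in \HS} \bS_\mu$, a finite direct sum. Hence $I_\lambda$ has finite length both in $\cV$ and in $\Mod_A$, so it admits a finite filtration whose successive quotients are simple modules $\bS_\mu$; iterating (a) and (b) then yields $\cP$ on $I_\lambda$. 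Combined with closure under finite direct sums, this proves $\cP$ holds on every finitely generated injective $A$-module.

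Finally, let $M$ be an arbitrary finitely generated $A$-module. By Theorem~\ref{thm:injA}, $M$ has a finite resolution
\[
0 \to M \to J^0 \to J^1 \to \cdots \to J^n \to 0
\]
by finitely generated injectives. Set $K^i = \ker(J^i \to J^{i+1})$ (with $J^{n+1}=0$), so $K^0 = M$ and $K^n = J^n$, and the resolution breaks into short exact sequences $0 \to K^i \to J^i \to K^{i+1} \to 0$. Downward induction on $i$, starting from $K^n = J^n$ and applying (a) at each step (using that $\cP$ holds on every $J^i$), propagates $\cP$ from $K^{i+1}$ to $K^i$, and concludes with $\cP$ on $K^0 = M$.

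The only genuinely non-formal step is the second one: verifying $\cP$ on the torsion injectives $I_\lambda$. After that, the argument is a standard resolution-and-dimension-shift; the key structural inputs are the finite injective dimension and the explicit list of indecomposable injectives.
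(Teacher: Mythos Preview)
Your proof is correct and is exactly the argument the paper has in mind: the paper's proof simply says ``this follows immediately from finiteness of injective dimension (Theorem~\ref{thm:injA}) and the classification of injectives (Theorem~\ref{thm:Ainj}),'' and you have faithfully unpacked these two ingredients via the same finite-length-of-$I_\lambda$ and dimension-shifting steps.
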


\begin{proof}
Again, this follows immediately from finiteness of injective dimension (Theorem~\ref{thm:injA}) and the classification of injectives (Theorem~\ref{thm:Ainj}).
\end{proof}

\part{Invariants of $A$-modules}


\section{Hilbert series}

Let $M$ be an object of $\cV^{\gfin}$, regarded as a sequence $M=(M_n)_{n \ge 0}$ of representations of symmetric groups $S_n$.  We define the Hilbert series of $M$ as
\begin{displaymath}
H_M(t)=\sum_{n \ge 0} \dim(M_n) \frac{t^n}{n!}.
\end{displaymath}
We then have the following result \cite[Thm.~3.1]{snowden}:  if $M$ is a finitely generated module over a twisted commutative algebra finitely generated in degree 1 then $H_M(t)$ is a polynomial in $t$ and $e^t$.  For finitely generated $A$-modules, the proof shows that only the first power of $e^t$ can appear, i.e., the Hilbert series is of the form $p(t)e^t+q(t)$ where $p$ and $q$ are polynomials.  Our goal in this section is to generalize this result in two very different ways.

First, we define ``enhanced'' Hilbert series, which not only record the dimensions of the $M_n$, but their structure as $S_n$-representations. The definition and rationality result is given in \S\ref{ss:enhancedhilbert}.  In \S \ref{ss:charpoly}, we show that the ``$p$-part'' of the enhanced Hilbert series is equivalent to the so-called ``character polynomial'' from the representation theory of symmetric groups.  Using our theory, we give a conceptual derivation of a formula for character polynomials.  In \S \ref{ss:hilbertq}, we show that the ``$q$-part'' of the enhanced Hilbert series is determined by local cohomology.  This gives a conceptual description of the failure of the character polynomial to compute the actual character in small degrees.

Second, we introduce certain differential operators in \S\ref{ss:diffop} and show every module satisfies a differential equation.  This is a sort of categorification of the rationality result from \cite{snowden}, and provides an analogue of homogeneous systems of parameters in a certain sense.

\subsection{Enhanced Hilbert series} \label{ss:enhancedhilbert}

Let $\lambda$ be a partition of $n$. We define the following notations. 
\begin{itemize}
\item Let $c_{\lambda}$ denote the conjugacy class of $S_n$ corresponding to $\lambda$, i.e., the parts of $\lambda$ specify the cycle type of the permutation. In particular, $(1^n)$ corresponds to the identity conjugacy class. We write $\trace(c_{\lambda} \vert M)$ for the trace of $c_{\lambda}$ on $M_n$.  
\item We write $t^{\lambda}$ for $t_1^{m_1(\lambda)} t_2^{m_2(\lambda)} \cdots$, where $m_i(\lambda)$ denotes the number of times $i$ appears in $\lambda$.  
\item We write $\lambda!$ for $m_1(\lambda)! m_2(\lambda)! \cdots$.  
\end{itemize}

\begin{example}
If $\lambda=(2,1,1,1)$ then $c_{\lambda}$ is the conjugacy class of transpositions in $S_5$, $t^{\lambda} = t_1^3 t_2$, and $\lambda!=3! \cdot 1!=6$.
\end{example}

Let $M$ be an object of $\cV^{\gfin}$.  The {\bf enhanced Hilbert series} of $M$ is the formal series in variables $t_1, t_2, \ldots$ given by
\begin{displaymath}
\tilde{H}_M(t)=\sum_{\lambda} \trace(c_{\lambda} \vert M) \frac{t^{\lambda}}{\lambda!}.
\end{displaymath}
The isomorphism class of $M$ as an object of $\cV$ is completely determined by $\tilde{H}_M$. The enhanced Hilbert series therefore has much more information in it than the usual Hilbert series; in fact, one can recover the usual Hilbert series directly from the enhanced Hilbert series by setting $t_i$ to 0 for $i \ge 2$.  The enhanced Hilbert series is multiplicative (we emphasize that $\otimes$ is {\it not} the pointwise tensor product on sequences of $S_n$-representations, but rather the induction product, see \S\ref{sec:catV}):
\begin{displaymath}
\tilde{H}_{M \otimes N}=\tilde{H}_M \tilde{H}_N
\end{displaymath}
(see, for example, \cite[Proposition 7.18.2]{stanley}). Thus $M \mapsto \tilde{H}_M$ provides an isomorphism of rings $\rK(\cV^{\gfin}) \otimes \bQ \to \bQ \lbb t_i \rbb$.  Define 
\begin{displaymath}
T_0=\sum_{i \ge 1} t_i.
\end{displaymath}

\begin{lemma} \label{lem:hilbert-A}
$\tilde{H}_A(t)=\exp(T_0)$.
\end{lemma}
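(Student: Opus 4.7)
The plan is to identify the sequence of $S_n$-representations corresponding to $A$ under the equivalence $\cV_1 \to \cV_3$, compute the relevant character values, and observe that the resulting generating function factors.

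First, I would identify $A$ as an object of $\cV_3$. The degree $n$ part of $A = \Sym(\bC\langle 1 \rangle)$ is $\Sym^n(\bC^\infty)$, whose $1^n$ weight space (i.e., the $A_n$ under the equivalence $\cV_1 \to \cV_3$ described in \S\ref{sec:catV}) is spanned by the single monomial $x_1 x_2 \cdots x_n$. Since this monomial is symmetric in the variables, $S_n$ acts trivially on it, so $A_n$ is the trivial $1$-dimensional representation. (This agrees with the identification $A \leftrightarrow B$ used in the proof of Proposition~\ref{prop:fi-mod}, where $B_n = \bC$ carries the trivial action.)

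Next, since every element of $S_n$ acts as the identity on $A_n$, we have $\trace(c_\lambda \mid A) = 1$ for every partition $\lambda$. Substituting into the definition of the enhanced Hilbert series and grouping the sum over partitions by the multiplicities $m_i = m_i(\lambda)$ (so that $t^\lambda = \prod_i t_i^{m_i}$ and $\lambda! = \prod_i m_i!$) gives
\[
\tilde{H}_A(t) \;=\; \sum_{\lambda} \frac{t^\lambda}{\lambda!} \;=\; \sum_{(m_1, m_2, \ldots)} \prod_{i \ge 1} \frac{t_i^{m_i}}{m_i!} \;=\; \prod_{i \ge 1} \exp(t_i) \;=\; \exp(T_0),
\]
where the sum in the middle is over sequences of nonnegative integers that are eventually zero.

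There is no real obstacle here; the only content is the identification of $A_n$ with the trivial representation, after which the calculation is a formal factorization of the exponential generating function. As a sanity check, specializing $t_1 = t$ and $t_i = 0$ for $i \ge 2$ recovers the ordinary Hilbert series $H_A(t) = e^t$, as expected.
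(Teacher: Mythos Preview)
Your proof is correct and follows essentially the same approach as the paper: identify $A_n$ with the trivial $S_n$-representation so that all traces are $1$, then recognize $\sum_\lambda t^\lambda/\lambda!$ as $\exp(T_0)$. Your argument is slightly more detailed (you justify the trivial-representation claim via the weight space and factor the sum explicitly rather than matching coefficients), but the content is the same.
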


\begin{proof}
As an object of $\cV^{\gfin}$, $A$ corresponds to the sequence of $S_n$-representations which is the trivial representation for each $n$. In particular, $\trace(c_\lambda \vert A) = 1$ for all $\lambda$, so $\tilde{H}_A(t) = \sum_\lambda t^\lambda / \lambda!$. Since the coefficient of $t_1^{p_1} \cdots t_m^{p_m}$ in $\exp(T_0)$ is $(p_1! \cdots p_m!)^{-1}$, this finishes the proof.
\end{proof}

The basic fact on enhanced Hilbert series is the following theorem.

\begin{theorem} \label{thm:enhanced}
Let $M$ be a finitely generated $A$-module. There is an integer $n$ and two (unique) polynomials $p_M(t),q_M(t) \in \bQ[t_1, \dots, t_n]$ so that
$\tilde{H}_M(t) = p_M(t)\exp(T_0) + q_M(t)$.
\end{theorem}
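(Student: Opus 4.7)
The plan is to apply the inductive principle of Proposition~\ref{prop:ind}. Let $\cP(M)$ be the property that there exist an integer $n\ge 1$ and polynomials $p_M,q_M\in\bQ[t_1,\dots,t_n]$ with $\tilde{H}_M(t)=p_M(t)\exp(T_0)+q_M(t)$. Uniqueness of $(p_M,q_M)$ is independent of the existence argument and can be disposed of first: if $p\exp(T_0)+q=0$ with $p,q\in\bQ[t_1,\dots,t_n]$, then expanding $\exp(T_0)=\exp(t_1)\exp(t_2+\cdots+t_n)$ and viewing everything as a formal power series in $t_1$ over $\bQ[\![t_2,\dots,t_n]\!]$, the element $\exp(t_1)$ is transcendental over $\bQ[t_1]$, so $p=q=0$. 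Thus once existence is established, $p_M$ and $q_M$ are uniquely determined.

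For existence, I verify the three hypotheses of Proposition~\ref{prop:ind}. Closure under short exact sequences is immediate since $\tilde{H}$ is additive on exact sequences in $\cV^{\gfin}$: from $\tilde{H}_{M_2}=\tilde{H}_{M_1}+\tilde{H}_{M_3}$, if $\cP$ holds on two of the three modules (with variable bounds $n_1,n_2$), then it holds on the third with $n=\max(n_1,n_2)$, since the polynomials of the form $p\exp(T_0)+q$ form a $\bQ$-subspace by the uniqueness just noted. For a simple module $\bS_\lambda$, the underlying sequence in $\cV_3^{\gfin}$ is $\bM_\lambda$ concentrated in degree $|\lambda|$, so
\[
\tilde{H}_{\bS_\lambda}(t)=\sum_{\mu\vdash|\lambda|}\chi^\lambda(c_\mu)\,\frac{t^\mu}{\mu!}\in\bQ[t_1,\dots,t_{|\lambda|}]
\]
is already a polynomial, and we take $p_M=0$, $q_M=\tilde{H}_{\bS_\lambda}$.

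For a projective module $A\otimes\bS_\lambda$, I use multiplicativity of $\tilde{H}$ under the symmetric tensor product of $\cV$ (i.e., the induction product on $\cV_3$) together with Lemma~\ref{lem:hilbert-A}, giving
\[
\tilde{H}_{A\otimes\bS_\lambda}(t)=\tilde{H}_A(t)\,\tilde{H}_{\bS_\lambda}(t)=\tilde{H}_{\bS_\lambda}(t)\exp(T_0),
\]
so $\cP$ holds with $p_M=\tilde{H}_{\bS_\lambda}$ and $q_M=0$. All three hypotheses of Proposition~\ref{prop:ind} are satisfied, so $\cP$ holds on every finitely generated $A$-module.

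Honestly, there is no serious obstacle once Proposition~\ref{prop:ind} is available: the multiplicativity of $\tilde{H}$ handles the projective case, finite length handles the simple case, and additivity of $\tilde{H}$ handles extensions. The genuine content of the theorem is packaged into Proposition~\ref{prop:ind}, which rests on the classification of injectives in Theorem~\ref{thm:Ainj} and the finite injective dimension in Theorem~\ref{thm:injA}. I also note as a sanity check that setting $t_i=0$ for $i\ge 2$ recovers the classical Hilbert series and the form $p_M(t_1)e^{t_1}+q_M(t_1)$ matches the rationality statement from \cite{snowden}.
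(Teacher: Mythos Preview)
Your proof is correct and follows essentially the same approach as the paper: verify the property on $\bS_\lambda$ and $A\otimes\bS_\lambda$ using multiplicativity of $\tilde{H}$ and Lemma~\ref{lem:hilbert-A}, then invoke Proposition~\ref{prop:ind}. You additionally spell out the uniqueness argument (which the paper states but does not prove); note a minor slip there: $T_0=\sum_{i\ge 1}t_i$ is an infinite sum, so you should first specialize $t_i\mapsto 0$ for $i>n$ before writing $\exp(T_0)=\exp(t_1)\exp(t_2+\cdots+t_n)$, after which your transcendence argument goes through.
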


\begin{proof}
It is clear that $\tilde{H}_M$ is a polynomial if $M=\bS_{\lambda}$. Multiplicativity of $\tilde{H}$ and Lemma~\ref{lem:hilbert-A} shows that $M$ satisfies the theorem if $M= A \otimes \bS_{\lambda}$.  Since $\tilde{H}$ factors through K-theory, the theorem follows from Proposition~\ref{prop:ind}.
\end{proof}

\begin{remark}
If we define $\tilde{H}^*_M(t) = \sum_\lambda \trace(c_\lambda \vert M) t^\lambda$ and $T^*_0 = \prod_{i \ge 1} (1-t_i)^{-1}$, then the first proof of Theorem~\ref{thm:enhanced} shows that there are $p(t), q(t) \in \bZ[t_1, \dots, t_n]$ so that $\tilde{H}^*_M(t) = p(t) T^*_0 + q(t)$.
\end{remark}

\begin{remark}
If we perform the substitution $t_i \mapsto p_i / i$, where $p_i$ denotes the $i$th power sum symmetric function $p_i = \sum_{n \ge 1} x_n^i$, then $\tilde{H}_M(t)$ becomes the Frobenius characteristic of the family of symmetric group representations $M = (M_n)_{n \ge 0}$ (see \cite[\S 7.18]{stanley}). 
\end{remark}

\subsection{Character polynomials and $p$} \label{ss:charpoly}

Let $M$ be a finitely generated $A$-module.  Using the equality $\tilde{H}_M(t)=p_M(t) \exp(T_0)+q_M(t)$ afforded by Theorem~\ref{thm:enhanced}, a simple computation shows that there is a polynomial $X_M(a_1, \ldots, a_n)$ called the {\bf character polynomial} of $M$, so that the quantity $\trace(c_{\mu} \vert M)$ is obtained from $X_M$ by putting $a_i=m_i(\mu)$, at least for $\vert \mu \vert$ large enough (this fact is also stated in \cite[Theorem 1.6]{fimodules}). We will see shortly that $q_M(t)$ is exactly the failure for this to happen when $|\mu|$ is small, and that we have $\trace(c_\mu \vert M) = X_M(m_1(\mu), \dots, m_n(\mu))$ whenever $|\mu| > \deg q_M(t)$.

In fact, we have the following formula for $X_M$ in terms of $p_M$.  Define a linear map (``umbral substitution'')
\begin{displaymath}
\bQ[t_i] \to \bQ[a_i], \qquad \prod_i t_i^{d_i} \mapsto \prod_i (a_i)_{d_i},
\end{displaymath}
where $(x)_d=x(x-1) \cdots (x-d+1)$ is the falling factorial.  This map is not a ring homomorphism.  We denote the image of $p$ under this map by $\downarrow p$.  We then have:

\begin{proposition}
\label{prop:hilbertp}
In the above notation, $X_M=\ \downarrow p_M$.
\end{proposition}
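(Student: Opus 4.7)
The plan is to prove this by a direct computation from the formula $\tilde{H}_M(t) = p_M(t)\exp(T_0) + q_M(t)$, extracting the coefficient of $t^\mu$ for $|\mu|$ sufficiently large and comparing with the umbral substitution. No induction on $M$ is needed; in fact the identity follows purely from the shape of $\exp(T_0)$.

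First I would write $p_M(t) = \sum_\alpha c_\alpha t^\alpha$, where $\alpha$ ranges over finitely supported tuples of nonnegative integers and $t^\alpha = \prod_i t_i^{\alpha_i}$. Expanding
\[
\exp(T_0) = \prod_{i \ge 1} \sum_{k_i \ge 0} \frac{t_i^{k_i}}{k_i!},
\]
the coefficient of $t^\mu$ in $p_M(t)\exp(T_0)$ is $\sum_\alpha c_\alpha \prod_i \frac{1}{(m_i(\mu) - \alpha_i)!}$, where the sum is implicitly restricted to $\alpha_i \le m_i(\mu)$. Multiplying by $\mu! = \prod_i m_i(\mu)!$ converts each factor $1/(m_i(\mu)-\alpha_i)!$ into the falling factorial $(m_i(\mu))_{\alpha_i}$, so
\[
\mu!\cdot [t^\mu]\bigl(p_M(t)\exp(T_0)\bigr) = \sum_\alpha c_\alpha \prod_i (m_i(\mu))_{\alpha_i} = (\downarrow\! p_M)(m_1(\mu),m_2(\mu),\ldots).
\]
Crucially, the constraint $\alpha_i \le m_i(\mu)$ is automatic once each $m_i(\mu)$ is at least the maximum degree of $t_i$ appearing in $p_M$, so the identity above is truly polynomial in the $m_i(\mu)$ for $|\mu|$ large.

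Next I would invoke the definition of the enhanced Hilbert series: $\trace(c_\mu\mid M) = \mu!\cdot[t^\mu]\tilde{H}_M(t)$. For any $\mu$ with $|\mu| > \deg q_M$, the contribution of $q_M$ to $[t^\mu]\tilde{H}_M$ vanishes, hence
\[
\trace(c_\mu\mid M) = (\downarrow\! p_M)(m_1(\mu), m_2(\mu), \ldots).
\]
By the defining property of $X_M$, we also have $\trace(c_\mu\mid M) = X_M(m_1(\mu), m_2(\mu), \ldots)$ for all sufficiently large $|\mu|$. Thus $X_M$ and $\downarrow\! p_M$ are two polynomials in $a_1,\ldots,a_n$ agreeing on $\{(m_1(\mu),\ldots,m_n(\mu)) : |\mu| \gg 0\}$; since this set is Zariski-dense in $\bN^n$ (indeed, every tuple of nonnegative integers arises as the multiplicity vector of some partition, and we can make $|\mu|$ arbitrarily large by increasing a coordinate $m_i$ with $i$ large), the two polynomials coincide.

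There is no substantive obstacle here: the argument is essentially bookkeeping, with the only subtlety being the justification that the set of multiplicity vectors arising from $\mu$ with $|\mu|\gg 0$ is Zariski-dense enough to pin down a polynomial identity. The genuine content of the proposition lies in Theorem~\ref{thm:enhanced} (the existence of $p_M$ and $q_M$), not in the present corollary-like statement.
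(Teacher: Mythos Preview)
Your proof is correct and follows essentially the same approach as the paper: both extract the coefficient of $t^\mu$ in $p_M(t)\exp(T_0)$, multiply by $\mu!$, and recognize the result as the umbral substitution applied to $p_M$. The paper streamlines the computation by reducing to a single monomial $p(t)=t^\nu$ via linearity, whereas you carry the full sum; and you make explicit the Zariski-density argument for uniqueness of $X_M$, which the paper leaves implicit. One minor remark: your caveat about the constraint $\alpha_i \le m_i(\mu)$ is unnecessary, since the falling factorial $(m)_k$ vanishes for $0 \le m < k$, so the restricted and unrestricted sums already agree for every $\mu$.
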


\begin{proof}
Pick $\mu$ so that $|\mu| > \deg q_M(t)$. Then $\trace(c_\mu \vert M)/\mu!$ is the coefficient of $t^\mu$ in $p_M(t) \exp(T_0)$, and we claim that this is the same as $(\downarrow p_M)(m_1(\mu), m_2(\mu), \dots)/\mu!$. 

We will prove the slightly more general statement that $\mu!$ times the coefficient of $t^\mu$ in $p(t) \exp(T_0)$ is given by $(\downarrow p)(m_1(\mu), m_2(\mu), \dots)$ for any polynomial $p(t)$. By linearity, it is enough to check this claim when $p(t) = t^\nu$ is a monomial. The coefficient of $t^\mu$ in $t^\nu \exp(T_0)$ is $\prod_i ((m_i(\mu) - m_i(\nu))!)^{-1}$ with the convention that $(n!)^{-1} = 0$ if $n<0$. Multiplying this by $\mu!$ gives $\prod_i(m_i(\mu))_{m_i(\nu)} = (\downarrow t^\nu)(m_1(\mu), m_2(\mu), \dots)$, as desired.
\end{proof}

Observe that both $X_M$ and $p_M$ are unaffected if $M$ is changed by a finite length object, and so both make sense for $M \in \Mod_K$.  In fact, $p$ and $X$ define linear maps from $\rK(\Mod_K) \otimes \bQ$ to $\bQ[t_i]$ and $\bQ[a_i]$, respectively.  The map $p$ is a ring homomorphism, while $X$ is not.  (To see that $p$ is a ring homomorphism, note that $p_{A \otimes V}=\tilde{H}_V$ for $V \in \cV$, and so $p$ is multiplicative on the injective objects of $\Mod_K$.  Since these span the $\rK$-group, the result follows.)

Write $X^{\lambda}$ and $p^{\lambda}$ in place of $X_M$ and $p_M$ when $M=L_{\lambda}$.  Note that the polynomial $X^{\lambda}$ describes the character of the irreducible representation $\bM_{(N, \lambda)}$ for $N$ sufficiently large; in fact, this is how character polynomials are usually thought of. Our resolution for $L_{\lambda}$ in $\Mod_K$ can be used to give an explicit formula for $X^{\lambda}$.  By Proposition~\ref{prop:cartanmatrix}, we have
\begin{displaymath}
p^{\lambda} = \sum_{\mu,\, \lambda/\mu \in \VS} (-1)^{|\lambda|-|\mu|} p_{A \otimes \bS_\mu}.
\end{displaymath}
Since $\tilde{H}_{A \otimes \bS_\mu}=\tilde{H}_{\bS_{\mu}} \tilde{H}_A=\tilde{H}_{\bS_{\mu}} \exp(T_0)$, we have
\begin{displaymath}
p_{A \otimes \bS_\mu}(t) = \tilde{H}_{\bS_{\mu}}(t) = \sum_{|\lambda| = n} \trace(c_\lambda \vert \bM_\mu) \frac{t^\lambda}{\lambda!}.
\end{displaymath}
For formulas for these traces, see \cite[\S I.7]{macdonald} or \cite[\S 7.18]{stanley}.  Since $X^{\lambda}=\downarrow p^{\lambda}$, the above two formulas give an explicit formula for $X^{\lambda}$.

\begin{remark}
In fact, sharper results on character polynomials are known; see \cite[Examples I.7.13--14]{macdonald} and \cite{garsia} for further details.  In particular, a more efficient version of the formula for $X^{\lambda}$ given above can be found in \cite[Proposition I.1]{garsia}. We believe that our derivation of these facts from the structure of $\Mod_K$ is more conceptual than the usual approaches, which is why we have included this section despite the known results in the literature.
\end{remark}

\begin{example}
For a partition $\lambda$, define $Y^\lambda(t) = p_{A \otimes \bS_\lambda}(t)$.
\begin{enumerate}[(a)]
\item We have $[L_1] = [Q_1] - [Q_\emptyset]$. Since $Y^1(t) = t_1$ and $Y^\emptyset(t) = 1$, we get $X^1(a_1) = a_1 - 1$. 

\item We have $[L_{2,1}] = [Q_{2,1}] - [Q_2] - [Q_{1,1}] + [Q_1]$. We calculate the polynomials $Y^\mu$: 
\[
Y^{2,1} = \frac{t_1^3}{3} - t_3, \qquad Y^2 = \frac{t_1^2}{2} + t_2, \qquad Y^{1,1} = \frac{t_1^2}{2} - t_2, \qquad Y^1 = t_1.
\]
Hence $X^{2,1}(a_1, a_2, a_3) = (a_1)_3/3 - a_3 - (a_1)_2 + a_1$. \qedhere
\end{enumerate}
\end{example}

\subsection{Local cohomology and $q$} \label{ss:hilbertq}

Recall from \S \ref{ss:loccoh} that $\rH^0_{\fm}(M)$ denotes the torsion submodule of $M$ and $\rR^i \Gamma_{\fm}=\rH^i_{\fm}$ denotes the $i$th derived functor of $\rH^0_{\fm}$.

\begin{proposition} \label{prop:hilbertlocal}
Let $M$ be an object of $\Mod_A$.  Then
\begin{displaymath}
q_M(t)=\tilde{H}_{\rR \Gamma_{\fm}(M)}(t)=\sum_{i \ge 0} (-1)^i \tilde{H}_{\rH^i_{\fm}(M)}(t),
\end{displaymath}
with $q_M$ as in Theorem~\ref{thm:enhanced}.
\end{proposition}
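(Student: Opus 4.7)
The plan is to apply Proposition~\ref{prop:ind} to the property $\cP(M) \colon q_M(t) = \sum_{i \ge 0}(-1)^i \tilde H_{\rH^i_\fm(M)}(t)$. First I need both sides to be additive on short exact sequences. For the right-hand side this is standard: a short exact sequence $0 \to M_1 \to M_2 \to M_3 \to 0$ produces a long exact sequence in local cohomology whose alternating sum of $\tilde H$'s vanishes. For the left-hand side, $\tilde H$ factors through $\rK(\Mod_A)$, so additivity of $q_M$ will follow once the decomposition $\tilde H_M(t) = p_M(t)\exp(T_0) + q_M(t)$ of Theorem~\ref{thm:enhanced} is shown to be unique. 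Uniqueness is essentially formal: if $p(t)\exp(T_0) + q(t) = 0$ with $p,q \in \bQ[t_1,\ldots,t_n]$ and $p \ne 0$, then fixing a monomial $t^\nu$ appearing in $p$, the infinitely many monomials $t^\nu \cdot t^\mu/\mu!$ contributed by $\exp(T_0)$ cannot be cancelled by the finitely many monomials of $q$, a contradiction.

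Next I verify $\cP$ on the simples and on the indecomposable projectives. For $M = \bS_\lambda$, regarded as an $A$-module killed by the positive part of $A$, $M$ is torsion, so $\rH^0_\fm(M) = M$; since $\rH^0_\fm$ is exact on the full subcategory $\Mod_A^\tors$ (being the identity there), all higher local cohomology vanishes and the right-hand side reduces to $\tilde H_{\bS_\lambda}(t)$. But $\tilde H_{\bS_\lambda}(t)$ is supported entirely in degree $|\lambda|$, hence is a polynomial, which by uniqueness forces $p_{\bS_\lambda} = 0$ and $q_{\bS_\lambda} = \tilde H_{\bS_\lambda}$. For $M = A \otimes \bS_\lambda$, Proposition~\ref{prop:pierisubmod} combined with Proposition~\ref{Llambda-sub} shows that every nonzero $A$-submodule contains some $L_\mu^{\ge D}$ and therefore has infinite length, so $\rH^0_\fm(A \otimes \bS_\lambda) = 0$; since $A \otimes \bS_\lambda$ is injective in $\Mod_A$ by Corollary~\ref{cor:projinj}, the higher groups $\rH^i_\fm(A \otimes \bS_\lambda)$ vanish as well. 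The right-hand side is therefore $0$. On the left, multiplicativity of $\tilde H$ together with Lemma~\ref{lem:hilbert-A} gives $\tilde H_{A \otimes \bS_\lambda} = \tilde H_{\bS_\lambda} \cdot \exp(T_0)$, so $p_{A \otimes \bS_\lambda} = \tilde H_{\bS_\lambda}$ and $q_{A \otimes \bS_\lambda} = 0$. In both cases $\cP$ holds, and Proposition~\ref{prop:ind} completes the argument.

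The only real obstacle is the uniqueness of the Theorem~\ref{thm:enhanced} decomposition; once this is granted, everything else reduces to routine verifications on the two classes of modules that generate the Grothendieck group. A stylistic alternative would be to invoke the triangle $\rR\Gamma_\fm(M) \to M \to \rR S(T(M)) \to$ from Proposition~\ref{prop:locS} and observe that $\rR S(T(M)) \in \Perf$ by Theorem~\ref{thm:pt-decomp}, so its Euler characteristic of Hilbert series is automatically a $\bZ$-linear combination of expressions $\tilde H_{\bS_\mu}\exp(T_0)$; this packages the inductive check into the perfect/torsion decomposition but ultimately relies on the same uniqueness statement to conclude.
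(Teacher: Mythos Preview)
Your proof is correct and follows essentially the same route as the paper: reduce via Proposition~\ref{prop:ind} to the simples $\bS_\lambda$ and the projectives $A \otimes \bS_\lambda$, and verify the identity directly in each case. The paper's proof is terser---it simply asserts that both sides factor through $\rK$-theory and checks the two generating classes---whereas you supply the supporting details (uniqueness of the $p,q$ decomposition, additivity via the long exact sequence, vanishing of local cohomology). One small remark: your claim that higher $\rH^i_\fm$ vanish on $\bS_\lambda$ because ``$\rH^0_\fm$ is exact on $\Mod_A^\tors$'' implicitly uses Lemma~\ref{lem:torsinj} to ensure that a torsion injective resolution remains injective in $\Mod_A$; this is harmless but worth making explicit.
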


This is an analogue of \cite[Corollary A1.15]{syzygies} which expresses the difference between the Hilbert polynomial and Hilbert function of a graded module over a polynomial ring in terms of the Euler characteristic of its local cohomology modules.

\begin{proof}
Each side factors through K-theory, so it suffices to check the proposition when $M$ is either a simple module $\bS_{\lambda}$ or a projective module $A \otimes \bS_{\lambda}$.  In the first case, $\rH^0_{\fm}(M)=M$ while $\rH^i_{\fm}(M)=0$ for $i>0$, which proves the statement.  In the second case, $\rH^i_{\fm}(M)=0$ for all $i$, while $\tilde{H}_M(t)=\tilde{H}_{\bS_{\lambda}}(t) \exp(T_0)$, which shows that $q_M(t) = 0$, and the statement follows.
\end{proof}

\subsection{Differential operators} \label{ss:diffop}

The category $\cV$ admits a derivation $\bD \colon \cV \to \cV$ called the ``Schur derivative.'' We describe this in two of the models described in \S\ref{sec:catV}. First suppose that $F$ is a polynomial functor.  Then $\bD F$ is the functor assigning to a vector space $V$ the subspace of $F(V \oplus \bC)$ on which $\bG_m$ acts through its standard character (this copy of $\bG_m$ acts by multiplication on $\bC$ only).  For example,
\begin{displaymath}
\bigwedge^{n}(V \oplus \bC)=\bigwedge^{n}(V) \oplus \bigwedge^{n-1}(V) \otimes \bC,
\end{displaymath}
and so $\bD(\bigwedge^{n})=\bigwedge^{n-1}$.  Similarly, $\bD(\Sym^n)=\Sym^{n-1}$. So in particular, $\bD(A) = A$.

Now suppose that $W=(W_n)$ is a sequence of representations of symmetric groups.  Then $W'=\bD W$ is the sequence with $W'_n=W_{n+1} \vert_{S_n}$, that is, $W'_n$ is obtained by restricting the representation $W_{n+1}$ of $S_{n+1}$ to $S_n$.  Notice that
\begin{displaymath}
H_{W'}(t)=\sum_{n \ge 0} \dim(W'_n) \frac{t^n}{n!}=\sum_{n \ge 0} \dim(W_{n+1}) \frac{t^n}{n!}=\frac{d}{dt} H_W(t).
\end{displaymath}
In other words,
\begin{displaymath}
H_{\bD W}(t)=\frac{d}{dt} H_W(t),
\end{displaymath}
that is, the Schur derivative lifts the usual derivative on Hilbert series. There is a simple formula for the enhanced Hilbert series, but we will not discuss it in this paper:
\[
\tilde{H}_{\bD W}(t) = \frac{\partial}{\partial t_1} \tilde{H}_W(t).
\]

Here are some basic properties of the Schur derivative:
\begin{itemize}
\item Leibniz rule:  $\bD(F \otimes G)$ is naturally isomorphic to $(\bD(F) \otimes G) \oplus (F \otimes \bD(G))$. 
\item Chain rule:  $\bD(F \circ G)=(\bD(F) \circ G) \otimes \bD(G)$, where $\circ$ is composition of Schur functors.
\end{itemize}

Let $M$ be an $A$-module which we think of a sequence of symmetric group representations $(M_n)$. Since $A$ corresponds to the sequence of trivial representations, we have a $S_n$-equivariant multiplication map
\begin{displaymath}
M_n \to M_{n+1}.
\end{displaymath}
Since $(\bD M)_n = M_{n+1}|_{S_n}$, this induces a map of $A$-modules
\begin{displaymath}
M \to \bD M.
\end{displaymath}
We define $\partial(M)$ to be the two-term complex $[M \to \bD M]$.  More generally, if $M$ is a complex of $A$-modules, then the above procedure defines a map $M \to \bD M$, and we define $\partial(M)$ to be the cone of this complex. This induces a triangulated functor
\begin{displaymath}
\partial \colon \rD^b(A) \to \rD^b(A),
\end{displaymath}
where $\rD^b(A)$ is the bounded derived category of $\Mod_A$.

\begin{theorem} \label{thm:diffeq}
Let $M \in \rD^b(A)$ be a complex. Then there are nonnegative integers $n_1, n_2$ so that $\partial^{n_1} \bD^{n_2} M = 0$ in $\rD^b(A)$.
\end{theorem}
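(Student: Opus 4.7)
The plan is to invoke Proposition~\ref{prop:ind} after reducing from $\rD^b(A)$ to the abelian category $\Mod_A$. First I would record the formal properties of the operators involved: in the $\cV_3$ model, $\bD$ is given by $(\bD W)_n = W_{n+1}|_{S_n}$, which is manifestly exact and (via the Leibniz-type computation below) preserves finite generation over $A$, so it defines a triangulated endofunctor of $\rD^b(A)$. The functor $\partial$ is the cone of the natural transformation $\mathrm{id} \to \bD$, hence is triangulated, and naturality of that transformation gives a canonical isomorphism $\bD\partial \simeq \partial\bD$.

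Call $M$ \emph{good} if $\partial^{n_1}\bD^{n_2}M = 0$ for some $n_1,n_2 \ge 0$. Because $\partial$ and $\bD$ commute (and both annihilate $0$), if $\cP$ holds for $M_1$ with $(a_1,a_2)$ and for $M_3$ with $(b_1,b_2)$, then taking the coordinatewise maximum gives a common pair annihilating both; applying $\partial^{\max}\bD^{\max}$ to a distinguished triangle then forces the middle object to be good as well. Using the truncation triangles $\tau_{\le k}M \to \tau_{\le k+1}M \to H^{k+1}(M)[-k-1] \to$, this reduces the claim for $M \in \rD^b(A)$ to the case $M \in \Mod_A$, which is exactly the setting of Proposition~\ref{prop:ind}.

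For the base cases, a simple $\bS_\lambda$ is concentrated in degree $|\lambda|$ in $\cV_3$, so $\bD^{|\lambda|+1}\bS_\lambda = 0$ on the nose (take $n_1=0$). For the projective $A \otimes V$ with $V \in \cV$ of finite length, the Leibniz rule gives a canonical isomorphism of $A$-modules $\bD(A\otimes V) \cong (A \otimes V) \oplus (A \otimes \bD V)$ (using $\bD A = A$). I then need to identify the multiplication map $A\otimes V \to \bD(A\otimes V)$ with the summand inclusion $(\mathrm{id},0)$: in $\cV_3$, an element of $(A \otimes V)_n$ is a pair consisting of an injection $\cO_d \hookrightarrow \cO_n$ together with $v \in V_d$, and the image in $(A \otimes V)_{n+1}|_{S_n}$ is obtained by post-composing with $\cO_n \hookrightarrow \cO_{n+1}$, which never hits $n+1$ and hence lies entirely in the ``$A \otimes V$'' piece of the Mackey decomposition of the restriction. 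Consequently $\partial(A \otimes V) \simeq A \otimes \bD V$ in $\rD^b(A)$, and iterating gives $\partial^{|\lambda|+1}(A \otimes \bS_\lambda) \simeq A \otimes \bD^{|\lambda|+1}\bS_\lambda = 0$. Proposition~\ref{prop:ind} then yields $\cP$ on all of $\Mod_A$, and the truncation argument propagates it to $\rD^b(A)$.

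The main obstacle is the last identification: verifying that the natural map $A \otimes V \to \bD(A \otimes V)$ is precisely the summand inclusion in the Leibniz decomposition (with vanishing component in $A \otimes \bD V$). The Leibniz rule itself is only stated in $\cV$, so one must work in the $\cV_3$ model to pin down the $A$-module structure on both sides and trace the multiplication map through the Mackey decomposition. Once this is in hand the rest is formal.
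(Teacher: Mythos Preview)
Your proposal is correct and follows essentially the same route as the paper: reduce from $\rD^b(A)$ to $\Mod_A$, invoke Proposition~\ref{prop:ind}, kill simples with powers of $\bD$, and handle projectives via the Leibniz decomposition $\bD(A\otimes V)\cong (A\otimes V)\oplus(A\otimes\bD V)$ together with the identification of the multiplication map with the first summand inclusion.

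The only differences are cosmetic. You are more explicit than the paper about the scaffolding: you state the commutation $\bD\partial\simeq\partial\bD$ (which both arguments need for the 2-out-of-3 step but the paper leaves implicit), and you reduce to modules via truncation triangles rather than ``term by term''. For the projective case you iterate the identity $\partial(A\otimes V)\simeq A\otimes\bD V$ directly to obtain $\partial^{|\lambda|+1}(A\otimes\bS_\lambda)=0$, whereas the paper phrases the same computation as an induction on $|\lambda|$; your version has the minor advantage of yielding an explicit bound. Your FI-module sketch of why the multiplication map lands in the first Leibniz summand is a genuine addition, since the paper simply asserts this.
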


\begin{proof}
Represent $M$ as a complex. The statement is true for $M$ if we can prove it is true for each term of $M$, so we can reduce to the case of a module. All finite length objects are killed by some power of $\bD$. So using Proposition~\ref{prop:ind}, we just need to show the statement for $A \otimes \bS_\lambda$. We will do this by induction on $|\lambda|$. When $|\lambda|=0$, we note that $\partial(A) = 0$ since the map $A \to \bD A$ is an isomorphism. Using the Leibniz rule, we have 
\[
\bD(A \otimes \bS_\lambda) = (\bD(A) \otimes \bS_\lambda) \oplus (A \otimes \bD(\bS_\lambda)) = A \otimes (\bS_\lambda \oplus \bD(\bS_\lambda)),
\]
and the map
\[
\partial \colon A \otimes \bS_\lambda \to \bD(A \otimes \bS_\lambda)
\]
is an injection with cokernel $A \otimes \bD(\bS_\lambda)$. Now $\bD(\bS_\lambda)$ is a sum of Schur functors $\bS_\mu$ where $|\mu| < |\lambda|$. It follows that $\rH^i(\partial(A \otimes \bS_{\lambda}))$ is zero, except when $i=1$, where it is a direct sum of smaller free modules, so we are done by induction.
\end{proof}

\begin{remark}
Let $M$ be a graded $\bC[t]$-module in nonnegative degrees.  Define $\bD(M)$ to be the module supported in nonnegative degrees whose degree $n$ piece is the degree $n+1$ piece of $M$.  Define $\partial(M)$ to be the two-term complex $[M \to \bD{M}]$, where the map is multiplication by $t$.  Then any finitely generated $\bC[t]$-module is annihilated by an operator of the form $\bD^n \partial^m$ with $m=0$ (if $M$ is torsion) or $m=1$ (otherwise).

Suppose $M$ as above is not a torsion module.  Then the homology of $\partial(M)$ is torsion.  This is a reflection of the fact that $t$ forms a system of parameters for $M$ (recall that if the support variety of a module $M$ has dimension $d$, then a system of parameters is a sequence $x_1, \dots, x_d$ so that $M/(x_1,\dots,x_d)M$ is a torsion module). Thus Theorem~\ref{thm:diffeq} can be viewed as something analogous to the existence of a finite system of parameters for $A$-modules.
\end{remark}

\begin{remark}
One might hope for a common generalization of Theorems~\ref{thm:enhanced} and~\ref{thm:diffeq}.  We will address this in \cite{hilbert}. \qedhere
\end{remark}


\section{Koszul duality and the Fourier transform} \label{sec:fourier}

In this section we introduce the Fourier transform, which is a certain modification of Koszul duality that is special to modules with $\GL_\infty$-actions. The first few subsections review well-known material on dg-algebras and Koszul duality. The fundamental finiteness properties of resolutions of finitely generated $A$-modules are discussed in \S\ref{ss:hilbsyz}. The $\natural$-duality functor, which is intermediate for the definition of the Fourier transform, is introduced in \S\ref{ss:natural} and its relation to Koszul duality is discussed in \S\ref{ss:natural-koszul}. The definition of the Fourier transform and its basic properties are given in \S\ref{ss:fourier}. In \S\ref{ss:poincare}, we study the Poincar\'e series of modules in $\Mod_A$. In \S\ref{ss:hilbertreciprocity}, we explain how the Fourier transform gives a certain reciprocity between perfect and torsion complexes, and how this interchanges the $p$- and $q$-parts of enhanced Hilbert series.  In \S\ref{ss:modKfourier}, we explain how the Fourier transform can be transferred to $\rD^b(K)$.  Finally in \S\ref{ss:exampleEFW} we calculate some examples of this theory.

\subsection{Review of chain complexes}
\label{ss:chcx}

Let $\cA$ be an abelian category. We write $\Ch(\cA)$ for the category of chain complexes in $\cA$. We use homological indexing, so if $U$ is a complex then $U_n$ is its $n$th term and the differential is a map $U_n \to U_{n-1}$. We say that a complex $U$ is {\bf bounded} (resp.\ {\bf bounded below}, resp.\ {\bf bounded above}) if $U_n=0$ for all but finitely many $n$  (resp.\ for $n \ll 0$, resp.\ for $n \gg 0$). We write $\Ch^b(\cA)$, $\Ch^+(\cA)$, and $\Ch^-(\cA)$ for the category of bounded, bounded below, and bounded above complexes.

Suppose $\otimes$ is a tensor structure on $\cA$. Then there is an induced tensor structure on both $\Ch^+(\cA)$ and $\Ch^-(\cA)$, which we still denote by $\otimes$, defined as follows. Suppose $U$ and $V$ are both bounded-below (or both bounded-above) chain complexes. Then $U \otimes V$ is the chain complex given by
\begin{displaymath}
(U \otimes V)_n=\bigoplus_{i+j=n} U_i \otimes V_j, \qquad
d(u \otimes v)=du \otimes v + (-1)^j u \otimes dv,
\end{displaymath}
where here $u \in U_i$ and $v \in V_j$. Note that the direct sum is finite.

Now suppose that $\tau$ is a symmetric structure on $\otimes$, i.e., $\tau_{U,V}$ is an isomorphism $U \otimes V \to V \otimes U$ for all $U$ and $V$ such that $\tau_{V,U} \tau_{U,V} = 1_{U \otimes V}$. Then there is an induced symmetric tensor structure on both $\Ch^+(\cA)$ and $\Ch^-(\cA)$, which we still denote by $\tau$, defined as follows. Let $U$ and $V$ be two bounded-below chain complexes. Then
\begin{displaymath}
\tau_{U,V} \colon U \otimes V \to V \otimes U, \qquad
\tau_{U,V}=\bigoplus_{i+j=n} (-1)^{ij} \tau_{U_i,V_j}.
\end{displaymath}
In this way, $\Ch^+(\cA)$ and $\Ch^-(\cA)$ are symmetric tensor categories. If $\cA$ has all direct sums, the above discussion applies equally well to $\Ch(\cA)$.

For us, a {\bf dg-algebra} is a commutative, associative, unital algebra in $\Ch^+(\cA)$. If $R$ is a dg-algebra, then a dg-$R$-module is an object $M$ of $\Ch^+(\cA)$ equipped with a map $R \otimes M \to M$ satisfying the usual axioms. The derived category of $R$, denoted $\rD^+(R)$, is the localization of the category of dg-modules at the class of quasi-isomorphisms. This category admits a symmetric tensor product. We regard algebras in $\cA$ as dg-algebras concentrated in degree 0, and similarly for modules. Similar remarks hold for coalgebras.

Given a chain complex $U$, we let $U[n]$ be the chain complex with $U[n]_i=U_{i+n}$. We note that if $U$ is an object of $\cA$, regarded as a chain complex concentrated in degree 0, then $\Sym^n(U[-1])=(\lw{n}{U})[-n]$.

\subsection{Review of Koszul duality}

The category of $\bZ$-graded complex vector spaces (not to be confused with the subcategory of $\Ch(\Vec)$ with trivial differentials) is denoted $\Vec^\bZ$. Pick $V \in \Vec^\bZ$ and let $R=\Sym(V)$ and $S=\Sym(V[-1])$. Note that $S_n=\lw{n}(V)$. We regard $R$ as a dg-algebra and $S$ as a dg-coalgebra in $\Ch(\Vec^{\bZ})$. Let $\bK \in \Ch(\Vec^{\bZ})$ be the Koszul complex on $V$. This is the complex with $\bK_n = R \otimes \lw{n}(V)$ for $n \ge 0$, and $\bK_n=0$ for $n<0$, and where the differential is the Koszul differential. The complex $\bK$ is naturally a dg-$R$-module and a dg-$S$-comodule. 

We define the {\bf Koszul dual} of a dg-$R$-module $M$ to be the dg-$S$-comodule $\cK(M) = \bK \otimes_R M$; similarly, we define the {\bf Koszul dual} of a dg-$S$-comodule $N$ to be the dg-$R$-module $\cK'(N)=N \otimes^S \bK$. There are natural quasi-isomorphisms $\bK \otimes_R \bK \to S$ and $R \to \bK \otimes^S \bK$ given by exterior algebra multiplication and the dual of symmetric algebra multiplication, respectively, which yield maps $\cK \cK' \to \id$ and $\id \to \cK' \cK$. These induce isomorphisms at the derived level:

\begin{proposition}
\label{prop:kosz}
The functors $\cK \colon \rD^+(R) \to \rD^+(S)$ and $\cK' \colon \rD^+(S) \to \rD^+(R)$ are mutually quasi-inverse equivalences.
\end{proposition}

\begin{proof}
This is essentially \cite[Corollary 2.7]{efs}, except that there the result is phrased in terms of modules over the graded dual of $S$ (which is a dg-algebra) rather than comodules over $S$.  
\end{proof}

\subsection{The Hilbert syzygy theorem}
\label{ss:hilbsyz}

Keep the notation of the previous section: $V$ is a graded complex vector space, $R = \Sym(V)$, and $S = \Sym(V[-1])$.

Let $M$ be a dg-$S$-comodule. Then the homology $\rH_i(M)$ is a graded vector space; write $\rH_i(M)_j$ for its $j$th graded piece. We define the $k$th {\bf linear strand} of $M$ to be the graded vector space $L_k(M)=\bigoplus_{i \in \bZ} \rH_{i-k}(M)_i$, where the $i$th summand has degree $i$. This is naturally a graded comodule over the coalgebra $S'=L_0(S)=\bigwedge V$. If $M_1 \to M_2 \to M_3 \to$ is a triangle in $\rD^+(S)$ then there is a long exact sequence of linear strands:
\begin{displaymath}
\cdots \to L_{-1}(M_3) \to L_0(M_1) \to L_0(M_2) \to L_0(M_3) \to L_1(M_1) \to \cdots
\end{displaymath}
We now recall the classical \emph{Hilbert syzygy theorem}:

\begin{theorem} \label{thm:hilb-syz-orig}
Suppose $V$ is finite dimensional and $M$ is a finitely generated graded $R$-module. Then $L_i(\cK(M))$ is a finitely cogenerated $S'$-module for all $i$, and zero for all but finitely many $i$.
\end{theorem}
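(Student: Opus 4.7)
The plan is to deduce the theorem from the classical Hilbert syzygy theorem applied to $R = \Sym(V)$. I would first translate $L_i(\cK(M))$ into Betti numbers. The bigrading on $\bK$ places $R \otimes \bigwedge^n V$ in homological degree $n$ and makes $\bigwedge^n V$ of internal degree $n$, so $\cK(M) = \bK \otimes_R M$ has $(\cK(M)_n)_i = \bigwedge^n V \otimes M_{i-n}$ and computes Tor, giving
\[
H_m(\cK(M))_i = \Tor^R_m(M, \bC)_i \quad\text{and therefore}\quad L_k(\cK(M)) = \bigoplus_{n \ge 0} \Tor^R_n(M, \bC)_{n+k}.
\]
In terms of a minimal graded free resolution $F_\bullet \to M$ with $F_n = \bigoplus_j R(-j)^{\beta_{n,j}}$, this identifies $L_k(\cK(M))$ with the $k$th antidiagonal $\bigoplus_n \bC^{\beta_{n,n+k}}$ of the Betti table of $M$.

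I would then invoke the classical syzygy theorem: since $\dim V < \infty$, we have $\pdim_R M \le \dim V$, so $\beta_{n, j} = 0$ for $n > \dim V$. Since $R$ is noetherian and $M$ is finitely generated, each syzygy module is also finitely generated, so at each homological step only finitely many degrees $j$ have $\beta_{n,j} \ne 0$. Consequently, the set of $(n, j)$ with $\beta_{n,j} \ne 0$ is finite, and a fortiori only finitely many values of $k = j - n$ give a nonzero strand. For each such $k$, $L_k(\cK(M))$ is then a finite-dimensional graded vector space.

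Finally, any finite-dimensional graded comodule over $S' = \bigwedge V$ is automatically finitely cogenerated: its graded linear dual is a finite-dimensional, and a fortiori finitely generated, $\bigwedge V$-module. Combining these observations yields both assertions. There is no real obstacle — the substantive input is the classical syzygy theorem itself — and the statement is included here mainly as the target for the infinite-dimensional generalization proved as Theorem~\ref{thm:hilbsyz}.
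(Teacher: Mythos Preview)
Your proposal is correct and aligns with the paper's treatment. The paper does not give a standalone proof of this theorem but simply notes that, since $S'=\bigwedge V$ is finite dimensional, the statement is equivalent to $\rH_\bullet(\cK(M))$ being finite dimensional, which is the classical Hilbert syzygy theorem as in \cite[Corollary~19.7]{eisenbud}; your argument unpacks exactly this equivalence via Betti numbers and the observation that finite-dimensional $S'$-comodules are automatically finitely cogenerated.
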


In the context of the theorem, $S'$ is finite dimensional, and so the theorem is equivalent to the statement that the total homology $\rH_{\bullet}(\cK(M))$ is finite dimensional, which is how it is usually stated (see \cite[Corollary 19.7]{eisenbud}). However, from our point of view, the fact that $\rH_{\bullet}(\cK(M))$ is finite dimensional is a red herring, and the above form of the theorem is preferable.

When $V$ is infinite dimensional, the above theorem is easily seen to be false in general. We now show that it holds in the presence of $\GL$-equivariance.

From now on we take $V=\bC^{\infty}$, and write $A$ in place of $R$ and $B$ in place of $S$; by an $A$-module or $B$-comodule we will always mean one equipped with a polynomial $\GL_\infty$-action, and we use the grading coming from the equivariance. We let $B'=L_0(B)$; this is the same as $B$, except it is concentrated in homological degree 0. There is an obvious notion of finite cogeneration for $B'$-modules, taking into account the $\GL$-action (similar to what we have previously discussed for $A$-modules).

\begin{theorem}
\label{thm:hilbsyz}
Let $M$ be a finitely generated $A$-module. Then $L_i(\cK(M))$ is a finitely cogenerated $B'$-module for all $i$, and zero for all but finitely many $i$.
\end{theorem}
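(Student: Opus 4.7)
The plan is to apply Proposition~\ref{prop:ind} to the property $\cP(M)$ that $L_i(\cK(M))$ is a finitely cogenerated $B'$-comodule for every $i$ and vanishes for all but finitely many $i$. A short exact sequence of $A$-modules $0 \to M_1 \to M_2 \to M_3 \to 0$ induces a long exact sequence of linear strands, and both the vanishing-in-a-bounded-range condition and the finite-cogeneration condition propagate through two-out-of-three provided that finitely cogenerated $B'$-comodules form a Serre subcategory inside all $B'$-comodules. So the plan reduces to (i) verifying $\cP$ on simples $\bS_\lambda$ and projectives $A \otimes \bS_\lambda$, and (ii) establishing that Serre subcategory property.

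For a simple $\bS_\lambda$, the Koszul differential on $\cK(\bS_\lambda) = \bK \otimes_A \bS_\lambda \cong \lw{\bullet} V \otimes \bS_\lambda$ vanishes identically, since it acts by multiplying elements of $V \subset \fm$ into $\bS_\lambda$. Hence $\rH_n(\cK(\bS_\lambda)) = \lw{n} V \otimes \bS_\lambda$ sits in internal degree $n + |\lambda|$, and a short bookkeeping check shows $L_k(\cK(\bS_\lambda))$ vanishes unless $k = |\lambda|$, in which case it is isomorphic to the cofree $B'$-comodule $B' \otimes \bS_\lambda$, cogenerated by the finite length object $\bS_\lambda$ and hence finitely cogenerated. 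For the projective $A \otimes \bS_\lambda$, $\cK(A \otimes \bS_\lambda) = \bK \otimes \bS_\lambda$ has homology concentrated in homological degree $0$ and internal degree $|\lambda|$, where it equals $\bS_\lambda$; so again only $L_{|\lambda|}$ is nonzero, equal to $\bS_\lambda$ itself.

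The hard part will be (ii): showing that finitely cogenerated $B'$-comodules (with their polynomial $\GL_\infty$-action) are closed under subobjects, quotients, and extensions inside all such comodules. This is a noetherian-type statement, and should follow from the equivalence of that abelian category with $\Mod_A$ via transpose and duality (recalled in the introduction), combined with noetherianity of $A$ from \S\ref{sec:background}. Granted this, Proposition~\ref{prop:ind} immediately upgrades the verification on simples and projectives to all finitely generated $A$-modules, completing the argument.
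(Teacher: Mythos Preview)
Your proof is correct and follows essentially the same approach as the paper: apply Proposition~\ref{prop:ind} via the long exact sequence of linear strands, and verify directly that $\cK(\bS_\lambda)=B\otimes\bS_\lambda$ and $\cK(A\otimes\bS_\lambda)=\bS_\lambda$. The one difference is emphasis: the paper simply asserts the two-out-of-three step, whereas you correctly isolate that it requires finitely cogenerated $B'$-comodules to form a Serre subcategory, and your proposed justification (transport to $\Mod_A$ via $(-)^{\vee,\dagger}$ and invoke noetherianity of $A$) is exactly right.
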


\begin{proof}
Suppose
\begin{displaymath}
0 \to M_1 \to M_2 \to M_3 \to 0
\end{displaymath}
is a short exact sequence of $A$-modules. Applying $L_{\bullet}(\cK(-))$, we obtain a long exact sequence of $B'$-modules. We thus see that if the theorem holds for two of the three modules above then it holds for the third. By Proposition~\ref{prop:ind}, it therefore suffices to verify the theorem for the $A$-modules $\bS_{\lambda}$ and $A \otimes \bS_{\lambda}$. Now, $\cK(\bC)=B$ and $\cK(A)=\bC$ as dg-$B$-modules; this is a simple and standard calculation with the Koszul complex. The $\GL$-equivariance does not enter into the definition of $\cK$, and so tensoring with $\bS_{\lambda}$ pulls out of $\cK$. We thus find $\cK(\bS_{\lambda})=B \otimes \bS_{\lambda}$ and $\cK(A \otimes \bS_{\lambda})=\bS_{\lambda}$. The result now easily follows.
\end{proof}

\begin{remark}
The space $L_i(\cK(M))$ is the $i$th linear strand of the minimal free resolution of $M$, in the usual sense. Thus, even though the resolution of $M$ is typically infinite, there are only finitely many linear strands, and each linear strand can be given an algebraic structure (namely, that of a $B'$-comodule) under which it is finitely cogenerated. We can therefore say that resolutions are determined by a finite amount of data, in a strong sense.
\end{remark}

\begin{remark}
There is a similar result for $\cK'$ that can be proved in essentially the same way. We will circumvent the need for such a result using the $\natural$-dual functor defined below.
\end{remark}

Define the {\bf (Castelnuovo--Mumford) regularity} of an $A$-module $M$ to be the supremum of the set of integers $n$ for which the $n$th linear strand of the minimal free resolution of $M$ is non-zero.

\begin{corollary} \label{cor:regularity}
Every object of $\Mod_A$ has finite regularity.
\end{corollary}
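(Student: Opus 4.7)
The plan is to deduce this directly from Theorem~\ref{thm:hilbsyz}. By definition, the $n$th linear strand of the minimal free resolution of $M$ is exactly (the underlying graded vector space of) $L_n(\cK(M))$, viewed as a $B'$-comodule. So finite regularity is equivalent to the assertion that $L_n(\cK(M)) = 0$ for all but finitely many $n$, which is precisely the second half of Theorem~\ref{thm:hilbsyz}. Thus there is essentially nothing new to prove: the corollary is a one-line consequence of the syzygy theorem, and the only thing to verify is that the identification of $L_n(\cK(M))$ with the $n$th linear strand agrees with the definition of regularity given just above the statement.

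Concretely, first I would recall that a minimal free resolution $P_\bullet \to M$ has the form $P_k = A \otimes V_k$ where $V_k \in \cV$ is a finite length object, and that applying $\cK$ to $P_\bullet$ replaces each $A \otimes V_k$ by $V_k$ with trivial differential, so that $\rH_{i-n}(\cK(M))_i$ records exactly the multiplicity of partitions of size $i$ in the degree $i-n$ term of the resolution --- that is, the generators on the $n$th diagonal, which is the $n$th linear strand. Given this identification, the vanishing of $L_n(\cK(M))$ for $n \gg 0$ (and symmetrically for $n \ll 0$, though in our homological conventions the strands are non-negative) guaranteed by Theorem~\ref{thm:hilbsyz} gives the bound on regularity.

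The only potential obstacle is bookkeeping of signs and conventions: ensuring that ``$i$th linear strand of the minimal free resolution'' matches $L_i(\cK(M))$ as defined in \S\ref{ss:hilbsyz}, given the grading conventions and the passage between $A$-modules and $B$-comodules via $\cK$. Once those conventions are matched, the corollary is immediate, and no further argument beyond a citation of Theorem~\ref{thm:hilbsyz} is required.
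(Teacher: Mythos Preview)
Your proposal is correct and matches the paper's approach exactly: the paper states the corollary immediately after Theorem~\ref{thm:hilbsyz} with no separate proof, treating it as a direct consequence of the fact that $L_i(\cK(M))=0$ for all but finitely many $i$. Your unpacking of the identification between $L_n(\cK(M))$ and the $n$th linear strand is a reasonable elaboration of what the paper leaves implicit.
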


\subsection{\texorpdfstring{The $\natural$-dual functor}{The \#-dual functor}} \label{ss:natural}

Recall from \S\ref{sec:catV} that $\cV=\cV_1$ is the category of polynomial representations of $\GL_\infty$ and that $\cV^{\gf}$ is the subcategory of graded-finite ones. We write $\tau_{V,W} \colon V \otimes W \to W \otimes V$ for the standard isomorphism $\tau_{V,W}(v \otimes w)=w \otimes v$, which is a symmetric structure on the tensor product on $\cV$. We let $\sigma$ be the symmetric structure given by $\sigma_{V,W}(v \otimes w)=(-1)^{\deg(v) \deg(w)} w \otimes v$, when $v$ and $w$ are homogeneous. Transpose is then a \emph{symmetric} tensor functor $(\cV, \tau) \to (\cV, \sigma)$; see \cite[\S 7.4]{expos} for more details. Both $\sigma$ and $\tau$ induce symmetric structures on the tensor product on $\Ch(\cV)$ and its subcategory $\Ch^+(\cV^{\gf})$, as discussed in \S \ref{ss:chcx}.

We define four operations on $\Ch(\cV)$. Let $M$ be an object of $\Ch(\cV)$. The notation $M_{i,j}$ denotes the $j$th graded piece of $M_i$, where $i$ indicates homological grading.

\begin{itemize}
\item The {\bf transpose} of $M$, denoted $M^{\dag}$, is the complex with $(M^{\dag})_n=(M_n)^{\dag}$, that is, we just apply the transpose to each term of $M$.
\item The {\bf dual} of $M$, denoted $M^{\vee}$, is the complex with $(M^{\vee})_n=(M_{-n})^{\vee}$, that is, we apply duals termwise and flip the complex.
\item The {\bf right sheer} of $M$, denoted $M^R$, is the complex with $(M^R)_n=\bigoplus_{i \in \bZ} M_{n+i,i}$. 
\item The {\bf left sheer} of $M$, denoted $M^L$, is the complex with $(M^L)_n=\bigoplus_{i \in \bZ} M_{n-i,i}$.
\end{itemize}
We have the following easy facts:
\begin{itemize}
\item Transpose commutes with dual and right and left sheer.
\item Right and left sheer are inverse to each other.
\item Dual intertwines right and left sheer, that is $(M^R)^{\vee}=(M^{\vee})^L$.
\item Right and left sheer are symmetric tensor functors $(\Ch(\cV), \tau) \to (\Ch(\cV), \sigma)$. (Sheering does not preserve the standard symmetric structure $\tau$ on $\Ch(\cV^{\gf})$ since both $\tau$ and sheering involve homological degree.)
\end{itemize}

We now define a new contravariant operation on $\Ch(\cV)$, denoted $(-)^{\natural}$, as follows:
\begin{displaymath}
M^{\natural}=((M^\vee)^{L})^{\dag}.
\end{displaymath}
We claim that $(-)^{\natural} \colon \Ch^+(\cV^{\gf}) \to \Ch(\cV)$ is a \emph{symmetric} tensor functor. Indeed, $(-)^\vee$ sends $\Ch^+(\cV^{\gf})$ to $\Ch^-(\cV^{\gf})$ and is a symmetric tensor functor, $(-)^L$ gives a tensor functor $\Ch^-(\cV^{\gf}) \to \Ch(\cV)$ which interchanges $\sigma$ and $\tau$, and finally $(-)^\dagger$ is a tensor functor on $\Ch(\cV)$ which interchanges $\sigma$ and $\tau$. For $M \in \Ch^+(\cV^{\gf})$, the natural map $M \to (M^{\natural})^{\natural}$ is an isomorphism; this can easily be seen using the properties listed above.

We say that a complex $M \in \Ch(\cV)$ is {\bf $\natural$-finite} if it satisfies the following conditions: 
\begin{enumerate}
\item $M_{i,j}$ is a finite length object of $\cV$, for all $i$ and $j$,
\item there exists $A \in \bZ$ such that $M_n=0$ for $n<A$,
\item there exists $B \in \bZ$ such that $M_{i,j}=0$ for $j-i<B$. 
\end{enumerate}
We let $\Ch^{\natural}(\cV)$ be the full subcategory of $\Ch(\cV)$ on $\natural$-finite objects. Obviously, $\Ch^{\natural}(\cV)$ is a subcategory of $\Ch^+(\cV^{\gf})$; in fact, $M \in \Ch^{\natural}(\cV)$ if and only if both $M$ and $M^{\natural}$ belong to $\Ch^+(\cV^{\gf})$. This shows that $\Ch^{\natural}(\cV)$ is stable under $\natural$. One easily sees that it is also stable under $\otimes$. It follows that $(-)^{\natural}$ induces a contravariant auto-equivalence of symmetric tensor categories on $\Ch^{\natural}(\cV)$. We note that $\bK$, $A$, and $B$ all belong to $\Ch^{\natural}(\cV)$, as does any finitely generated dg-$A$-module or any finitely cogenerated dg-$B$-comodule.

\subsection{\texorpdfstring{Interplay between $\natural$-dual and Koszul duality}{Interplay between \#-dual and Koszul duality}} \label{ss:natural-koszul}

A simple computation shows that $(\bC^{\infty})^{\natural}=\bC^{\infty}[-1]$. Since $(-)^{\natural}$ is a symmetric tensor functor, it follows that $A=B^{\natural}$ as dg-algebras (maintaining the notation from the second half of \S \ref{ss:hilbsyz}). Thus $(-)^{\natural}$ gives an equivalence between the categories of $\natural$-finite dg-$A$-modules and $\natural$-finite dg-$B$-comodules. The following result shows how this identification interacts with Koszul duality:

\begin{proposition}
\label{prop:natkosz}
Suppose $M$ is a $\natural$-finite dg-$A$-module. Then there is a natural isomorphism $\cK(M)^{\natural}=\cK'(M^{\natural})$ of dg-$A$-modules.
\end{proposition}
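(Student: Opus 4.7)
The plan is to exploit the fact that $(-)^{\natural}$ is a contravariant symmetric tensor equivalence on $\Ch^{\natural}(\cV)$ with $A^{\natural} \cong B$, combined with the self-dual nature of the Koszul complex $\bK$.

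First I would observe that, being a contravariant symmetric tensor equivalence, the functor $(-)^{\natural}$ sends commutative dg-algebras to commutative dg-coalgebras, dg-modules to dg-comodules, and, most importantly, relative tensor products to cotensor products. Concretely, $\bK \otimes_A M$ is the coequalizer of the two obvious maps $\bK \otimes A \otimes M \rightrightarrows \bK \otimes M$ in $\Ch^{+}(\cV^{\gf})$; applying $(-)^{\natural}$ converts this coequalizer into the equalizer of $\bK^{\natural} \otimes M^{\natural} \rightrightarrows \bK^{\natural} \otimes B \otimes M^{\natural}$, which is by definition the cotensor product $\bK^{\natural} \otimes^{B} M^{\natural}$. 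Thus $(\bK \otimes_A M)^{\natural} \cong \bK^{\natural} \otimes^{B} M^{\natural}$ functorially in $M$.

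Second I would identify $\bK^{\natural}$ with $\bK$ as a chain complex carrying commuting $A$-module and $B$-comodule structures, with the two roles swapped. As a bigraded object of $\cV$ we have $\bK_{n,j} = \Sym^{j-n}(\bC^{\infty}) \otimes \bigwedge^{n}(\bC^{\infty})$; identifying $\bigwedge^{n}(\bC^{\infty}) = \Sym^{n}(\bC^{\infty}[-1]) = B_n$ yields a canonical isomorphism $\bK \cong A \otimes B$. Applying the symmetric tensor functor $(-)^{\natural}$ and using $A^{\natural} \cong B$, together with the symmetry of $\otimes$, gives a natural isomorphism
\[
\bK^{\natural} = (A \otimes B)^{\natural} \cong B^{\natural} \otimes A^{\natural} \cong A \otimes B = \bK.
\]
One must then verify that this identification intertwines the Koszul differentials, and that it interchanges the left $A$-action on the first factor of $\bK$ with the $A$-action on $\bK^{\natural}$ arising by $\natural$-dualizing the $B$-coaction on the second factor of $\bK$ (and symmetrically for the $B$-coactions).

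Combining these two steps finishes the argument:
\[
\cK(M)^{\natural} = (\bK \otimes_A M)^{\natural} \cong \bK^{\natural} \otimes^{B} M^{\natural} \cong \bK \otimes^{B} M^{\natural} = \cK'(M^{\natural}),
\]
where the last equality uses the symmetry of $\otimes$ to identify $\bK \otimes^{B} M^{\natural}$ with $M^{\natural} \otimes^{B} \bK$. The main obstacle will be the compatibility check in the second step: the Koszul differential and the $A$- and $B$-actions must be tracked through $(-)^{\vee}$, $(-)^{L}$, and $(-)^{\dag}$, and the signs introduced by the symmetric structure $\sigma$ on chain complexes must be shown to align. A cleaner alternative is to characterize $\bK$ by a universal property, as the essentially unique $(A,B)$-bi-structured resolution of $\bC$ augmented over $\bC$; then $\bK^{\natural}$, which by functoriality of $\natural$ inherits the symmetric universal property with the roles of $A$ and $B$ interchanged, must coincide with $\bK$ up to canonical isomorphism.
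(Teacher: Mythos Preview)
Your proposal is correct and follows essentially the same route as the paper: first use that $(-)^{\natural}$, being a contravariant symmetric tensor equivalence with $A^{\natural}\cong B$, converts $\otimes_A$ into $\otimes^B$, and then identify $\bK^{\natural}\cong\bK$ as an $(A,B)$-bimodule complex. The paper handles the compatibility check in your second step exactly via the uniqueness argument you sketch at the end: it writes $\bK=\Sym(V\oplus U)$ with $V=\bC^{\infty}$, $U=\bC^{\infty}[-1]$, obtains $\phi\colon\bK\to\bK^{\natural}$ from the symmetric tensor property (automatically respecting the $A$- and $B$-structures), and then notes that the Koszul differential is the unique $A$-linear, $B$-colinear differential inducing the natural map $\Sym^1(U)\to\Sym^1(V)$, so $\phi$ must preserve it.
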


We begin with a lemma.

\begin{lemma}
\label{lem:kosz}
There is an isomorphism $\bK = \bK^{\natural}$ respecting all structure.
\end{lemma}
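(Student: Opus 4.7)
The plan is to exhibit the Koszul complex as a symmetric algebra on a simple two-term complex, and then reduce the entire statement to a computation on that generating object.

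First, I would identify $\bK$ with $\Sym(W)$ as a commutative dg-algebra in $\Ch^{\natural}(\cV)$, where $W$ is the two-term complex $[V \xrightarrow{\mathrm{id}} V]$ with $V=\bC^{\infty}$ placed in homological degrees $1$ and $0$ respectively, and with internal Schur-degree $1$. Unpacking the symmetric algebra on $W$ gives in bidegree $(n,j)$ the piece $\Sym^{j-n}(V) \otimes \lw{n}(V)$, the differential extended by Leibniz from the identity map is exactly the Koszul differential, and both the dg-$A$-module structure and the dg-$B$-comodule structure on $\bK$ are recovered canonically: $A=\Sym(W_0) \hookrightarrow \Sym(W)$ gives the module structure, and the coalgebra structure dual to the projection $W \twoheadrightarrow W_1$ gives the comodule structure over $B=\Sym(V[-1])$.

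Next I would invoke that $(-)^{\natural}$ is a contravariant symmetric tensor auto-equivalence of $\Ch^{\natural}(\cV)$, which therefore commutes with $\Sym$ (modulo the algebra/coalgebra duality, which for polynomial representations of $\GL_{\infty}$ in characteristic zero is harmless since $\Sym(U)$ is canonically a bialgebra). This reduces the lemma to producing an isomorphism $W \cong W^{\natural}$ in $\Ch^{\natural}(\cV)$, after which functoriality of $\Sym$ transports the isomorphism to $\bK$, automatically preserving the multiplication, differential, and hence the induced $A$-module and $B$-comodule structures.

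The construction of $W \cong W^{\natural}$ is a direct bidegree computation: $V=\bS_{(1)}$ is fixed by both the duality $(-)^{\vee}$ on $\cV^{\gf}$ and the transpose functor $\dagger$, and the only nonzero bidegrees of $W$ are $(0,1)$ and $(1,1)$. Tracing these through the definition $(-)^{\natural}=((-)^{\vee})^{L})^{\dag}$ shows that $(-)^{\vee}$ reflects them to $(0,1)$ and $(-1,1)$, the left sheer sends bidegree $(n,j)$ to $(n+j,j)$ and so restores them to $(1,1)$ and $(0,1)$, and $\dagger$ fixes $V$. The identity differential on $W$ is likewise transported to the identity.

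The main obstacle I expect is bookkeeping the signs: dualizing, sheering, and transposing each interact with the Koszul sign rule, and to conclude that $W \cong W^{\natural}$ intertwines the differentials one must check that the accumulated signs either cancel or can be absorbed by rescaling one of the two copies of $V$. Once this sign verification is carried out the rest of the argument is formal, and the resulting isomorphism $\bK^{\natural}=\Sym(W^{\natural}) \cong \Sym(W)=\bK$ respects all the structure asserted by the lemma.
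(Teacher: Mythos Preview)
Your proposal is correct and follows essentially the same route as the paper: both realize $\bK$ as the symmetric algebra on the two-term object $V \oplus V[-1]$ (your $W$) and reduce to checking that $(-)^{\natural}$ interchanges the two summands. The only difference is that you build the differential into $W$ so that it is carried along automatically once $W\cong W^{\natural}$ is verified as a complex, whereas the paper first obtains the isomorphism on the underlying graded bialgebra and then pins down the differential by observing that it is the \emph{unique} $A$-linear, $B$-colinear differential restricting to the identity $\Sym^1(U)\to\Sym^1(V)$ --- a characterization that sidesteps the sign bookkeeping you flag at the end.
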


\begin{proof}
For clarity in this proof, write $V=\bC^{\infty}$ and $U=\bC^{\infty}[-1]$. Note that there are isomorphisms $f \colon V^{\natural} \to U$ and $g \colon U^{\natural} \to V$; we take $g=f^{\natural}$, so that $f=g^{\natural}$ as well. Since $(-)^{\natural}$ is a symmetric tensor functor and $\bK = \Sym(V \oplus U)$ (ignoring the differential), it follows that there is an isomorphism $\phi \colon \bK \to \bK^{\natural}$ respecting all structure except perhaps the differential. (In fact, up to scalars, $\phi$ is the unique isomorphism which is $\GL$-equivariant, $A$-linear, $B$-colinear, and satisfies $\phi^{\natural}=\phi$.) But notice that the differential on $\bK$ is the unique one which is $A$-linear, $B$-colinear, and induces the natural isomorphism $\Sym^1(U) \to \Sym^1(V)$. This final property is clearly respected by $\phi$, and so $\phi$ is compatible with the differential.
\end{proof}

\begin{proof}[Proof of Proposition~\ref{prop:natkosz}]
If $M$ and $N$ are $\natural$-finite dg-$A$-modules then there is a natural isomorphism $(M \otimes_A N)^{\natural}=M^{\natural} \otimes^B N^{\natural}$ (use that the $A$-module structure on $M \otimes N$ turns into the $B$-comodule structure on $M^\natural \otimes N^\natural$ under the natural identification $(M \otimes N)^\natural = M^\natural \otimes N^\natural$). We therefore have natural isomorphisms
\begin{displaymath}
\cK(M)^{\natural}=(K \otimes_A M)^{\natural} = \bK \otimes^B M^{\natural}=\cK'(M^{\natural}),
\end{displaymath}
where we used the isomorphism of Lemma~\ref{lem:kosz}.
\end{proof}

\subsection{The Fourier transform} \label{ss:fourier}

Let $M$ be a $\natural$-finite dg-$A$-module. We define the {\bf Fourier transform} of $M$, denoted $\cF(M)$, to be the $\natural$-finite dg-$A$-module $\cF(M)=\cK(M)^{\natural}$.

\begin{theorem}
The Fourier transform is a contravariant auto-equivalence of the bounded derived category of finitely generated $A$-modules:
\[
\cF \colon \rD^b(\Mod_A)^{\op} \xrightarrow{\simeq} \rD^b(\Mod_A).
\]
Furthermore, there is a natural isomorphism $\cF^2=\id$.
\end{theorem}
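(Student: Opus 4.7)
The plan is to establish: (i) that $\cF$ sends $\rD^b(\Mod_A)$ to itself, and (ii) that there is a natural isomorphism $\cF^2 \simeq \id$. Together these imply that $\cF$ is an auto-equivalence with itself as a quasi-inverse. Contravariance is automatic from the construction, since $\cK$ is a covariant functor while $(-)^\natural$ is contravariant.

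For (ii), the key input is Proposition~\ref{prop:natkosz}, which gives $\cK(M)^\natural \simeq \cK'(M^\natural)$. Substituting,
\[
\cF^2(M) \;=\; \cK\bigl(\cK(M)^\natural\bigr)^\natural \;\simeq\; \cK\bigl(\cK'(M^\natural)\bigr)^\natural \;\simeq\; (M^\natural)^\natural \;\simeq\; M,
\]
where the middle isomorphism uses Proposition~\ref{prop:kosz} (that $\cK\cK'$ is naturally isomorphic to the identity on $\rD^+(B)$) and the last uses that $(-)^\natural$ is involutive on $\Ch^\natural(\cV)$. Each of these identifications is natural in $M$, yielding the claimed natural isomorphism $\cF^2 \simeq \id$.

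For (i), I will apply Proposition~\ref{prop:ind}. Since $\cF$ is a composition of exact functors of triangulated categories (derived Koszul duality and $\natural$), it sends short exact sequences in $\Mod_A$ to distinguished triangles, so the class of $M \in \Mod_A$ with $\cF(M) \in \rD^b(\Mod_A)$ is closed under extensions. It thus suffices to check the two generating classes: simples $\bS_\lambda$ and free modules $A \otimes \bS_\lambda$. Using $\cK(\bC) \simeq B$, $\cK(A) \simeq \bC$, and the compatibility of $\cK$ with tensoring by objects of $\cV$, one computes
\[
\cF(\bS_\lambda) \;\simeq\; (B \otimes \bS_\lambda)^\natural \;\simeq\; A \otimes \bS_{\lambda^\dag} \qquad \text{(in homological degree $|\lambda|$),}
\]
and $\cF(A \otimes \bS_\lambda) \simeq \bS_\lambda^\natural \simeq \bS_{\lambda^\dag}$ (likewise in homological degree $|\lambda|$). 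Both land in $\rD^b(\Mod_A)$, and the induction concludes.

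The main obstacle is that $\natural$ is only defined on $\Ch^\natural(\cV)$, so one must know that $\cK(M)$ is $\natural$-finite for every finitely generated $A$-module $M$ in order for $\cF(M)$ even to make sense. This is exactly what the $\GL$-equivariant Hilbert syzygy theorem (Theorem~\ref{thm:hilbsyz}) provides: only finitely many linear strands of $\cK(M)$ are nonzero, and each is finitely cogenerated. With that input in hand, the remainder of the argument is a formal combination of Proposition~\ref{prop:natkosz} and Proposition~\ref{prop:kosz}.
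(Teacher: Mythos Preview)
Your proof is correct and follows essentially the same route as the paper. The paper establishes (i) by identifying $\rH_i(\cF(M))=L_i(\cK(M))^{\vee,\dag}$ and invoking Theorem~\ref{thm:hilbsyz}, whose proof is precisely the Proposition~\ref{prop:ind} argument you carry out directly; part (ii) is handled identically via Propositions~\ref{prop:kosz} and~\ref{prop:natkosz}. One small quibble: $\natural$-finiteness of $\cK(M)$ for finitely generated $M$ is more elementary than Theorem~\ref{thm:hilbsyz} (it follows from $(\cK(M))_{n,j}=\lw{n}\otimes M_{j-n}$), so your final paragraph slightly mislocates the role of that theorem---its content is exactly the boundedness and finite generation you verify via Proposition~\ref{prop:ind}, not the well-definedness of $\cF$.
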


\begin{proof}
Unraveling definitions, we find that $\rH_i(\cF(M))=L_i(\cK(M))^{\vee,\dag}$ as $A$-modules. (Note: $L_i(\cK(M))$ is a $B'$-comodule, and $(B')^{\vee,\dag}=A$ as algebras). Thus if $M$ belongs to the bounded derived category of finitely generated $A$-modules then so does $\cF(M)$, by Theorem~\ref{thm:hilbsyz}. The isomorphism $\cF^2=\id$ comes from Proposition~\ref{prop:natkosz} and Proposition~\ref{prop:kosz}.
\end{proof}

The following basic computations are important enough that we state them formally here:

\begin{proposition} \label{prop:fourier-basic}
Let $\lambda$ be a partition with $\vert \lambda \vert=r$. We then have $\cF(\bS_{\lambda}[k])=A \otimes \bS_{\lambda^{\dag}}[r+k]$ and $\cF(A \otimes \bS_{\lambda}[k])=\bS_{\lambda^{\dag}}[r+k]$.
\end{proposition}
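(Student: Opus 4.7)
The proof is a direct computation that decomposes into two independent pieces. First, from the proof of Theorem~\ref{thm:hilbsyz} we have the identifications $\cK(\bS_\lambda) = B \otimes \bS_\lambda$ (with zero differential, since $\bS_\lambda$ is annihilated by $\fm$) and $\cK(A \otimes \bS_\lambda) \simeq \bS_\lambda$ (using that $\bK \to \bC$ is a resolution and that tensoring with $\bS_\lambda$ pulls out of $\cK$). Since $\cK = \bK \otimes_A(-)$ manifestly commutes with the homological shift, these upgrade to $\cK(\bS_\lambda[k]) = (B \otimes \bS_\lambda)[k]$ and $\cK(A \otimes \bS_\lambda[k]) \simeq \bS_\lambda[k]$ for every integer $k$.

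Second, since $(-)^\natural$ is a symmetric tensor functor on $\Ch^\natural(\cV)$ and $B^\natural = A$ (as recorded at the start of \S\ref{ss:natural-koszul}), tensor-multiplicativity gives
\[
\cF(\bS_\lambda[k]) = (B \otimes \bS_\lambda[k])^\natural \cong A \otimes (\bS_\lambda[k])^\natural \quad\text{and}\quad \cF(A \otimes \bS_\lambda[k]) = (\bS_\lambda[k])^\natural,
\]
so both identities reduce to the single claim $(\bS_\lambda[k])^\natural = \bS_{\lambda^\dag}[r+k]$. For this, note that $\bS_\lambda[k]$ is concentrated in a single bidegree, homological $-k$ and Schur $r$. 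Unwinding $\natural = (-)^\dag \circ (-)^L \circ (-)^\vee$: dualization sends the object to homological degree $k$ and Schur degree $r$, with content $\bS_\lambda^\vee \cong \bS_\lambda$ (which transpose then replaces with $\bS_{\lambda^\dag}$); the left sheer promotes the Schur degree $r$ into the homological coordinate, placing the result in homological degree $k+r$. Under the paper's shift convention this is exactly $\bS_{\lambda^\dag}[r+k]$.

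The argument is entirely structural once $\cK(\bS_\lambda) = B \otimes \bS_\lambda$, $\cK(A \otimes \bS_\lambda) \simeq \bS_\lambda$, and $B^\natural = A$ are available; the only real work is bookkeeping the three homological shifts (from dual, from left sheer, and from the input shift $[k]$) through the composite $\natural$. As a sanity check, the same conclusion can be reached by applying the identity $\rH_i(\cF(M)) = L_i(\cK(M))^{\vee,\dag}$ used in the proof of the preceding theorem, which sidesteps some of the bookkeeping.
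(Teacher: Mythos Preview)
Your proof is correct and follows essentially the same approach as the paper's: recall $\cK(\bS_\lambda)=B\otimes\bS_\lambda$ and $\cK(A\otimes\bS_\lambda)=\bS_\lambda$ from the proof of Theorem~\ref{thm:hilbsyz}, note $\cK$ commutes with shifts, and then compute $(\bS_\lambda[k])^\natural=\bS_{\lambda^\dag}[r+k]$ directly from the definitions. Your only addition is to make the intermediate reduction via $B^\natural=A$ and tensor-multiplicativity of $(-)^\natural$ explicit, which the paper leaves implicit in ``from which the result follows.''
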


\begin{proof}
We have already seen in the proof of Theorem~\ref{thm:hilbsyz} that as dg-$B$-comodules, we have $\cK(\bS_\lambda) = B \otimes \bS_\lambda$ and $\cK(A \otimes \bS_\lambda) = \bS_\lambda$. The functor $\cK$ commutes with the translation functor $(-)[k]$. A short computation gives $\bS_{\lambda}[k]^{\natural}=\bS_{\lambda^{\dag}}[r+k]$, from which the result follows.
\end{proof}

Define $\cF_k$ to be the $k$th homology of $\cF$. If $M$ is a module (as opposed to a complex), then
\begin{displaymath}
\cF_k(M)=\bigoplus_{i \ge 0} \Tor^i_A(M, \bC)_{i+k}^{\dagger,\vee}
\end{displaymath}
as an object of $\cV$. In particular, $\cF_k(M)=0$ for $k<0$. If
\begin{displaymath}
0 \to M_1 \to M_2 \to M_3 \to 0
\end{displaymath}
is a short exact sequence of modules, then there is a long exact sequence
\begin{displaymath}
\cdots \to \cF_1(M_3) \to \cF_1(M_2) \to \cF_1(M_1) \to \cF_0(M_3) \to \cF_0(M_2) \to \cF_0(M_1) \to 0
\end{displaymath}

\begin{remark} \label{rmk:Gexact}
The above sequence shows that $\cF_0 \colon \Mod_A^\op \to \Mod_A$ is a right-exact functor. The $\cF_k$ are \emph{not} the derived functors of $\cF_0$. Indeed, it is not difficult to see that $\cF_k$ can fail to vanish on torsion injective $A$-modules for $k>0$ (which are projective in $\Mod_A^\op$).
\end{remark}

\subsection{Poincar\'e series} \label{ss:poincare}

Given an $A$-module $M$, define its {\bf Poincar\'e series} by
\begin{displaymath}
P_M(t, q)=\sum_{n \ge 0} (-q)^n H_{\Tor^A_n(M, \bC)}(t).
\end{displaymath}
By setting $q=1$ in the Poincar\'e series and multiplying by $H_A(t)=e^t$, one recovers the Hilbert series.  Note that the Poincar\'e series of $M$ has nontrivial information about the $A$-module structure on $M$, whereas the Hilbert series of $M$ is only defined in terms of the underlying object of $\cV$.  The Poincar\'e series does \emph{not} factor through K-theory, so it cannot be studied directly via Proposition~\ref{prop:ind}.  Nonetheless, as a corollary of the results in the preceding section, we have:

\begin{theorem}
The Poincar\'e series is of the form $f(t, q)+g(t, q)e^{-tq}$ where $f$ and $g$ are polynomials in $t$ and Laurent polynomials in $q$.
\end{theorem}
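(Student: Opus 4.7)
The plan is to express the Poincar\'e series as a finite sum of ordinary Hilbert series of the homology modules $\cF_k(M)$ of the Fourier transform, and then invoke Theorem~\ref{thm:enhanced} for each such Hilbert series. The Poincar\'e series does not factor through $\rK(\Mod_A)$, so Proposition~\ref{prop:ind} is not directly available; the role of the Fourier transform is to convert a statement involving infinitely many $\Tor$ groups into a statement about finitely many finitely generated $A$-modules.

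Concretely, unpacking the definition,
\begin{displaymath}
P_M(t, q) = \sum_{n, d \ge 0} (-q)^n \dim \Tor^A_n(M, \bC)_d \cdot \frac{t^d}{d!},
\end{displaymath}
I would substitute $d = n + k$ and reorganize. Since the functors $(-)^{\dag}$ and $(-)^{\vee}$ preserve the dimensions of graded pieces, the identity $\cF_k(M) = \bigoplus_{i \ge 0} \Tor^A_i(M, \bC)^{\dag,\vee}_{i+k}$ yields $H_{\cF_k(M)}(s) = \sum_{n \ge 0} \dim \Tor^A_n(M, \bC)_{n+k} \cdot s^{n+k}/(n+k)!$. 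Comparing the two expressions gives the key identity
\begin{displaymath}
P_M(t, q) = \sum_{k \ge 0} (-q)^{-k} H_{\cF_k(M)}(-qt),
\end{displaymath}
where the sum runs over $k \ge 0$ because $\cF_k(M) = 0$ for $k < 0$ (as noted in \S\ref{ss:fourier}).

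By Theorem~\ref{thm:hilbsyz} (equivalently Corollary~\ref{cor:regularity}), only finitely many $\cF_k(M)$ are nonzero, and each is a finitely generated $A$-module since $\cF(M) \in \rD^b(A)$. Specializing the enhanced Hilbert series supplied by Theorem~\ref{thm:enhanced} to the ordinary one (by setting $t_1 = s$ and $t_i = 0$ for $i \ge 2$), I would write $H_{\cF_k(M)}(s) = p_k(s) e^s + q_k(s)$ for some polynomials $p_k, q_k \in \bQ[s]$. Substituting $s = -qt$ and summing then gives
\begin{displaymath}
P_M(t, q) = \Bigl(\sum_{k \ge 0} (-q)^{-k} p_k(-qt)\Bigr) e^{-qt} + \sum_{k \ge 0} (-q)^{-k} q_k(-qt),
\end{displaymath}
and since each sum is finite and each summand $(-q)^{-k} p_k(-qt)$ (resp.\ $(-q)^{-k} q_k(-qt)$) is a polynomial in $t$ and a Laurent polynomial in $q$, the bracketed expressions are of the claimed form. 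The only real subtlety is the bookkeeping that produces the key identity above; once the Poincar\'e series is recognized as a finite sum over the linear strands of a minimal free resolution (equivalently, over the $\cF_k(M)$), the result is a formal consequence of Theorems~\ref{thm:hilbsyz} and~\ref{thm:enhanced}.
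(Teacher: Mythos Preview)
Your proof is correct and follows essentially the same approach as the paper's own proof: rewrite $P_M(t,q)$ as a finite sum over $k$ of rescaled Hilbert series $H_{\cF_k(M)}(-qt)$, then apply the rationality of Hilbert series for finitely generated $A$-modules. You have in fact supplied the details of the ``simple manipulation'' the paper leaves to the reader (and your coefficient $(-q)^{-k}$ is the correct one; the paper writes $q^{-k}$, which appears to be a harmless sign slip).
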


\begin{proof}
A simple manipulation gives $P_M(t,q)=\sum_{k \ge 0} (-q)^{-k} H_{\cF_k(M)}(-qt)$. Since each $\cF_k(M)$ is a finitely generated $A$-module, its Hilbert series is of the form $a(t)+b(t)e^t$ for polynomials $a$ and $b$.  As the sum is finite, the result follows.
\end{proof}

\begin{remark}
The above proof applies equally well to the ``enhanced Poincar\'e series.''
\end{remark}

\begin{remark}  
Although $P_M$ only has nonnegative powers of $q$ in its power series expansion, one may need negative powers to express $P_M$ in terms of elementary functions. See \eqref{eqn:exampleP02} for a specific example. 
\end{remark}

\subsection{The duality theorem and reciprocity} \label{ss:hilbertreciprocity}

Let $M$ be an object of $\rD^b(A)$, the bounded derived category of finitely generated $A$-modules. We have a canonical distinguished triangle
\begin{equation}
\label{eq:decomp}
M_t \to M \to M_f \to
\end{equation}
with $M_t \in \Tors$ and $M_f \in \Perf$ (see Theorem~\ref{thm:pt-decomp}). We now show that the Fourier transform interchanges the perfect and torsion pieces in \eqref{eq:decomp}.

\begin{theorem} \label{thm:perf-tors}
We have natural isomorphisms $\cF(M_t)=\cF(M)_f$ and $\cF(M_f)=\cF(M)_t$.
\end{theorem}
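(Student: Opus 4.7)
The plan is to apply $\cF$ to the canonical triangle $M_t \to M \to M_f \to M_t[1]$ and use uniqueness of the perfect/torsion decomposition (Theorem~\ref{thm:pt-decomp}(a)). Since $\cF$ is a contravariant triangulated auto-equivalence of $\rD^b(A)$, it sends this triangle to a triangle of the form
\[
\cF(M_f) \to \cF(M) \to \cF(M_t) \to \cF(M_f)[1].
\]
So the whole result reduces to showing that $\cF$ swaps the subcategories $\Tors$ and $\Perf$: once this is established, the displayed triangle is itself the (essentially unique) canonical torsion/perfect triangle for $\cF(M)$, giving $\cF(M)_t = \cF(M_f)$ and $\cF(M)_f = \cF(M_t)$.

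First I would verify that $\cF(\Perf) \subseteq \Tors$ and $\cF(\Tors) \subseteq \Perf$. Both $\Tors$ and $\Perf$ are triangulated subcategories of $\rD^b(A)$ closed under shifts and cones. An easy d\'evissage by stupid truncation and composition series shows that $\Tors$ is the triangulated subcategory of $\rD^b(A)$ generated by the simple objects $\bS_\lambda[k]$, while $\Perf$ is generated by the projectives $A \otimes \bS_\lambda[k]$. On these generators the required swap is exactly the content of Proposition~\ref{prop:fourier-basic}: $\cF(\bS_\lambda[k]) = A \otimes \bS_{\lambda^\dag}[|\lambda|+k]$ lies in $\Perf$, and $\cF(A \otimes \bS_\lambda[k]) = \bS_{\lambda^\dag}[|\lambda|+k]$ lies in $\Tors$. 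Since $\cF$ is triangulated, it propagates the inclusions from generators to the whole subcategory.

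Second, I would invoke the uniqueness clause in Theorem~\ref{thm:pt-decomp}(a) to conclude. The triangle $\cF(M_f) \to \cF(M) \to \cF(M_t) \to$ has the first vertex in $\Tors$ and the last in $\Perf$ by Step~1, so it is (up to unique isomorphism) the canonical decomposition triangle of $\cF(M)$. Identifying the vertices yields the claimed natural isomorphisms $\cF(M)_t \cong \cF(M_f)$ and $\cF(M)_f \cong \cF(M_t)$. Naturality in $M$ follows from Theorem~\ref{thm:pt-decomp}(b), since both sides are functorial and agree on objects.

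The main potential obstacle is conceptual rather than computational: one must make sure that $\cF$, built as $(-)^\natural \circ \cK$, really is a contravariant triangulated functor on $\rD^b(A)$ (not just an equivalence of the underlying categories), so that it sends distinguished triangles to distinguished triangles. This is built into the formalism since Koszul duality is a triangulated equivalence on $\rD^+$ and $(-)^\natural$ is an exact contravariant functor of chain complexes (being a composition of the exact functors dual, sheer, and transpose on $\cV^{\gf}$); together these give the triangulated structure on $\cF$. Once this is in hand, the argument above is essentially formal.
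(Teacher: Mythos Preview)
Your proposal is correct and follows essentially the same approach as the paper: the paper isolates the swap $\cF(\Perf)\subseteq\Tors$, $\cF(\Tors)\subseteq\Perf$ as a separate lemma (proved by induction on the total length of a torsion complex, reducing to Proposition~\ref{prop:fourier-basic}), and then applies $\cF$ to the canonical triangle and invokes the uniqueness clause of Theorem~\ref{thm:pt-decomp} exactly as you do. Your d\'evissage via ``triangulated subcategory generated by the $\bS_\lambda[k]$ (resp.\ $A\otimes\bS_\lambda[k]$)'' is just a rephrasing of that same induction.
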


Before proving this theorem, we require a lemma:

\begin{lemma} \label{lem:fourier-pt}
The Fourier transform of a perfect complex is a torsion complex, and conversely, the Fourier transform of a torsion complex is a perfect complex.
\end{lemma}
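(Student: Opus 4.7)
The plan is to reduce the statement to the computation already available in Proposition~\ref{prop:fourier-basic} via a devissage, exploiting the fact that $\cF$ is triangulated (as an auto-equivalence of $\rD^b(A)$) and that both $\Perf$ and $\Tors$ are thick (triangulated, closed under direct summands) subcategories of $\rD^b(A)$.

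First I would observe that $\Tors$ is the smallest thick triangulated subcategory of $\rD^b(A)$ containing all simple modules $\bS_\lambda$. Indeed, any object of $\Tors$ is represented by a bounded complex of finitely generated torsion modules; each such torsion module has finite length (being a finite length object of $\cV$ by assumption), so by induction on length and iterated mapping cones it is built from the $\bS_\lambda$. Similarly, $\Perf$ is the smallest thick subcategory containing the projective indecomposables $A \otimes \bS_\lambda$, because every perfect object is represented by a bounded complex of projectives, and every finitely generated projective is a finite direct sum of modules $A \otimes \bS_\mu$ (a fact recalled right before Proposition~\ref{prop:pierisubmod}).

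Second, I would invoke Proposition~\ref{prop:fourier-basic}, which gives
\[
\cF(\bS_\lambda) = A \otimes \bS_{\lambda^\dagger}[\,|\lambda|\,] \in \Perf, \qquad \cF(A \otimes \bS_\lambda) = \bS_{\lambda^\dagger}[\,|\lambda|\,] \in \Tors.
\]
Since $\cF$ is triangulated and $\Perf$ is a thick subcategory, the preimage $\cF^{-1}(\Perf) \cap \rD^b(A)$ is itself a thick subcategory; it contains every $\bS_\lambda$, so it contains all of $\Tors$ by the first paragraph. This yields $\cF(\Tors) \subset \Perf$. Symmetrically, $\cF(\Perf) \subset \Tors$.

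Since this is essentially a formal devissage, I do not foresee a serious obstacle: the only non-trivial input is the explicit computation $\cF(\bS_\lambda) = A \otimes \bS_{\lambda^\dagger}[|\lambda|]$ (and its dual statement), which has already been recorded. The main subtlety to verify carefully is simply that the thick-subcategory generation claims really do produce all of $\Tors$ and $\Perf$, and not merely a smaller subcategory -- but this follows immediately from the structure of finitely generated torsion and projective $A$-modules.
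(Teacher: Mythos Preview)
Your proposal is correct and follows essentially the same approach as the paper: both arguments are d\'evissages that reduce to Proposition~\ref{prop:fourier-basic} on the generators $\bS_\lambda$ and $A \otimes \bS_\lambda$. The paper phrases this as an explicit induction on the ``size'' of a torsion complex (sum of lengths of its terms), peeling off one simple constituent at a time via a triangle, whereas you package the same induction in the language of thick triangulated subcategories; the content is identical.
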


\begin{proof}
For the purposes of this proof, define the {\bf size} of a bounded torsion complex to be the sum of the lengths of its terms. Suppose $M \in \Tors$. We can find an exact triangle $M_1 \to M \to M_2 \to$ where $M_1$ is a torsion complex of smaller size and $M_2$ is a torsion complex of size 1, i.e., $M_2=\bS_{\lambda}[n]$ for some $\lambda$ and $n$. Applying $\cF$, we obtain a triangle $\cF(M_2) \to \cF(M) \to \cF(M_1) \to$. By Proposition~\ref{prop:fourier-basic}, $\cF(M_2)$ is perfect. We can assume $\cF(M_1)$ is perfect by induction on size. It follows that $\cF(M)$ is perfect as well. Thus $\cF$ takes torsion complexes to perfect complexes. The reverse property follows from a similar argument.
\end{proof}

\begin{proof}[Proof of Theorem~\ref{thm:perf-tors}]
The decomposition \eqref{eq:decomp} for $\cF(M)$ is
\begin{displaymath}
\cF(M)_t \to \cF(M) \to \cF(M)_f \to
\end{displaymath}
Applying $\cF$ to the decomposition \eqref{eq:decomp} for $M$, we obtain a triangle
\begin{displaymath}
\cF(M_f) \to \cF(M) \to \cF(M_t) \to
\end{displaymath}
Furthermore, $\cF(M_f)$ is torsion and $\cF(M_t)$ is perfect, by Lemma~\ref{lem:fourier-pt}. By Theorem~\ref{thm:pt-decomp}, we obtain the stated isomorphism.
\end{proof}

The following proposition describes how the Fourier transform interacts with enhanced Hilbert series. It is a direct corollary of Proposition~\ref{prop:hilbertlocal} and Theorem~\ref{thm:perf-tors}, but we prefer to give a direct, elementary proof.

\begin{proposition}
\label{prop:fourier-hilbert}
Let $M$ be an $A$-module.  Then $\tilde{H}_M(t)=\exp(T_0) \tilde{H}_{\cF(M)}(-t)$.  In particular, writing $\tilde{H}_M=p_M \exp(T_0)+q_M$, we have $p_{\cF(M)}(t)=q_M(-t)$ and $q_{\cF(M)}(t)=p_M(-t)$.
\end{proposition}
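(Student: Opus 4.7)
The plan is to proceed by dévissage using Proposition~\ref{prop:ind}, reducing the identity to the two generating cases $M = \bS_\lambda$ and $M = A \otimes \bS_\lambda$. To apply that proposition I first need to know that both sides of $\tilde{H}_M(t) = \exp(T_0)\tilde{H}_{\cF(M)}(-t)$ are additive on short exact sequences. For the left side this is immediate, and for the right side it follows from the long exact sequence of the $\cF_k$ stated right after Proposition~\ref{prop:fourier-basic}, together with the Euler-characteristic convention $\tilde{H}_{\cF(M)} = \sum_k (-1)^k \tilde{H}_{\cF_k(M)}$.

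Next, on the two generating classes, Proposition~\ref{prop:fourier-basic} gives $\cF(\bS_\lambda) = A \otimes \bS_{\lambda^\dag}[\,|\lambda|\,]$ and $\cF(A \otimes \bS_\lambda) = \bS_{\lambda^\dag}[\,|\lambda|\,]$. Using multiplicativity $\tilde{H}_{X \otimes Y} = \tilde{H}_X \tilde{H}_Y$, together with $\tilde{H}_A = \exp(T_0)$ from Lemma~\ref{lem:hilbert-A}, both cases of the desired identity collapse to the single symmetric-function statement
\[
\tilde{H}_{\bS_\lambda}(t) \;=\; (-1)^{|\lambda|}\,\tilde{H}_{\bS_{\lambda^\dag}}(-t).
\]
I plan to verify this directly from the definition of $\tilde{H}$. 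The substitution $t_i \mapsto -t_i$ multiplies each monomial $t^\mu$ by $(-1)^{m_1(\mu)+m_2(\mu)+\cdots} = (-1)^{\ell(\mu)}$. On the other hand, $\bS_{\lambda^\dag}$ corresponds under the equivalence $\cV_1 \simeq \cV_3$ to $\bM_\lambda \otimes \sgn$, so $\trace(c_\mu \mid \bS_{\lambda^\dag}) = \sgn(c_\mu)\trace(c_\mu \mid \bS_\lambda) = (-1)^{|\mu|-\ell(\mu)}\trace(c_\mu \mid \bS_\lambda)$. Multiplying the two sign contributions and noting $|\mu| = |\lambda|$ on the relevant support yields the overall sign $(-1)^{|\lambda|}$, as required.

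For the ``in particular'' consequence, I plan to perform the routine substitution directly: writing $\tilde{H}_{\cF(M)}(t) = p_{\cF(M)}(t)\exp(T_0) + q_{\cF(M)}(t)$ and using that $T_0(-t) = -T_0(t)$, one gets
\[
\exp(T_0)\,\tilde{H}_{\cF(M)}(-t) \;=\; p_{\cF(M)}(-t) + q_{\cF(M)}(-t)\exp(T_0).
\]
Matching this against $\tilde{H}_M(t) = p_M(t)\exp(T_0) + q_M(t)$ (using that the decomposition $f \exp(T_0) + g$ with $f,g$ polynomials is unique, which follows because $\exp(T_0)$ has nonzero constant term and only polynomial ``collisions'' occur on a finite set of monomials) reads off $p_M(t) = q_{\cF(M)}(-t)$ and $q_M(t) = p_{\cF(M)}(-t)$.

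The main obstacle, if one can call it that, is purely bookkeeping: keeping straight the cohomological shift $[\,|\lambda|\,]$ in Proposition~\ref{prop:fourier-basic} (which contributes the sign $(-1)^{|\lambda|}$ in the Euler-characteristic Hilbert series) and simultaneously the sign $(-1)^{\ell(\mu)}$ produced by the $t \mapsto -t$ substitution. Once the symmetric-function identity $\tilde{H}_{\bS_\lambda}(t) = (-1)^{|\lambda|}\tilde{H}_{\bS_{\lambda^\dag}}(-t)$ is in hand, all signs conspire correctly and the proposition follows from Proposition~\ref{prop:ind}.
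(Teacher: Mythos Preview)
Your argument is correct. It differs from the paper's proof in its overall structure, though the core sign computation is the same.

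The paper does not use d\'evissage via Proposition~\ref{prop:ind}; instead it argues directly for an arbitrary $M$ from the minimal free resolution. Writing $[M]=\tilde{H}_M$, the free resolution gives
\[
[M]=\sum_{p\ge 0}(-1)^p\,[A]\,[\Tor^A_p(M,\bC)],
\]
and then the formula $\cF_k(M)=\bigoplus_i \Tor^i_A(M,\bC)_{i+k}^{\dag,\vee}$ is unwound to rewrite the right side as $[A]\sum_n(-1)^n[\cF_n(M)^\dag]^-$, where $[-]^-$ is the substitution $t_i\mapsto(-1)^i t_i$. The remaining step is the same sign identity you proved: the effect of $(-)^\dag$ on $\tilde{H}$ is $t_i\mapsto(-1)^{i+1}t_i$, so $[-^\dag]^-$ is $t_i\mapsto -t_i$.

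Your route trades that direct unwinding for two structural inputs: Proposition~\ref{prop:ind} (hence implicitly the finite-injective-dimension theorem) and the explicit values of $\cF$ on $\bS_\lambda$ and $A\otimes\bS_\lambda$ from Proposition~\ref{prop:fourier-basic}. This is perfectly valid and arguably more transparent, since each case reduces cleanly to the identity $\tilde{H}_{\bS_\lambda}(t)=(-1)^{|\lambda|}\tilde{H}_{\bS_{\lambda^\dag}}(-t)$. The paper's approach, by contrast, is self-contained at the level of Koszul duality and does not invoke the structural theory of $\Mod_A$.
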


\begin{proof}
For notational convenience, put $[M]=\tilde{H}_M$.  The terms of the minimal free resolution of $M$ are given by $A \otimes \Tor^A_p(M, \bC)$, and so we have
\begin{displaymath}
[M]=\sum_{p \ge 0} (-1)^p [A] [\Tor^A_p(M, \bC)].
\end{displaymath}
We thus have
\begin{displaymath}
[M]=[A] \sum_{n \ge 0} (-1)^n [\cT_n(M)]^-=[A] \sum_{n \ge 0} (-1)^n [\cF_n(M)^\dag]^{-},
\end{displaymath}
where $[-]^-$ is the map $t^{\lambda} \mapsto (-1)^{\vert \lambda \vert} t^{\lambda}$.  Note that this is the same as $t_i \mapsto (-1)^i t_i$.  If $V=(V_n)_{n \ge 0}$ is an object of $\cV$, thought of as a sequence of representations of symmetric groups, then $V^{\dag}=(V_n \otimes \sgn)_{n \ge 0}$.  We have $\trace(c_{\lambda} | V_n \otimes \sgn)=(-1)^k \trace(c_{\lambda} | V_n)$, where $k$ is the number of even cycles in the cycle decomposition of $\lambda$.  It follows that $[V^{\dag}]$ is gotten from $[V]$ by changing $t_i$ to $(-1)^{i+1} t_i$. Thus $[-^\dag]^{-}$ is the operation $t_i \mapsto -t_i$, which completes the proof.
\end{proof}

\subsection{The Fourier transform on $\rD^b(K)$}
\label{ss:modKfourier}

It follows from Lemma~\ref{lem:fourier-pt} that $\cF$ induces an equivalence of categories $\Perf^{\op} \to \Tors$. On the other hand, we know that $\Perf$ is equivalent to $\rD^b(K)$ while $\Tors$ is equivalent to $\rD^b_{\tors}(A)$, and that $\Mod_K$ and $\Mod_A^{\tors}$ are equivalent.  It follows that we obtain an equivalence
\begin{displaymath}
\cF_K \colon \rD^b(K)^{\op} \to \rD^b(K), \qquad
\cF_K(M)=\Psi(\cF(\rR S(M))),
\end{displaymath}
where $\Psi$ is as in \S \ref{ss:modKmodAequiv}.  One can show that if $\lambda$ is a partition of size $n$ then
\begin{displaymath}
\cF_K(Q_{\lambda})=L_{\lambda^{\dag}}[-n], \qquad \cF_K(L_{\lambda})=Q_{\lambda^{\dag}}[-n]
\end{displaymath}
and $\cF_K^2=\id$.  These formulas give an explanation for the symmetries in $\rK$-theory observed in Remark~\ref{rmk:modKpairing}.

\subsection{Example: EFW complexes} \label{ss:exampleEFW}

The resolutions discussed in this section are taken from \cite[\S 3]{efw}. They also appeared earlier in \cite[Theorem 8.11]{olver}. Pick a partition $\alpha$ and a positive integer $e$. From Pieri's formula, there is a unique, up to scalar multiple, non-zero map
\[
A \otimes \bS_{(\alpha_1 + e, \alpha_2, \dots)} \to A \otimes \bS_\alpha
\]
(in $(\alpha_1 + e, \dots)$, we are only adding $e$ to the first part of $\alpha$). Let $M(\alpha, e)$ be the cokernel of this map. By Proposition~\ref{prop:pierisubmod}, this is a finite length $A$-module and its free resolution was constructed in \cite[\S 3]{efw}. In this section, we will show that the regularity of $M(\alpha,e)$ is $|\alpha| + e-1 + \alpha_1$ and we will calculate its Poincar\'e series (modulo finitely many error terms).

First, we review the free resolution of $M(\alpha,e)$. For $i \ge 1$, define partitions $\alpha(i)$ by
\[
\alpha(i)_j = \begin{cases} \alpha_1 + e & j=1\\ \alpha_{j-1} + 1 & 1 < j \le i\\ \alpha_j & j > i\end{cases}
\]
and set $\alpha(0) = \alpha$. By Pieri's formula, we have non-zero maps
\[
d_i \colon A \otimes \bS_{\alpha(i)} \to A \otimes \bS_{\alpha(i-1)}
\]
for all $i \ge 1$, which are unique up to scalar multiple, and furthermore, $d_{i-1}d_i = 0$. Set $\bF_i = A \otimes \bS_{\alpha(i)}$. As a consequence of \cite[\S 3]{efw} (alternatively, using Proposition~\ref{prop:pierisubmod}), the complex $\bF_\bullet$ is exact in positive degrees, and is a resolution of $M(\alpha,e) = \rH_0(\bF_\bullet)$. In particular, the module $M(\alpha,e)$ is generated in degree $|\alpha|$ and $M(\alpha,e)$ has regularity $|\alpha| + e-1 + \alpha_1$.

Before we calculate the Poincar\'e series of $M(\alpha,e)$, we do a preliminary calculation. Let $\mu$ be a partition and let $(1^N)$ be a sequence of $N$ 1's. The hook length formula \cite[Corollary 7.21.6]{stanley} shows that the dimension of $\bM_{\mu + (1^N)}$ is
\[
d_\mu(N) = \frac{(N+|\mu|)!}{\prod_{i=1}^{\ell(\mu)} (N+\mu_i-i+1) \cdot (N- \ell(\mu))! \cdot \prod_{b \in \mu} {\rm hook}(b)},
\]
which is a polynomial in $N$ of degree $|\mu|$.

Let $\beta = \alpha(\ell(\alpha)+1) - (1^{\ell(\alpha)+1})$ (this is the partition obtained by subtracting $1$ from each part of $\alpha(\ell(\alpha)+1)$). For $N > \ell(\alpha)$, we have $\alpha(N) = \beta + (1^N)$. Hence modulo the first $\ell(\alpha)$ terms, the Poincar\'e series of $M(\alpha, e)$ is
\[
P_{M(\alpha,e)} \sim (-q)^{-|\beta|} \sum_{N > \ell(\alpha)} (-qt)^{N+|\beta|} \frac{d_\beta(N)} {(N+|\beta|)!}.
\]

In particular, consider $\alpha = \emptyset$ and $e=2$, so that $M(\emptyset, 2)$ is the quotient of $A$ by the square of its maximal ideal. Then 
\begin{align} \label{eqn:exampleP02}
P_{M(\emptyset, 2)} = 1 -q^{-1}\sum_{n \ge 2} \frac{(-qt)^{n}}{(n)!}(n-1) = 
(1-q^{-1}) + (t + q^{-1})e^{-qt}.
\end{align}
For general $e\ge 2$, $M(\emptyset, e)$ is the quotient of $A$ by the $e$th power of its maximal ideal, and
\[
P_{M(\emptyset, e)} = 1 + \frac{(-q^{-1})^{e-1}}{(e-1)!} 
\sum_{n \ge e} \frac{(-qt)^n}{n!} (n-1)(n-2) \cdots (n-e+1)
\]
Recall that the generating function of $x^d e^x$ is $\sum_{n \ge d} \frac{x^n}{n!} (n)_d$ where $(n)_d = n(n-1) \cdots (n-d+1)$ is the falling factorial. Hence if we want to simplify an expression of the form $\sum_n \frac{x^n}{n!} p(n)$ for some polynomial $n$, we have to convert $p$ into the falling factorial basis.

In our situation, we have $p(n) = (n-1)_{e-1}$, which satisfies the identity 
\[
(n-1)_{e-1} = (n)_{e-1} - (e-1)(n-1)_{e-2}.
\]
Expanding this, we can express it as
\[
(n-1)_{e-1} = \sum_{i=0}^{e-1} (-1)^i (e-1)_i (n)_{e-1-i}.
\]
So 
\begin{align*}
P_{M(\emptyset, e)} &= f(t,q) + \frac{(-q^{-1})^{e-1}}{(e-1)!} \left(\sum_{i=0}^{e-1} (-1)^i (e-1)_i (-tq)^{e-1-i}\right) e^{-qt}\\
&= f(t,q) + \left(\sum_{i=0}^{e-1} \frac{t^{e-1-i} q^{-i}}{(e-1-i)!} \right) e^{-qt}
\end{align*}
for some Laurent polynomial $f(t,q)$ with $f(t,1) = 0$. 


\section{Depth and local cohomology} \label{sec:depth}

In this section, we study an important homological invariant of modules: depth. We start in \S\ref{ss:depth} by giving a definition of depth. In \S\ref{ss:localcohomology}, we prove an analogue of Grothendieck's vanishing theorem for local cohomology.  In \S\ref{ss:comparison}, we make a comparison with a possible definition of local cohomology for modules over $\Sym(\bC^\infty)$ which might not have a $\GL_\infty$-equivariant structure and show that this definition agrees with the one we give for $\GL_\infty$-equivariant modules. Finally, in \S\ref{ss:localchar}, we calculate the local cohomology of the modules $L_\lambda^{\ge D}$ from \S\ref{ss:simp-inj}, and show that they lift well-known formulas in $\rK$-theory for character polynomials.

\subsection{Depth} \label{ss:depth}

Given an $A$-module $M$, let $d_M(n)$ be the depth of $M(\bC^n)$ with respect to the homogeneous maximal ideal $A(\bC^n)_{>0}$. By the Auslander--Buchsbaum formula \cite[Theorem 19.9]{eisenbud}, we have 
\begin{align} \label{eqn:AB}
\pdim_{A(\bC^n)} M(\bC^n) + d_M(n) = n.
\end{align}

\begin{theorem} \label{thm:depth}
Let $M \in \Mod_A$ be non-projective.  Then $d_M(n)$ is a weakly decreasing function. In particular, the limit $\lim_{n \to \infty} d_M(n)$ exists, i.e., $d_M(n)$ is independent of $n$ for $n \gg 0$.
\end{theorem}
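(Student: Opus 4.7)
The plan is to reduce the statement to a question about the minimal free resolution of $M$ over $A$. By the Auslander--Buchsbaum formula \eqref{eqn:AB}, the inequality $d_M(n+1) \le d_M(n)$ is equivalent to $\pdim_{A(\bC^{n+1})} M(\bC^{n+1}) \ge \pdim_{A(\bC^n)} M(\bC^n) + 1$. If $P_\bullet = A \otimes V_\bullet \to M$ is the minimal free resolution of $M$ over $A$, with $V_i = \Tor^A_i(M, \bC)$, then evaluating at $\bC^n$ produces the minimal free resolution of $M(\bC^n)$ over $A(\bC^n)$ (minimality persists because the differentials of $P_\bullet$ land in $\fm \cdot P_{\bullet-1}$). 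Hence
\[
\pdim_{A(\bC^n)} M(\bC^n) \;=\; \max\{i : V_i \text{ contains some } \bS_\lambda \text{ with } \ell(\lambda) \le n\}.
\]
Since $M$ is non-projective, the Introduction tells us that $M$ has infinite projective dimension over $A$, so $V_i \ne 0$ for every $i \ge 0$: a vanishing $V_j$ with $j > 0$ would split off the tail of the resolution and produce a finite one.

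The heart of the argument will be the following \emph{key lemma}: for every $i \ge 0$,
\[
\ell_{\min}(V_{i+1}) \;\le\; \ell_{\min}(V_i) + 1, \qquad \ell_{\min}(V_i) := \min\{\ell(\lambda) : \bS_\lambda \subset V_i\}.
\]
Granting this, if $V_i$ contains some $\bS_\lambda$ with $\ell(\lambda) \le n$, then $V_{i+1}$ contains some $\bS_\mu$ with $\ell(\mu) \le n+1$, so $\pdim_{A(\bC^{n+1})} M(\bC^{n+1})$ exceeds $\pdim_{A(\bC^n)} M(\bC^n)$ by at least one.

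To prove the key lemma I would work with the Koszul presentation $V_i = \rH_i(M \otimes \bigwedge^\bullet \bC^\infty, d_K)$. Picking an equivariant cycle $z \colon \bS_\lambda \hookrightarrow M \otimes \bigwedge^i \bC^\infty$ with $\ell(\lambda) = \ell_{\min}(V_i)$, I form the wedging map
\[
\bS_\lambda \otimes \bC^\infty \longrightarrow M \otimes \bigwedge^{i+1} \bC^\infty, \qquad x \otimes v \longmapsto z(x) \wedge v.
\]
The Koszul--Leibniz rule gives $d_K(z(x) \wedge v) = (-1)^i\, v \cdot z(x)$, and the right-hand side is a boundary in $M \otimes \bigwedge^i \bC^\infty$ because $A_{>0}$ annihilates $V_i$. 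Choosing an equivariant null-homotopy of this boundary and subtracting yields a cycle whose Schur constituents lie in the Pieri sum $\bS_\lambda \otimes \bC^\infty = \bigoplus_{\mu/\lambda \in \HS_1} \bS_\mu$, and each such $\mu$ has $\ell(\mu) \le \ell(\lambda) + 1$.

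Main obstacle: showing that the class of the constructed cycle in $V_{i+1}$ is non-zero. A priori the wedge and the correction term could conspire to form a coboundary. I expect to settle this by exploiting the graded $B$-comodule structure on $\bigoplus_j V_j$ supplied by Koszul duality, combined with the finiteness of linear strands (Theorem~\ref{thm:hilbsyz}) and the fact that $V_{i+1} \ne 0$, to force some Schur component of the constructed cycle to survive in homology. An alternative route is to transport the question through the Fourier transform of \S\ref{sec:fourier}, reinterpreting the bound on $\ell_{\min}(V_i)$ as a statement about supports of the cohomology modules of $\cF(M)$; that dual formulation, however, appears to require combinatorial input of comparable depth.
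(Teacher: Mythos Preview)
Your reduction is correct and matches the paper: both approaches come down to showing that $n\mapsto \pdim_{A(\bC^n)} M(\bC^n)$ is strictly increasing, equivalently that $\ell_{\min}(V_{i+1})\le \ell_{\min}(V_i)+1$ for all $i$. The gap is in your proof of this key lemma. You construct a candidate class in $V_{i+1}$ by wedging and correcting, but---as you yourself flag---you have no argument that it is nonzero in homology. The suggested fixes via Koszul duality or the Fourier transform are not arguments; knowing that $V_{i+1}\ne 0$ and that the linear strands are finitely cogenerated $B'$-comodules does not by itself pin down $\ell_{\min}(V_{i+1})$, and I do not see how to extract the bound from those facts without essentially reproving the statement.

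The paper avoids this obstacle entirely by arguing by contradiction and using a structural fact you did not invoke: projective $A$-modules are injective (Corollary~\ref{cor:projinj}). Suppose $\pdim_{A(\bC^n)} M(\bC^n)=\pdim_{A(\bC^{n+1})} M(\bC^{n+1})=L$. Then every generator of $\bF_{L+1}$ has $\ell>n+1$. Let $F\subset\bF_L$ be the summand generated by those $\bS_\mu$ with $\ell(\mu)\le n$; by hypothesis $F\ne 0$. Pieri's rule shows every partition occurring in $F$ has at most $n+1$ rows, so there is no nonzero map $\bF_{L+1}\to F$; hence $d_L|_F$ is injective. Because $F$ is projective and therefore injective, the injection $F\hookrightarrow\bF_{L-1}$ splits, contradicting minimality of the resolution. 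This replaces your cycle-construction problem with a one-line splitting argument; the key input is injectivity of projectives, which is where the special structure of $\Mod_A$ enters.
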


We call this limit the {\bf depth} of $M$. We say that non-zero projective modules have infinite depth. 

\begin{proof} 
Let $\bK_\bullet$ be the Koszul complex, i.e., $\bK_i = A \otimes \bigwedge^i(\bC^{\infty})$. Then $\Tor_i^A(\bC, M) = \rH_i(\bK_\bullet \otimes_A M)$ is a Schur functor and its specialization to $n$ variables gives $\Tor^{A(\bC^n)}_i(\bC, M(\bC^n))$. This shows that $n \mapsto \pdim_{A(\bC^n)} M(\bC^n)$ is a weakly increasing function. By the Auslander--Buchsbaum formula \eqref{eqn:AB}, it suffices to show that $n \mapsto \pdim_{A(\bC^n)} M(\bC^n)$ is a strictly increasing function. 

So suppose that $\pdim_{A(\bC^n)} M(\bC^n) = \pdim_{A(\bC^{n+1})} M(\bC^{n+1})$ for some $n$. Let $L$ be this common value. Let $\bF_{\bullet}$ be the minimal projective resolution of $M$ as an $A$-module. Then $\bF_{\bullet}(\bC^k)$ is the minimal projective resolution of $M(\bC^k)$ as an $A(\bC^k)$-module, for any $k$. (This can be seen since minimal is equivalent to $d(\bF_i) \subset \fm \bF_{i-1}$, which is preserved when evaluation on $\bC^k$.) Since $\bF_{L+1}(\bC^{n+1}) = 0$, we see that $\bF_{L+1}$ is a sum of $A \otimes \bS_\lambda$ where $\ell(\lambda)>n+1$. Write $\bF_L = F \oplus F'$ where $F$ (resp.\ $F'$) is a free $A$-module generated by $\bS_{\mu}$'s with $\ell(\mu) \le n$ (resp.\ $\ell(\mu)>n$). By the assumption that $\pdim_{A(\bC^n)} M(\bC^n) = L$, we have $F \ne 0$. By Pieri's rule, every partition appearing in $F$ has at most $n+1$ rows. Since every partition appearing in $\bF_{L+1}$ has more than $n+1$ rows, it follows that there is no non-zero map $\bF_{L+1} \to F$, and hence the restriction of the differential $\bF_L \to \bF_{L-1}$ to $F$ is injective. Since $F$ is injective (Corollary~\ref{cor:projinj}), it follows that $\bF_{L-1}$ splits as $F \oplus F''$ for some free $A$-module $F''$. But this contradicts the minimality of $\bF_{\bullet}$ (recall that $F \ne 0$).
\end{proof}

\begin{lemma}
\label{lem:depth}
Let
\begin{displaymath}
0 \to M \to F \to N \to 0
\end{displaymath}
be an exact sequence of $A$-modules with $F$ projective.  Then $\depth(M)=\depth(N)+1$.
\end{lemma}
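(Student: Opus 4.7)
The plan is to reduce the equality to the classical Auslander--Buchsbaum formula \eqref{eqn:AB} by evaluating at $\bC^n$ for $n \gg 0$, following the strategy used in the proof of Theorem~\ref{thm:depth}. First I would dispose of the case in which $M$ is itself projective. In that case, Corollary~\ref{cor:projinj} tells us $M$ is also injective, so the sequence $0 \to M \to F \to N \to 0$ splits and $N$ is a direct summand of $F$, hence projective. Both $\depth(M)$ and $\depth(N)$ are then infinite by convention, and the asserted equation reads $\infty = \infty + 1$.

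So assume $M$ is non-projective. The main step is to compare projective dimensions via a $\Tor$ long exact sequence. Since $F$ is projective, $\Tor^A_i(F, \bC) = 0$ for $i \ge 1$, so the long exact sequence for $\Tor^A_{\bullet}(-, \bC)$ applied to the given short exact sequence yields isomorphisms $\Tor^A_{i+1}(N, \bC) \cong \Tor^A_i(M, \bC)$ in $\cV$ for all $i \ge 1$. Evaluating at $\bC^n$ and invoking the identification $\Tor^A_i(-, \bC)(\bC^n) = \Tor^{A(\bC^n)}_i(-, \bC)$ via the Koszul complex (as noted in the proof of Theorem~\ref{thm:depth}), the same isomorphisms hold over $A(\bC^n)$.

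Finally, the proof of Theorem~\ref{thm:depth} shows that $n \mapsto \pdim_{A(\bC^n)} M(\bC^n)$ is strictly increasing whenever $M$ is non-projective; since this function takes integer values and is bounded below by $0$, it takes arbitrarily large values, so for $n \gg 0$ we have $\pdim_{A(\bC^n)} M(\bC^n) \ge 1$. In that range, the $\Tor$ isomorphisms above upgrade to $\pdim_{A(\bC^n)} N(\bC^n) = \pdim_{A(\bC^n)} M(\bC^n) + 1$, and \eqref{eqn:AB} then gives $d_N(n) = d_M(n) - 1$. Taking $n \to \infty$ yields $\depth(M) = \depth(N) + 1$. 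I do not expect a substantial obstacle here; the only point requiring care is the confirmation that the $\Tor$ identifications in $\cV$ specialize correctly to $\Tor$ over $A(\bC^n)$, which is essentially the same Koszul-complex observation already invoked in the proof of Theorem~\ref{thm:depth}.
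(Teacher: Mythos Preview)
Your argument is correct and follows essentially the same route as the paper: both reduce to the Auslander--Buchsbaum formula after evaluating at $\bC^n$ for large $n$ and comparing projective dimensions of $M(\bC^n)$ and $N(\bC^n)$. The paper phrases the comparison as a mapping-cone construction and handles the degenerate case from the $N$-side (if $\depth(N)=\infty$ then $N$ is projective, hence so is $M$), whereas you use the $\Tor$ long exact sequence explicitly and handle the degenerate case from the $M$-side via Corollary~\ref{cor:projinj}; these are minor cosmetic differences.
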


\begin{proof}
If $\depth(N)=\infty$ then $N$ is projective, and so $M$ is projective and $\depth(M)=\infty$ as well.  Assume now that $\depth(N)$ is finite.  We have an exact sequence
\begin{displaymath}
0 \to M(\bC^n) \to F(\bC^n) \to N(\bC^n) \to 0
\end{displaymath}
for each $n$.  We thus see that $d_M(n)=d_N(n)+1$ if $n>\depth(N)$ by the Auslander--Buchsbaum formula \eqref{eqn:AB} applied to a mapping cone construction for a minimal projective resolution of $N$ in terms of one for $M$.  The result follows.
\end{proof}

\subsection{Vanishing of local cohomology} \label{ss:localcohomology}

The following result determines where local cohomology vanishes in general.

\begin{proposition} \label{prop:lcextrema}
Let $M \in \Mod_A$ be non-zero. Then $\inf(\{d \mid \rH^d_\fm(M) \ne 0\})$ is the depth of $M$ (where we use the convention $\inf(\emptyset)=\infty$), and $\sup(\{d-1 \mid \rH^d_\fm(M) \ne 0\})$ is the injective dimension of the localization $T(M) \in \Mod_K$ (where we use the convention $\sup(\emptyset)=0$ and that the injective dimension of $0$ is $-1$.). 
\end{proposition}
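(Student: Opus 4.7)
The plan for both halves is to exploit the triangle $\rR\Gamma_\fm(M) \to M \to \rR S(T(M))$ of Proposition~\ref{prop:locS}, together with the fact that any projective $F = A \otimes \bS_\lambda$ satisfies $\rH^d_\fm(F) = 0$ for all $d \ge 0$: indeed $F$ is also injective by Corollary~\ref{cor:projinj}, so $[F]$ is its own injective resolution, and $\rH^0_\fm(F) = 0$ since $F$ is torsion-free. This vanishing will serve as the main inductive engine, allowing one to ``dimension-shift'' along a projective embedding.

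For the depth equality I will induct on $\depth(M)$. If $M$ is projective then $\depth(M) = \infty$ and all $\rH^d_\fm(M) = 0$ by the above, matching the convention $\inf(\emptyset) = \infty$. If $\depth(M) = 0$ then I must show $\rH^0_\fm(M) \ne 0$: were $M$ torsion-free, then $M \hookrightarrow S(T(M))$ by Proposition~\ref{prop:Sprop}, and composing with $S$ applied to any injective embedding $T(M) \hookrightarrow \bigoplus Q_{\lambda_i}$ in $\Mod_K$ would produce $M \hookrightarrow F = \bigoplus A \otimes \bS_{\lambda_i}$ (via Corollary~\ref{cor:S-L-free}); then $M(\bC^n)$ embeds in the free $A(\bC^n)$-module $F(\bC^n)$, hence is torsion-free over the integral domain $A(\bC^n)$, forcing $\depth M(\bC^n) \ge 1$ for all $n$ and contradicting $\depth(M) = 0$. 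For the inductive step with $1 \le \depth(M) < \infty$ (so $M$ is automatically torsion-free), the same construction produces a short exact sequence $0 \to M \to F \to N \to 0$ with $F$ projective; Lemma~\ref{lem:depth} gives $\depth(N) = \depth(M) - 1$, while the long exact sequence for $\rH^\bullet_\fm$ combined with the vanishing on $F$ yields $\rH^0_\fm(M) = 0$ and $\rH^{d-1}_\fm(N) \cong \rH^d_\fm(M)$ for $d \ge 1$. Both quantities therefore shift by one, and the inductive hypothesis closes the argument.

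For the injective dimension equality I apply Corollary~\ref{prop:localsheaf}, which rewrites $\rH^d_\fm(M) = \rR^{d-1} S(T(M))$ for $d \ge 2$, and gives the four-term sequence $0 \to \rH^0_\fm(M) \to M \to S(T(M)) \to \rH^1_\fm(M) \to 0$ controlling the low degrees. Set $n = \idim T(M)$. The cases $n = -1$ (torsion $M$) and $n = 0$ (either $M$ projective so all $\rH^d_\fm$ vanish, or $M \to S(T(M))$ fails to be an isomorphism so that $\rH^0_\fm(M)$ or $\rH^1_\fm(M)$ is non-zero) both yield the desired value of the supremum directly, using $\rR^{\ge 1} S = 0$ on injectives. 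In the main case $n \ge 1$, a minimal injective resolution $T(M) \to \bI^\bullet$ of length $n$ shows $\rR^i S(T(M)) = 0$ for $i > n$, so the supremum is at most $n$. For the matching lower bound I must show $\rR^n S(T(M)) = \coker(S(\bI^{n-1}) \to S(\bI^n)) \ne 0$: decompose $\bI^n = \bigoplus Q_{\mu_j}$ and observe that by minimality each summand $Q_{\mu_j}$ receives only maps from summands $Q_{\lambda_i}$ of $\bI^{n-1}$ with $\lambda_i \supsetneq \mu_j$ (otherwise an identity component could be split off, contradicting minimality). By Corollary~\ref{cor:A-K-isom} the corresponding $A$-linear maps $A \otimes \bS_{\lambda_i} \to A \otimes \bS_{\mu_j}$ have image equal to the $A$-submodule generated by $\bS_{\lambda_i}$, which by Proposition~\ref{prop:pierisubmod} is a sum of Schur functors $\bS_\eta$ with $\eta \supsetneq \mu_j$. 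Hence the total image in $A \otimes \bS_{\mu_j}$ misses the bottom copy of $\bS_{\mu_j}$, the cokernel is non-zero, and so $\rH^{n+1}_\fm(M) \ne 0$, giving supremum equal to $n$.

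The main obstacle is expected to be this final non-vanishing of $\rR^n S(T(M))$ for $n = \idim T(M) \ge 1$. Although $\rR S$ is an equivalence of derived categories (Remark~\ref{rmk:differentS}), it does not preserve the standard $t$-structure, so the top cohomology of $\rR S(T(M))$ cannot be extracted from the resolution length by soft arguments alone. The Pieri-type combinatorics of Proposition~\ref{prop:pierisubmod}, applied through the minimality of the injective resolution in $\Mod_K$, is what ultimately provides the lower bound.
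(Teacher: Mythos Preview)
Your proof is correct and follows the same overall strategy as the paper: induct via cosyzygy embeddings $0 \to M \to F \to N \to 0$ with $F$ projective for the depth half, and use Corollary~\ref{prop:localsheaf} together with a minimal injective resolution of $T(M)$ for the injective-dimension half. One small gap: the parenthetical ``so $M$ is automatically torsion-free'' for $\depth(M) \ge 1$ is the \emph{converse} of what your base-case contradiction establishes (you showed torsion-free $\Rightarrow$ depth $\ge 1$, not the other implication). It needs the trivial observation that a nonzero torsion submodule of $M$ contains some simple $\bS_\lambda$, which specializes to a nonzero $\fm(\bC^n)$-annihilated subspace of $M(\bC^n)$ for $n \ge \ell(\lambda)$, forcing $d_M(n)=0$. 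The paper makes this direction explicit.

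The substantive local differences are worth noting. For the depth base case $\depth(M)=0 \Rightarrow \rH^0_\fm(M)\ne 0$, the paper argues hands-on by lifting a $\GL_n$-stable $\fm(\bC^n)$-annihilated subspace of $M(\bC^n)$ back to a subobject of $M$ in $\cV$ and checking it is $\fm$-annihilated; your argument (torsion-free $M$ embeds in a projective, hence each $M(\bC^n)$ sits inside a free module and has positive depth) is cleaner. For the non-vanishing of $\rR^n S(T(M))$ with $n\ge 1$, the paper gives a slicker argument than your Pieri analysis: if $S(\bI^{n-1}) \to S(\bI^n)$ were surjective it would split, since $S(\bI^n)$ is projective; applying $T$ would then split $\bI^{n-1} \to \bI^n$ in $\Mod_K$, contradicting minimality. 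This avoids the summand-by-summand image computation, though your approach has the virtue of showing concretely that each generator $\bS_{\mu_j}$ survives in the cokernel.
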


This is an analogue of Grothendieck's vanishing theorem \cite[Proposition A1.16]{syzygies}, but we have swapped ``dimension'' with a different invariant. In between these two extrema, we show in Proposition~\ref{prop:simplelocal} that the pattern of which local cohomology groups is non-zero can be anything. This can be viewed as an analogue of the corresponding fact for local cohomology of local rings (see \cite[Theorem A]{ep}).

\begin{proof}
Let $M$ be a non-projective $A$-module.  Say that $M$ satisfies property $(A_n)$ if $\depth(M)=n$ and property $(B_n)$ if $\rH^n_{\fm}(M) \ne 0$ but $\rH^i_{\fm}(M)=0$ for $i<n$.  We show that $(A_n)$ and $(B_n)$ are equivalent, by induction on $n$.

We first consider the base case $n=0$.  If $\rH^0_{\fm}(M) \ne 0$ then $M$ has torsion, and so $M(\bC^n)$ has torsion for $n \ge \ell(M)$, which implies that $M(\bC^n)$ has depth $0$ for $n \gg 0$, and so $\depth(M)=0$.  Thus $(B_0)$ implies $(A_0)$.  Conversely, suppose that $\depth(M)=0$.  Then $M(\bC^n)$ has depth $0$ for $n \ge N$ for some $N$.  Let $n$ be greater than $N$ and $\ell(M)+1$.  As $M(\bC^n)$ has depth $0$, we can choose a non-zero $\GL_n$-stable subspace $V_0 \subset M(\bC^n)$ which is annihilated by $\fm(\bC^n)$.  We have $V_0=V(\bC^n)$ for some unique subobject $V \subset M$ in $\cV$.  By hypothesis, the map $\fm \otimes V \to M$ is zero when evaluated on $\bC^n$.  Since both spaces have at most $\ell(M)+1$ rows, the map is identically zero, and so $V$ is annihilated by $\fm$.  Thus $V$ defines a non-zero subspace of $\rH^0_{\fm}(M)$.  Therefore $(A_0)$ implies $(B_0)$.

Suppose now that $M$ satisfies either $(A_n)$ or $(B_n)$, with $n>0$.  Then $M$ is torsion-free (this is clear if $(B_n)$ holds; if $(A_n)$ holds then $(A_0)$ does not hold, and so $(B_0)$ does not hold, and so $\rH^0_{\fm}(M)$ is zero).  It follows that $M$ injects into its saturation $S(T(M))$.  We can choose an injection $T(M) \to I$ in $\Mod_K$, for some injective object $I$.  Let $F=S(I)$, a projective object of $\Mod_A$.  Since $S$ is left exact, $M$ injects into $F$.  We thus have an exact sequence
\begin{displaymath}
0 \to M \to F \to N \to 0.
\end{displaymath}
By Lemma~\ref{lem:depth}, $M$ satisfies $(A_n)$ if and only if $N$ satisfies $(A_{n-1})$. By considering the long exact sequence in local cohomology, we also see that $M$ satisfies $(B_n)$ if and only if $N$ satisfies $(B_{n-1})$. Hence we finish by induction.

For the second part, the trivial cases when $M$ is torsion or of the form $A \otimes \bS_\lambda$ are clear from the definitions and Proposition~\ref{prop:modKinj}. Otherwise, let $\bI^\bullet$ be a minimal injective resolution of $T(M)$ of length $n$. If $S(\bI^{n-1}) \to S(\bI^n)$ were surjective then it would be split, since $S$ carries injectives to projectives; applying $T$ we would find that $\bI^{n-1} \to \bI^n$ is split, contradicting minimality.  Thus $S(\bI^{n-1}) \to S(\bI^n)$ is not surjective, and using Corollary~\ref{prop:localsheaf}, we get $\rR^nS(T(M)) \cong \rH^{n+1}_\fm(M) \ne 0$. Finally, since $S(\bI^k) = 0$ for $k>n$, we conclude that $\rH^k_\fm(M) = 0$ for $k>n+1$.
\end{proof}

\begin{remark}[Cosyzygies] \label{rmk:cosyzygies}
Let $M$ be a finitely generated $A$-module which has positive finite depth $d$.  As we saw in the above proof, we can choose an injection $M \to F$ with $F$ projective, and we have $\depth(F/M)=d-1$.  Iterating this process, we obtain a long exact sequence
\[
0 \to M \to F_d \to F_{d-1} \to \cdots \to F_0 \to M' \to 0
\]
where each $F_i$ is projective and $\depth(M') = 0$ (i.e., $M'$ has torsion).
\end{remark}

\subsection{Comparison with non-equivariant modules} \label{ss:comparison}

Let $A_0$ denote the graded ring $A(\bC^{\infty})=\bC[x_1,x_2,\ldots]$, but regarded without any $\GL_{\infty}$-action.  We write $\fm_0$ for its maximal ideal.  Let $N$ be an $A_0$-module.  We define the local cohomology of $N$ by
\begin{displaymath}
\rH^i_{\fm_0}(N)= \varinjlim_d \ext^i_{A_0}(A_0/\fm_0^d, N).
\end{displaymath}
We define the depth of $N$ to be the supremum of lengths of regular sequences on $N$.  These are the usual definitions of these concepts, but they are not often applied in the setting of non-noetherian rings.  Given an $A$-module $M$, we obtain an $A_0$-module $M_0$ by simply forgetting the extra structure.

The purpose of this section is to compare the depth and local cohomology of an $A$-module with that of its underlying $A_0$-module.

\begin{proposition}
\label{prop:depthinf}
For an $A$-module $M$ we have $\depth(M)=\depth(M_0)$.
\end{proposition}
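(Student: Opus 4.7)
The plan is to induct on $d = \depth(M) \in \bZ_{\ge 0} \cup \{\infty\}$, showing $\depth(M_0) = d$ at each stage. The projective case $d = \infty$ is immediate: if $M = A \otimes V$, then $M_0 = A_0 \otimes_{\bC} V(\bC^\infty)$ is a free $A_0$-module, and $x_1, x_2, \ldots$ is a regular sequence of infinite length on any free $A_0$-module, so $\depth(M_0) = \infty$.

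For the base case $d = 0$, the goal is $\depth(M) = 0 \iff \depth(M_0) = 0$. The forward direction follows from Proposition~\ref{prop:lcextrema}: the nonvanishing of $\rH^0_{\fm}(M)$ yields a simple object $\bS_\lambda \subset M$ with $\fm \cdot \bS_\lambda = 0$, so any nonzero element of $\bS_\lambda(\bC^\infty) \subset M_0$ is annihilated by $\fm_0$. For the converse, suppose $m \in M_0$ is nonzero with $\fm_0 m = 0$. Since $\fm_0$ is the positive-degree part of $A_0$, it is $\GL_\infty$-stable, so for every $g \in \GL_\infty$ and $y \in \fm_0$ one has $y(g m) = g(g^{-1}(y) m) = 0$; hence the $\GL_\infty$-subrepresentation $V_m \subset M_0$ spanned by $\GL_\infty \cdot m$ lies in the $\fm_0$-annihilator. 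Each graded piece of $M$ is a finite direct sum of Schur functors, so $M_0$ is semisimple as a polynomial $\GL_\infty$-representation, which forces $V_m = V(\bC^\infty)$ for a unique polynomial subfunctor $V \subset M$. The multiplication map $\fm \otimes V \to M$ vanishes on $\bC^\infty$ and therefore vanishes as a map of polynomial functors, so $V \subset \rH^0_\fm(M)$ and $\depth(M) = 0$.

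For the inductive step ($0 < d < \infty$), apply the cosyzygy construction of Remark~\ref{rmk:cosyzygies} to produce a short exact sequence $0 \to M \to F \to N \to 0$ with $F$ projective; Lemma~\ref{lem:depth} gives $\depth(N) = d - 1$. Forgetting $\GL$-structure yields a short exact sequence of $A_0$-modules with $F_0$ free, and it suffices to establish $\depth(M_0) = \depth(N_0) + 1$, for then the inductive hypothesis gives $\depth(M_0) = d$. Setting $\fm_n := (x_1, \ldots, x_n) \subset A_0$, any regular sequence in $\fm_0$ lies in some $\fm_n$, so $\depth_{\fm_0}(-) = \sup_n \depth_{\fm_n}(-)$. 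Since $\fm_n$ is generated by the $A_0$-regular sequence $x_1, \ldots, x_n$, the standard Koszul-complex form of the depth lemma applies; using $\depth_{\fm_n}(F_0) = n$ and choosing $n$ larger than all relevant depths yields $\depth_{\fm_n}(M_0) = \depth_{\fm_n}(N_0) + 1$, and taking $\sup_n$ completes the step.

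The main obstacle is the backward direction of the base case: turning a single nonzero $\fm_0$-torsion element of $M_0$ into a $\GL_\infty$-equivariant, $\fm$-annihilated polynomial subfunctor of $M$. This is where the semisimplicity of the category of polynomial representations is essential, ensuring that the $\GL_\infty$-span of any vector in $M_0$ arises from a subfunctor of $M$. A secondary technicality is verifying the depth lemma in the non-noetherian setting of $A_0$, handled by approximating $\fm_0$ by the finitely generated ideals $\fm_n$.
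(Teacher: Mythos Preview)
Your approach—induction on $\depth(M)$ via cosyzygies—is genuinely different from the paper's. The paper proves the two inequalities directly: for $\depth(M_0)\le\depth(M)$ it observes that any $M_0$-regular sequence lies in some $A(\bC^n)$ and remains regular on $M(\bC^n)$ by faithful flatness of $A_0$ over $A(\bC^n)$; for $\depth(M)\le\depth(M_0)$ it uses Lemma~\ref{lem:linseq} to produce $d=\depth(M)$ linearly independent linear forms that are regular on every $M(\bC^n)$ for $n\gg 0$, and a direct-limit check shows they are regular on $M_0$.

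Your inductive step, however, has a real gap. You invoke ``the standard Koszul-complex form of the depth lemma'' for the ideals $\fm_n\subset A_0$. The Koszul complex on $x_1,\ldots,x_n$ does compute $\Ext^i_{A_0}(A_0/\fm_n,-)$, and the long exact sequence in Ext yields a depth lemma for the \emph{Ext-depth} $\inf\{i:\Ext^i_{A_0}(A_0/\fm_n,P)\ne 0\}$. But you need it for the \emph{classical} depth (supremum of regular-sequence lengths), and outside the noetherian setting the implication ``$\Ext^0(A_0/\fm_n,P)=0\Rightarrow$ some element of $\fm_n$ is a nonzerodivisor on $P$'' is not automatic; passing from $\fm_0$ to the finitely generated $\fm_n$ does not remove this obstacle, since $A_0$ is still non-noetherian. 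For the specific modules $P_0$ arising from $A$-modules one can check the two notions agree—because $P_0\cong P(\bC^n)\otimes_{A(\bC^n)}A_0$ with $A_0$ faithfully flat, both depths equal $d_P(n)$—but that is exactly the finite-dimensional comparison underlying the paper's direct proof, so your route does not really bypass it. The same issue surfaces in the backward direction of your base case: $\depth(M_0)=0$ only says every element of $\fm_0$ is a zerodivisor, which over a non-noetherian ring does not by itself produce a single $m\neq 0$ with $\fm_0 m=0$. Fortunately that direction is unnecessary for your induction, since you only ever use $\depth(M)=d\Rightarrow\depth(M_0)=d$.
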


We first require a lemma:

\begin{lemma}
\label{lem:linseq}
Let $M$ be a $\GL_n$-equivariant module over $\Sym(\bC^n)$.  The following statements are equivalent:
\begin{enumerate}[\rm (a)]
\item There exists a regular sequence of length $d$ on $M$.
\item There exists $d$ linearly independent elements of $\bC^n$ which form a regular sequence on $M$.
\item Every sequence of $d$ linearly independent elements of $\bC^n$ forms a regular sequence on $M$.
\end{enumerate}
\end{lemma}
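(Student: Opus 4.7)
The implications $(c) \Rightarrow (b) \Rightarrow (a)$ are essentially immediate: $(b) \Rightarrow (a)$ requires no argument, and for $(c) \Rightarrow (b)$ it suffices to exhibit any $d$ linearly independent elements of $\bC^n$, which is possible provided $d \le n$; the case $d > n$ cannot arise in $(a)$ since $\depth_{\Sym(\bC^n)} M \le n$ (the maximal ideal has only $n$ generators). It therefore suffices to establish $(a) \Rightarrow (b)$ and $(b) \Rightarrow (c)$.

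For $(b) \Rightarrow (c)$ the key input is $\GL_n$-equivariance, which reduces the claim to transitivity. Let $U \subseteq (\bC^n)^d$ denote the set of linearly independent $d$-tuples; $\GL_n$ acts transitively on $U$. For any $g \in \GL_n$, equivariance supplies an abelian group isomorphism $g \colon M \to M$ with $g(\bv \cdot m) = (g\bv) \cdot (gm)$. An easy induction on $i$ shows that $g$ descends to an isomorphism $M/(\bv_1,\ldots,\bv_i)M \xrightarrow{\sim} M/(g\bv_1,\ldots,g\bv_i)M$ intertwining multiplication by $\bv_{i+1}$ with multiplication by $g\bv_{i+1}$. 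Thus being a regular sequence on $M$ is a $\GL_n$-invariant condition on $U$, and by transitivity either every tuple in $U$ has this property or none does; case $(b)$ rules out the latter.

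For $(a) \Rightarrow (b)$ we induct on $d$, using that the scalar torus in $\GL_n$ naturally makes $M$ into a graded $\Sym(\bC^n)$-module, so every associated prime of $M$ is graded. Given a regular sequence of length $d \ge 1$ on $M$, we have $\depth M \ge 1$ and hence $\fm \notin \ass(M)$; consequently each $\fp \in \ass(M)$ is a graded prime strictly contained in $\fm$, and its intersection with the degree-$1$ part $\bC^n$ is a proper linear subspace. Since $\ass(M)$ is finite and $\bC$ is infinite, graded prime avoidance yields a linear form $\bv_1 \in \bC^n$ avoiding every associated prime, i.e., a non-zerodivisor on $M$. The quotient $M_1 = M/\bv_1 M$ is again a $\GL_n$-equivariant $\Sym(\bC^n)$-module with a regular sequence of length $d-1$, and we iterate; at each step the conditions ``non-zerodivisor on $M_{i-1}$'' and ``linearly independent from $\bv_1, \ldots, \bv_{i-1}$'' are each nonempty Zariski open on $\bC^n$, so we may realize both simultaneously. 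The main obstacle is precisely this simultaneous realization of regularity and linear independence, which ensures that the linear forms so produced lie in $U$ and thereby establish $(b)$.
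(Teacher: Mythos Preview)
Your proof is correct and follows essentially the same route as the paper's: prime avoidance to find linear nonzerodivisors for $(a)\Rightarrow(b)$, and $\GL_n$-transitivity on linearly independent $d$-tuples for $(b)\Rightarrow(c)$. One minor slip: the quotient $M_1 = M/\bv_1 M$ is \emph{not} $\GL_n$-equivariant in general, since the line $\bC\bv_1$ is not $\GL_n$-stable; it is only graded. This does not damage your argument, because the only structure you actually use in the iteration is the grading (so that associated primes are homogeneous and hence meet $\bC^n$ in proper subspaces). Just replace ``$\GL_n$-equivariant'' with ``graded'' in your inductive step and the argument goes through verbatim.
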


\begin{proof}
Assume $d>0$ otherwise there is nothing to prove. The set of zerodivisors of $M$ is a union of finitely many prime ideals, none of which is the maximal homogeneous ideal because we have a non-zerodivisor. Hence none of them contains the space of linear forms, and in particular, their union cannot contain the space of linear forms since $\bC$ is an infinite field. By induction on $d$, we see that (a) implies (b). Since every sequence of $d$ linearly independent elements of $\bC^n$ is conjugate under $\GL_n$, we see that (b) implies (c). Finally, (c) trivially implies (a).  
\end{proof}

\begin{proof}[Proof of Proposition~\ref{prop:depthinf}]
We first show that $\depth(M_0) \le \depth(M)$.  Thus let $(y_1, \ldots, y_d)$ be a regular sequence on $M_0$ with $d=\depth(M_0)$.  Then each $y_i$ belongs to $A(\bC^n)$ for $n$ sufficiently large, and $(y_1, \ldots, y_d)$ forms a regular sequence on $M(\bC^n)$.  This shows that $d \le d_M(n)$ for $n \gg 0$, and so $\depth(M_0) \le \depth(M)$.

We now show that $\depth(M) \le \depth(M_0)$.  Let $d=\depth(M)$.  Let $y = (y_1, \ldots, y_d)$ be a sequence of linearly independent elements of $\bC^{\infty}$.  By Lemma~\ref{lem:linseq}, $y$ forms a regular sequence on all $M(\bC^n)$ for $n \gg 0$.  We claim that $y$ forms a regular sequence on $M_0$.  Indeed, suppose that $y_im = 0$ in $M_0/(y_1, \ldots, y_{i-1}) M_0$. We have that $y_i$ belongs to $A(\bC^n)$ and $m$ belongs to $M(\bC^n)$ for $n \gg 0$, and since $y$ is a regular sequence on $M(\bC^n)$ for $n \gg 0$, we find $m=0$.  This proves the claim, and so $\depth(M) \le \depth(M_0)$.
\end{proof}

\begin{proposition}
\label{prop:infcoh}
Given an $A$-module $M$, there is a natural isomorphism
\begin{displaymath}
\rH^i_{\fm}(M)=\rH^i_{\fm_0}(M_0)
\end{displaymath}
compatible with the $\GL_{\infty}$-action on each side.
\end{proposition}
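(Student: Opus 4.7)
I would begin with the degree-zero case, which is direct. An $A$-submodule of $M$ has finite length as an object of $\cV$ if and only if it is annihilated by $\fm^n$ for some $n$, and therefore $\rH^0_\fm(M)=\{m\in M:\fm^n m=0\text{ for some }n\}$. The formula $\rH^0_{\fm_0}(M_0)=\varinjlim_n\Hom_{A_0}(A_0/\fm_0^n,M_0)$ yields the analogous description with $\fm_0$ in place of $\fm$. Since $\fm^n=\fm_0^n$ as additive subspaces of $A=A_0$ (both equal $\bigoplus_{k\ge n}\Sym^k(\bC^\infty)$), the two subsets of $M$ coincide, and the identification is plainly natural in $M$ and $\GL_\infty$-equivariant.

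For $i\ge 1$ I would invoke Grothendieck's theorem on derived functors of a composite. Let $F\colon\Mod_A\to\Mod(A_0)$ be the (exact) forgetful functor, and abbreviate $\Gamma=\rH^0_\fm$ and $\Gamma_0=\rH^0_{\fm_0}$. The degree-zero identification is the equality $F\circ\Gamma=\Gamma_0\circ F$ of left exact functors $\Mod_A\to\Mod(A_0)$. Since $F$ is exact, $\rR^i(F\circ\Gamma)=F\circ\rR^i\Gamma$; on the other hand $\rR^i(\Gamma_0\circ F)=\rR^i\Gamma_0\circ F$ provided that $F$ sends $\Mod_A$-injectives to $\Gamma_0$-acyclic $A_0$-modules. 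Thus the proposition reduces to the following \emph{key acyclicity statement}: for every injective $I\in\Mod_A$, $\rH^i_{\fm_0}(I_0)=0$ for all $i\ge 1$.

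By Theorem~\ref{thm:Ainj} it suffices to verify this when $I=A\otimes\bS_\lambda$ or $I=I_\lambda$. In the first case, $I_0$ is a free $A_0$-module on the vector space $\bS_\lambda(\bC^\infty)$, and the vanishing reduces (after a commutation of $\rH^i_{\fm_0}$ with the relevant colimits) to showing $\rH^i_{\fm_0}(A_0)=0$ for $i\ge 1$; I would prove this by a stable-Koszul calculation on the regular sequence $x_1,x_2,\ldots$, using that each finitely generated sub-ideal $(x_1,\ldots,x_n)$ contributes acyclically and $\fm_0=\bigcup_n(x_1,\ldots,x_n)$. In the second case, I would use the EFW-type resolution of $I_\lambda$ by finite-rank free $A$-modules (Remark~\ref{rmk:torsion-inj} and \S\ref{ss:exampleEFW}) together with the long exact sequence in local cohomology to reduce to the projective case.

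The hardest step will be the computation $\rH^i_{\fm_0}(A_0)=0$ for $i\ge 1$. Because $\fm_0$ is not finitely generated, standard noetherian machinery applies only to the finite subideals $(x_1,\ldots,x_n)$, and the passage to the colimit requires genuine care: there is no automatic identification of $\rH^i_{\fm_0}$ with $\varinjlim_n \rH^i_{(x_1,\ldots,x_n)}$ in this non-noetherian regime. The strategy is similar in spirit to the argument given for Proposition~\ref{prop:depthinf}, which used $\GL_\infty$-equivariance to bridge between the finite-dimensional truncations $A(\bC^n)$ and the full algebra $A=\varinjlim_n A(\bC^n)$, and I expect an analogous device to be the crux here.
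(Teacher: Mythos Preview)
Your overall reduction---show that $\Mod_A$-injectives are $\Gamma_{\fm_0}$-acyclic, then split into the two types $A\otimes\bS_\lambda$ and $I_\lambda$ via Theorem~\ref{thm:Ainj}---is exactly the paper's framework. The differences are in how you handle each case.

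For the torsion injectives $I_\lambda$, your proposed route through the EFW resolution does not work. That resolution is \emph{infinite}, so dimension-shifting along it only yields $\rH^i_{\fm_0}(I_\lambda)\cong\rH^{i+n}_{\fm_0}(K_n)$ for every $n$, where $K_n$ is the $n$th syzygy; you have no bound on the cohomological dimension of the $K_n$, so this never terminates in a vanishing statement. The paper instead uses the elementary fact that higher local cohomology vanishes on any $\fm_0$-torsion module (valid without noetherian hypotheses), and $I_\lambda$ is torsion. This disposes of the case in one line.

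For $\rH^i_{\fm_0}(A_0)=0$, you correctly flag the non-noetherian subtlety and leave the argument as a sketch. The paper avoids stable Koszul complexes and colimits entirely with a short descent: set $A_n=A_0/(x_1,\ldots,x_n)$ and use the exact sequences $0\to A_n\xrightarrow{x_{n+1}}A_n\to A_{n+1}\to 0$. Since local cohomology is torsion, multiplication by $x_{n+1}$ is never injective on a nonzero $\rH^k_{\fm_0}(A_n)$, so the long exact sequence forces $\rH^{k-1}_{\fm_0}(A_{n+1})\neq 0$ whenever $\rH^k_{\fm_0}(A_n)\neq 0$ (and lower groups vanish). Iterating, $\rH^k_{\fm_0}(A_0)\neq 0$ would give $\rH^0_{\fm_0}(A_k)\neq 0$, contradicting that $A_k$ is a polynomial ring and hence torsion-free. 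This argument is elementary and sidesteps every non-noetherian worry you raised.
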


For an analogous statement about local cohomology for local rings, see \cite[Proposition A1.1]{syzygies}.  

\begin{proof}
Put $G^i(M)=\rH^i_{\fm_0}(M_0)$.  Then $G^{\bullet}$ defines a $\delta$-functor on $\Mod_A$ and $G^0$ coincides with $\rH^0_{\fm}$.  To prove the proposition, it suffices to show that $G^i(I)=0$ for $i>0$ whenever $I$ is an indecomposable injective object of $\Mod_A$.  There are two kinds of indecomposable injectives of $\Mod_A$: the injective envelopes of simple modules and modules of the form $A \otimes \bS_{\lambda}$.  Since $G^{\bullet}$ is just the usual local cohomology of the ring $A$, the higher functors vanish on all torsion modules.  This takes care of the first sort of indecomposable injective.  We now observe that we have $G^i(A \otimes \bS_{\lambda})=\bS_{\lambda} \otimes G^i(A)$, since the $\GL_{\infty}$-structure does not affect the computation of $G^i$.  Therefore, it is enough to show that $G^i(A)=0$ for $i>0$.

This vanishing follows from the fact that $A_0$ has infinite depth.  Since our situation is somewhat exotic, we give a proof.  Let $A_n$ be the quotient of $A_0$ by the ideal $(x_1, \ldots, x_n)$.  We have an exact sequence
\begin{displaymath}
0 \to A_n \xrightarrow{x_{n+1}} A_n \to A_{n+1} \to 0
\end{displaymath}
and an associated long exact sequence after applying $\rH_{\fm_0}^{\bullet}$.  This shows that if $\rH^i_{\fm_0}(A_n)=0$ for $i<k$ but $\rH^k_{\fm_0}(A_n) \ne 0$ then $\rH_{\fm_0}^{k-1}(A_{n+1}) \ne 0$, if $k>0$.  (Note:  local cohomology is always torsion, and so multiplication by an element of the maximal ideal on a non-zero local cohomology group is never injective.)  In particular, if $\rH_{\fm_0}^k(A_0) \ne 0$ then $\rH_{\fm_0}^0(A_k) \ne 0$.  However, none of the $A_k$ have any torsion, and so $\rH_{\fm_0}^0(A_k)=0$ for all $k$.  Thus $G^i(A)=\rH^i_{\fm_0}(A_0)=0$ for all $i$.
\end{proof}

\begin{remark}
Propositions \ref{prop:lcextrema}, \ref{prop:depthinf}, and~\ref{prop:infcoh} show that if $N$ is a module over $A_0$ then $\rH^i_{\fm_0}(N)=0$ for $i<\depth(N)$ and $\rH^i_{\fm_0}(N) \ne 0$ for $i=\depth(N)$, provided $N$ can be endowed with a $\GL_{\infty}$-equivariant structure.
\end{remark}

\begin{remark}
There are a few reasons we defined local cohomology as the derived functors of $\rH^0_{\fm}$ instead of first passing to $A_0$ and then using $\rH_{\fm_0}^i$. First, it does not seem to be clear that $\rH^i_{\fm_0}(M_0)$ belongs to $\cV$, i.e., that the $\GL_{\infty}$-action on it is polynomial: on the one hand, an equivariant injective resolution of $M_0$ as an $A_0$-module will involve non-polynomial modules; on the other, while one can equivariantly resolve $A_0/\fm_0^d$ using polynomial modules, applying $\Hom_{A_0}(-, M_0)$ destroys this property.  Second, it is not clear that $\rH^i_{\fm_0}(M_0)=0$ for $i \gg 0$.  And finally, it is not clear that $M \mapsto \rH^i_{\fm_0}(M_0)$ is the derived functor of $M \mapsto \rH^0_{\fm_0}(M_0)$.
\end{remark}

\begin{remark}  
The functors $\rH^i_{\fm}$ and $\varinjlim \Ext^i_A(A/\fm^d, -)$ are not isomorphic:  they even differ for $i=0$. For example, $\hom_A(A/\fm^d, \bC^\infty) = 0$ for all $d$ because of the $\GL$-equivariance, but $\rH^0_\fm(\bC^\infty) = \bC^\infty$.
\end{remark}

\begin{remark}
Consider the category $\cA$ of nonnegatively graded $\bC[t]$-modules.  One can define $\rH^0_{\fm}$ on this category to be the maximal torsion submodule, as we did above.  Its derived functors exist.  However, the analogue of Proposition~\ref{prop:infcoh} is false in this setting.  Indeed, $\bC[t]$ is an injective object of $\cA$, and so $\rH^1_{\fm}(\bC[t])=0$, while the usual theory of local cohomology shows that $\varinjlim \Ext^1_{\bC[t]}(\bC[t]/\fm^n, \bC[t])$ is non-zero.  This problem disappears if one considers all graded $\bC[t]$-modules. 
\end{remark}

\subsection{Local cohomology and character polynomials} \label{ss:localchar}

Consider the module $L_\lambda^0 = \bigoplus_{d \ge \lambda_1} \bS_{(d,\lambda_1)}$ from \S\ref{ss:simp-inj} and its enhanced Hilbert series
\[
\tilde{H}_{L_\lambda^0}(t) = p_{L_\lambda^0}(t) \exp(T_0) + q_{L_\lambda^0}(t).
\]
We discussed the character polynomial $X^\lambda$ in \S\ref{ss:charpoly}. In particular, we have that 
\[
X^\lambda(\mu) := X^\lambda(a_1, \dots, a_n) = \trace(c_\mu|\bM_{(N-|\lambda|,\lambda)})
\]
for any partition $\mu \vdash N$ with $m_i(\mu) = a_i$ and $N \gg 0$. By the determinantal expression for $\trace(c_\mu|\bM_{(N-|\lambda|,\lambda)})$ discussed in \cite[Example I.7.14]{macdonald}, we see that it in fact holds for $N \ge \lambda_1 + |\lambda|$.

Actually, this determinantal expression implies more. Namely, it gives an interpretation for $X^\lambda(\mu)$ when $|\mu| < \lambda_1 + |\lambda|$. Note that the coefficient of $t^\mu$ in $\tilde{H}_{L_\lambda^0}$ vanishes for such $\mu$, so this value of the character polynomial is related to the coefficient of $t^\mu$ in $q_{L_\lambda^0}$.

First, define the sequence $\rho = (-1, -2, -3, \dots)$. Given a permutation $w \in S_\infty$ and a sequence $\alpha = (\alpha_1, \alpha_2, \dots)$ with finitely many non-zero terms, define $w \bullet \alpha = w(\alpha + \rho) - \rho$ where the right-hand side is the usual permutation action on sequences. There are two mutually exclusive cases:
\begin{enumerate}[(a)]
\item \label{enum:singularwt} There is a non-identity permutation $w$ such that $w \bullet \alpha = \alpha$ (in which case we say that $\alpha$ is {\bf singular}), or 
\item \label{enum:regularwt} There is a unique permutation $w$ such that $w \bullet \alpha$ is weakly decreasing (in which case we write $w \bullet \alpha \ge 0$).
\end{enumerate}

Now we go back to character polynomials. Set $\alpha = (|\mu| - |\lambda|, \lambda)$. This is a sequence of integers. For instance, if $\lambda = (3,2,2)$ and $|\mu| = 5$, then $\alpha = (-2,3,2,2)$. In case \eqref{enum:singularwt}, $X^\lambda(\mu) = 0$. In case \eqref{enum:regularwt}, let $w$ be the unique permutation such that $w \bullet \alpha$ is weakly decreasing (and hence is a partition). Then 
\begin{align} \label{eqn:modification}
X^\lambda(\mu) = (-1)^{\ell(w)} \trace(c_\mu | \bM_{w \bullet \alpha})
\end{align}
where $\ell(w) = \#\{i<j \mid w(i) > w(j)\}$ is the number of inversions of $w$. In this case, we will write 
\[
\alpha \xrightarrow{\ell(w)} w \bullet \alpha.
\]
This gives us a formula for the Euler characteristic of $\rH^\bullet_\fm(L_\lambda^0)$ by Proposition~\ref{prop:hilbertlocal}. Since $L_\lambda^0$ is saturated by Proposition~\ref{prop:Llamsat}, we always have $\rH^0_\fm(L_\lambda^0) = \rH^1_\fm(L_\lambda^0) = 0$, so by Corollary~\ref{prop:localsheaf}, this is the Euler characteristic of $S$ applied to the minimal injective resolution of Theorem~\ref{thm:BGGresolution}. We will refine this statement in Proposition~\ref{prop:simplelocal}.

Since $X_M$ is in general a linear combination of the $X^\lambda$, similar remarks apply to any finitely generated module $M$ in place of $L_\lambda^0$.

Now we give a pictorial description of \eqref{eqn:modification}. First, given a partition $\lambda$, consider its Young diagram. (The following terminology is not standard, but we will only need it in the formulation of Proposition~\ref{prop:simplelocal}.) A {\bf border strip} $B$ of its Young diagram is a set of boxes which do not contain a $2 \times 2$ square, and such that no boxes of $\lambda$ in the complement of $B$ lie strictly below $B$, and such that $\lambda \setminus B$ is the Young diagram of a partition. Equivalently, this is a set of boxes which can be obtained by first removing a vertical strip from $\lambda$, and then removing a horizontal strip from the result. For example, for $\lambda = (7,5,3,3,2)$, we have marked a border strip with 2 connected components with $\times$:
\[
\begin{ytableau}
\ & \ & \ & \ & \times & \times & \times \\
\ & \ & \ & \times & \times \\
\ & \ & \times \\
\ & \times & \times \\
\ & \times 
\end{ytableau}
\]
We say that a border strip is {\bf aligned} if it contains the last box in the first row of $\lambda$ and if it is connected. The rightmost 5 boxes in the above example form an aligned border strip. Aligned border strips are determined by their size. We denote the set of aligned border strips of $\lambda$ by $\BS(\lambda)$. The {\bf height} of a border strip, denoted ${\rm ht}(B)$, is the number of rows that it occupies. Then the $w \bullet \alpha$ in \eqref{eqn:modification} becomes the partition obtained by removing the aligned border strip of size $\lambda_1 - |\mu| + |\lambda|$ from $(\lambda_1, \lambda)$ (if possible) and $\ell(w)$ becomes the height of this border strip. If there is no aligned border strip of the specified size, then the right-hand side of \eqref{eqn:modification} is $0$.

To see the equivalence of the rules, start with the sequence $\alpha = (N - |\lambda|, \lambda)$ which we visualize as a modified Young diagram (if $N-|\lambda| < 0$, we can think about the first row going to the left). To sort the sequence $\alpha + \rho$, we only have to move the first entry to the $k$th entry for some $k$, so the permutation that we use is the cycle $(1,2,\dots,k)$. We can factor this cycle as $s_{k-1} \cdots s_2 s_1$ where $s_i$ is the transposition that swaps positions $i$ and $i+1$. In the process of calculating $s_{k-1} \bullet ( \cdots ( s_2 \bullet (s_1 \bullet \alpha) )\cdots )$, we will move the first entry past $k-1$ rows, add $k-1$ to it, and subtract $1$ from each of these rows that we moved past. All we have done is removed a border strip from $(\lambda_1, \lambda)$ and this gives us a bijection between modified sequences and removing border strips.

\begin{example}
We now work out the combinatorics above in a few cases of $\lambda$ and its relation to Hilbert series and local cohomology. The case $\lambda = (1)$ is treated in detail. The other cases are similar, so we omit the details for them.

\begin{enumerate}[(a)]
\item The simplest case above is $\lambda = (1)$. We consider the sequences $(N-1,1)$ for $N \ge 0$, and the discussion above interprets $X^1(\mu)$ (up to sign) as the trace of $c_\mu$ on a representation of the $N$th symmetric group.

When $N \ge 2$, $(N-1,1)$ is a partition, so $X^1(\mu) = \trace(c_\mu \vert \bM_{(N-1,1)})$ for all partitions $\mu$ of size $N$.
When $N=1$, we get the sequence $(0,1)$, which is singular, so $X^1(\mu) = 0$ for $\mu = (1)$.
When $N=0$, we get the sequence $(-1,1)$. If $s_1$ is the transposition that swaps the first and second indices, we get $s_1 \bullet (-1,1) = (0,0)$. So we write $(-1,1) \xrightarrow{1} (0,0)$, and we get $X^1(\mu) = -\trace(c_\mu \vert \bM_0)$ for $\mu = \emptyset$.

Applying $S$ to the minimal injective resolution of $L_1$, we get
\[
A \otimes \bS_1 \to A
\]
whose kernel is $L_1^0$ and whose cokernel is $\bC$. Hence $\rH^2_\fm(L_1^0) = \bC$. According to Proposition~\ref{prop:hilbertlocal}, this contributes to $q_{L_1^0}(t)$ and we see directly that it accounts for the difference between the enhanced Hilbert series $\tilde{H}_{L_1^0}(t)$ and the character polynomial $p_1^0(t) \exp(T_0)$.

\item Now consider the case $\lambda = (2)$. Then $(N-2,2)$ is a partition whenever $N \ge 4$ and for $N=0,1,2,3$, we get, respectively, that $(-2,2)$ is singular, $(-1,2) \xrightarrow{1} (1,0)$, $(0,2) \xrightarrow{1} (1,1)$, and $(1,2)$ is singular. Applying $S$ to the minimal injective resolution of $L_2$, we get
\[
A \otimes \bS_2 \to A \otimes \bS_1
\]
whose kernel is $L_2^0$ and whose cokernel is $\rH^2_\fm(L_2^0) = \bS_1 \oplus \bS_{1,1}$ (which could be deduced from Proposition~\ref{prop:pierisubmod}).  Note that we are only treating $\rH^2_{\fm}(L_2^0)$ as an object of $\cV$ here.

\item For a slightly more involved example, consider $\lambda = (2,1)$. Then $(N-3,2,1)$ is a partition whenever $N \ge 5$, and this sequence is singular for $N=0,2,4$. Otherwise, we have $(-2,2,1) \xrightarrow{2} (1)$ and $(0,2,1) \xrightarrow{1} (1,1,1)$. The corresponding border strips are
\begin{align*}
  \begin{ytableau}
    \ & \times \\
\times & \times \\
\times
  \end{ytableau}
\qquad 
  \begin{ytableau}
    \ & \times \\
\ & \times \\
\ 
  \end{ytableau}
\end{align*}

Applying $S$ to the minimal injective resolution of $L_{2,1}$, we get
\[
A \otimes \bS_{2,1} \to A \otimes (\bS_{1,1} \oplus \bS_2) \to A \otimes \bS_1.
\]
Again from Proposition~\ref{prop:pierisubmod}, we deduce that the cokernel of the last map is $\rH^3_\fm(L_{2,1}^0) = \bS_1$. Since the kernel of the first map is $L_{2,1}^0$, we can take the Euler characteristic (either by hand using Pieri's formula, or using the discussion in this section) to get that the middle homology is $\rH^2_\fm(L_{2,1}^0) = \bS_{1,1,1}$. \qedhere
\end{enumerate}
\end{example}

Here is the general situation for $L_\lambda^{\ge D}$, which upgrades the K-theory formula \eqref{eqn:modification}.

\begin{proposition} \label{prop:simplelocal}
For $i \ge 1$, the local cohomology of $L_\lambda^{\ge D}$ is given by
\[
\rH^i_{\fm}(L_\lambda^{\ge D}) = \bigoplus_{\substack{B \in \BS((D, \lambda)),\\
{\rm ht}(B) = i}} \bS_{(D, \lambda) \setminus B}
\]
as an object of $\cV$. As an $A$-module, $\rH^i_\fm(L_\lambda^{\ge D})$ is generated by the smallest partition in $\{(D,\lambda) \setminus B\}$ with ${\rm ht}(B)=i$.
\end{proposition}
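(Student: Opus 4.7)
My plan is to reduce the computation to $\rR^\bullet S(L_\lambda)$ via the section functor and the injective resolution of Theorem~\ref{thm:BGGresolution}, then match the resulting cohomology with the border-strip combinatorics.

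First, I would use the short exact sequence $0 \to L_\lambda^{\ge D} \to L_\lambda^0 \to Q \to 0$, where $Q = \bigoplus_{\lambda_1 \le d < D} \bS_{(d,\lambda)}$ is torsion. Since $L_\lambda^0$ is saturated (Proposition~\ref{prop:Llamsat}), $\rH^0_\fm(L_\lambda^0) = \rH^1_\fm(L_\lambda^0) = 0$; since $L_\lambda^{\ge D}$ is torsion-free, $\rH^0_\fm(L_\lambda^{\ge D}) = 0$; and $Q$ has no higher local cohomology. The long exact sequence therefore yields $\rH^1_\fm(L_\lambda^{\ge D}) = Q$ and $\rH^i_\fm(L_\lambda^{\ge D}) = \rH^i_\fm(L_\lambda^0)$ for $i \ge 2$. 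The summands of $Q$ are precisely the partitions $(D,\lambda) \setminus B$ where $B$ runs over height-$1$ aligned border strips (which live entirely in row $1$). For $i \ge 2$, extending the row-$1$ portion of a height-$i$ aligned border strip of $(\lambda_1,\lambda)$ by the columns $\lambda_1 + 1, \ldots, D$ produces a bijection with the height-$i$ aligned border strips of $(D,\lambda)$ preserving $(D,\lambda) \setminus B$, so it suffices to prove the formula for $L_\lambda^0$.

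By the distinguished triangle of Proposition~\ref{prop:locS}, $\rH^i_\fm(L_\lambda^0) = \rR^{i-1} S(L_\lambda)$ for $i \ge 2$. I would apply $S$ to the minimal injective resolution $L_\lambda \to \bI^\bullet$ of Theorem~\ref{thm:BGGresolution}, using $S(Q_\mu) = A \otimes \bS_\mu$ from Corollary~\ref{cor:S-L-free}, to obtain a complex
\[
S(\bI^k) = \bigoplus_{\mu : \lambda/\mu \in \VS_k} A \otimes \bS_\mu,
\]
whose cohomology in degree $i-1$ is $\rR^{i-1} S(L_\lambda)$; the differentials are signed sums of Pieri maps $A \otimes \bS_\mu \to A \otimes \bS_{\mu'}$ with $\mu/\mu' \in \HS_1$.

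The main step is to identify this cohomology with the border-strip sum. Decomposing each $A \otimes \bS_\mu$ by Pieri into $\bigoplus_{\nu/\mu \in \HS} \bS_\nu$, I would examine for each $\nu$ the $\bS_\nu$-isotypic subcomplex of $S(\bI^\bullet)$, whose terms are indexed by $\mu$ with $\lambda/\mu \in \VS$ and $\nu/\mu \in \HS$. Its Euler characteristic, a signed count of such $\mu$, is computed by the Jacobi--Trudi-style modification rule described in Section~\ref{ss:localchar} (equivalently, via the identity $H(t)E(-t) = 1$ on Schur functions, cf.\ Remark~\ref{rmk:changeofbasis}); it equals $(-1)^{\mathrm{ht}(B)-1}$ when $\nu = (\lambda_1,\lambda) \setminus B$ for a (necessarily unique) aligned border strip $B$, and zero otherwise. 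The main obstacle will be promoting this Euler-characteristic identity to a genuine cohomological identification: I need the $\bS_\nu$-subcomplex to have cohomology concentrated in degree $\mathrm{ht}(B) - 1$ when it is nontrivial, and to be exact otherwise. I would try to establish this by showing that for each such $\nu$ the admissible $\mu$'s satisfying both strip conditions form a combinatorially controlled set and that the restriction of the Pieri maps to the $\bS_\nu$-line in each $A \otimes \bS_\mu$ is either zero or $\pm 1$ in a pattern forcing the relevant Koszul-style acyclicity; failing that, I would induct on $\ell(\lambda)$ using the product-of-simplices structure on the indexing poset from the proof of Lemma~\ref{lem:inj-signs} to peel off one distinct part size of $\lambda$ at a time.

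For the $A$-module generator claim, I would invoke Proposition~\ref{prop:pierisubmod}: the $A$-submodule of $A \otimes \bS_\mu$ generated by $\bS_\nu$ contains every $\bS_{\nu'}$ with $\nu \subseteq \nu'$. Since $\rH^i_\fm(L_\lambda^{\ge D})$ is inherited from the cohomology at $S(\bI^{i-1})$, the smallest partition among $\{(D,\lambda) \setminus B : \mathrm{ht}(B) = i\}$ (corresponding to the largest height-$i$ aligned border strip) generates the rest under the inherited $A$-action.
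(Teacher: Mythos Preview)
Your overall strategy matches the paper's: reduce to $L_\lambda^0$, apply $S$ to the resolution of Theorem~\ref{thm:BGGresolution}, and compute the cohomology of the resulting complex $S(\bI^\bullet)$. The reduction step and the identification $\rH^i_\fm(L_\lambda^0)=\rR^{i-1}S(L_\lambda)$ are exactly as in the paper.

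The gap is precisely where you flag it. Your plan decomposes $S(\bI^\bullet)$ by individual $\bS_\nu$-isotypic pieces and hopes to show each such subcomplex has cohomology concentrated in one degree; you do not carry this out, and the two fallback strategies you mention are not developed. The paper resolves this obstacle with a different, coarser decomposition: it filters each $A\otimes\bS_\mu$ by the $A$-subquotients $L_\tau^{\ge \mu_1}$ of Proposition~\ref{prop:filtration} (here $\tau=(\mu_2,\mu_3,\ldots)$), and for a fixed tail $\tau$ analyzes the subcomplex of all the $L_\tau^{\ge \mu_1}$'s that occur. The crucial combinatorial observation is that the $\mu$ with $\lambda/\mu\in\VS$ and $\mu/\tau\in\HS$ correspond to ways of writing the border strip $\lambda\setminus\tau$ as a vertical strip followed by a horizontal strip, and that each connected component of $\lambda\setminus\tau$ contributes exactly one binary choice (its top-right box may go in either strip). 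This forces the $\tau$-subcomplex to be a tensor product of two-term complexes, hence exact whenever there are at least two connected components or the strip misses the top-right box of $\lambda$; in the remaining case (a single connected component containing that box, i.e., an aligned border strip) the subcomplex is the two-term complex $L_\tau^{\ge\lambda_1}\to L_\tau^{\ge\lambda_1-1}$, whose cokernel $\bS_{(\lambda_1-1,\tau)}$ lands in exactly the predicted cohomological degree. This is the missing idea, and it is not visible from the finer $\bS_\nu$-isotypic viewpoint without essentially regrouping.

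Your argument for the $A$-module generator is also incomplete: invoking Proposition~\ref{prop:pierisubmod} requires knowing that all the relevant $\bS_{(D,\lambda)\setminus B}$ with $\mathrm{ht}(B)=i$ live inside a common $A\otimes\bS_\eta$ before passing to the subquotient. The paper obtains this directly from the analysis above: for fixed $i$, all contributions to $\rH^i_\fm(L_\lambda^0)$ arise as cokernels sitting inside a single summand $A\otimes\bS_\eta$ of $S(\bI^{i-1})$, where $\eta$ is determined by the forced vertical-strip boxes.
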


\begin{proof}
Since the saturation of $L_\lambda^{\ge D}$ is $L_\lambda^0$, the formula is correct for $i=1$ by Corollary~\ref{prop:localsheaf}. Hence we may assume that $D = \lambda_1$ without loss of generality. To check it for $i>1$, we use the injective resolution of $L_\lambda$ in $\Mod_K$ given in Theorem~\ref{thm:BGGresolution}, apply the section functor $S$ to it, and calculate cohomology. By Proposition~\ref{prop:freesat}, we get a complex $\bI^\bullet$ where $\bI^j = \bigoplus_{\mu,\, \lambda/\mu \in \VS_j} A \otimes \bS_\mu$.

First, we consider how to count the occurrences of $L_\nu^{\ge D}$ in $\bI^j$. The only such $\nu$ that appear are obtained by first removing a vertical strip from $\lambda$ to get some partition $\mu$, and then removing a horizontal strip from $\mu$. In other words, the $\nu$ are those partitions we can get by removing a border strip $B$ from $\lambda$. In this case, we will have $D = \mu_1$, and $L_\nu^{\ge \mu_1}$ appears as a constituent of $\bI^{|\lambda|-|\mu|}$. Given a connected border strip, there are exactly two ways to get it from removing a vertical strip and then a horizontal strip: the box that lies in the top-right can be removed at either step, but the order in which the other boxes are removed is forced.

Now consider the border strip $B$ removed from $\lambda$ to get $\nu$. If $B$ does not contain the top-right box of $\lambda$, then the description above shows that all instances of $L_\nu^{\ge D}$ have $D = \lambda_1$, and the proof of Theorem~\ref{thm:BGGresolution} shows that the subcomplex (in the category $\cV$) consisting of them is exact. 

So we assume that $B$ contains the top-right box of $\lambda$ now. Let $r$ be the number of connected components of $B$, and order them from right to left. Then there are $2^r$ ways to obtain $B$ by first removing a vertical strip and then removing a horizontal strip. We encode these with a bitstring $\ul{a} = (a_1, \dots, a_r)$ with $a_i \in \{0,1\}$, where $a_i = 1$ if and only if we include the top-right box of the $i$th connected component in the vertical strip. Let $f$ be the number of boxes which are forced to be included in the vertical strip. Then the bitstring $\ul{a}$ contributes a copy of $L_\nu^{\ge \lambda_1 - a_1}$ to $\bI_{f + a_1 + \cdots + a_r}$. In particular, if $r \ge 2$, we see that the number of copies of $L_\nu^{\ge \lambda_1 - 1}$ and $L_\nu^{\ge \lambda_1}$ are evenly distributed, and in fact, that the subcomplex given by all of them is isomorphic to the $2$-term complex $L_\nu^{\ge \lambda_1} \to L_\nu^{\ge \lambda_1 - 1}$ tensored with $r-1$ copies of an exact complex $\bC \xrightarrow{\cong} \bC$, and hence this subcomplex is exact.

Finally, we deal with the case $r=1$, which corresponds in a natural way to removing an aligned border strip from $(\lambda_1, \lambda)$. In this case, the subcomplex given by the $L_\nu^{\ge D}$ looks like
\[
L_\nu^{\ge \lambda_1} \to L_\nu^{\ge \lambda_1 - 1}
\]
in cohomological degrees $f$ and $f+1$, respectively. Since these are the unique ways to get $L_\nu$, there are no other terms in cohomological degrees $f,f+1$ that interfere with this map, i.e., the cokernel of this map contributes to $\rR^{f+1}S(L_\lambda^0) \cong \rH^{f+2}_\fm(L_\lambda^0)$. The cokernel of this map is $\bS_{(\lambda_1-1, \nu)}$, which is exactly obtained from $(\lambda_1, \lambda)$ by removing the unique aligned border strip of size $f+2$.

Hence we have calculated the local cohomology as elements of $\cV$. The last statement on the $A$-module structure follows from the fact that $\rH^i_\fm(L_\lambda^0)$ is a subquotient of $A \otimes \bS_\eta$ where $\eta$ is the partition obtained from $\lambda$ by removing the $f$ forced vertical strip boxes mentioned above, plus the last box in the first row of $\lambda$.
\end{proof}

\begin{corollary} \label{cor:depthsimple}
The depth of $L_\lambda^{\ge D}$ is the multiplicity of $D$ in the partition $(D,\lambda)$. In particular, if $D>\lambda_1$, the depth is $1$, and otherwise, the depth of $L_\lambda^0$ is $1$ more than the multiplicity of $\lambda_1$ in $\lambda$.
\end{corollary}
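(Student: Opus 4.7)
The plan is to read off the depth directly from the local cohomology computation in Proposition~\ref{prop:simplelocal}, using the vanishing/non-vanishing criterion given in Proposition~\ref{prop:lcextrema}.

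First I would observe that $L_\lambda^{\ge D}$ is torsion-free, since it is a submodule of the free (hence torsion-free) module $A \otimes \bS_\lambda$. In particular $\rH^0_{\fm}(L_\lambda^{\ge D}) = 0$, so by Proposition~\ref{prop:lcextrema} the depth of $L_\lambda^{\ge D}$ equals the smallest $i \ge 1$ for which $\rH^i_{\fm}(L_\lambda^{\ge D}) \ne 0$. By Proposition~\ref{prop:simplelocal}, this is the smallest integer $i \ge 1$ such that $(D,\lambda)$ admits an aligned border strip of height $i$. So the entire problem reduces to a combinatorial one: minimize the height over $B \in \BS((D,\lambda))$.

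Next I would carry out this minimization. Write $\mu = (D,\lambda)$ and let $m$ be the multiplicity of $D$ in $\mu$, so that $\mu_1 = \mu_2 = \cdots = \mu_m = D > \mu_{m+1}$. Any aligned border strip $B$ of $\mu$ is connected and contains the top-right box $(1,D)$. For $\mu \setminus B$ to be a Young diagram, whenever $B$ contains box $(r,D)$ but not box $(r+1,D)$, we must have $\mu_r - 1 \ge \mu_{r+1}$; since $\mu_1 = \cdots = \mu_m$, this forces $B$ to contain all of $(1,D),(2,D),\dots,(m,D)$. Hence every aligned border strip occupies at least $m$ rows. Conversely, the single column $\{(1,D),(2,D),\dots,(m,D)\}$ is itself an aligned border strip (it is connected, contains no $2\times 2$ square, and its complement in $\mu$ is a partition), and it has height exactly $m$. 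Therefore the minimum height is $m$, and the depth of $L_\lambda^{\ge D}$ is $m$.

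Finally, the ``In particular'' clauses are immediate: if $D > \lambda_1$ then $m = 1$; if $D = \lambda_1$ (so $L_\lambda^{\ge D} = L_\lambda^0$) then $m = 1 + \#\{i \ge 1 : \lambda_i = \lambda_1\}$, i.e., one more than the multiplicity of $\lambda_1$ in $\lambda$. There is no real obstacle here — the only thing to be careful about is the torsion-free case $D > \lambda_1$, where $L_\lambda^{\ge D}$ fails to be saturated and so $\rH^1_{\fm}$ is nonzero via Corollary~\ref{prop:localsheaf}; this is consistent with Proposition~\ref{prop:simplelocal}, which records the single-box aligned border strip of $(D,\lambda)$ of height $1$.
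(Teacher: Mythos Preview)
Your proof is correct and follows exactly the same approach as the paper, which simply cites Propositions~\ref{prop:lcextrema} and~\ref{prop:simplelocal}; you have merely spelled out the combinatorial minimization that the paper leaves implicit. One small remark: your lower bound on the height can be obtained even more directly from the clause in the definition of border strip that no box of the complement lies strictly below $B$, which immediately forces $(2,D),\dots,(m,D)\in B$ once $(1,D)\in B$.
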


\begin{proof}
This follows directly from Propositions~\ref{prop:lcextrema} and \ref{prop:simplelocal}.
\end{proof}

\begin{remark} \label{rmk:modKpairing2}
Now we have introduced enough language to simplify the calculation of $\langle Q_\lambda, L_\mu \rangle$ from Remark~\ref{rmk:modKpairing}. Recall that we defined $\langle M, N \rangle = \sum_i (-1)^i \dim \Ext^i_K(M,N)$ for $M,N \in \Mod_K$. We have
\begin{displaymath}
\langle Q_{\lambda}, L_{\mu} \rangle=\langle T(A \otimes \bS_{\lambda}), L_{\mu} \rangle
=\langle A \otimes \bS_{\lambda}, \rR S(L_{\mu}) \rangle
\end{displaymath}
by adjunction. Since $A \otimes \bS_\lambda$ is projective and $\rR S L_\mu$ is essentially the local cohomology of $L_\mu^0$ (Corollary~\ref{prop:localsheaf}), we see that this pairing is $m_0 - \sum_{i>0} (-1)^i m_i$ where $m_0$ denotes the multiplicity of $\lambda$ in $L_\mu^0$ (which is either $0$ or $1$) and $m_i$ for $i>0$ denotes the multiplicity of $\lambda$ in $\rH^i_\fm(L_\mu^0)$. By Proposition~\ref{prop:simplelocal}, we have $\sum_{i \ge 0} m_i \le 1$, so in particular, $\langle Q_\lambda, L_\mu \rangle \in \{-1,0,1\}$. The exact value can be calculated using the combinatorics of border strips (or using the shifted symmetric group action from above).
\end{remark}

\begin{remark} \label{rmk:stabdeg}
For an $A$-module $M$, let $d_i(M)$ be the maximum size of a partition appearing in $\rH^i_{\fm}(M)$.  Then by Proposition~\ref{prop:hilbertlocal}, we have
\begin{displaymath}
\deg(q_M) \le \max(d_i(M)).
\end{displaymath}
For $i \ge 2$, the quantity $\rH^i_{\fm}(M)$ depends only on the image $T(M)$ of $M$ in $\Mod_K$.  Let $\Lambda(M)$ be the set of partitions $\lambda$ for which $L_{\lambda}$ is a constituent of $T(M)$.  By filtering $T(M)$ and looking at various long exact sequences, we see that
\begin{displaymath}
d_i(M) \le \max_{\lambda \in \Lambda(M)} d_i(L_{\lambda}^0)
\end{displaymath}
for $i \ge 2$.  For a non-zero partition $\lambda$, let $d(\lambda)=\vert \lambda \vert+\lambda_1-n-1$, where $n$ is the multiplicity of $\lambda_1$ in $\lambda$; put $d(\emptyset)=0$.  Then Proposition~\ref{prop:simplelocal} shows that $d_i(L_{\lambda}^0) \le d(\lambda)$.  Combining all of this, we see that
\begin{displaymath}
\deg(q_M) \le \max \left( d_0(M), d_1(M), \max_{\lambda \in \Lambda(M)} d(\lambda) \right).
\end{displaymath}
Recall that the character polynomial of $M$ computes the character of the representation $M_n$ for $n>N$, for some integer $N$.  It is desirable to know the optimal value of $N$.  In fact, the optimal value of $N$ is exactly equal to the degree of $q$, and so the above inequality gives an explicit bound on the optimal value of $N$.  This bound is an improvement on the one given in \cite[Thm.~2.67]{fimodules}:  note that $d_0(M)$ and $d_1(M)$ are both $\le$ the stability degree (in the sense of \cite[Definition 2.34]{fimodules}), while $\max_{\lambda \in \Lambda(M)} d(\lambda)$ is $\le$ the weight \cite[Definition 2.50]{fimodules}, and this inequality is often strict.
\end{remark}

\begin{example} \label{eg:depthAB}
We will say a little bit about calculating the depth of the modules $L_\lambda^{\ge D}$ using the Auslander--Buchsbaum formula. This depth was obtained using local cohomology in Corollary~\ref{cor:depthsimple}. Given an $A$-module $M$, let $\Omega M$ be the syzygy module of $M$, i.e., the kernel of a minimal surjection $P \to M \to 0$ with $P$ projective. We extend this notation by setting $\Omega^k M = \Omega(\Omega^{k-1} M)$ for $k > 1$.

We will continue to use the notation of \S\ref{ss:exampleEFW}, so $M(\alpha, e)$ is the cokernel of the map $A \otimes \bS_\beta \to A \otimes \bS_\alpha$ where $\beta_1 = \alpha_1 + e$ and $\beta_i = \alpha_i$ for $i\ge 2$.

Let $\{\mu^1, \dots, \mu^r\}$ be the partitions obtained by adding a single box to $(D,\lambda)$ anywhere except the first row. Then the presentation of $L_\lambda^{\ge D}$ as an $A$-module is
\[
\bigoplus_{i=1}^r A \otimes \bS_{\mu^i} \to A \otimes \bS_{(D,\lambda)} \to L_\lambda^{\ge D} \to 0.
\]
If $\lambda = \emptyset$, then $L_\emptyset^{\ge D} = \Omega M(\emptyset,D)$ is the $D$th power of the maximal ideal of $A$, and was discussed in \S\ref{ss:exampleEFW}. The minimal free resolution in general was constructed in \cite[Corollary 2.10]{sw}. 

We will consider the case when $\lambda = (n)$ has a single part. If $D = n$, then 
\[
L_{(n)}^{\ge n} = \Omega^2 M((n-1,n-1), 1),
\]
so again, was discussed in \S\ref{ss:exampleEFW}. So we assume that $D > n$. The basic idea of \cite[\S 2.3]{sw} is to use mapping cones. In our situation, we have the presentation
\[
A \otimes (\bS_{(D,n+1)} \oplus \bS_{(D,n,1)}) \to A \otimes \bS_{(D,n)} \to L_{(n)}^{\ge D} \to 0.
\]
Define $A$-modules $N,N'$ using the presentations
\begin{align*}
A \otimes \bS_{(D,n,1)} \to A \otimes \bS_{(D,n)} \to N \to 0,\\
A \otimes \bS_{(D,n+1,1)} \to A \otimes \bS_{(D,n+1)} \to N' \to 0.
\end{align*}
Then $N = \Omega^2 M(n-1,D-n+1)$ and $N' = \Omega^2 M(n,D-n)$, so both have depth $2$. Also, we have a short exact sequence
\[
0 \to N' \to N \to L_{(n)}^{\ge D} \to 0.
\]
We can construct a free resolution for $L_{(n)}^{\ge D}$ by taking a mapping cone on the minimal free resolutions of $N'$ and $N$. All of the partitions that appear will be distinct, so in fact it will be minimal. In particular, the minimal free resolution $\bF_\bullet$ of $L_{(n)}^{\ge D}$ has the terms
\[
\bF_i = A \otimes (\bS_{(D,n,1^i)} \oplus \bS_{(D,n+1,1^{i-1})}) \qquad (i>0),
\]
so we see, by the Auslander--Buchsbaum formula, that $\depth L_{(n)}^{\ge D} = 1$ when $D > n$. This is in contrast with the fact that $\depth L_{(n)}^{\ge n} = 2$.

In general, \cite[Corollary 2.10]{sw} implies that $\depth L_\lambda^{\ge D}$ is the number of times that $D$ occurs in the partition $(D,\lambda)$, which agrees with Corollary~\ref{cor:depthsimple}. Again, the idea is to relate $L_\lambda^{\ge D}$ to simpler modules, constructed like $N$ and $N'$ above, and then to use a mapping cone construction. In general, $N$ and $N'$ are not of the form $\Omega^j M(\mu, e)$. 
\end{example}

\addtocontents{toc}{\bigskip}

\end{document}